\documentclass{amsart}

\usepackage{amsmath}
\usepackage{amsthm}
\usepackage{amsfonts}
\usepackage{amssymb}
\usepackage{graphics}

\newcommand\R{{\mathbb{R}}}
\newcommand\C{{\mathbb{C}}}
\newcommand\Z{{\mathbf{Z}}}

\newcommand\Q{{\mathbf{Q}}}

\newcommand\N{{\mathbf{N}}}
\newcommand\n{{\mathbf{n}}}

\newcommand\E{{\mathbf{E}}}
\newcommand\F{{\mathbf{F}}}

\renewcommand\Re{{\operatorname{Re}}}

\newcommand\st{{\operatorname{st}}}
\newcommand\eps{{\varepsilon}}

\newcommand\sgn{\operatorname{sgn}}
\newcommand\Frob{\operatorname{Frob}}
\newcommand\ultra{{}^*}

\theoremstyle{plain}
  \newtheorem{theorem}{Theorem}

  \newtheorem{proposition}[theorem]{Proposition}
  
  \newtheorem{lemma}[theorem]{Lemma}
  \newtheorem{corollary}[theorem]{Corollary}

\theoremstyle{definition}
  \newtheorem{definition}[theorem]{Definition}
  \newtheorem{example}[theorem]{Example}
  \newtheorem{remark}[theorem]{Remark}

\begin{document}

\title[Expanding polynomials and a regularity lemma]{Expanding polynomials over finite fields of large characteristic, and a regularity lemma for definable sets}

\author{Terence Tao}
\address{UCLA Department of Mathematics, Los Angeles, CA 90095-1555.}
\email{tao@math.ucla.edu}

\subjclass{11T06, 11B30, 05C75}

\begin{abstract}  Let $P: \F \times \F \to \F$ be a polynomial of bounded degree over a finite field $\F$ of large characteristic.  In this paper we establish the following dichotomy: either $P$ is a \emph{moderate asymmetric expander} in the sense that $|P(A,B)| \gg |\F|$ whenever $A, B \subset \F$ are such that $|A| |B| \geq C |\F|^{2-1/8}$ for a sufficiently large $C$, or else $P$ takes the form $P(x,y) = Q(F(x)+G(y))$ or $P(x,y) = Q(F(x) G(y))$ for some polynomials $Q,F,G$.  This is a reasonably satisfactory classification of polynomials of two variables that moderately expand (either symmetrically or asymmetrically).  We obtain a similar classification for weak expansion (in which one has $|P(A,A)| \gg |A|^{1/2} |\F|^{1/2}$ whenever $|A| \geq C |\F|^{1-1/16}$), and a partially satisfactory classification for almost strong asymmetric expansion (in which $|P(A,B)| = (1-O(|\F|^{-c})) |\F|$ when $|A|, |B| \geq |\F|^{1-c}$ for some small absolute constant $c>0$).

The main new tool used to establish these results is an \emph{algebraic regularity lemma} that describes the structure of dense graphs generated by definable subsets over finite fields of large characteristic.  This lemma strengthens the Sz\'emeredi regularity lemma in the algebraic case, in that while the latter lemma decomposes a graph into a bounded number of components, most of which are $\eps$-regular for some small but fixed $\epsilon$, the latter lemma ensures that all of the components are $O(|\F|^{-1/4})$-regular.  This lemma, which may be of independent interest, relies on some basic facts about the \'etale fundamental group of an algebraic variety.
\end{abstract}

\maketitle

\section{Introduction}

\subsection{Expanding polynomials}

Let $\F$ be a finite field, let $k \geq 1$ be an integer, and let $P: \F^k \to \F$ be a polynomial of $k$ variables defined over $\F$.  We will be interested in the regime when the order $|\F|$ of $\F$ is large\footnote{In fact, the new results of this paper will be restricted to the regime in which the \emph{characteristic} of $\F$ is large, and not just the order.}, but $k$ and the degree of $P$ remains bounded; one could formalise this by working\footnote{This is analogous to how the concept of an expander graph does not, strictly speaking, apply in any non-trivial sense to a single (standard) graph, but should instead be applied to a \emph{sequence} of such graphs, or to a single \emph{nonstandard} graph.} with a \emph{sequence} $\F_\n$ of fields whose order is going to infinity, and a sequence $P_\n: \F_\n^k \to \F_\n$ of polynomials of uniformly bounded degree on each of these fields, where $k$ is independent of $\n$.  But in the discussion that follows we will suppress the dependence on the sequence parameter $\n$ to simplify the exposition.  (Later on, we will use the formalism of nonstandard analysis to make this suppression of $\n$ more precise.)  Given $k$ subsets $A_1,\ldots,A_k$ of $\F$, we may form the set
$$ P(A_1,\ldots,A_k) := \{ P(a_1,\ldots,a_k) \mid a_1 \in A_1,\ldots,a_k \in A_k\}.$$
One of the main objectives of this paper is to study the expansion properties of $P$, which informally refers to the phenomenon that for ``typical'' polynomials $P$ and ``non-trivial'' $A_1,\ldots,A_k$, the set $P(A_1,\ldots,A_k)$ tends to be significantly larger than any of the $A_1,\ldots,A_k$.  We will focus in particular on the following five concepts in increasing order of strength, essentially following the notation from \cite{hls}:

\begin{enumerate}
\item We say that $P$ is a \emph{weak expander} if there are absolute constants $c,C>0$ such that $|P(A,\ldots,A)| \geq C^{-1} |A|^{1-c} |\F|^c$ whenever $A \subset \F$ and $|A| \geq C |\F|^{1-c}$.
\item We say that $P$ is a \emph{moderate expander} if there are absolute constants $c,C>0$ such that $|P(A,\ldots,A)| \geq C^{-1} |\F|$ whenever $A \subset \F$ and $|A| \geq C |\F|^{1-c}$.
\item We say that $P$ is a \emph{almost strong expander} if there are absolute constants $c,C>0$ such that $|P(A,\ldots,A)| \geq |\F| - C |\F|^{1-c}$ whenever $A \subset \F$ and $|A|,\ldots,|A| \geq C |\F|^{1-c}$.
\item We say that $P$ is a \emph{strong expander} if there are absolute constants $c,C>0$ such that $|P(A,\ldots,A)| \geq |\F| - C$ whenever $A \subset \F$ and $|A|,\ldots,|A| \geq C |\F|^{1-c}$.
\item We say that $P$ is a \emph{very strong expander} if there are absolute constants $c,C>0$ such that $P(A,\ldots,A) = \F$ whenever $A \subset \F$ and $|A|,\ldots,|A| \geq C |\F|^{1-c}$.
\end{enumerate}

As noted previously, these notions are trivial in the setting of a fixed field $\F$ and polynomial $P$, but acquire non-trivial meaning when these objects are allowed to depend on some parameter $\n$.  It is certainly also of interest to understand expansion when the sets $A_1,\ldots,A_k$ are small (as opposed to having cardinality at least $C |\F|^{1-c}$), but we have nothing new to say about this case and will not discuss it further here, and refer the interested reader to \cite{hls} and \cite{bt} for a survey of the situation.

In this paper, we will also consider the \emph{asymmetric} case when the sets involved are distinct:

\begin{enumerate}
\item We say that $P$ is a \emph{weak asymmetric expander} if there are absolute constants $c,C>0$ such that $|P(A_1,\ldots,A_k)| \geq C^{-1} \min(|A_1|,\ldots,|A_k|)^{1-c} |\F|^c$ whenever $|A_1|,\ldots,|A_k| \geq C |\F|^{1-c}$.
\item We say that $P$ is a \emph{moderate asymmetric expander} if there are absolute constants $c,C>0$ such that $|P(A_1,\ldots,A_k)| \geq C^{-1} |\F|$ whenever $|A_1|,\ldots,|A_k| \geq C |\F|^{1-c}$.
\item We say that $P$ is a \emph{almost strong asymmetric expander} if there are absolute constants $c,C>0$ such that $|P(A_1,\ldots,A_k)| \geq |\F| - C |\F|^{1-c}$ whenever $|A_1|,\ldots,|A_k| \geq C |\F|^{1-c}$.
\item We say that $P$ is a \emph{strong asymmetric expander} if there are absolute constants $c,C>0$ such that $|P(A_1,\ldots,A_k)| \geq |\F| - C$ whenever $|A_1|, \ldots, |A_k| \geq C |\F|^{1-c}$.
\item We say that $P$ is a \emph{very strong asymmetric expander} if there are absolute constants $c,C>0$ such that $P(A_1,\ldots,A_k) = \F$ whenever $|A_1|, \ldots, |A_k| \geq C |\F|^{1-c}$.
\end{enumerate}

Clearly, any of the asymmetric expansion properties implies the symmetric counterpart; for instance, moderate asymmetric expansion implies moderate expansion.  

When $k=1$, $P$ cannot be an expander in any of the above senses, thanks to the trivial inequality $|P(A)| \leq |A|$.  For $k \geq 2$, there are some obvious examples of non-expanders.  For instance, when $k=2$, the polynomial $P(x,y) :=x+y$ is not an expander in any of the above senses (in the limit when $|\F|$ goes to infinity), as can be seen by setting $A_1=A_2$ equal to an arithmetic progression.  In a similar vein, $P(x,y) :=xy$ is not an expander as one can set $A_1=A_2$ equal to a geometric progression.  More generally, if $P$ takes the additive form
\begin{equation}\label{p12}
 P(x_1,x_2) = Q(F_1(x_1)+F_2(x_2))
\end{equation}
or the multiplicative form
\begin{equation}\label{q12}
 P(x_1,x_2) = Q(F_1(x_1) F_2(x_2))
\end{equation}
for some polyomials $Q,F_1,F_2: \F \to \F$ of bounded degree, then $P$ will not be an expander in any of the asymmetric senses, as can be seen by taking $A_i = F^{-1}_i(E_i)$ for $i=1,2$, where $E_1, E_2$ are randomly chosen arithmetic (resp. geometric) progressions of fixed length $L$ of equal spacing (resp. ratio).  By setting instead $A := F^{-1}_1(E_1) \cap F^{-1}_2(E_2)$, we see from the first moment method that we can find length $L$ progressions $E_1,E_2$ of equal spacing with $|A| \geq L^2/|\F|$ (resp. $|A| \geq L^2/(|\F|-1))$ in the additive (resp. multiplicative) case; taking $L$ close to $|\F|$, we conclude that such polynomials cannot be moderate expanders (although this argument is not strong enough rule out weak expansion, unless $F_1=F_2$).  A construction in \cite{gyar} also shows that no polynomial of two variables can be a strong expander.

On the other hand, by using estimates related to the sum-product phenomenon, there are several results in the literature establishing various sorts of expansion for certain classes of polynomials.  We will only give a sample of the known results here (focusing exclusively on the regime of large subsets of $\F$), and refer the reader to \cite{hls} and \cite{bt} for a more comphensive survey of results.  Solymosi \cite{soli} used graph-theoretic methods to establish weak expansion for polynomials of the form $P(x_1,x_2) = f(x_1)+x_2$ when $f$ was a nonlinear polynomial of bounded degree; his results also show weak asymmetric expansion for polynomials of the form $P(x_1,x_2,x_3) = f(x_1)+x_2+x_3$.  These results were generalised in \cite{hls} (by a Fourier-analytic method), establishing for instance weak expansion for $P(x_1,x_2) = f(x_1)+g(x_2)$ for non-constant polynomials $f,g$ whose degrees are distinct and less than the characteristic of $\F$, with a similar result for polynomials of the form $P(x_1,x_2) = f(x_1) g(x_2)$.  In \cite{shk}, Shkredov established very strong expansion for the polynomial $P(x_1,x_2,x_3) = x_1^2+x_1x_2+x_3$, and moderate expansion for $P(x_1,x_2)	= x_1(x_1+x_2)$, while in \cite{garaev} weak expansion for $P(x_1,x_2) := x_1(x_2+1)$ in fields of prime order was established.  As a consequence of their results on the finite field distinct distances problem, Iosevich and Rudnev \cite{ir} established the strong expansion of polynomials of the form $P(x_1,\ldots,x_d,y_1,\ldots,y_d) := \sum_{i=1}^d (x_i-y_i)^2$ for any $d \geq 2$, and in a similar spirit Vu \cite{vu} established\footnote{In fact, the result in \cite{vu} yields a stronger lower bound on expressions such as $|A+A| + |f(A,A)|$.}  the moderate expansion of polyomials of the form $P(x_1,x_2,y_1,y_2) = f(x_1-y_1,x_2-y_2)$ for any symmetric polynomial $f$ of bounded degree which was \emph{non-degenerate} in the sense that $f$ is not of the form $f(x_1,x_2) = Q(ax_1 + bx_2)$ for some polynomial $Q$ and constants $a,b$.   In \cite{heg}, moderate expansion for polynomials of the form $P(x_1,x_2)= f(x_1) + x_1^k g(x_2)$ was established when $f(x_1)$ is affinely independent of $x_1^k$, improving upon earlier work of Bourgain \cite{borg}. In \cite{bt}, it was shown that any polynomial $P(x_1,x_2)$ that is not of the form $F_1(x_1) + F_2(x_2)$ or $F_1(x_1)F_2(x_2)$, is monic in each of the two variables $x_1,x_2$, and is \emph{non-composite} in that it is not of the form $P = Q \circ R$ for some polynomials functions $Q: \overline{\F} \to \overline{\F}$, $R: \overline{\F}^2 \to \overline{\F}$ over the algebraic completion of $\F$ with $Q$ non-linear, then $P$ is a weak asymmetric expander.

We can now present our first set of new results regarding expansion, in the context of polynomials of two variables in a field of large characteristic.  We first give the formulation that pertains to moderate asymmetric expansion.

\begin{theorem}[Moderate asymmetric expansion]\label{expand-thm-modas} For any degree $d$, there exists a constant $C$ such that the following statement holds.  Let $\F$ be a finite field of characteristic at least $C$, and let $P: \F \times \F \to \F$ be a polynomial of degree at most $d$.  Then at least one of the following statements hold:
\begin{itemize}
\item[(i)] (Additive structure)  One has
\begin{equation}\label{padd}
 P(x_1,x_2) = Q(F_1(x_1)+F_2(x_2))
\end{equation}
(as a polynomial identity in the indeterminates $x_1,x_2$) for some polynomials $Q, F_1, F_2: \F \to \F$.  
\item[(ii)] (Multiplicative structure)  One has
\begin{equation}\label{pmult}
P(x_1,x_2) = Q(F_1(x_1) F_2(x_2))
\end{equation}
for some polynomials $Q, F_1, F_2: \F \to \F$.
\item[(iii)] (Moderate asymmetric expansion)  One has
$$ |P(A_1,A_2)| \geq C^{-1} |\F|$$
whenever $A_1, A_2$ are subsets of $\F$ with $|A_1| |A_2| \geq C |\F|^{2-1/8}$.
\end{itemize}
\end{theorem}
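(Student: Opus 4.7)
The plan is to prove the contrapositive: assuming that $P$ satisfies neither (i) nor (ii), I would derive the expansion bound in (iii) via Cauchy--Schwarz combined with the algebraic regularity lemma. Setting $r(z) := |\{(x,y) \in A_1 \times A_2 : P(x,y) = z\}|$, one has $\sum_z r(z) = |A_1||A_2|$, so Cauchy--Schwarz gives $|P(A_1,A_2)| \geq |A_1|^2|A_2|^2 / N$ where $N := \sum_z r(z)^2$. It therefore suffices to establish the second-moment bound $N \leq C|A_1|^2|A_2|^2/|\F|$ whenever $|A_1||A_2| \geq C|\F|^{2-1/8}$.

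Observe that $N$ counts the $\F$-points of the hypersurface
$$ W := \{(x_1,y_1,x_2,y_2) \in \F^4 : P(x_1,y_1) = P(x_2,y_2)\} $$
lying in the Cartesian set $(A_1 \times A_2) \times (A_1 \times A_2)$; equivalently, $N$ is the edge count between $A_1 \times A_2$ and itself in the bipartite graph $H$ on $\F^2 \times \F^2$ whose edge set is $W$. I would apply the algebraic regularity lemma to $H$: this partitions each copy of $\F^2$ into $O_d(1)$ definable cells so that every cell pair is $O(|\F|^{-1/4})$-regular with some density $d_{ij}$. The variety $W$ has dimension $3$, so by Lang--Weil its point count is $(1+o(1))|\F|^3$, giving a global density $\sim |\F|^{-1}$; the algebraic regularity lemma further constrains each $d_{ij}$ to be a bounded integer multiple of $|\F|^{-1}$ (up to regularity error), with the multiplier reflecting the local geometry of $W$ over the cell pair.

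Granting that these multipliers are uniformly $O_d(1)$---i.e., that all the $d_{ij}$ are $O(|\F|^{-1})$---summing the main contributions over the cell pairs yields a main term of size $O(|A_1|^2|A_2|^2/|\F|)$, while the cumulative regularity error is dominated by the main term precisely at the threshold $|A_1||A_2| \geq C|\F|^{2-1/8}$. The exponent $1/8$ arises from balancing the $|\F|^{-1/4}$ regularity error across the four-dimensional ambient space $\F^4$ against the main term. Combined with Cauchy--Schwarz, this gives (iii).

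The principal obstacle is the structural claim that the $d_{ij}$ are uniformly $O(|\F|^{-1})$ when $P$ satisfies neither (i) nor (ii). Geometrically, this amounts to showing that $W$ has no ``extra'' $3$-dimensional irreducible components beyond the main one (and its $2$-dimensional diagonal): any such extra component would encode a nontrivial one-parameter algebraic correspondence $(x_1,y_1) \leftrightarrow (x_2,y_2)$ preserving $P$, i.e., a positive-dimensional fiber-preserving symmetry group acting on the map $(x,y) \mapsto P(x,y)$. A Ritt/monodromy-type rigidity theorem, leveraging the étale fundamental group machinery underpinning the algebraic regularity lemma itself, would then force such a symmetry group to be isomorphic to either the additive or multiplicative group over $\overline{\F}$; after absorbing a suitable outer polynomial $Q$, this translates precisely into the form $P = Q(F(x)+G(y))$ or $P = Q(F(x)G(y))$, yielding the dichotomy. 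Carrying out this algebro-geometric classification rigorously---which rests on automorphism theory of one-dimensional algebraic groups and a careful study of when the fiber curves $C_z := \{P=z\}$ can admit a nontrivial symmetry---is the hardest part of the argument.
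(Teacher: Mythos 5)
There is a fundamental gap in the expansion half of your argument. Your object $W = \{(x_1,y_1,x_2,y_2) \in \F^2\times\F^2 : P(x_1,y_1) = P(x_2,y_2)\}$ is a codimension-one hypersurface in $\F^4$, so its density as a bipartite graph between $\F^2$ and $\F^2$ is $\sim |\F|^{-1}$. This is far below the $O(|\F|^{-1/4})$ error in the algebraic regularity lemma, so applying Lemma~\ref{algreg} directly to $W$ says nothing: the error term $O(|\F|^{-1/4}|V_i||W_j|)$, which is $\gg |\F|^{15/4}$ since each cell has size $\gg|\F|^2$, already exceeds the would-be main term $|A_1|^2|A_2|^2/|\F| \le |\F|^3$ for \emph{every} choice of $A_1,A_2$; no threshold on $|A_1||A_2|$ can rescue your second-moment bound. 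This is exactly the sparsity obstruction the paper flags after stating Lemma~\ref{algreg}. The paper circumvents it in Theorem~\ref{expand-thm} by applying Cauchy--Schwarz \emph{twice} (in $v$, then in $w$) before invoking regularity; the resulting quadrilinear average lives on the image of $(v,v',w,w') \mapsto (P(v,w),P(v,w'),P(v',w),P(v',w'))$ in $\F^4$, which is genuinely dense once (i) fails, and the higher-dimensional regularity lemma (Theorem~\ref{threg-higher}) is applied to the dense definable level sets $E_{c_0}$ of the Lang--Weil fiber count. The exponent $1/8$ arises because the $|\F|^{-1/4}$ regularity gain is raised to the power $1/4$ under the two Cauchy--Schwarz steps (yielding $|\F|^{-1/16}$ in the discrepancy, hence $1/8 = 2\cdot 1/16$ at the threshold), not from balancing a regularity error against a main term on a sparse hypersurface.

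The structural dichotomy is also not the one the paper needs. Reducibility of $W$ is not the correct criterion: for $P(x,y)=x+y$, $P(x,y)=xy$, or $P(x,y)=x^2+y^2$ the hypersurface $W$ is an irreducible hyperplane or quadric, with no extra components, yet all three are prototypical non-expanders with additive or multiplicative structure. The algebraic constraint the paper actually classifies (Theorem~\ref{const-0}) is Zariski non-density of $\{(P(a,c),P(a,d),P(b,c),P(b,d)) : a,b,c,d\}$ in $\overline{\F}^4$, and it is resolved not by \'etale-fundamental-group monodromy (that machinery lives entirely inside the proof of the regularity lemma), but by taking a $4\times 4$ Jacobian determinant to get the functional equation $\partial_1 P/\partial_2 P = f(a)/g(b)$ for rational $f,g$, then analyzing residues, periods, and contour integrals of $f,g$ via Riemann-surface and Liouville-type arguments. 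Your ``classify the one-parameter symmetry group'' heuristic points in the right general direction (the additive/multiplicative dichotomy indeed reflects the classification of one-dimensional algebraic groups), but as stated it neither identifies the correct algebraic object to be degenerate nor addresses the elliptic-curve alternative, which the paper excludes by exploiting that $P$ is polynomial rather than merely rational.
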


The degree of $Q,F_1,F_2$ is not specified in (i), (ii), but it is easy to see that one can restrict to the case when $Q, F_1, F_2$ have degree at most $d$ with no loss of generality, since (except in degenerate cases when one of $Q,F_1,F_2$ is constant) it is not possible to have either (i) or (ii) hold if $Q$, $F_1$, or $F_2$ has degree greater than $d$.   The exponent $1/8$ appearing in the above theorem is an artefact arising from the number of times we were forced to apply the Cauchy-Schwarz inequality in our arguments, and we do not believe it to be optimal.  When option (iii) occurs, we are also able to obtain additional bounds on the set $\{ (a_1,a_2,a_3) \in A_1\times A_2 \times A_3: P(a_1,a_2) = a_3 \}$ for various sets $A_1,A_2,A_3$; see Remark \ref{three} below.

Thus, we see that in the large characteristic case, the only polynomials in two variables that are not moderate asymmetric expanders are the polynomials given by the examples \eqref{p12}, \eqref{q12}, which as discussed previously are not weak asymptotic expanders or moderate expanders.  In particular, this shows that there is no distinction between moderate asymmetric expansion, moderate expansion, and weak asymmetric expansion, at least for polynomials of two variables in the large characteristic case.  This result partially addresses a conjecture of Bukh and Tsimerman \cite[\S 9]{bt} and of Vu \cite[Problem 4]{vu}, at least in the case of large characteristic and reasonably dense sets $A$, and it seems likely that the methods here could be used to make further progress on these conjectures.  We remark that some analogous results, in which the finite field $\F$ was replaced by the real field $\R$, the complex field $\C$, or the rationals $\Q$ were obtained in \cite{elekes-ronyai}, \cite{elekes-szabo}, and \cite{solymosi} respectively, using very different methods (related to the Szemer\'edi-Trotter theorem) from those used here.

Theorem \ref{expand-thm} will be established at the end of Section \ref{algo}.  By combining the above results with the Fourier-analytic arguments in \cite{hls}, we can obtain a similar criterion for weak expansion:

\begin{theorem}[Weak expansion]\label{expand-thm-weak} For any degree $d$, there exists a constant $C$ such that the following statement holds.  Let $\F$ be a finite field of characteristic at least $C$, and let $P: \F \times \F \to \F$ be a polynomial of degree at most $d$.  Then at least one of the following statements hold:
\begin{itemize}
\item[(i)] (Additive structure)  One has
$$ P(x_1,x_2) = Q(aF(x_1)+bF(x_2))$$
for some polynomials $Q, F: \F \to \F$, and some elements $a,b \in \F$.
\item[(ii)] (Multiplicative structure)  One has
$$ P(x_1,x_2) = Q(F(x_1)^a F(x_2)^b)$$
for some polynomials $Q, F: \F \to \F$, and some natural numbers $a,b$ with $a,b \leq C$.
\item[(iii)] (Weak expansion)  One has
$$ |P(A,A)| \geq C^{-1} |\F|^{1/2} |A|^{1/2}$$
whenever $A \subset \F$ with $|A| \geq C |\F|^{1-1/16}$.
\end{itemize}
\end{theorem}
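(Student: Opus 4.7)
\textbf{Proof plan for Theorem \ref{expand-thm-weak}.}

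The plan is to reduce to Theorem \ref{expand-thm-modas} and then refine the resulting structural dichotomy using the Fourier-analytic techniques of [hls]. I would first apply Theorem \ref{expand-thm-modas} to $P$. If its option (iii) holds, then for every $A \subset \F$ with $|A| \geq C |\F|^{1-1/16}$ one has $|A|^2 \geq C|\F|^{2-1/8}$ and hence $|P(A,A)| \geq C^{-1}|\F| \geq C^{-1}|\F|^{1/2}|A|^{1/2}$, which already gives option (iii) of the present theorem (after enlarging $C$). Thus I may assume $P(x_1,x_2) = Q(F_1(x_1) + F_2(x_2))$ or $P(x_1,x_2) = Q(F_1(x_1) F_2(x_2))$.

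In the additive subcase, I would check whether $F_1, F_2$ are \emph{affinely proportional}, i.e.\ whether there exist a polynomial $F: \F \to \F$ and scalars $a, b, c_1, c_2 \in \F$ with $F_1 = aF + c_1$ and $F_2 = bF + c_2$. If so, absorbing $c_1 + c_2$ into $Q$ yields $P = \widetilde Q(aF(x_1) + bF(x_2))$, which is case (i) of the theorem. Otherwise, since $|P(A,A)| \geq \deg(Q)^{-1} |F_1(A) + F_2(A)|$, it suffices to prove $|F_1(A) + F_2(A)| \gg |\F|^{1/2}|A|^{1/2}$. Following [hls], I fix a nontrivial additive character $\psi$ of $\F$, set $S_i(t) := \sum_{x \in A} \psi(t F_i(x))$, and combine Cauchy-Schwarz with Parseval to obtain
\[
|F_1(A) + F_2(A)| \; \geq \; \frac{|A|^4}{|\F|^{-1}\sum_{t \in \F} |S_1(t)|^2 |S_2(t)|^2}.
\]
The $t = 0$ frequency contributes the main term $|A|^4/|\F|$, while the remaining frequencies are controlled by a Weil bound on the bilinear character sum $S_1(t) S_2(t)$ provided the curve $\{t_1 F_1(x) + t_2 F_2(y) = z\}$ is absolutely irreducible for Zariski-generic $(t_1,t_2,z)$; this irreducibility is exactly what affine independence of $F_1, F_2$ buys. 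The multiplicative subcase is parallel, using multiplicative characters: either $F_1 = c_1 F^a$ and $F_2 = c_2 F^b$ for a common $F$ and natural numbers $a, b$ (forced to be at most $d$ by degree considerations, giving case (ii)), or the multiplicative-character Weil bound yields the analogous weak expansion estimate for $|F_1(A) F_2(A)|$.

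The hardest step will be the Fourier/Weil estimate when $\deg F_1 = \deg F_2$: [hls] handles $f(x)+g(y)$ explicitly only in the distinct-degree case, and the equal-degree but non-proportional regime requires an extra geometric input. One must verify that affine independence of $F_1, F_2$ is equivalent to absolute irreducibility of $t_1 F_1(x) + t_2 F_2(y) = z$ for generic $(t_1,t_2,z)$, and then extract from the Weil bound a control on $\sum_{t \neq 0} |S_1(t)|^2 |S_2(t)|^2$ strong enough to deliver $|F_1(A)+F_2(A)| \gg |\F|^{1/2}|A|^{1/2}$ throughout the range $|A| \geq C|\F|^{1-1/16}$. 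The multiplicative analogue, which reduces the condition ``$F_1 = c_1 F^a$, $F_2 = c_2 F^b$'' to a comparison of the common irreducible factorizations of $F_1, F_2$ over $\overline{\F}$, is similar in spirit but requires tracking multiplicities.
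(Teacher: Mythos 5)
Your overall reduction is the same as the paper's: apply the moderate expansion theorem (in its nonstandard form, Theorem \ref{expand-thm-modas-nonst}, the paper works over a nonstandard finite field but this is a wrapper), land in the additive or multiplicative structural case, and then use Fourier analysis plus a Weil-type bound to upgrade additive structure to case (i) (respectively multiplicative structure to case (ii)) or else obtain weak expansion. The stylistic difference — you do an explicit case split on whether $F_1,F_2$ are affinely proportional, while the paper runs a single contrapositive and lets the Weil bound \emph{force} the relation $aF_1 + bF_2 = \text{const}$ — is immaterial.

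However, the Fourier step as you have written it has a genuine gap. Your inequality
$$|F_1(A)+F_2(A)| \geq \frac{|A|^4}{|\F|^{-1}\sum_{t\in\F}|S_1(t)|^2|S_2(t)|^2},\qquad S_i(t)=\sum_{x\in A}\psi(tF_i(x)),$$
is correct (it is Cauchy--Schwarz plus orthogonality), but you then propose to control the $t\neq 0$ contribution by ``a Weil bound on $S_1(t)S_2(t)$''. The Weil bound only controls \emph{complete} character sums $\sum_{x\in\F}\psi(tF_i(x))$; it says nothing about the sums $S_i(t)$ restricted to the arbitrary set $A$. Indeed $|S_1(t)|$ can be as large as $|A|$ for nontrivial $t$ (for instance $F_1(x)=x$ and $A$ an interval, with $t$ small), so no absolute-irreducibility hypothesis on the surface $\{t_1F_1(x)+t_2F_2(y)=z\}$ can rescue a pointwise bound on $S_1(t)S_2(t)$. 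A crude bound such as $\sum_{t\neq 0}|S_1|^2|S_2|^2 \leq (\max_{t\neq 0}|S_1|)^2\sum_t|S_2|^2 \ll |A|^2\cdot|\F||A|$ only yields $|F_1(A)+F_2(A)|\gg|A|$, which is trivial.

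The paper avoids this problem by a different double-counting that produces a \emph{complete} character sum. Setting $B:=F_2(A)\times F_1(A)$ and $C:=(F_1(A)+F_2(A))^2$, one has $(F_1(t),F_2(t))+B\subset C$ for every $t\in A$, whence
$$\sum_{t\in\F}\sum_{(x,y)\in\F^2}1_B(x,y)\,1_C(x+F_1(t),y+F_2(t))\geq|A|\,|B|\gg|A|^3.$$
Fourier-expanding $1_B$ and $1_C$ and using Plancherel plus Cauchy--Schwarz (here the contradiction hypothesis $|F_1(A)+F_2(A)|\lll|\F|^{1/2}|A|^{1/2}$ is used to bound $|C|$), one finds a nontrivial pair of additive characters $(\chi_1,\chi_2)$ for which $\bigl|\sum_{t\in\F}\chi_1(F_1(t))\chi_2(F_2(t))\bigr|\ggg|A|^{3/2}/|\F|^{1/2}\ggg|\F|^{1/2}$. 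Since this is a complete sum $\sum_{t\in\F}\chi_0(aF_1(t)+bF_2(t))$, Weil forces $aF_1+bF_2$ to be constant, which is exactly the affine-dependence you want. Your plan needs to be reorganized along these lines; the direct restricted-energy estimate does not close. The multiplicative case has the same issue and the same fix, with multiplicative characters and the multiplicative-character Weil bound.
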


Again, this is a reasonably good classification of the polynomials which weakly expand, except that in the case (i) of additive structure, some further information on the ratio $b/a$ of the two elements should be obtained (this ratio should be ``low complexity'' in some sense). We will not pursue this issue here; it boils down to the expansion properties of $A+\alpha A$ for various values of $\alpha$, an issue studied in \cite{laba}, \cite{bukh}, \cite{cill} in the case when $A$ lies in the integers $\Z$ rather than in $\F$ (see also \cite{plagne} for some initial results in the finite field setting).  Theorem \ref{expand-thm-weak} will be established in Section \ref{weak-sec}.

We now turn to the analogue of the above results for almost strong asymmetric expansion, where our results are unfortunately somewhat less satisfactory.  The situation here is necessarily more complicated, as can be seen by the following simple observation: if a polynomial $P$ is an almost strong asymmetric expander, then its square $P^2$ is automatically\footnote{This observation technically answers the question posed at the end of \cite[\S 1]{hls} as to whether there are moderate expanders which are not strong expanders; for instance one can take the square $(x_1^2+x_1x_2+x_3)^2$ of the strong expander of Shkredov \cite{shk}.  However, this is something of a ``cheat'', and the interesting question remains of whether there exists a \emph{non-composite} polynomial $P$ which is a moderate expander but not a strong expander.  In view of Theorem \ref{expand-thm-ass} below, this basically reduces (in the large characteristic case, at least) to the task of locating a non-composite polynomial that obeys an algebraic constraint \eqref{alg-con} without having additive or multiplicative structure.} a moderate asymmetric expander, but not an almost strong asymmetric expander, because $P^2$ is clearly restricted to the quadratic residues.  More generally, if $P$ obeys a polynomial identity of the form
\begin{equation}\label{pgh}
 P( f(x_1), g(x_2) ) = h( Q(x_1,x_2) )
\end{equation}
for some polynomials $f,g,h: \F \to \F$ and $Q: \F \times \F \to \F$ with $h$ nonlinear and $f,g$ non-constant, then it is likely that $P$ will not be an almost strong asymmetric expander, because $P(f(\F),g(\F)) \subset h(\F)$ and $h(\F)$ is not, in general, equal\footnote{One can however construct some examples of nonlinear polynomials $h$ that are bijective on a certain finite field, for instance $x \mapsto x^3$ is bijective on a field $\F_p$ of prime order $p$ whenever $p-1$ is coprime to $3$.} to all of $\F$ and is instead usually just a dense subset of $\F$.  An example of such a polynomial identity\footnote{Admittedly, this is also an example of a polynomial $P$ that obeys the multiplicative structure \eqref{pmult}.  We were unable to either exhibit or rule out an example of a polynomial $P$ obeying \eqref{pgh} but not \eqref{pmult}, but it seems of interest to resolve this question.} is
\begin{equation}\label{px1}
 P( x_1^n, x_2^n ) = P(x_1, x_2)^n
\end{equation}
when $P$ is a monomial of the form $P(x_1,x_2) = x_1^a x_2^b$ and $a,b,n$ are arbitrary natural numbers, which shows that $P$ maps $n^{\operatorname{th}}$ powers to $n^{\operatorname{th}}$ powers.

Our next main result is an attempt to assert that this is the only additional obstruction to almost strong expansion, beyond the obstructions already identified in Theorem \ref{expand-thm}.  Unfortunately, due to limitations in our arguments,  we will be forced to generalise the above example, in which the polynomials $f,g,h,Q$ are defined over the algebraic closure $\overline{\F}$ of $\F$, and for which the domains of $f,g,Q$ are constructed using affine algebraic curves (possibly of positive genus) instead of the affine line.  More precisely, we have:

\begin{theorem}[Almost strong asymmetric expansion]\label{expand-thm-ass} For any degree $d$, there exists a constant $C$ such that the following statement holds.  Let $\F$ be a finite field of characteristic at least $C$, and let $P: \F \times \F \to \F$ be a polynomial of degree at most $d$.  Then at least one of the following statements hold:
\begin{itemize}
\item[(i)] (Additive structure)  One has
$$ P(x_1,x_2) = Q(F_1(x_1)+F_2(x_2))$$
for some polynomials $Q, F_1, F_2$.
\item[(ii)] (Multiplicative structure)  One has
$$ P(x_1,x_2) = Q(F_1(x_1) F_2(x_2))$$
for some polynomials $Q, F_1, F_2$.
\item[(iii)] (Algebraic constraint) There exist irreducible affine curves $V \subset \overline{\F}^m, W \subset \overline{\F}^n$ of complexity\footnote{We will review algebraic geometry notation such as ``irreducible'', ``affine'', ``curve'', and ``complexity'' in the next section.} at most $C$ and definable over an extension of $\F$ of degree at most $C$, as well as polynomial maps $f: \overline{\F}^n \to \overline{\F}$, $g: \overline{\F}^m \to \overline{\F}$, $Q: \overline{\F}^n \times \overline{\F}^m \to \overline{\F}$, $h: \overline{\F} \to \overline{\F}$ of degree at most $C$, and whose coefficients lie in an extension of $\F$ of degree at most $C$, such that the restrictions of $f,g$ to $V,W$ respectively are non-constant, and $h$ has degree at least two, and one has the identity
\begin{equation}\label{alg-con}
 P( f(x_1), g(x_2) ) = h( Q(x_1,x_2) )
\end{equation}
for all $x_1 \in V$ and $x_2 \in W$.
\item[(iv)] (Almost strong asymmetric expansion)  One has
$$ |\F \backslash P(A_1,A_2)| \leq C |\F| \left(\frac{|A_1| |A_2|}{|\F|^{2-1/8}}\right)^{-1/2}$$
whenever $A_1, A_2$ are non-empty subsets of $\F$.
\end{itemize}
\end{theorem}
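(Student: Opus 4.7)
The plan is to bootstrap the moderate-asymmetric expansion provided by Theorem \ref{expand-thm-modas} up to almost-strong asymmetric expansion, using a Cauchy--Schwarz that converts the size of the complementary set $\F \setminus P(A_1,A_2)$ into a second-moment (``collision'') estimate, and then controlling that second moment by the algebraic regularity lemma advertised in the introduction. First I apply Theorem \ref{expand-thm-modas} to $P$. If cases (i) or (ii) of that theorem occur we are immediately done, since those two cases reappear verbatim here. Otherwise $P$ is a moderate asymmetric expander, and the task is to prove (iv) unless (iii) holds.

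Next, introduce the multiplicity $r(z) := |\{(x_1,x_2) \in A_1 \times A_2 : P(x_1,x_2)=z\}|$, so that $\sum_{z \in \F} r(z) = |A_1||A_2|$ and Cauchy--Schwarz gives
$$ |A_1|^2 |A_2|^2 \;\leq\; |P(A_1,A_2)| \cdot \sum_{z \in \F} r(z)^2. $$
Thus a bound of the form $\sum_z r(z)^2 \leq (1+\delta)\,|A_1|^2|A_2|^2/|\F|$ translates directly into $|\F \setminus P(A_1,A_2)| \leq \delta\,|\F|$, and the problem reduces to upper-bounding the collision count $\sum_z r(z)^2$, which equals the number of $\F$-points of the codimension-one variety $V_{\mathrm{coll}} := \{(x_1,x_1',x_2,x_2') \in \overline{\F}^4 : P(x_1,x_2)=P(x_1',x_2')\}$ lying inside $A_1 \times A_1 \times A_2 \times A_2$. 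The expected main term coming from generic equidistribution of $V_{\mathrm{coll}}$ is $|A_1|^2|A_2|^2/|\F|$.

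The core step is to apply the algebraic regularity lemma to a suitable definable bipartite graph encoding $V_{\mathrm{coll}}$, split across $A_1$ and $A_2$. This produces a bounded partition into definable cells, all of whose induced pair-densities are $O(|\F|^{-1/4})$-regular, and an additional Cauchy--Schwarz over the cell decomposition converts the $|\F|^{-1/4}$ density error into a gain of order $|\F|^{-1/8}$ in the count. The composition of the outer Cauchy--Schwarz (the factor $1/2$) with the algebraic regularity exponent $1/4$ is precisely what yields the $1/8$ exponent appearing in the threshold $|\F|^{2-1/8}$ of the theorem. Provided the cell-pair densities really do concentrate near $1/|\F|$, this closes the estimate and establishes (iv).

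The main obstacle is what happens when the main term of the collision count deviates substantially from $|A_1|^2|A_2|^2/|\F|$: some cell pair must carry an anomalously large density of edges of $V_{\mathrm{coll}}$. The algebraic-geometric content of the regularity lemma, through the \'etale fundamental group of the cover $V_{\mathrm{coll}} \to \overline{\F}^2$, shows that this anomaly can only arise from a geometric factorization---an intermediate cover with Galois/monodromy group strictly smaller than the generic one---which restricts the image of $P$ on certain curves. Translating this factorization into an honest polynomial identity of the shape $P(f(x_1),g(x_2)) = h(Q(x_1,x_2))$ with $\deg h \geq 2$ holding on irreducible affine curves $V, W$ of bounded complexity and small field of definition is the delicate step, and is exactly what produces conclusion (iii) and \eqref{alg-con}. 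This monodromy-to-polynomial-identity extraction is what forces case (iii) to be phrased in terms of curves rather than directly in terms of $\F$, and it accounts for the ``somewhat less satisfactory'' nature of this theorem relative to Theorem \ref{expand-thm-modas}.
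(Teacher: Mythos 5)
Your reduction of almost-strong expansion to an upper bound on the collision count $\sum_z r(z)^2$ is fine, but the step where you control that count with the algebraic regularity lemma does not work, because the collision variety is too sparse. $V_{\mathrm{coll}}$ is a hypersurface in $\overline{\F}^4$, so as a bipartite graph between $\F^2$ and $\F^2$ (or as a $4$-ary relation on $\F$) its density is of order $1/|\F|$. The regularity lemma (Lemma \ref{algreg} / Theorem \ref{threg-higher}) produces an error of size $O(|\F|^{-1/4}|V_i||W_j|)$ that does \emph{not} scale with the density of the definable set: with $V_i,W_j$ cells of a bounded partition of $\F^2$, the error is of size $\approx |\F|^{15/4}$, whereas the main term $|A_1|^2|A_2|^2/|\F|$ never exceeds $|\F|^3$. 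The error swamps the main term, and no ``additional Cauchy--Schwarz over the cell decomposition'' can rescue a bound that is already worse than trivial. This is precisely the sparsity obstruction flagged in Section \ref{algreg-sec}, just pushed from the graph of $P$ to the collision variety built from it.

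The paper circumvents this by a $TT^*$ manoeuvre that keeps the regularity lemma on the \emph{target} side, where the relevant set is dense. One considers $\sum_{v\in A,w\in B}f(P(v,w))$ for a mean-zero test function $f$ supported on a regularity cell $U_j\subset U(\F)$, applies Cauchy--Schwarz twice to arrive at $\sum_{(v,v',w,w')}f^{\otimes 4}(P(v,w),P(v,w'),P(v',w),P(v',w'))$, and observes that the quadruple $(P(v,w),P(v,w'),P(v',w),P(v',w'))$ sweeps out a Zariski-dense subset of $U^4$ exactly when the algebraic constraint of Theorem \ref{expand-thm}(i) fails; the iterated regularity lemma (Theorem \ref{threg-higher}) is applied to \emph{that} dense definable image, giving the $|\F|^{-1/4}$ gain which the two Cauchy--Schwarzes degrade to $|\F|^{-1/16}$. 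When some cell density $\mu_j$ is anomalously small, the paper (in Theorem \ref{expand2-thm}) applies the basic regularity lemma to the genuinely dense preimage $E=\{(v,w):P(v,w)\in U_j\}\subset V(\F)\times W(\F)$ and extracts via Lang--Weil the ``second algebraic constraint'' (reducibility of a fibre-product variety). The passage from that constraint to conclusion (iii) in Section \ref{second-sec} is also not the monodromy factorization you describe: the \'etale fundamental group is used only inside the proof of the regularity lemma, while \eqref{alg-con} is produced by Bertini's second theorem (the pencil $P(f\cdot,g\cdot)-t$ is composite), a Lefschetz transfer to $\C$, and a genus-by-genus Riemann surface argument (Riemann--Hurwitz, de Franchis, Tamme, Riemann--Roch) to kill the positive-genus cases and convert rational data into polynomial data on affine curves. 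That Riemann surface argument is where most of the labour of this theorem actually lies.
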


We establish this result in Section \ref{second-sec}.
While this theorem in principle gives a purely algebraic description of the polynomials $P$ that are not almost strong asymmetric expanders, it is not fully satisfactory, due to the excessively complicated form of (iii).  It would be of interest to simplify this constraint\footnote{For instance, it is conceivable that one could eliminate the cases when the curves $V,W$ have positive genus, so that they could be replaced by the affine line $\overline{\F}$, thus reducing the constraint \eqref{alg-con} back to the simpler constraint \eqref{pgh}.  Also, it might be possible (perhaps by utilising some Galois theory) to reduce to the case where the curves $V,W$ and maps $f,g,Q,h$ are defined over $\F$ rather than over $\overline{\F}$.  Finally, it may be possible to get some more effective bounds on the degrees of $V,W,f,g,Q,h$ in terms of the degree of $P$, although the example \eqref{px1} indicates that one may have to make some ``minimality'' assumptions on these objects before an effective degree bound can be obtained.  We were unable to achieve any of these goals, but believe that they are all worthwhile to pursue.  Some variant of Ritt's theory of decomposition into prime polynomials (see e.g. \cite{zieve}) may be relevant for this purpose.}, in order to make this theorem more useful in applications (and in particular, to allow one to exhibit explicit examples of strong expander polymomials in two variables).

It is likely that one can iterate the above results to obtain some classification of various types of expanding polynomials in three or more variables, but we will not pursue this question here.

\subsection{An algebraic regularity lemma}\label{algreg-sec}

The main new tool that we introduce to establish the above results is an \emph{algebraic regularity lemma} which improves upon the Szemer\'edi regularity lemma \cite{szemeredi-reg} in the case of dense graphs that are \emph{definable} in the language of fields over a field of large characteristic; this lemma seems to be of independent interest.  To describe this new lemma, let us first give a formulation of the usual regularity lemma:

\begin{lemma}[Szemer\'edi regularity lemma]\label{szreg}\cite{szemeredi-reg}  If $\eps > 0$, then there exists $C = C_\eps> 0$ such that the following statements hold: whenever $V,W$ are non-empty finite sets and $E \subset V \times W$, then there exists partitions $V = V_1 \cup \ldots \cup V_a$, $W = W_1 \cup \ldots \cup W_b$ into non-empty sets, and a set $I \subset \{1,\ldots,a\} \times \{1,\ldots,b\}$ of exceptional pairs, with the following properties:
\begin{itemize}
\item[(i)] (Low complexity) $a,b \leq C$.
\item[(ii)] (Few exceptions) $\sum_{(i,j) \in I} |V_i| |W_j| \leq \eps |V| |W|$.
\item[(iii)] ($\eps$-regularity)  For all $(i,j) \in \{1,\ldots,a\} \times \{1,\ldots,b\} \backslash I$, and all $A \subset V_i, B \subset W_j$, one has
$$ \left| |E \cap (A \times B)| - d_{ij} |A| |B|\right| \leq \eps |V_i| |W_j|$$
where $d_{ij} := \frac{|E \cap (V_i \times W_j)|}{|V_i| |W_j|}$.
\end{itemize}
\end{lemma}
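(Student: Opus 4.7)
The plan is to prove Lemma \ref{szreg} by the standard \emph{energy-increment} argument. For any partitions $\mathcal{V}: V = V_1 \cup \ldots \cup V_a$ and $\mathcal{W}: W = W_1 \cup \ldots \cup W_b$, I would define the mean-square density (``energy'')
$$ q(\mathcal{V}, \mathcal{W}) := \sum_{i=1}^a \sum_{j=1}^b \frac{|V_i| |W_j|}{|V| |W|} d_{ij}^2, $$
where $d_{ij} := |E \cap (V_i \times W_j)| / (|V_i| |W_j|)$ as in the statement. Since each $d_{ij} \in [0,1]$, we have $q(\mathcal{V},\mathcal{W}) \in [0,1]$ for every pair of partitions, so $q$ cannot grow indefinitely.

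Next I would establish two properties of $q$. First, \emph{monotonicity under refinement}: if $\mathcal{V}', \mathcal{W}'$ refine $\mathcal{V}, \mathcal{W}$, then $q(\mathcal{V}', \mathcal{W}') \geq q(\mathcal{V}, \mathcal{W})$. On each cell $V_i \times W_j$, the refinement replaces the single density $d_{ij}$ by a probability-weighted average of sub-cell densities whose mean is exactly $d_{ij}$, so monotonicity follows from Cauchy--Schwarz (the mean of squares dominates the square of the mean). Second, an \emph{irregularity increment}: if some cell $(V_i, W_j)$ fails the $\eps$-regularity condition, witnessed by sets $A \subset V_i$, $B \subset W_j$ with
$$ \bigl| |E \cap (A \times B)| - d_{ij} |A| |B| \bigr| > \eps |V_i| |W_j|, $$
then splitting $V_i$ into $\{A, V_i \setminus A\}$ and $W_j$ into $\{B, W_j \setminus B\}$ increases the local contribution to $q$ by at least $\eps^2 \cdot \tfrac{|V_i| |W_j|}{|V| |W|}$; this is the quantitative form of Cauchy--Schwarz applied to the four sub-cells, exploiting the defect $\eps$.

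Third, I would iterate, starting from the trivial partition $\mathcal{V}_0 = \{V\}$, $\mathcal{W}_0 = \{W\}$. At stage $k$, if the irregular index set $I$ has $\sum_{(i,j) \in I} |V_i| |W_j| > \eps |V| |W|$, then refining every irregular cell simultaneously yields a new partition whose energy has increased by at least $\eps^2 \cdot \eps = \eps^3$. Since $q \leq 1$ always, this must terminate after at most $\eps^{-3}$ iterations in a partition satisfying (ii) and (iii). At each iteration each cell is split into at most four pieces, so if the partition had $a_k, b_k$ parts at stage $k$ then $a_{k+1} \leq a_k \cdot 2^{a_k b_k}$ and similarly for $b_{k+1}$; iterating this bound $O(\eps^{-3})$ times produces a tower-type bound $C = C_\eps$ depending only on $\eps$, establishing (i).

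The main obstacle I expect is the irregularity-increment estimate in the second step: one has to compute carefully, on the four sub-cells $(A, B)$, $(A, W_j \setminus B)$, $(V_i \setminus A, B)$, $(V_i \setminus A, W_j \setminus B)$, that the $\eps$-scale deviation of $|E \cap (A \times B)|$ from its expected value translates into an $\eps^2$-scale gain in the mean-square density, with the correct mass weightings so that the contributions add up cleanly. Once this convexity-with-defect inequality is in hand, the rest of the argument (iterating the refinement, bounding the energy, and extracting a tower-type bound on the number of parts) is essentially mechanical.
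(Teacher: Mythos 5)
The paper does not prove Lemma~\ref{szreg}; it simply cites \cite{szemeredi-reg} and moves on, so there is no ``paper proof'' to compare against. Your outline is the standard energy-increment proof of the (bipartite) Szemer\'edi regularity lemma, and the overall skeleton---monotonicity of the mean-square density $q$ under refinement, the $\eps^2$-scale local gain from splitting an irregular cell $(V_i,W_j)$ by the witness sets $A$ and $B$, the resulting $\eps^3$ global gain when the total irregular mass exceeds $\eps|V||W|$, and termination after at most $\eps^{-3}$ rounds---is correct.

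Two small points are worth tightening. First, your part-count recursion $a_{k+1}\leq a_k\cdot 2^{a_k b_k}$ is a valid overestimate but not the bound the argument actually produces: for a fixed $i$ the refinement of $V_i$ uses the sets $A^{(i,j)}$ only for those $j$ with $(i,j)\in I$, so each $V_i$ is cut into at most $2^{b_k}$ pieces and $a_{k+1}\leq a_k\cdot 2^{b_k}$ (and symmetrically $b_{k+1}\leq b_k\cdot 2^{a_k}$). Either way the iteration gives a tower of height $O(\eps^{-3})$, so the conclusion is unaffected, but the sharper bound is the one you would want to state. Second, the lemma requires the parts $V_i,W_j$ to be non-empty, and the splitting $\{A,V_i\setminus A\}$ can create empty parts (e.g.\ when the witness set $A$ equals all of $V_i$); one should note that empty parts are simply discarded, which does not affect the energy or the $\eps$-regularity conclusion. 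Finally, to make the ``refine every irregular cell simultaneously'' step airtight, one should record that the increment argument is applied to the \emph{common refinement} of all the witness partitions, and that by the refinement-monotonicity of $q$ this common refinement achieves at least the sum of the individual local gains; you gesture at this but do not say it explicitly, and it is the step most likely to be questioned by a careful reader.
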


The dependence of $C_\eps$ on $\eps$ is notoriously poor (tower exponential in nature); see \cite{gowers-sz}.

Now we restrict attention to definable sets.  If $\F$ is a field, and $n \geq 0$ is a natural number, a \emph{definable subset} of $\F^n$ is defined to be any set of the form
\begin{equation}\label{fan}
 \{ (x_1,\ldots,x_n) \in \F^n \mid p(x_1,\ldots,x_n) \hbox{ is true} \}
\end{equation}
where $p()$ is any formula involving $n$ variables $x_1,\ldots,x_n$ and a finite number of additional constants $c_1,\ldots,c_m \in \F$ and bound variables $y_1,\ldots,y_l$, as well as the ring operations\footnote{One could also, if one wished, also include the inversion operation $()^{-1}$ (after handling somehow the fact that $0^{-1}$ is undefined), but of course any formula involving this operation can be replaced with an equivalent (albeit slightly longer) formula involving the operations $+,-,\times$ and some additional variables and existential quantifiers.} $+, \times$, parentheses $(,)$, the equality sign $=$, the logical connectives $\neg, \wedge, \vee, \implies$, and the quantifiers $\forall, \exists$ (where the quantification is understood to be over the field $\F$). Thus, for instance, the $\F$-points
$$ V(\F) = \{ (x_1,\ldots,x_n) \in \F^n \mid P_1(x_1,\ldots,x_n) = \ldots = P_m(x_1,\ldots,x_n) = 0 \}$$
of an algebraic variety $V$ defined over $\F$, where $P_1,\ldots,P_m: \F^n \to \F$ are polynomials with coefficients in $\F$, form a definable set.  A bit more generally, the $\F$-points of any \emph{constructible set} in $\F^n$ (i.e. a boolean combination of a finite number of algebraic varieties in $\F^n$) is a definable set.  As is well known, in the case when $\F$ is algebraically closed, the constructible sets are the only definable sets, thanks to the presence of quantifier elimination (or Hilbert's nullstellensatz) in this setting; but for non-algebraically closed fields, other definable sets also exist.  For instance, the set
\begin{equation}\label{qf}
 Q := \{ x \in \F \mid \exists y \in \F: x = y^2 \}
\end{equation}
of quadratic residues in $\F$ is definable, but is usually not constructible.

Now we specialise to the case where $\F$ is a finite field.  Strictly speaking, the theory of definable sets on such fields is trivial, since every subset of $\F^n$ is finite and thus automatically definable. However, one can recover a more interesting theory by limiting the \emph{complexity} of the definable sets being considered.  Let us say that a subset $E$ of $\F^n$ is a \emph{definable set of complexity at most $M$} if the ambient dimension $n$ is at most $M$, and $E$ can be expressed in the form \eqref{fan} for some formula $\phi$ of length at most $M$, where we consider all variables, constants, operations, parentheses, equality symbols, logical operations, and quantifiers to have unit length.  One is then interested in the regime where $M$ stays bounded, but the cardinality or characteristic of $\F$ goes to infinity.

We can now give the algebraic regularity lemma.

\begin{lemma}[Algebraic regularity lemma]\label{algreg}  If $M > 0$, then there exists $C = C_M> 0$ such that the following statements hold: whenever $\F$ is a finite field of characteristic at least $C$, $V,W$ are non-empty definable sets over $\F$ of complexity at most $M$, and $E \subset V \times W$ is another definable set over $\F$ of complexity at most $M$, then there exists partitions $V = V_1 \cup \ldots \cup V_a$, $W = W_1 \cup \ldots \cup W_b$, with the following properties:
\begin{itemize}
\item[(i)] (Largeness) For all $i \in \{1,\ldots,a\}$ and $j \in \{1,\ldots,b\}$, one has $|V_i| \geq |V|/C$ and $|W_j| \geq |W|/C$.  In particular, $a,b \leq C$.
\item[(ii)] (Bounded complexity)  The sets $V_1,\ldots,V_a,W_1,\ldots,W_b$ are definable over $\F$ with complexity at most $C$.
\item[(iii)] ($|\F|^{-1/4}$-regularity)  For all $(i,j) \in \{1,\ldots,a\} \times \{1,\ldots,b\}$, and all $A \subset V_i, B \subset W_j$, one has
\begin{equation}\label{elab}
 \left| |E \cap (A \times B)| - d_{ij} |A| |B|\right| \leq C |\F|^{-1/4} |V_i| |W_j|
 \end{equation}
where $d_{ij} := \frac{|E \cap (V_i \times W_j)|}{|V_i| |W_j|}$.
\end{itemize}
\end{lemma}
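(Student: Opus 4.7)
The plan is to combine the Lang--Weil estimate, an ultraproduct/compactness reduction, and the structure of the \'etale fundamental group of a variety in order to both construct and analyze the partition. First I would pass to a nonstandard setting by taking an ultraproduct of a sequence of finite fields of growing characteristic, so that the uniformity over all sufficiently large characteristic in the conclusion becomes a single statement about definable sets in a pseudofinite field $\F$. A routine compactness argument then reduces the bounded-complexity uniform version in the lemma to proving it for a fixed definable pair $V, W$ and a definable $E \subset V \times W$. In this pseudofinite setting, the Ax/Chatzidakis--van den Dries--Macintyre analysis of definable sets allows one to reduce each definable set, modulo a lower-dimensional exceptional piece, to a Boolean combination of constructible conditions together with conditions of the form ``$P(x)$ is an $n$-th power'' for some bounded $n$; these latter conditions are governed by Galois data, which is how the \'etale fundamental group enters.

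Next I would decompose $V$ and $W$ into their geometrically irreducible components, grouping Frobenius orbits so that each piece descends to $\F$ after a bounded-degree extension. By Lang--Weil, each geometrically irreducible piece of dimension $d$ has $(1 + O(|\F|^{-1/2})) |\F|^d$ points, while lower-dimensional strata contribute only $O(|\F|^{-1/2})$ in relative error and can be absorbed into the main term. This reduces matters to the model case in which $V, W$ are geometrically irreducible and $E \subset V \times W$ is a constructible set (possibly modified by bounded-index power residue constraints).

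The heart of the argument is the construction of the partition using the \'etale monodromy of the family of fibers $E_v := \{w : (v,w) \in E\}$. I would place two points $v, v' \in V$ in the same piece $V_i$ when the geometric irreducible components of $E_v, E_{v'}$ and of the intersection $E_v \cap E_{v'}$ realize the same type under the action of the \'etale fundamental group of $V \times V$. Since this group acts through a finite monodromy quotient, there are only boundedly many types, each type is cut out by a bounded-complexity definable condition, and the partition of $W$ is constructed symmetrically. By construction, for each $(i,j)$, the fibers $E_v \cap W_j$ are geometrically uniform across $v \in V_i$, and likewise the two-fold intersections $E_v \cap E_{v'} \cap W_j$ are uniform across most $(v, v') \in V_i^2$, so Lang--Weil applied to the associated auxiliary varieties yields
\begin{equation*}
|N(v) \cap W_j| = d_{ij} |W_j| + O(|\F|^{-1/2} |W_j|), \qquad |N(v) \cap N(v') \cap W_j| = d_{ij}^2 |W_j| + O(|\F|^{-1/2} |W_j|)
\end{equation*}
for most $v, v' \in V_i$, where $N(v) := \{w : (v,w) \in E\}$.

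Finally, to upgrade these averaged estimates to the uniform bound \eqref{elab} for all $A \subset V_i, B \subset W_j$, I would run a Cauchy--Schwarz argument: the discrepancy $|E \cap (A \times B)| - d_{ij}|A||B|$ is bounded in absolute value by $|V_i|^{1/2} \bigl(\sum_{v \in V_i} (|N(v) \cap B| - d_{ij}|B|)^2\bigr)^{1/2}$, and expanding the square and inserting the two fiber estimates above shows the sum is $O(|\F|^{-1/2} |V_i| |W_j|^2)$, yielding the required $|\F|^{-1/4}$ discrepancy after taking the square root. The principal obstacle I expect is the third step, namely showing that the \'etale-monodromy partition has bounded complexity and descends from $\overline{\F}$ to a definable partition over $\F$ itself; this requires marrying the algebro-geometric theory of the fundamental group to the bookkeeping of definability, and is where the paper's new algebraic input is concentrated.
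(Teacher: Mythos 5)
Your high-level strategy matches the paper's: pass to a nonstandard/pseudofinite formulation, reduce via Chatzidakis--van den Dries--Macintyre and Lang--Weil to a component-counting problem for fibre products, use \'etale monodromy to show the component count is generically locally constant, and finish with a Cauchy--Schwarz/$TT^*$ step that upgrades the squared-error estimate at the cost of halving the exponent (yielding $|\F|^{-1/4}$). The two main gaps are in precisely the places where the paper does nontrivial work.

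The serious gap is in the step ``Since this group acts through a finite monodromy quotient, there are only boundedly many types.'' The relevant \'etale fundamental group is not that of a fixed space such as $V\times V$: the auxiliary variety parametrising $E_v\cap E_{v'}$ is a finite \'etale cover only of an open subset of the base whose complement $\Sigma_{1,v}\cup\Sigma'_{1,v'}$ varies with $(v,v')$, so the group $\pi_1(\text{base}\setminus(\Sigma_{1,v}\cup\Sigma'_{1,v'}))$ is itself $(v,v')$-dependent, and there is no a priori finite list of monodromy types to partition by. To make your step rigorous one must decouple the roles of $v$ and $v'$. The paper does this in three moves you do not mention: (a) invoke the \'etale van Kampen theorem so that $\pi_1$ of the doubly-punctured base surjects onto the fibre product $\pi_1(\cdot\setminus\Sigma_{1,v})\times_{\pi_1(V')}\pi_1(\cdot\setminus\Sigma'_{1,v'})$; (b) show $\Sigma_{1,v}\cap\Sigma'_{1,v'}$ has codimension at least two generically, so that by Zariski--Nagata purity the amalgamation base can be replaced by the fixed group $\pi_1(V')$; (c) invoke topological finite generation of $\pi_1(V')$ in characteristic zero so that the number of possible actions of the fibre product on a bounded finite set is bounded, and hence the partition is finite. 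Without (a)--(c) the assertion ``boundedly many types'' is unsupported, and this is where nearly all the algebro-geometric content of the lemma lives.

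The secondary gap is the assertion that $|N(v)\cap N(v')\cap W_j| = d_{ij}^2|W_j| + O(|\F|^{-1/2}|W_j|)$ for most $(v,v')\in V_i^2$. What the Lang--Weil/monodromy argument actually gives is a constant leading coefficient $c_{ij}$ on each piece, and there is no reason this constant should equal $d_{ij}^2$; by Cauchy--Schwarz one only gets $c_{ij}\ge d_{ij}^2$ in general, and equality is exactly equivalent to the regularity one is trying to prove. The paper avoids needing this by pairing the counting function against mean-zero functions in the $TT^*$ step, so that the constant term cancels regardless of its value. (There is also a minor bookkeeping slip: with the expansion via $|N(v)\cap N(v')\cap W_j|$ the Cauchy--Schwarz prefactor must be $|W_j|^{1/2}$, not $|V_i|^{1/2}$, for the dimensions in the final bound to come out right.)
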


Comparing this lemma with Lemma \ref{szreg}, we see that one has substantially more regularity (a power gain in $|\F|$), and no exceptional pairs $(i,j)$, thanks to the bounded complexity of the set $E$.  Furthermore, the cells $V_1,\ldots,V_a,W_1,\ldots,W_b$ are not arbitrary, but are themselves definable with bounded complexity.  Let us illustrate this lemma with two simple examples:

\begin{example} Let $V=W=\F \backslash \{0\}$, and let $E$ be the set of all pairs $(v,w)$ such that $vw$ is a quadratic residue; this is clearly a definable set of bounded complexity.  Then we can regularise $E$ by partitioning $V=W$ into the quadratic residues and the non-quadratic residues, with the set $E$ having density either zero or one in each of the four pairs $V_i \times W_j$ created by this partition.
\end{example}

\begin{example}[Paley graph]\label{play} Let $V=W=\F$, and let $E$ be the set of all pairs $(v,w)$ such that $v-w$ is a quadratic residue.  Standard Gauss sum estimates then show that
$$ |E \cap (A \times B)| = \frac{1}{2}|A| |B| + O( |\F|^{2-1/2} )$$
for any $A, B \subset \F$, thus giving \eqref{elab} with the exponent $1/4$ improved to $1/2$, and with $a=b=1$.  (It may be that this improvement of $1/4$ to $1/2$ is in fact true in all cases; our method is unable to show this due to the fact that it invokes the Cauchy-Schwarz inequality at one point, which halves the exponent gain that one expects in bounds such as \eqref{elab}.)
\end{example}

As a very rough first approximation, one can interpret Lemma \ref{algreg} as an assertion that any definable subset of $V \times W$ behaves like some combination of the two basic examples listed above.

\begin{remark} A result in a somewhat similar spirit to Lemma \ref{algreg} was established by Kowalski \cite{kow}.  In our notation, the main result is as follows: if $E$ is a definable subset of complexity at most $M$ over a finite field $\F$ of prime order $p$, $f, g: \F \to \F$ are non-constant polynomials of degree at most $M$, and $\chi: \F \to \C$ is any multiplicative character, then
$$ |\sum_{x \in E} \chi(g(x)) e^{2\pi i f(x)/p}| \leq C_M \sqrt{p}$$
for some quantity $C_M$ depending only on $M$.  This allows for a substantial generalisation of Example \ref{play} to definable Cayley graphs, and also allows for some ``twisting'' of such graphs by multiplicative characters.  It is concievable that one could similarly twist the algebraic regularity lemma by allowing some ``twisted'' generalisation of the notion of definable set, but we will not pursue this issue here.
\end{remark}

\begin{remark}  A recent paper of Malliaris and Shelah \cite{shelah} links the absence of exceptional pairs in the regularity lemma with the concept of \emph{stability} from model theory (roughly speaking, the inability to definably create large induced copies of the half-graph).  The algebraic regularity lemma can thus be viewed as asserting a (somewhat exotic) form of model-theoretic stability for the language of finite fields of large characteristic.
\end{remark}

Actually, in applications we will use an iterated form of the regularity lemma in which one works not with a dense definable subset $E \subset V \times W$ of the product of two definable sets, but a subset $E \subset V_1 \times \ldots \times V_k$ of a bounded number of definable sets (for our application to expansion, we will take $k=4$); see Theorem \ref{threg-higher}.  This lemma is to the $k=2$ case as the Chung hypergraph regularity lemma \cite{chung-hyper} (see also \cite{frankl}) is to the Szemer\'edi regularity lemma.  It seems of interest to obtain stronger hypergraph regularity lemmas, analogous to those in \cite{gowers-hyper}, \cite{rodl}, \cite{rs}, \cite{tao-hyper}, but we will not pursue this matter here.  (See \cite{mubayi} for a discussion of the general poset of hypergraph regularity lemmas.)

The regularity lemma is not directly applicable to the expansion problem, basically because the graph $\{ (x_1,x_2,P(x_1,x_2))\mid x_1,x_2\in \F \}$ of the polynomial is too sparse a subset of $\F^3$ for this lemma to be useful.  However, as observed in \cite{bukh}, if one applies the Cauchy-Schwarz inequality a few times (in the spirit of \cite{gowers-4aps}), one can effectively replace the above graph by the set
$$ \{ (P(x_1,x_2), P(x_1,y_2), P(y_1,x_2), P(y_1,y_2))\mid x_1,x_2,y_1,y_2 \in \F \},$$
which for ``generic'' $P$ will be a dense subset of $\F^4$ to which the (iterated) algebraic regularity lemma can be applied.  (It is these applications of Cauchy-Schwarz that reduce the exponents in our final expansion results to be $1/8$ or $1/16$ instead of $1/4$.) There will be some exceptional cases in which this set fails to be Zariski dense, but we will be able to show (using a Riemann surface\footnote{This argument is closely related to the classical fact that the only one-dimensional algebraic groups over the complex numbers are (up to isomorphism) the additive group $(\C,+)$, the multiplicative group $(\C^\times, \cdot)$, and the elliptic curves.  The elliptic curve case will eventually be eliminated because the underlying map $P$ is polynomial rather than merely rational.} argument!) that those cases only arise when one has additive or multiplicative structure, giving rise to the trichotomies and tetrachotomies in Theorems \ref{expand-thm-modas}, \ref{expand-thm-weak}, \ref{expand-thm-ass}.

Our proof of the regularity lemma will use two somewhat exotic ingredients.  The first is the use of nonstandard analysis, in order to convert a quantitative problem involving finite fields of large characteristic into an equivalent qualitative problem involving pseudo-finite fields of zero characteristic.  The main reason for using the nonstandard formalism is so that we may deploy the second ingredient, which is the theory of the \'etale fundamental group of algebraic varieties over fields of characteristic zero\footnote{One can also define the \'etale fundamental group in positive characteristic, but the theory is significantly less favorable for our purposes; in particular, one cannot guarantee that this group is topologically finitely generated.  This is one of the main reasons why our results are limited to the large characteristic setting.  We will also take advantage of characteristic zero to use the theory of Riemann surfaces in order to analyse algebraic curves, although this is largely for reasons of convenience, as many of the Riemann surface facts we will use have purely algebraic counterparts that are also valid in positive characteristic.}.  The reason that the \'etale fundamental group comes into play is because it plays a key role in counting the number of connected components of certain algebraic varieties that will arise in the argument, and this in turn is needed to count the number of $\F$-points on those varieties thanks to the Lang-Weil estimates \cite{lang}.  Some special cases of this general theme are already visible in the work of Bukh and Tsimerman \cite{bt} on polynomial expansion; the \'etale fundamental group is not explicitly mentioned in their paper, but is implicitly present in some of the algebraic geometry lemmas used in that paper (e.g. \cite[Lemma 21]{bt}).

As a byproduct of our reliance on nonstandard methods, we do not obtain any quantitative bounds in our main theorems; in particular, we cannot explicitly give values for the constants $C$ in those theorems.  In principle such bounds could eventually be extracted from suitable finitisations of the arguments here, but this would require (among other things) effective versions of results on the \'etale fundamental group (in the case of sufficiently large characteristic, rather than in zero characteristic), which seems feasible but only after an enormous amount of effort, which we will not expend here.

\begin{remark} Throughout this paper we shall freely use the axiom of choice.  However, thanks to a well known result of G\"odel \cite{godel}, any result that can be formalized in first-order arithmetic (and the main results of this paper are of this type) and is provable in Zermelo-Frankel set theory with the axiom of choice (ZFC), can also be proven in Zermelo-Frankel set theory without the axiom of choice (ZF).
\end{remark}

\subsection{Acknowledgments}

The author is greatly indebted to Brian Conrad and Jordan Ellenberg for their many patient explanations of the \'etale fundamental group, to Jozsef Solymosi for useful suggestions, and to Antoine Chambert-Loir, Jordan Ellenberg, Norbert Hegyv\'ari, and Van Vu for corrections and comments.  The author was also partially supported by a Simons Investigator award from the Simons Foundation and by NSF grant DMS-0649473.  

\section{Algebraic geometry notation}

In this section we lay out the basic algebraic geometry notation that we will need throughout this paper.

\begin{definition}[Algebraic varieties]  Let $n$ be a natural number, and let $k$ be an algebraically closed field.  An \emph{affine variety} (or more precisely, an \emph{algebraic set}) in $k^n$ over $k$ is a set $V$ of the form
\begin{equation}\label{vdef}
V =\{ x \in k^n\mid P_1(x) = \ldots = P_m(x) = 0 \}
\end{equation}
for some polynomials $P_1,\ldots,P_m: k^n \to k$.    Similarly, we define the \emph{projective space} $\mathbb{P}^n(k)$ to be the space of equivalence classes $[x_1,\ldots,x_{n+1}]$ of tuples $(x_1,\ldots,x_{n+1}) \in k^{n+1} \backslash \{0\}$ after quotienting out by dilations by $k$, and define a \emph{projective variety} in $\mathbb{P}^n(k)$ to be a set $V$ of the form
\begin{equation}\label{vdef-2}
V =\{ x \in \mathbb{P}^n(k)\mid P_1(x) = \ldots = P_m(x) = 0 \}
\end{equation}
for some homogeneous polynomials $P_1,\ldots,P_m: k^{n+1} \to k$ (note that the constraint $P_1(x)=\ldots=P_m(x)=0$ is well-defined in $\mathbb{P}^n(k)$).   We embed $k^n$ in $\mathbb{P}^n(k)$ in the usual manner, identifying $(x_1,\ldots,x_n)$ with $[x_1,\ldots,x_n,1]$; thus for instance every affine variety can be viewed as a subset of an associated projective variety.  

A subset of $\mathbb{P}^n(k)$ is said to be a \emph{quasiprojective variety} if it is the set-theoretic difference of two projective varieties in $\mathbb{P}^n(k)$.  Thus for instance the set-theoretic difference of two affine varieties in $k^n$ is a quasiprojective variety.  
A \emph{constructible set} in $k^n$ is a boolean combination of finitely many affine varieties in $k^n$.  As noted in Section \ref{algreg-sec}, constructible sets are definable over $k$ (indeed, as $k$ is algebraically closed, the two concepts coincide in this setting), and so we can inherit the notion of complexity for such sets.  

We define the \emph{Zariski topology} on $\mathbb{P}^n(k)$ by declaring the projective varieties to be the closed sets; thus, for instance, the \emph{Zariski closure} $\overline{E}$ of a subset $E$ of $\mathbb{P}^n(k)$ is the intersection of all the projective varieties which contain that set.  One can then induce the Zariski topologies on other varieties by restriction.  For instance, in the affine space $k^n$, the Zariski closed sets are given by the affine varieties.

An affine (resp. projective) variety is \emph{geometrically irreducible}, or \emph{irreducible} for short, if it is non-empty and cannot be expressed as the union of two strictly smaller affine (resp. projective) varieties.  We say that a quasiprojective variety (or more generally, a constructible set) is irreducible if its Zariski closure is irreducible.  It is well known (see e.g. \cite[Propositions I.5.2, I.5.3]{mumf}) that any affine (resp. projective) variety can be uniquely decomposed into finitely many irreducible subvarieties, no two of which are contained in each other.  

The \emph{dimension} $\dim(V)$ of a non-empty affine (resp. projective) variety $V$ is the largest natural number $d$ for which there is a chain 
$$ \emptyset \neq V_0 \subsetneq V_1 \subsetneq \ldots \subsetneq V_d \subset V$$
of irreducible affine (resp. projective)  varieties $V_0, \ldots, V_d$; this is always a finite natural number.  We adopt the convention that the empty set has dimension $-\infty$.  The dimension of a quasiprojective variety (in $k^n$ or $\mathbb{P}^n(k)$) is defined to be the dimension of its Zariski closure in the indicated ambient space.  A variety will be called a \emph{curve} if it has dimension one.

If $V,W$ are two varieties with $V \subseteq W$, we say that $V$ is a \emph{subvariety} of $W$.  If $V \subsetneq W$, we say that $V$ is a \emph{strict subvariety} of $W$.

Let $\F$ be a subfield of $k$.  We say that an affine (resp. projective) variety is \emph{defined over $\F$} if one can find polynomials $P_1,\ldots,P_m$ with coefficients in $\F$ for which \eqref{vdef} (resp. \eqref{vdef-2}) holds.  A quasiprojective variety is defined over $\F$ if it is the set-theoretic difference of two affine varieties defined over $\F$.  If $V \subset k^n$ is a quasiprojective variety, we define $V(\F) := V \cap \F^n$ to be the $F$-points of $V$; note that this is a definable subset over $\F$.

Let $V$ be a quasiprojective variety.  If $V \subset k^n$, a \emph{regular function} on $V$ is a function $f: V \to k$ which, at every point $p$ in $V$, agrees with a rational function from (an open dense subset of) $k^n$ to $k$ on an open neighbourhood of $p$ in $V$.  If instead $V \subset \mathbb{P}^n(k)$, a regular function on $V$ is a function which is regular when restricted to the $n+1$ affine subsets $\{ [x_1,\ldots,x_{n+1}] \in \mathbb{P}^n(k)\mid x_i \neq 0 \}$, $i=1,\ldots,n+1$ that cover ${\mathbb P}^n(k)$, each of which can be identified with the affine space $k^n$ in the obvious manner.  The ring of regular functions on $V$ is denoted $k[V]$, and its fraction field (which is well defined for irreducible $V$) is denoted $k(V)$.  A map $\phi: V \to W$ between two quasiprojective varieties is a \emph{regular morphism} if every regular function on $W$ pulls back by $\phi$ to a regular function on $V$.  A \emph{regular isomorphism} is an invertible regular morphism whose inverse is also regular.  
\end{definition}

In most of this paper, we will only need to work with constructible subsets of affine space $k^n$, such as affine varieties.  However in Section \ref{second-sec} we will also need to work with projective varieties (in order to use the theory of compact Riemann surfaces).

If $V$ is an affine variety, then $k[V]$ is just the restriction of the polynomials on $k^n$ to $V$; see \cite[Theorem 3.2]{hart}.  If $V$ is an irreducible projective variety, then $k[V]$ consists only of the constant functions; see \cite[Theorem 3.4]{hart}.  

We recall some basic facts about dimension.  Firstly, we have $\dim(k^n) = n$ for any $n$, and $\dim(V \times W) = \dim(V) + \dim(W)$ for any constructible sets $V,W$; see e.g. \cite[\S I.7]{mumf}.  We clearly also have $\dim(V) \leq \dim(W)$ whenever $V \subset W$, with strict inequality when $W$ is an irreducible affine (resp. projective) variety and $V$ is a strict affine (resp. projective) subvariety.  By construction, all affine or projective varieties of finite non-zero cardinality have dimension zero, and all such varieties of infinite cardinality have dimension greater than zero, and so the same is also true for constructible sets.  We also have the following basic fact:

\begin{proposition}[Projections]\label{proj}  Let $k$ be an algebraically closed field, and let $\pi: V \to W$ be a regular map between two quasiprojective varieties $V,W$ with $V$ irreducible.  Then $\pi(V)$ is an irreducible constructible set, and there is a subvariety $\Sigma$ of $\pi(V)$ of dimension at most $\dim(\pi(V))-1$ such that $\pi^{-1}(\{x\}) \cap V$ has dimension $\dim(V)-\dim(\pi(V))$ for all $x \in \pi(V) \backslash \Sigma$.  Furthermore, the set $\pi^{-1}(\Sigma) \cap V$ has dimension at most $\dim(V)-1$.
\end{proposition}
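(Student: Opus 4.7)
The plan is to assemble three standard ingredients from algebraic geometry: Chevalley's theorem on constructibility of images, preservation of irreducibility under Zariski closure, and the fiber dimension theorem for dominant morphisms.

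First I would invoke Chevalley's theorem (see e.g. \cite[\S I.8]{mumf}) to conclude that the image $\pi(V)$ is a constructible subset of $W$. For irreducibility, let $\overline{\pi(V)}$ denote the Zariski closure of $\pi(V)$ in $W$. If $\overline{\pi(V)} = A \cup B$ with $A,B$ Zariski-closed, then $V = \pi^{-1}(A) \cup \pi^{-1}(B)$ expresses the irreducible variety $V$ as a union of two closed sets, so one of them (say $\pi^{-1}(A)$) must equal $V$. This forces $\pi(V) \subseteq A$ and hence $\overline{\pi(V)} = A$, showing that $\overline{\pi(V)}$ is irreducible, which by definition is what it means for the constructible set $\pi(V)$ to be irreducible.

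Next, I would replace the target $W$ by $\overline{\pi(V)}$, so that $\pi$ becomes a dominant morphism between irreducible quasiprojective varieties. The classical fiber dimension theorem then asserts: every nonempty fiber $\pi^{-1}(\{x\})$ has dimension at least $\dim V - \dim \pi(V)$, and there is a proper Zariski-closed subset $\Sigma' \subsetneq \overline{\pi(V)}$, automatically of dimension at most $\dim \pi(V) - 1$, such that the fiber has dimension exactly $\dim V - \dim \pi(V)$ for every $x \in \overline{\pi(V)} \setminus \Sigma'$. Setting $\Sigma := \Sigma' \cap \pi(V)$ gives the desired subvariety of $\pi(V)$, and the second clause of the proposition follows directly.

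Finally, for the bound on $\pi^{-1}(\Sigma)$, I would use that $\pi^{-1}(\Sigma) \subseteq \pi^{-1}(\Sigma')$, where the latter is Zariski-closed in $V$ as the preimage of a closed set under the regular map $\pi$. Since $\pi(V)$ is Zariski-dense in $\overline{\pi(V)}$ and $\Sigma'$ is a proper closed subset of $\overline{\pi(V)}$, the difference $\pi(V) \setminus \Sigma'$ is nonempty, so $\pi^{-1}(\Sigma')$ is a proper closed subset of the irreducible variety $V$; irreducibility then forces $\dim \pi^{-1}(\Sigma') \leq \dim V - 1$. The main obstacle is not any single step but the bookkeeping between the constructible set $\pi(V)$ and its irreducible closure $\overline{\pi(V)}$ (and similarly between $\Sigma$ and $\Sigma'$), so that dimensions are consistently measured in the correct ambient variety; the fiber dimension theorem itself is the one nontrivial input and is standard, usually obtained by Noether normalization reducing a dominant morphism to a composition of a finite map with a linear projection.
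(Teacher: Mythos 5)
Your proof is correct and assembles exactly the standard ingredients one would expect: Chevalley's theorem for constructibility of the image, the elementary pullback argument for irreducibility, the fiber dimension theorem for the generic fiber dimension, and the observation that a proper closed subset of an irreducible variety has strictly smaller dimension. The paper itself does not give an argument for this proposition; it simply cites \cite[Lemma A.8]{bgt-product}. So the comparison is between your self-contained write-up and an external citation. Your route is the natural one (and almost certainly what the cited lemma reduces to). One point worth flagging for precision: the proposition's phrase ``subvariety $\Sigma$ of $\pi(V)$'' is a slight abuse of language, since $\pi(V)$ is only constructible; you handle this reasonably by taking $\Sigma := \Sigma' \cap \pi(V)$ for a proper closed $\Sigma' \subsetneq \overline{\pi(V)}$ and measuring dimensions inside $\overline{\pi(V)}$, which is consistent with the paper's convention that the dimension of a constructible set is the dimension of its Zariski closure. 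Your bookkeeping between $\pi(V)$ and $\overline{\pi(V)}$, and between $\Sigma$ and $\Sigma'$, is careful and correct throughout.
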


\begin{proof} See \cite[Lemma A.8]{bgt-product}.
\end{proof}

This has the following consequence.  If $V$ is an irreducible constructible set, we say that a property $P(x)$ of points in $V$ holds \emph{for generic $x \in V$} if there is a subvariety $\Sigma$ of $V$ of dimension at most $\dim(V)-1$ such that $P(x)$ holds for all $x \in V \backslash \Sigma$.

\begin{lemma}[Generic Fubini-type theorem]\label{fubini}  Let $V, W$ be constructible sets, and let $E$ be a constructible subset of $V \times W$.  Then the following are equivalent:
\begin{itemize}
\item[(i)] For generic $v \in V$, one has $(v,w) \in E$ for generic $w \in W$.  
\item[(ii)] For generic $w \in W$, one has $(v,w) \in E$ for generic $v \in V$.
\item[(iii)] For generic $(v,w) \in V \times W$, one has $(v,w) \in E$.
\end{itemize}
\end{lemma}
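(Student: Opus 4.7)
By the symmetry of the statement between $V$ and $W$, it suffices to establish the equivalence (i) $\iff$ (iii); the equivalence (ii) $\iff$ (iii) then follows from the same argument with the roles of $V$ and $W$ reversed. Implicit in the formulation is the irreducibility of $V$ and $W$ (so that the notion of ``generic'' is defined); the product $V \times W$ is then also irreducible, of dimension $\dim V + \dim W$. Let $F := (V \times W) \backslash E$, which is again a constructible set. Unwinding the definitions, condition (iii) is equivalent to $\dim F < \dim V + \dim W$: the ``only if'' direction follows from the containment $F \subseteq \Sigma$ for the strict subvariety $\Sigma$ provided by (iii), and the ``if'' direction follows by taking $\Sigma = \overline{F}$. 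Similarly, condition (i) is equivalent to the statement that $\dim F_v < \dim W$ for all $v$ outside some strict subvariety of $V$, where $F_v := \{w \in W : (v,w) \in F\}$.

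For the implication (iii) $\Rightarrow$ (i), I decompose the Zariski closure $\overline{F}$ into a finite union $F_1 \cup \ldots \cup F_r$ of irreducible components, each of dimension at most $\dim F < \dim V + \dim W$. To each projection $\pi_V|_{F_i} : F_i \to V$ I apply Proposition \ref{proj}: either $\pi_V(F_i)$ is contained in a strict subvariety of $V$ (in which case $(F_i)_v$ is empty for $v$ outside that subvariety), or else $\pi_V(F_i)$ is Zariski-dense in $V$, and Proposition \ref{proj} produces a strict subvariety of $V$ outside which the fibre $(F_i)_v$ has dimension exactly $\dim F_i - \dim V < \dim W$. Since a finite union of strict subvarieties of the irreducible $V$ is again a strict subvariety, for $v$ outside some strict subvariety of $V$ one has $F_v \subseteq \overline{F}_v = \bigcup_i (F_i)_v$ contained in a strict subvariety of $W$, which is exactly (i).

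For the converse direction (i) $\Rightarrow$ (iii), I argue by contrapositive. Suppose $\dim F = \dim V + \dim W$; then $\overline{F}$ has maximal dimension inside the irreducible variety $V \times W$, forcing $\overline{F} = V \times W$. Since $F$ is constructible and Zariski-dense in $V \times W$, it contains $(V \times W) \backslash S$ for some strict subvariety $S \subsetneq V \times W$ (as any constructible set contains a Zariski-open dense subset of its Zariski closure). Applying the case analysis from the previous paragraph to $S$ in place of $\overline{F}$, one sees that for $v$ outside a strict subvariety of $V$ the fibre $S_v$ is a strict subvariety of $W$, whence $F_v \supseteq (\{v\} \times W) \backslash S_v$ has dimension $\dim W$, contradicting (i). The main delicate point throughout is the bookkeeping in the irreducible-component case analysis, in particular ensuring that the finitely many strict subvarieties assembled from each component have strict union in $V$; this is guaranteed by the irreducibility of $V$. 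Beyond this, the argument is a routine dimension count resting entirely on the generic fibre-dimension statement of Proposition \ref{proj}.
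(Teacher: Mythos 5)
Your proof is correct and follows essentially the same route as the paper: you pass to the complement $F := (V \times W) \setminus E$, decompose its closure into irreducible components, apply the generic-fibre-dimension statement (Proposition \ref{proj}) component by component, and handle the converse by the contrapositive (using that a dense constructible set contains a dense open subset). The paper's own proof is terser and silently allows $V, W$ to be reducible (working with top-dimensional components $\overline{V}_i \times \overline{W}_j$), whereas you restrict to the irreducible case, which is a defensible reading since the paper's definition of ``generic'' is stated only for irreducible constructible sets; the rest of your write-up simply makes explicit the bookkeeping the paper leaves implicit.
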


\begin{proof}  By symmetry, it suffices to show the equivalence of (i) and (iii).  Write $\Sigma := (V \times W) \backslash E$.  If (iii) holds, then 
$\overline{\Sigma}$ has dimension at most $\dim(V)+\dim(W)-1$.  Writing $\pi: \overline{\Sigma} \to \overline{V}$ for the projection map and using Proposition \ref{proj}, we see that the generic fibre of $\pi$ has dimension at most $\dim(W)-1$, giving (i).

Conversely, if (iii) fails, then $\overline{\Sigma}$ has dimension $\dim(V)+\dim(W)$, and so contains at least one of the components $\overline{V}_i \times \overline{W}_j$ where $\overline{V}_i$ is a top-dimensional irreducible component of $\overline{V}$, and similarly for $\overline{W}_j$.  The complement of $\Sigma$ in $\overline{V}_i \times \overline{W}_j$ then has dimension at most $\dim(V)+\dim(W)-1$, and so by the previous arguments, we see that for generic $v \in \overline{V}_i$, one has $w \in \Sigma$ for generic $w \in \overline{W}_j$, which contradicts (i) as required.
\end{proof}

\section{Nonstandard formulation}

As discussed in the introduction, it will be convenient to pass to a nonstandard analysis formalism in order to take full advantage of the existing literature in algebraic geometry and on the \'etale fundamental group, as well as to be able to use some tools from the theory of Riemann surfaces which are only available for varieties when the characteristic is zero.  In this section, we set up this formalism, and give the nonstandard version of the main theorems.

We will assume the existence of a \emph{standard universe} ${\mathfrak U}$ which contains all the objects and spaces that one is interested in (such as the natural numbers $\N$, the real numbers $\R$, finite fields $\F$, constructible sets or varieties defined over $\F$ or $\overline{\F}$, maps between such spaces, etc.).  The precise construction of this universe is not particularly important for our purposes, so long as it forms a set in our external set theory.  We refer to objects and spaces inside the standard universe as \emph{standard objects} and \emph{standard spaces}, with the latter being sets whose elements are in the former category.  Thus, for instance, we refer to elements of $\N$ as standard natural numbers.

For the rest of the paper, we fix a non-principal ultrafilter $\alpha \in \beta \N \backslash \N$ on the natural numbers, that is to say a collection of subsets of $\N$ obeying the following axioms:
\begin{enumerate}
\item If $E, F \in \alpha$, then $E \cap F \in \alpha$.
\item If $E \subset F \subset \N$ and $E \in \alpha$, then $F \in \alpha$.
\item If $E \subset \N$, then exactly one of $E$ and $\N \backslash E$ lies in $\alpha$.
\item No finite subset of $\N$ lies in $\alpha$.
\end{enumerate}
The existence of such a non-principal ultrafilter follows easily from Zorn's lemma.

Throughout the paper, we fix a non-principal ultrafilter $\alpha$.  A property $P(\n)$ depending on a natural number $\n$ is said to hold \emph{for $\n$ sufficiently close to $\alpha$} if the set of $\n$ for which $P(\n)$ holds lies in $\alpha$.  A set of natural numbers lying in $\alpha$ will also be called an \emph{$\alpha$-large set}.

Once we have fixed this ultrafilter, we can define nonstandard objects and spaces.

\begin{definition}[Nonstandard objects and functions]
Given a sequence $(x_\n)_{\n \in \N}$ of standard objects in ${\mathfrak U}$, we define their \emph{ultralimit} $\lim_{\n \to \alpha} x_\n$ to be the equivalence class of all sequences $(y_\n)_{\n \in \N}$ of standard objects in ${\mathfrak U}$ such that $x_\n = y_\n$ for $\n$ sufficiently close to $\alpha$.  Note that the ultralimit $\lim_{\n \to \alpha} x_\n$ can also be defined even if $x_\n$ is only defined for $\n$ sufficiently close to $\alpha$.

An ultralimit of standard natural numbers is known as a \emph{nonstandard natural number}, an ultralimit of standard real numbers is known as a \emph{nonstandard real number}, and so on.

For any standard object $x$, we identify $x$ with its own ultralimit $\lim_{\n \to \alpha} x$.  Thus, every standard natural number is a nonstandard natural number, etc.

Any operation or relation on standard objects can be extended to nonstandard objects in the obvious manner.  Indeed, if $O$ is a $k$-ary operation, we define
$$ O( \lim_{\n \to \alpha} x^1_\n, \ldots, \lim_{\n \to \alpha} x^k_\n ) := \lim_{\n \to \alpha} O( x^1_\n, \ldots, x^k_\n ) $$
and if $R$ is a $k$-ary relation, we define $R(\lim_{\n \to \alpha} x^1_\n, \ldots, \lim_{\n \to \alpha} x^k_\n )$ to be true iff $R(x^1_\n,\ldots,x^k_\n)$ is true for all $\n$ sufficiently close to $\alpha$.  One easily verifies that these nonstandard extensions of $O$ and $R$ are well-defined.
\end{definition}

\begin{example}
The sum of two nonstandard real numbers $\lim_{\n \to \alpha} x_\n$, $\lim_{\n \to \alpha} y_\n$ is the nonstandard real number
$$\lim_{\n \to \alpha} x_\n + \lim_{\n \to \alpha} y_\n = \lim_{\n \to \alpha} x_\n + y_\n,$$
and the statement $\lim_{\n \to \alpha} x_\n < \lim_{\n \to \alpha} y_\n$ means that $x_\n < y_\n$ for all $\n$ sufficiently close to $\alpha$.
\end{example}

We will use the usual asymptotic notation from nonstandard analysis: 

\begin{definition}[Asymptotic notation] A nonstandard real number $x \in \ultra \R$ is said to be \emph{bounded} if one has $|x| \leq C$ for some standard $C>0$, and \emph{unbounded} otherwise.  Similarly, we say that $x$ is \emph{infinitesimal} if $|x| \leq c$ for all standard $c>0$; in the former case we write $x = O(1)$, and in the latter $x=o(1)$.  For every bounded real number $x \in \ultra \R$ there is a unique standard real number $\st(x) \in \R$, called the \emph{standard part} of $\R$, such that $x = \st(x)+o(1)$, or equivalently that $\st(x) - \eps \leq x \leq \st(x)+\eps$ for all standard $\eps>0$.  Indeed, one can set $\st(x)$ to be the supremum of all the real numbers $y$ such that $x>y$ (or equivalently, the infimum of all the real numbers $y$ such that $x<y$).

We write $X = O(Y)$, $X \ll Y$, or $Y \gg X$ if we have $X \leq CY$ for some standard $C$; and we write $X = o(Y)$, $X \lll Y$, or $Y \ggg X$ if we have $X \leq \eps Y$ for every standard $\eps>0$.
\end{definition}

\begin{definition}[Ultraproducts]  Let $(X_\n)_{\n \in \N}$ be a sequence of standard spaces $X_\n$ in ${\mathfrak U}$ indexed by the natural numbers.  The \emph{ultraproduct} $\prod_{\n \to \alpha} X_\n$ of the $X_\n$ is defined to be the space of all ultralimits $\lim_{\n \to \alpha} x_\n$, where $x_\n \in X_\n$ for all $\n$.  We refer to the ultraproduct of standard sets as an \emph{nonstandard set}; in a similar vein, an ultraproduct of standard finite sets is a \emph{nonstandard finite set}, and an ultraproduct of standard finite fields is a \emph{nonstandard finite field}.  We refer to $\ultra X := \prod_{\n \to \alpha} X$ as the \emph{ultrapower} of a standard set $X$; the identification of $x$ with $\lim_{\n \to \alpha} x$ causes $X$ to be identified with a subset of $\ultra X$.  We will refer to the ultrapower  $\ultra {\mathfrak U}$ of the standard universe ${\mathfrak U}$ as the \emph{nonstandard universe}.

In a similar spirit, if $f_\n: X_\n \to Y_\n$ is a collection of standard functions between standard sets $X_\n,Y_\n$, we can form the \emph{ultralimit} $f := \lim_{\n \to \alpha} f_\n$ to be the function from $X := \prod_{\n \to \alpha} X_\n$ to $Y := \prod_{\n \to \alpha} Y_\n$ defined by the formula
$$ f( \lim_{\n \to \alpha} x_n ) := \lim_{\n \to \alpha} f_\n(x_\n).$$
We refer to such functions as \emph{nonstandard functions} (also known as \emph{internal functions} in the nonstandard analysis literature).

As with nonstandard objects, any operation or relation on standard spaces can be converted to a nonstandard analogue in the usual manner.  For instance, the nonstandard cardinality of a nonstandard finite set $X = \prod_{\n \to \alpha} X_\n$ is the nonstandard natural number
$$ |X| := \lim_{\n \to \alpha} |X_\n|.$$
Note that this is a different concept from the usual (or \emph{external}) cardinality of $X$; indeed, nonstandard finite sets usually have an uncountable external cardinality.  Similarly, if $f: X \to \ultra \R$ is a nonstandard function $f = \lim_{\n \to \alpha} f_\n$ defined on a nonstandard finite set $X = \prod_{\n \to \alpha} X_\n$, we can define the nonstandard sum $\sum_{x \in X} f(x)$ to be the nonstandard real number
$$ \sum_{x \in X} f(x) := \lim_{\n \to \alpha} \sum_{x_\n\in X_\n} f_\n(x_\n).$$
\end{definition}

A fundamental property of ultralimits is that they preserve first-order statements and predicates, a fact known as \emph{{\L}os's theorem}:

\begin{theorem}[{\L}os's theorem with parameters and ultraproducts]\label{los-param}  Let $m, k$ be standard natural numbers.  For each $1 \leq i \leq m$, let $x_i = \lim_{\n \to \alpha} x_{i,\n}$ be a nonstandard object, and for each $1 \leq j \leq k$, let $A_j = \prod_{\n \to \alpha} A_{j,\n}$ be a nonstandard set.  If $P(y_1,\ldots,y_m; B_1,\ldots,B_k)$ is a predicate over $m$ objects and $k$ sets, with the sets $A_1,\ldots,A_k$ only appearing in $P$ through the membership predicate $x \in B_j$ for various $j$ and various objects $B_j$, then $P(x_1,\ldots,x_m; A_1,\ldots,A_k)$ is true \textup{(}as quantified over the nonstandard universe $\ultra {\mathfrak U}$\textup{)} if and only if $P(x_{1,\n},\ldots,x_{m,\n}; A_{1,\n},\ldots,A_{k,\n})$ is true for all $\n$ sufficiently close to $\alpha$ \textup{(}as quantified over the standard universe ${\mathfrak U}$\textup{)}.
\end{theorem}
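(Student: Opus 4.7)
The plan is to proceed by induction on the structural complexity of the predicate $P$, using the four ultrafilter axioms for the propositional connectives and the axiom of choice for existential quantifiers. The hypothesis that the sets $A_j$ appear only via the membership predicate (rather than being quantified over) is what keeps us within first-order logic and makes the induction go through cleanly.

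For the base case, atomic formulas built from the nonstandard operations and relations (e.g.\ equalities, or the action of $k$-ary operations/relations extended pointwise to ultralimits) satisfy the claim directly, by the very definition of how such operations and relations were extended to nonstandard objects in the preceding paragraphs. The remaining atomic formula is the membership predicate: by the construction of the ultraproduct, $\lim_{\n \to \alpha} x_{i,\n} \in \prod_{\n \to \alpha} A_{j,\n}$ holds in $\ultra{\mathfrak U}$ if and only if $x_{i,\n} \in A_{j,\n}$ for $\n$ in an $\alpha$-large set, and this equivalence is independent of the chosen representatives.

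For the inductive step on the propositional connectives, I would argue as follows. If $P = \neg Q$, then $P$ holds iff $Q$ fails; by induction $Q$ holds in the nonstandard universe iff $\{\n : Q_\n \text{ holds}\} \in \alpha$, and by axiom (3) of the ultrafilter this fails iff the complementary set $\{\n : \neg Q_\n \text{ holds}\}$ lies in $\alpha$. For $P = Q_1 \wedge Q_2$, the intersection axiom (1) (together with the monotonicity axiom (2)) gives the claim. The remaining connectives $\vee$ and $\implies$ reduce to $\neg$ and $\wedge$.

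The interesting step is the quantifier case. For $P = \exists y : Q(y, x_1,\ldots,x_m; A_1,\ldots,A_k)$, one direction is easy: if a nonstandard witness $y = \lim_{\n \to \alpha} y_\n$ exists, then by the inductive hypothesis the set of $\n$ for which $Q(y_\n, x_{1,\n},\ldots; A_{1,\n},\ldots)$ holds is $\alpha$-large, so a standard witness exists for such $\n$. Conversely, if on an $\alpha$-large set $S$ of indices a standard witness $y_\n$ exists for each $\n \in S$, invoke the axiom of choice to pick one such $y_\n$ for each $\n \in S$ (and define $y_\n$ arbitrarily for $\n \notin S$), then set $y := \lim_{\n \to \alpha} y_\n$; the inductive hypothesis applied to $Q$ shows $P$ holds in $\ultra{\mathfrak U}$. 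The universal case $\forall y: Q$ follows by duality, writing it as $\neg \exists y \neg Q$. The only real subtlety is ensuring that the chosen witness is a genuine nonstandard \emph{object} (an ultralimit of standard objects), which is automatic here because the quantifier ranges over ordinary objects rather than over sets; this is precisely the role played by the syntactic restriction on how the $A_j$ appear in $P$.
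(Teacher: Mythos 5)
Your proof is correct and is the standard structural-induction argument for Łoś's theorem. The paper itself does not prove the statement — it simply cites \cite[Theorem A.6]{bgt} — so there is no in-paper argument to compare against, but your induction on formula complexity (atomic formulas and the membership predicate as base cases, ultrafilter axioms for the propositional connectives, and the axiom of choice to extract an ultralimit witness for $\exists$) is exactly the canonical proof that the cited reference gives, and you have correctly identified the role of the syntactic restriction on the $A_j$ in keeping the quantifier step within the realm of object-valued (rather than set-valued) witnesses.
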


\begin{proof} See e.g. \cite[Theorem A.6]{bgt}.
\end{proof}

Another fundamental property is that of \emph{countable saturation}:

\begin{lemma}[Countable saturation]\label{countsat}  Let $k$ be a standard natural number, and let $X_1,\ldots,X_k$ be nonstandard spaces.  For each standard natural number $n$, let $P_n(x_1,\ldots,x_k)$ be predicates defined for $x_i \in X_i$, using some finite number of nonstandard objects and spaces as constants, and quantified over the nonstandard universe.  Suppose that for any standard natural number $N$, there exists $x_1 \in X_1,\ldots,x_k \in X_k$ such that $P_n(x_1,\ldots,x_k)$ holds for all (standard) $n \leq N$.  Then there exists $x_1 \in X_1,\ldots,x_k \in X_k$ such that $P_n(x_1,\ldots,x_k)$ holds for all $n \in \N$.
\end{lemma}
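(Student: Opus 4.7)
The plan is to use a diagonal argument across the index set $\N$, combined with {\L}os's theorem to transfer the nonstandard predicates back to the standard world. Since each $P_n$ involves only finitely many nonstandard objects and spaces as constants, I can write $P_n(x_1,\ldots,x_k)$ as a first-order formula $\tilde P_n(x_1,\ldots,x_k; c_{n,1},\ldots,c_{n,l_n})$ in which the constants $c_{n,j} = \lim_{\n \to \alpha} c_{n,j,\n}$ are ultralimits of standard objects (and similarly for the nonstandard sets $X_i$ and any other nonstandard sets appearing in $P_n$). By Theorem \ref{los-param}, $P_n(x_1,\ldots,x_k)$ holds if and only if the standard predicate $\tilde P_{n,\n}(x_{1,\n},\ldots,x_{k,\n})$, obtained by plugging in the componentwise constants, holds for $\n$ in an $\alpha$-large set.

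For each standard $N$, form the standard conjunction $Q_{N,\n}(x_1,\ldots,x_k) := \bigwedge_{n \leq N} \tilde P_{n,\n}(x_1,\ldots,x_k)$. The hypothesis asserts that the internal statement ``there exist $x_i \in X_i$ with $P_n(x_1,\ldots,x_k)$ for all $n \leq N$'' is true in the nonstandard universe, so by Theorem \ref{los-param} the set
$$ E_N := \{ \n \in \N : \exists\, x_{1,\n} \in X_{1,\n}, \ldots, x_{k,\n} \in X_{k,\n} \text{ with } Q_{N,\n}(x_{1,\n},\ldots,x_{k,\n}) \text{ true} \} $$
is $\alpha$-large for every standard $N$. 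These sets are nested downwards: $E_{N+1} \subset E_N$.

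Now I diagonalize: for each $\n \in \N$, let $N(\n)$ be the largest integer $N \leq \n$ with $\n \in E_N$ (and $N(\n) := 0$ if no such $N$ exists). Using the axiom of choice externally, pick witnesses $x_{1,\n}, \ldots, x_{k,\n}$ realizing $Q_{N(\n),\n}$ whenever $N(\n) \geq 1$, and arbitrary elements otherwise; set $x_i := \lim_{\n \to \alpha} x_{i,\n} \in X_i$. To verify $P_n(x_1,\ldots,x_k)$ for a given standard $n$, consider $F_n := E_n \cap \{\n \in \N : \n \geq n\}$. The second factor is cofinite in $\N$, hence $\alpha$-large by axioms (3)--(4) defining the nonprincipal ultrafilter, so $F_n$ is $\alpha$-large. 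For any $\n \in F_n$ we have $N(\n) \geq n$, so the chosen witnesses make $\tilde P_{n,\n}(x_{1,\n},\ldots,x_{k,\n})$ true; applying Theorem \ref{los-param} in reverse yields $P_n(x_1,\ldots,x_k)$ as desired.

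There is no substantial obstacle here. The argument only uses two features: the countability of the index set $\N$ over which the ultrafilter is built (which is what allows the diagonal bound $N(\n) \leq \n$ to eventually exceed any fixed standard $n$), and the nonprincipal character of $\alpha$ (which makes cofinite sets $\alpha$-large). The one delicate point is ensuring that the formula-level manipulation at the start is legitimate for \emph{internal} predicates that may contain nonstandard quantifiers, but this is already built into the statement of Theorem \ref{los-param}.
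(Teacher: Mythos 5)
Your proof is correct and follows essentially the same strategy as the paper: use {\L}os's theorem to produce nested $\alpha$-large sets $E_N$ of indices for which standard witnesses exist up to level $N$, diagonalize by assigning to each $\n$ the largest $N\leq\n$ with $\n\in E_N$ and choosing witnesses accordingly, then take ultralimits and verify via {\L}os again. The only cosmetic difference is that you define $E_N$ as the full set of indices admitting witnesses (so nesting is automatic), while the paper shrinks arbitrary $\alpha$-large sets to force nesting; this changes nothing in substance.
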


\begin{proof} Write $X_i = \prod_{\n \to \alpha} X_{i,\n}$ and $P_n = \lim_{\n \to \infty} P_{n,\n}$.  By {\L}os's theorem, we see that for every standard natural number $M$, there exists an $\alpha$-large set $E_M$ and elements $x_{i,\n,M} \in X_{i,\n}$ for $\n \in E_M$ and $i=1,\ldots,k$ such that $P_{n,\n}(x_{1,\n,M},\ldots,x_{k,\n,M})$ holds for all $\n \in E_M$ and $n=1,\ldots,M$.  By shrinking the $E_M$ if necessary, we may assume that the $E_M$ are non-increasing in $M$.  We then define $M_\n$ for any $\n \in E_1$ to be the largest integer in $\{1,\ldots,\n\}$ for which $\n \in E_{M_\n}$.  If we then set $x_i := \lim_{\n \to \alpha} x_{i,\n,M_\n}$ for $i=1,\ldots,k$, we see from {\L}os's theorem that $x_i \in X_i$ for all $i=1,\ldots,k$, and $P_n(x_1,\ldots,x_k)$ holds for all standard $n$, as required.
\end{proof}

A typical application of countable saturation is the following: if $f: X \to \ultra \R$ is a nonstandard function with the property that $f(x)$ is bounded for every $x \in X$, then there is a uniform bound $|f(x)| \leq M$ for some standard $M$ (for otherwise the predicates $|f(x)| \geq n$ would form a counterexample to Lemma \ref{countsat}).  This automatic uniformity is one advantage of the nonstandard framework: one does not need to make as many careful distinctions between the order of various quantifiers in one's arguments (but one instead has to carefully distinguish between standard and nonstandard quantities).  In this paper we will not use countable saturation very often, however, because in our applications such uniform bounds are also often obtainable directly from algebraic geometry methods (e.g. bounding the degree of various varieties or maps).

We will be working extensively with nonstandard finite fields $\F = \prod_{\n \to \alpha} \F_\n$ in this paper, which are examples of what are known as \emph{pseudo-finite fields} in the model theory literature, because (by {\L}os's theorem) they obey all the first-order sentences in the language of fields that hold for all finite fields.   By {\L}os's theorem, the algebraic closure $\overline{\F}$ of a nonstandard finite field $\F$ is contained in the ultraproduct of the algebraic closures $\overline{\F_\n}$ of the associated finite fields; indeed, it is the space of ultralimits $\lim_{\n \to \alpha} x_\n$, where for all $\n$ sufficiently close to $\alpha$, $x_\n$ lies in an extension of $\F_\n$ of degree at most $C$ for some $C$ independent of $\n$.  The nonstandard finite field $\F$ has a nonstandard \emph{Frobenius endomorphism} $\Frob_\F: \overline{\F} \to \overline{\F}$, defined as the restriction to $\overline{\F}$ ultralimit of the standard Frobenius endomorphism $\Frob_{\F_\n}: \overline{\F_\n} \to \overline{\F_\n}$ defined by
$$ \Frob_{\F_\n}(x_\n) := x_\n^{|\F_{\n}|}.$$
(Note that $\Frob_{\F_\n}$ preserves every finite extension of $\F_\n$, and so $\Frob_\F$ is well-defined on $\overline{\F}$.)
Note (again by {\L}os's theorem) that $\F$ can be viewed as the set of fixed points of $\Frob_\F$ in $\overline{\F}$, since $\F_\n$ is the set of fixed points of $\Frob_{\F_\n}$ in $\overline{\F_\n}$ for all $\n$.  Later on, we will use this Frobenius endomorphism to determine which varieties over $\overline{\F}$ are actually defined over $\F$.

Ultraproducts interact well with definable sets.  From {\L}os's theorem, we see that if $\F = \prod_{\n \to \alpha} \F_\n$ is a nonstandard field and $d$ is a standard natural number, then a set $E \subset \F^d$ is definable over $\F$ if and only if $E$ can be expressed as an ultraproduct $E = \prod_{\n \to \alpha} E_\n$, where for all $\n$ sufficiently close to $\alpha$, $E_\n \subset \F_\n^d$ is definable over $\F_n$ with complexity at most $M$, for some $M$ independent of $\n$.

In a similar vein, a function $P: \F \to \F$ on a nonstandard field $\F = \prod_{\n \to \alpha} \F_\n$ is an \emph{external}\footnote{Taking the ultralimit of polynomials whose degree goes to infinity instead of being uniformly bounded will lead to a function which is a nonstandard polynomial, but not an external one.  For instance, the nonstandard Frobenius endomorphism $\Frob_\F$ is a nonstandard polynomial, but is not external if the characteristic of the $\F_\n$ goes to infinity.} polynomial (that is, a polynomial in the usual sense) if and only if it is an ultralimit $P = \lim_{\n \to \alpha} P_\n$ of (standard) polynomials $P_\n: \F_\n \to \F_\n$ of uniformly bounded degree.  

We can now give the nonstandard version of the main theorems stated in the introduction.  We first give the nonstandard version of Theorem \ref{expand-thm-modas}:

\begin{theorem}[Moderate asymmetric expansion, nonstandard formulation]\label{expand-thm-modas-nonst} Let $\F$ be a nonstandard finite field of (external) characteristic zero, and let $P: \F \times \F \to \F$ be an (external) polynomial.  Then at least one of the following statements hold:
\begin{itemize}
\item[(i)] (Additive structure)  One has
$$ P(x_1,x_2) = Q(F_1(x_1)+F_2(x_2))$$
(as a polynomial identity in the indeterminates $x_1,x_2$) for some (external) polynomials $Q, F_1, F_2: \F \to \F$.
\item[(ii)] (Multiplicative structure)  One has
$$ P(x_1,x_2) = Q(F_1(x_1) F_2(x_2))$$
for some (external) polynomials $Q, F_1, F_2: \F \to \F$.
\item[(iii)] (Moderate asymmetric expansion)  One has
$$ |P(A_1,A_2)| \gg |\F|$$
whenever $A_1, A_2$ are nonstandard subsets of $\F$ with $|A_1| |A_2| \ggg |\F|^{2-1/8}$.
\end{itemize}
\end{theorem}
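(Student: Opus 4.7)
The plan is to prove Theorem \ref{expand-thm-modas-nonst} by contrapositive: assuming $P$ admits neither the additive form \eqref{padd} nor the multiplicative form \eqref{pmult}, we establish the expansion bound (iii). Write $B := P(A_1,A_2)$, and introduce the Bukh--Tsimerman four-corner polynomial map
$$ \Phi : \F^4 \to \F^4,\quad (x_1,y_1,x_2,y_2) \mapsto \bigl(P(x_1,x_2),\,P(x_1,y_2),\,P(y_1,x_2),\,P(y_1,y_2)\bigr), $$
as indicated in the introduction.

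First I would use the Cauchy--Schwarz inequality a few times (in the spirit of \cite{bt} and \cite{gowers-4aps}) to reduce the desired lower bound $|B| \gg |\F|$ to an upper bound of the form $N \lll |A_1|^4 |A_2|^4 / |\F|^4$, where $N$ counts the $8$-tuples $(z,z') \in (A_1^2 \times A_2^2)^2$ lying in the coincidence variety $W := \{(z,z') \in \F^4 \times \F^4 : \Phi(z) = \Phi(z')\}$. The key chain is $|B|^4 \geq |\Phi(A_1^2 \times A_2^2)| \geq |A_1|^4 |A_2|^4 / N$, via the trivial containment $\Phi(A_1^2 \times A_2^2) \subset B^4$ together with a Cauchy--Schwarz on the push-forward of the counting measure by $\Phi$. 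It is these Cauchy--Schwarz steps that transmute the regularity exponent $1/4$ appearing in Lemma \ref{algreg} into the exponent $1/8$ stated in the theorem.

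The central step, and the main obstacle of the proof, is to show that under the assumption that $P$ is of neither form \eqref{padd} nor \eqref{pmult}, the map $\Phi$ is dominant, so that the coincidence variety $W$ has the generic dimension $4$ inside $\F^8$, and in particular $|W(\F)| = (1+o(1))|\F|^4$ by Lang--Weil. Suppose for contradiction that $\overline{\Phi(\F^4)}$ is a proper subvariety of $\F^4$. Then Lemma \ref{fubini} combined with Proposition \ref{proj} and a generic slicing in the variables $(x_1,y_1)$ produces a nontrivial polynomial relation among the four quantities $P(x_1,x_2),\,P(x_1,y_2),\,P(y_1,x_2),\,P(y_1,y_2)$, and unfolding this relation forces the level curves $\{P(x,y)=c\}$ to admit a one-parameter family of self-symmetries compatible with the polynomial $P$. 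Since the nonstandard field $\F$ has external characteristic zero, I would pass to the algebraic closure $\overline{\F}$ and invoke the classical theory of Riemann surfaces: a one-parameter family of symmetries of a smooth affine curve endows that curve with the structure of a connected one-dimensional algebraic group. By the classification of one-dimensional algebraic groups over an algebraically closed field of characteristic zero, this group must be isomorphic to the additive group $(\overline{\F}, +)$, the multiplicative group $(\overline{\F}^\times, \cdot)$, or an elliptic curve; the elliptic case is excluded because the level curves of a \emph{polynomial} are affine rather than projective, so they cannot support an elliptic curve group law. In the remaining two cases, pulling back the isomorphism yields precisely the decompositions \eqref{padd} and \eqref{pmult} for $P$, contradicting the hypothesis. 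This Riemann surface step is where the characteristic-zero assumption is truly essential, and is also why the restriction to polynomial (rather than rational) $P$ cannot be dropped.

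With dominance of $\Phi$ in hand, the final step is to apply the iterated algebraic regularity lemma (Theorem \ref{threg-higher}) to the coincidence variety $W$, viewed as a definable subset of bounded complexity in $(\F^2 \times \F^2)^2$. Combined with the Lang--Weil count $|W(\F)| = (1+o(1))|\F|^4$, the regularity estimate yields $N = (1 + O(|\F|^{-1/4}))\, |A_1|^4 |A_2|^4 / |\F|^4$ once $|A_1||A_2|$ is large enough that the $|\F|^{-1/4}$ regularity error is dominated by the main term; careful bookkeeping, exploiting the product structure of $A_1 \times A_2$ inside $\F^2$ so that the $|\F|^{-1/4}$ loss is only incurred on the coarser factorization, pins the threshold down to $|A_1||A_2| \ggg |\F|^{2-1/8}$. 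Substituting $N \lll |A_1|^4 |A_2|^4 / |\F|^4$ into the Cauchy--Schwarz chain of the second paragraph then delivers $|B|^4 \gg |\F|^4$, i.e.\ $|B| \gg |\F|$, completing the proof of (iii).
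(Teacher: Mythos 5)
Your broad outline — the four-corner map $\Phi$, Lang--Weil, the regularity lemma, and a Riemann surface argument to dispose of the non-dominant case — is the right skeleton and matches the paper's strategy.  But the quantitative engine in your second and fourth paragraphs has a genuine gap that would sink the proof.

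You propose the chain $|B|^4 \geq |\Phi(A_1^2\times A_2^2)| \geq |A_1|^4|A_2|^4/N$ with $N$ the number of coincident pairs in the variety $W=\{(z,z'):\Phi(z)=\Phi(z')\}$, and then claim $N = (1+O(|\F|^{-1/4}))|A_1|^4|A_2|^4/|\F|^4$.  This asymptotic for $N$ cannot hold in the regime of interest: the diagonal pairs $(z,z)$ already force $N \geq |A_1|^2|A_2|^2$, which exceeds $|A_1|^4|A_2|^4/|\F|^4$ whenever $|A_1||A_2| < |\F|^2$ — that is, always (unless $A_1=A_2=\F$).  So the lower bound from Cauchy--Schwarz collapses to the trivial $|\Phi(A_1^2\times A_2^2)|\geq |A_1|^2|A_2|^2$ and gives nothing about $|B|$.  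The deeper problem is that the coincidence variety $W$ has density about $|\F|^{-4}$ in $\F^8$, far too sparse for the regularity lemma (whose error term is $O(|\F|^{-1/4})$ \emph{of the trivial bound}) to detect the main term; the parenthetical about ``exploiting the product structure'' does not rescue this.  The paper sidesteps the sparsity in a structurally different way: it does not count $8$-tuples in $W$ at all.  Instead, writing $c(u_1,\dots,u_4)$ for the number of $\F$-points of the fiber $\Phi^{-1}(u_1,\dots,u_4)$ normalized by $|\F|^{\dim \text{fiber}}$ (a \emph{bounded} integer-valued function by Lang--Weil), it applies the iterated regularity lemma to the \emph{dense} definable level sets $E_{c_0}\subset\F^4$ of $c$, and controls $\sum_{v\in A}\sum_{w\in B} f(P(v,w))$ for mean-zero test functions $f$ supported on one cell $U_j$ of the resulting partition; the conclusion then comes from a multiplicity/pigeonhole argument on $\mu(u):=|\{(v,w)\in A\times B: P(v,w)=u\}|$, not from an $L^2$-energy count.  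This is the $TT^*$ method, and it is what makes the exponents work out.

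Two smaller issues: (1) your exponent bookkeeping drifts — the paper's four applications of Cauchy--Schwarz turn $1/4$ into $1/16$ in the test-function bound, and the threshold $|A_1||A_2|\ggg|\F|^{2-1/8}$ then drops out of the pigeonhole step, not directly from a single $1/4\to1/8$ halving; and (2) after the Riemann surface argument yields the additive or multiplicative form, the polynomials $Q,F_1,F_2$ are a priori only defined over $\overline{\F}$, and a non-trivial extra step (Theorem \ref{const}, using Frobenius-invariance, Gr\"obner bases, and a further Lefschetz-type argument in the multiplicative case) is needed to descend them to $\F$, which your sketch omits.  Your description of the dominance/constraint step is also somewhat looser than the paper's: the algebraic group structure in the paper's argument lives on the ``$z+w$'' variable (a quotient of $\C$ by the period subgroup $\Gamma_1+\Gamma_2$), not directly on the level curves of $P$, though the two viewpoints are certainly related in spirit.
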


Let us now see why Theorem \ref{expand-thm-modas-nonst} implies Theorem \ref{expand-thm-modas}.   Suppose for contradiction that Theorem \ref{expand-thm-modas-nonst} was true, but Theorem \ref{expand-thm-modas} failed.  Carefully negating the quantifiers (and using the axiom of choice), we conclude that there is a standard natural number $d$ such that, for every standard natural number $\n$, one can find a finite field $\F_\n$ of characteristic at least $\n$, and a polynomial $P_\n: \F_\n \times \F_\n \to \F_\n$ of degree at most $d$, such that $P_\n$ is not expressible in the form \eqref{padd} or \eqref{pmult} for any polynomials $Q_\n$, $F_{1,\n}$, $F_{2,\n}$, and such that there exist subsets $A_{1,\n}$, $A_{2,\n}$ of $\F_\n$ with 
$$ |A_{1,\n}| |A_{2,\n}| \geq \n |\F_\n|^{2-1/8}$$
but
$$ |P_\n(A_{1,\n}, A_{2,\n})| \leq \n^{-1} |\F_\n|.$$
We now take ultralimits, giving the nonstandard finite field $\F := \prod_{\n \to \alpha} \F_\n$ with nonstandard subsets $A_i := \prod_{\n \to \alpha} A_{i,\n}$ for $i=1,2$ and the map $P := \lim_{\n \to \alpha} P_\n$.  For any standard natural number $k$, $\F_\n$ has characteristic greater than $k$ for all but finitely many $\n$, so by {\L}os's theorem, $\F$ does not have characteristic $k$ for any positive $k$, and thus has characteristic zero.
Because the $P_\n$ are polynomials of degree at most $d$, $P$ is an (external) polynomial also.  From {\L}os's theorem, we have $|A_1| |A_2| \ggg |\F|^{2-1/18}$ and $|P(A_1,A_2)| \lll |\F|$, and $P$ cannot be expressed in either of the two forms \eqref{padd}, \eqref{pmult}.  This gives a counterexample to Theorem \ref{expand-thm-modas-nonst}, and the claim follows.

It is also not difficult to show that Theorem \ref{expand-thm-modas} implies Theorem \ref{expand-thm-modas-nonst}, but we will not need this implication here and so will leave it to the interested reader.

In a similar vein, Theorem \ref{expand-thm-weak} and Theorem \ref{expand-thm-ass} follow from these nonstandard counterparts:

\begin{theorem}[Weak expansion, nonstandard formulation]\label{expand-thm-weak-nonst} Let $\F$ be a nonstandard finite field of (external) characteristic zero, and let $P: \F \times \F \to \F$ be an (external) polynomial.   Then at least one of the following statements hold:
\begin{itemize}
\item[(i)] (Additive structure)  One has
$$ P(x_1,x_2) = Q(aF(x_1)+bF(x_2))$$
for some polynomials $Q, F: \F \to \F$, and some elements $a,b \in \F$.
\item[(ii)] (Multiplicative structure)  One has
$$ P(x_1,x_2) = Q(F(x_1)^a F(x_2)^b)$$
for some polynomials $Q, F: \F \to \F$, and some standard natural numbers $a,b$.
\item[(iii)] (Weak expansion)  One has
$$ |P(A,A)| \gg |\F|^{1/2} |A|^{1/2}$$
whenever $A \subset \F$ is a nonstandard subset with $|A| \ggg |\F|^{1-1/16}$.
\end{itemize}
\end{theorem}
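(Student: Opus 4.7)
The plan is to reduce Theorem \ref{expand-thm-weak-nonst} to Theorem \ref{expand-thm-modas-nonst} and then handle the two structured residual cases by the Fourier analytic methods of Hart, Li and Shen \cite{hls}. First I would apply Theorem \ref{expand-thm-modas-nonst} to $P$. If the moderate asymmetric expansion conclusion holds, then for any $A \subset \F$ with $|A| \ggg |\F|^{1-1/16}$ one automatically has $|A|^2 \ggg |\F|^{2-1/8}$, hence $|P(A,A)| \gg |\F| \geq |\F|^{1/2}|A|^{1/2}$, which is case (iii) of the present theorem.

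It remains to handle the additive and multiplicative structure cases. In the additive case $P(x_1,x_2) = Q(F_1(x_1) + F_2(x_2))$, I would split according to whether the pair $(F_1, F_2)$ is \emph{compatible}, meaning that there exist a polynomial $F$ and constants $a, b, c_1, c_2 \in \F$ with $F_1 = aF + c_1$ and $F_2 = bF + c_2$. In the compatible subcase,
$$ P(x_1, x_2) = Q\bigl(aF(x_1) + bF(x_2) + c_1 + c_2\bigr) = \tilde Q\bigl(aF(x_1) + bF(x_2)\bigr) $$
with $\tilde Q(t) := Q(t + c_1 + c_2)$, which is precisely case (i). In the incompatible subcase I would invoke the Fourier method of \cite{hls}: expand the indicator of $F_1(A) + F_2(A)$ in additive characters $\psi$ of $\F$ and apply Weil's bound $\bigl|\sum_{x \in \F} \psi(\lambda F_i(x))\bigr| \ll |\F|^{1/2}$ for nontrivial $\psi$ (valid in our nonstandard characteristic zero regime since $\deg F_i$ is bounded and hence less than the characteristic). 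This yields $|F_1(A) + F_2(A)| \gg |\F|^{1/2} |A|^{1/2}$, and since $Q$ is of bounded degree (hence boundedly many-to-one), composing gives $|P(A,A)| \gg |\F|^{1/2} |A|^{1/2}$, i.e.\ case (iii). The multiplicative case $P(x_1,x_2) = Q(F_1(x_1) F_2(x_2))$ is handled symmetrically, with additive characters replaced by multiplicative characters of $\F^\times$: the compatible subcase is $F_1 = c_1 F^a$, $F_2 = c_2 F^b$ for a common polynomial $F$, standard natural numbers $a, b$, and $c_1, c_2 \in \F$, which after absorbing $c_1 c_2$ into $Q$ gives case (ii); and the incompatible subcase produces weak expansion via Weil's bound on the character sums $\sum_{x \in \F} \chi(F_i(x))$.

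The principal obstacle is making the compatible/incompatible dichotomy genuinely algebraic and extending \cite{hls} (stated there for polynomials of distinct degrees) to the equal-degree incompatible case. I expect this extension to follow by normalising $F_1, F_2$ to share a common monic leading term (so that $F_1 - (a/b) F_2$ has strictly smaller degree), then applying Weil's bound to this lower-degree correction, which is non-constant exactly when the pair is incompatible. A secondary issue is that the theorem as stated does not control the parameters $a, b$ in case (i); the boundedness of $a, b$ in case (ii) is automatic from $\deg P$ being bounded. These checks aside, the proof is a direct combination of Theorem \ref{expand-thm-modas-nonst} with the Fourier framework of \cite{hls}.
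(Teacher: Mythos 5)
Your high-level strategy matches the paper's: first invoke Theorem~\ref{expand-thm-modas-nonst} and note that moderate asymmetric expansion trivially implies weak expansion (indeed $|A|\ggg|\F|^{1-1/16}$ gives $|A|^2\ggg|\F|^{2-1/8}$, so $|P(A,A)|\gg|\F|\geq|\F|^{1/2}|A|^{1/2}$), then handle the residual structured cases via Fourier analysis and the Weil bound. Your ``compatible/incompatible'' dichotomy (i.e.\ whether $1, F_1, F_2$ are $\F$-linearly dependent, resp.\ whether the exponent vectors of $F_1, F_2$ are proportional) is exactly the conclusion the paper's argument extracts, and the compatible subcase reduces to option (i), resp.\ (ii), just as you say. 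The concern you flag about the uncontrolled ratio $b/a$ in (i) is also acknowledged in the paper after Theorem~\ref{expand-thm-weak}.

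However, there is a real gap in your sketch of the incompatible subcase. You propose to ``expand the indicator of $F_1(A)+F_2(A)$ in additive characters $\psi$ and apply Weil's bound $|\sum_{x\in\F}\psi(\lambda F_i(x))|\ll|\F|^{1/2}$.'' But that single-polynomial bound holds for \emph{any} non-constant $F_i$ of bounded degree, regardless of whether $(F_1,F_2)$ is compatible; in particular it holds when $F_1 = F_2$, where weak expansion manifestly fails. So that estimate alone cannot carry the argument, and your fallback (normalise leading coefficients so that $F_1-(a/b)F_2$ has lower degree and apply Weil to that correction) also does not slot into the Fourier identity in an obvious way. What is actually needed, and what incompatibility buys you, is control of the \emph{joint} complete sum $\sum_{t\in\F}\chi_1(F_1(t))\chi_2(F_2(t)) = \sum_{t\in\F}\psi(c_1F_1(t)+c_2F_2(t))$ for \emph{all} nontrivial pairs $(\chi_1,\chi_2)$, since incompatibility is precisely the statement that $c_1F_1+c_2F_2$ is non-constant for every $(c_1,c_2)\neq(0,0)$. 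The paper engineers this two-character sum through a genuinely two-dimensional construction: it sets $B:=F_2(A)\times F_1(A)$ and $C:=(F_1(A)+F_2(A))\times(F_1(A)+F_2(A))$ in $\F^2$, notes that $(F_1(t),F_2(t))+B\subset C$ for $t\in A$, and Fourier-expands the resulting shift-count over $t\in\F$ so that the character sums factor over all of $\F$. The trivial-term and Plancherel bookkeeping then isolate the joint Weil sum. This reformulation in $\F^2$ — not a direct expansion of $1_{F_1(A)+F_2(A)}$ — is the missing ingredient; without it your Fourier step does not distinguish compatible from incompatible pairs. (Analogous remarks apply in the multiplicative case, where the relevant joint sum is $\sum_t\psi_1^{a_1}\psi_2^{b_1}(P_1(t))\cdots$ over a factorisation into distinct irreducibles, and incompatibility translates to linear independence of the exponent vectors.)
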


\begin{theorem}[Almost strong asymmetric expansion, nonstandard formulation]\label{expand-thm-ass-nonst} Let $\F$ be a nonstandard finite field of (external) characteristic zero, and let $P: \F \times \F \to \F$ be an (external) polynomial.   Then at least one of the following statements hold:
\begin{itemize}
\item[(i)] (Additive structure)  One has
$$ P(x_1,x_2) = Q(F_1(x_1)+F_2(x_2))$$
for some polynomials $Q, F_1, F_2$.
\item[(ii)] (Multiplicative structure)  One has
$$ P(x_1,x_2) = Q(F_1(x_1) F_2(x_2))$$
for some polynomials $Q, F_1, F_2$.
\item[(iii)] (Algebraic constraint) One has irreducible affine curves $V,W$ defined over $\overline{\F}$ and the constraint
$$ P( f(x_1), g(x_2) ) = h( Q(x_1,x_2) )$$
for all $x_1 \in V, x_2 \in W$ and some polynomials $f: V \to \overline{\F}$, $g: W \to \overline{\F}$, $h: \overline{\F} \to \overline{\F}$, $Q: V \times W \to \overline{\F}$ defined over $\overline{\F}$, with $f,g$ non-constant and $h$ having degree at least two.
\item[(iv)] (Almost strong asymmetric expansion)  One has
$$ |\F \backslash P(A_1,A_2)| \ll |\F| \left(\frac{|A_1| |A_2|}{|\F|^{2-1/8}}\right)^{-1/2}$$
whenever $A_1, A_2$ are non-empty nonstandard subsets of $\F$.
\end{itemize}
\end{theorem}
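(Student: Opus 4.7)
The plan is to argue by contradiction: suppose that (i), (ii), and (iv) all fail, and deduce (iii). Since (i) and (ii) fail, Theorem~\ref{expand-thm-modas-nonst} already guarantees that $P$ is at least a moderate asymmetric expander. Negating (iv) yields non-empty nonstandard sets $A_1, A_2 \subset \F$ with
$$|\F \setminus P(A_1, A_2)| \ggg |\F|(|A_1||A_2|/|\F|^{2-1/8})^{-1/2},$$
so in particular $|A_1||A_2| \ggg |\F|^{2-1/8}$, which brings the moderate-expansion regime into play.

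The reduction to an energy estimate is classical: setting $S(c) := |\{(x_1,x_2) \in A_1 \times A_2 : P(x_1,x_2) = c\}|$ and $E := \sum_c S(c)^2$, the Cauchy--Schwarz bound $|P(A_1, A_2)| \geq (|A_1||A_2|)^2 / E$ lets one rearrange the failure of (iv) into the requirement that $E$ exceed its heuristic value $(|A_1||A_2|)^2/|\F|$ by $\ggg (|A_1||A_2|)^{3/2}|\F|^{-1/16}$. Now $E$ counts the $\F$-points of the three-dimensional variety $V := \{(x_1, x_2, y_1, y_2) \in \overline{\F}^4 : P(x_1, x_2) = P(y_1, y_2)\}$ lying in $A_1 \times A_2 \times A_1 \times A_2$. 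Following the strategy sketched in the introduction, one controls $E$ via the iterated algebraic regularity lemma (Theorem~\ref{threg-higher}) applied to the auxiliary four-dimensional variety $W_P \subset \overline{\F}^4$, namely the Zariski closure of the image of the Cauchy--Schwarz map $\Phi(x_1, x_2, y_1, y_2) := (P(x_1, x_2), P(x_1, y_2), P(y_1, x_2), P(y_1, y_2))$; the failure of (i) and (ii) is precisely what ensures $\dim W_P = 4$. Combined with the Lang--Weil counts, the resulting regularity decomposition produces the matching upper bound $E \leq (|A_1||A_2|)^2/|\F| + O((|A_1||A_2|)^{3/2}|\F|^{-1/16})$ whenever the irreducible components of $W_P$ project surjectively onto every coordinate, contradicting the assumed failure of (iv).

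The principal obstacle is therefore the remaining scenario, in which some irreducible component of $W_P$ (equivalently of $V$) projects non-surjectively to a coordinate, or concentrates on a proper definable subset of $\F^4$; one must convert such a geometric degeneracy into the polynomial identity of (iii). The strategy is to analyse the Galois monodromy of the projection of the offending component to $\F^2$ via the \'etale fundamental group of a suitable open subset of $\F^2$, which is topologically finitely generated thanks to the nonstandard reduction to characteristic zero. A non-trivial covering factor arising in this analysis necessarily factors through a morphism $h$ of degree at least two, and Riemann surface arguments applied to the pre-image curves (which may carry positive genus, thereby forcing the generality of (iii)) then furnish the irreducible affine curves $V, W$, defined over a bounded-degree extension of $\F$, together with the morphisms $f, g, Q$ needed to realise the identity $P(f(x_1), g(x_2)) = h(Q(x_1, x_2))$ on $V \times W$; boundedness of $\deg P$ keeps the complexities of $V, W, f, g, Q, h$ bounded. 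The need to allow $V, W$ to be curves of positive genus and to work over an algebraic extension of $\F$ is what accounts for the somewhat involved form of (iii), and this translation from a quantitative geometric degeneracy to a qualitative polynomial identity is the technical heart of the proof.
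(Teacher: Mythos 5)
Your overall shape — (i) and (ii) fail, so $W_P$ has full dimension; a Cauchy--Schwarz/energy argument plus the iterated regularity lemma controls $|P(A_1,A_2)|$; and the only way to evade the strong bound is an algebraic degeneracy, which one converts to (iii) via Riemann-surface-type analysis of curves — is in the right spirit, but the way you isolate the obstruction is not the one the paper uses, and the gap it leaves is real. The point is that having $W_P$ Zariski dense in $\overline{\F}^4$ (equivalently every component of $W_P$ ``projecting well'') is exactly the content of (i) and (ii) failing, and this already gives \emph{moderate} expansion via Theorem~\ref{expand-thm-modas-nonst}; it does \emph{not} force the almost-strong bound in (iv). The obstruction to (iv) that the paper identifies is a genuinely different degeneracy: even when $W_P$ is dense, the regularity decomposition can leave a piece $U_j$ of $U(\F)$ on which the density $\sigma_{ij}$ of the fiber graph of $P$ vanishes. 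This is exactly what Theorem~\ref{expand2-thm} (the ``second expansion dichotomy'') captures, and its option (ii) --- there exist dominant maps $f,g,h$ from curves $V',W',U'$ such that $\{(v',w',u'):P(f(v'),g(w'))=h(u')\}$ is \emph{reducible} --- is the intermediate algebraic statement you would need. Your proposal never formulates this, so the energy estimate in your sketch cannot close the case distinction: you argue ``either the upper bound holds or a component projects badly,'' but the latter alternative is already excluded by the failure of (i)/(ii) and does not produce (iii).

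The second difficulty is how one gets from the reducibility of that variety to a pencil $h\circ Q$ with $\deg h\ge 2$. The paper's crucial tool here is Bertini's second theorem: the linear system $\{P(f(v'),g(w'))-t\}_t$ on $V'\times W'$ has no fixed component and is reducible, hence is composite with a pencil, furnishing $Q$ and $h$ with $\deg h\ge 2$. Your proposal replaces this with an appeal to ``Galois monodromy via the \'etale fundamental group.'' In the paper, the \'etale fundamental group (topological finite generation, van Kampen, etc.) is used in proving the algebraic regularity lemma itself, not in this step; the passage to (iii) in Section~\ref{second-sec} is carried out with Bertini's theorem followed by a genus-by-genus Riemann surface analysis (de Franchis, Tamme, Riemann--Hurwitz, elliptic-curve isogeny considerations). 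Your vaguer monodromy argument would at best recover the existence of a degree-$\ge 2$ factorization of a covering map, but you would then need to show it can be realized by \emph{polynomial} maps from irreducible \emph{affine} curves, and it is precisely the compactification/genus and Riemann--Roch arguments that the paper uses to arrange this; your sketch asserts these furnish $V,W,f,g,Q$ but does not say how. In short: you are missing both the specific statement of Theorem~\ref{expand2-thm}(ii) (the right ``degenerate alternative'') and Bertini's theorem (the right mechanism for extracting $h$ of degree $\ge 2$), and these are the two ideas that carry the proof.
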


The derivations of Theorem \ref{expand-thm-weak}, \ref{expand-thm-ass} from Theorem \ref{expand-thm-weak-nonst}, \ref{expand-thm-ass-nonst} are closely analogous to the derivation of Theorem \ref{expand-thm-modas} from Theorem \ref{expand-thm-modas-nonst} and are omitted.

Finally, Lemma \ref{algreg} also follows from a nonstandard counterpart:

\begin{lemma}[Algebraic regularity lemma, nonstandard formulation]\label{algreg-nonst}  Let $\F$ be a nonstandard finite field of (external) characteristic zero, let $V, W$ be non-empty definable subsets over $\F$, and let $E \subset V \times W$ be another definable set.  Then there exists partitions $V = V_1 \cup \ldots \cup V_a$, $W = W_1 \cup \ldots \cup W_b$ into a (standard) finite number of definable sets, with the following properties:
\begin{itemize}
\item (Largeness) For all $i \in \{1,\ldots,a\}$ and $j \in \{1,\ldots,b\}$, one has $|V_i| \gg |V|/C$ and $|W_j| \gg |W|/C$.  
\item ($|\F|^{-1/4}$-regularity)  For all $(i,j) \in \{1,\ldots,a\} \times \{1,\ldots,b\} \backslash I$, and all $A \subset V_i, B \subset W_j$, one has
$$ \left| |E \cap (A \times B)| - d_{ij} |A| |B|\right| \ll |\F|^{-1/4} |V_i| |W_j|$$
where $d_{ij} := \frac{|E \cap (V_i \times W_j)|}{|V_i| |W_j|}$.
\end{itemize}
\end{lemma}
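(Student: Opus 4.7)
The plan is to prove the lemma by first reducing to a constructible setting, then using Lang--Weil estimates combined with \'etale fundamental group input to obtain a refined $\F$-point count, and finally converting a second-moment estimate into the regularity conclusion via Cauchy--Schwarz.

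First, I would reduce to the case of constructible sets. By quantifier elimination-type results for pseudo-finite fields (or, equivalently, by induction on the complexity of the defining formulas, partitioning at each quantifier), one can partition $V$, $W$, and $E$ into boundedly many pieces, each of which is (after a bounded-degree base change to an extension of $\F$) the $\F$-points of a constructible set defined over $\overline{\F}$. Stratifying by dimension and discarding lower-dimensional strata --- which contribute only $O(|\F|^{-1})$ relative error to the counts of interest --- I may assume that $V$ and $W$ are geometrically irreducible constructible varieties defined over $\F$ and that $E$ is a constructible subset of $V \times W$ whose top-dimensional components are identified.

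Second, I would analyse the fiber structure of the projection $\pi_V: E \to V$. On an open dense subvariety $V^\circ$ of $V$, the fiber $E_v = \pi_V^{-1}(v)$ has a fixed dimension and a fixed number of geometrically irreducible top-dimensional components. These components form an \'etale cover of $V^\circ$, corresponding to a finite set $S$ on which the \'etale fundamental group $\pi_1(V^\circ)$ acts; because the characteristic is zero, this group is topologically finitely generated, so $S$ is finite and the action factors through a finite quotient. The Frobenius $\Frob_\F$ then singles out, for each $v \in V^\circ(\F)$, a conjugacy class in $\pi_1(V^\circ)$, and hence a permutation of $S$; the number of fixed points of this permutation controls, via Lang--Weil applied to the fiber $E_v$, the quantity $|E_v(\F)|$ up to $O(|\F|^{-1/2})$ relative error. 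I would partition $V$ according to this conjugacy class to produce finitely many pieces $V_1,\ldots,V_a$, each of which is first-order definable (through $\F$-point counts on auxiliary fiber products) and each of which has size $\gg |V|$ by a Chebotarev-type application of Lang--Weil. An analogous partitioning would be performed on $W$ via $\pi_W: E \to W$, and then refined to the common partition compatible with both.

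Third, I would establish regularity on each piece $V_i \times W_j$ by counting the $4$-cycle variety
$$ \{ (v, v', w, w') \in V_i \times V_i \times W_j \times W_j : (v,w), (v,w'), (v',w), (v',w') \in E \}.$$
This is again a constructible set, whose top-dimensional components and their Frobenius stability can be computed from Step 2. Lang--Weil then yields the $4$-cycle count as $d_{ij}^4 |V_i|^2 |W_j|^2 (1 + O(|\F|^{-1/2}))$, which by the standard counting-lemma / Cauchy--Schwarz argument produces $L^2$-control on the codegree function $w \mapsto |\{v \in V_i : (v,w) \in E\}|$; a second application of Cauchy--Schwarz then delivers the $O(|\F|^{-1/4})$-regularity estimate \eqref{elab} uniformly for all $A \subset V_i$ and $B \subset W_j$ (the square root coming from Cauchy--Schwarz accounting for the decay from $|\F|^{-1/2}$ to $|\F|^{-1/4}$). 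The main obstacle is Step 2: ensuring that the partition of $V$ by Frobenius conjugacy class is genuinely first-order \emph{definable} over $\F$ with bounded complexity, since a priori the conjugacy class is a topological invariant of $\pi_1(V^\circ)$ rather than something visible in the language of fields. Overcoming this requires reformulating the conjugacy class data in terms of $\F$-point counts on concretely-defined fiber products (which are first-order expressible) and exploiting the finite generation of $\pi_1$ in characteristic zero, so that only boundedly many conjugacy classes appear --- it is here that the hypothesis of (external) characteristic zero, rather than merely large cardinality, is essential.
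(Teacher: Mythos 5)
Your overall strategy — quantifier elimination to reduce to constructible sets, Lang--Weil to relate $\F$-point counts to geometrically irreducible components defined over $\F$, the \'etale fundamental group to track how those components vary, topological finite generation (from characteristic zero) to obtain a finite partition, and a Cauchy--Schwarz finish — collects the right ingredients and matches the paper's broad outline. The difficulty is that your Step 2 works with the wrong object, and your Step 3 then assumes something that would actually need to be proved.

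In Step 2 you analyse the one-variable slices $E_v$ (and $E_w$): the component structure of the fibre of $\pi_V : E \to V$, the Frobenius permutation of that component set, and a partition of $V$ designed to make $v \mapsto |E_v(\F)|$ essentially constant. This controls only the degree function. Regularity is a statement about \emph{codegrees}: one must control the two-variable function $\mu(w,w') = |\{v : (v,w),(v,w') \in E\}|$, since bounded degree fluctuation gives no pseudorandomness at all. The paper's Proposition~\ref{First-reg} does exactly this, and the heart of its proof (Proposition~\ref{Four-reg}) is showing that the count $c(w,w')$ of Frobenius-stable top-dimensional components of the fibre product $U_w \times_V U'_{w'}$ can be made generically constant in the \emph{pair} $(w,w')$. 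That requires a genuinely new idea absent from your proposal: after arranging things to be finite \'etale, one needs the \'etale van Kampen theorem (Theorem~\ref{evk}) together with the codimension-$\geq 2$ insensitivity of $\pi_1$ (Lemma~\ref{highd}) to show that $\pi_1(\phi_V(U_w) \cap \phi_V(U'_{w'}),p)$ surjects onto the fibre product of $\pi_1(V' \backslash \Sigma_{1,w},p)$ and $\pi_1(V' \backslash \Sigma'_{1,w'},p)$ over $\pi_1(V',p)$. It is precisely this decoupling of the $w$-- and $w'$--monodromy that lets one partition $W$ and $W'$ \emph{separately} and still control the two-variable quantity $c(w,w')$. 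Nothing in your Step 2 (which never considers the $w,w'$ interaction) produces this; consequently your Step 3 claim that ``the $4$-cycle count \dots can be computed from Step 2'' is a gap rather than a consequence. (The paper deliberately avoids analysing the full $4$-cycle variety over $V^2 \times W^2$; it instead takes all moments $\sum_v \nu_{w,w'}(v)^k$ of the two-point fibre count, each of which is a Lang--Weil count on a fibre product over $V$ alone, where the van Kampen machinery applies cleanly.)

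There is also a quantitative problem in Step 3. A $4$-cycle count of $d^4|V_i|^2|W_j|^2\bigl(1+O(|\F|^{-1/2})\bigr)$ gives, by the Gowers/Chung--Graham--Wilson counting lemma, a discrepancy bound with the \emph{fourth root} of the $4$-cycle error, i.e.\ only $O(|\F|^{-1/8})$-regularity, not $O(|\F|^{-1/4})$. (Equivalently: that $4$-cycle count gives only $L^2$ control of the codegree with relative error $|\F|^{-1/4}$, and a further Cauchy--Schwarz loses another square root.) The paper achieves $O(|\F|^{-1/4})$ because Proposition~\ref{First-reg} delivers \emph{pointwise} control $\mu(w,w') = (c + O(|\F|^{-1/2}))|V|$ outside a sparse exceptional set, which is strictly stronger than $L^2$ control; a single Cauchy--Schwarz then suffices. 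So even granting Step 3 as written, it would yield a weaker exponent than the statement claims.
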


Again, we omit the derivation, as it is closely analogous to the previous derivations.

It remains to establish Theorem \ref{expand-thm-modas-nonst}, Theorem \ref{expand-thm-ass-nonst}, Theorem \ref{expand-thm-weak-nonst}, and Lemma \ref{algreg-nonst}.  This will be the focus of the remainder of the paper, with the latter lemma being used as a crucial tool to prove the first three theorems.

As mentioned previously, there are two main reasons why we move to a nonstandard framework.  The first is that one no longer has to explicitly keep track of the complexity of various definable sets or algebraic varieties that one will shortly encounter in the argument.  This allows one to use many existing results from algebraic geometry without modification, as these results are usually phrased qualitatively rather than quantitatively, and so do not come with explicit bounds on complexity.  The other reason is that now that we have passed to a field of characteristic zero, many aspects of algebraic geometry become simpler; in particular, varieties are generically smooth, and \'etale fundamental groups are topologically finitely generated. Furthermore, we can take advantage of embeddings into the complex field $\C$ (Lefschetz principle) in order to exploit the theory of Riemann surfaces.  It would be rather difficult (though not entirely impossible) to replicate these facts in the original setting of finite fields of large characteristic.

\section{Definable sets and Lang-Weil type bounds}

In order to prove the algebraic regularity lemma, we will need some results in the model theory literature \cite{ax}, \cite{kiefe}, \cite{fried}, \cite{cherlin}, \cite{cher2}, \cite{kow} on definable subsets of nonstandard finite fields.   

We will need the fact that definability over a nonstandard finite field $\F$ can be detected using the Frobenius map:

\begin{lemma}\label{ose}  Let $\F$ be a nonstandard finite field, with algebraic closure $\overline{\F}$, and let $V \subset \overline{\F}^n$ be a quasiprojective variety.  Then $V$ is defined over $\F$ if and only if it is invariant with respect to the action of the nonstandard Frobenius map $\Frob_\F$ (which acts componentwise on $\overline{\F}^n$).
\end{lemma}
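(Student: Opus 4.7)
The plan is to prove this by transferring to the standard components via {\L}o\'s's theorem and then invoking the classical Galois-descent criterion for varieties over a finite field. The forward direction is essentially immediate: if $V$ is the common zero locus of external polynomials $P_1,\ldots,P_m$ with coefficients in $\F$, then $\Frob_\F$ fixes every coefficient and is a ring automorphism of $\overline{\F}$ (automorphism because each $\Frob_{\F_\n}$ is a bijection on $\overline{\F_\n}$, so by {\L}o\'s's theorem $\Frob_\F$ is a bijection on $\overline{\F}$), hence it permutes the zero locus of each $P_i$ and preserves $V$.

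For the converse, I would write $V = \prod_{\n\to\alpha} V_\n$ as an ultraproduct of standard quasiprojective varieties $V_\n\subset \overline{\F_\n}^n$ of uniformly bounded complexity; this is possible because $V$ is cut out by finitely many external polynomials of bounded degree, whose coefficients are ultralimits of elements lying in bounded-degree extensions of $\F_\n$. {\L}o\'s's theorem then translates the hypothesis that $\Frob_\F$ preserves $V$ into the statement that $\Frob_{\F_\n}$ preserves $V_\n$ for all $\n$ in an $\alpha$-large set.

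At this point I would apply the classical fact: a standard quasiprojective variety $V_\n\subset \overline{\F_\n}^n$ is defined over $\F_\n$ if and only if it is $\Frob_{\F_\n}$-stable. To see this, choose a finite extension $\F_{\n,d}/\F_\n$ over which the original defining polynomials of $V_\n$ live. The Galois group $\mathrm{Gal}(\F_{\n,d}/\F_\n)$ is cyclic, generated by the restriction of $\Frob_{\F_\n}$, and each graded piece of the ideal $I(V_\n)\subset \F_{\n,d}[x_1,\ldots,x_n]$ is a finite-dimensional $\F_{\n,d}$-subspace; $\Frob_{\F_\n}$-stability of these subspaces, together with standard Galois descent for vector spaces, then yields an ideal of definition with coefficients in $\F_\n$.

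The one technical point to monitor is uniformity: the descended defining polynomials for $V_\n$ over $\F_\n$ must have degree and number bounded independently of $\n$. This is automatic, since the descent procedure operates on the finitely many graded pieces of $I(V_\n)$ generated by the original defining polynomials, and dimension is preserved under Galois descent. Taking ultralimits of these uniformly bounded families then produces external polynomials with coefficients in $\F$ whose common zero locus is $V$, completing the proof. I do not anticipate any genuine obstacle: the lemma is essentially the standard Galois-descent principle for finite fields, transported through {\L}o\'s's theorem, with the ultraproduct framework supplying the necessary uniformity for free.
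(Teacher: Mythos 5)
Your proof is correct, but it takes a genuinely different route from the paper's. The paper handles the nontrivial direction entirely at the nonstandard level: $\Frob_\F$-invariance of $V$ passes to the ideal $I(V) \subset \overline{\F}[x_1,\ldots,x_n]$, and the uniqueness of the reduced Gr\"obner basis of $I(V)$ with respect to a fixed monomial ordering forces that basis to be $\Frob_\F$-invariant; since the basis elements are monic with distinct leading monomials, each one is individually fixed, so its coefficients lie in the fixed field $\F$, and one is done in a few lines (the quasiprojective case then follows by applying the affine case to $\overline V$ and $\overline V\setminus V$). You instead transfer to the standard components via {\L}o\'s's theorem, invoke classical Galois descent for finite fields on each $V_\n$ (degree filtration of $I(V_\n)$ plus descent for finite-dimensional $\F_{\n,d}$-vector spaces under the cyclic Frobenius action), verify uniformity in $\n$, and take ultralimits. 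Both arguments are implementations of Galois descent; the paper's Gr\"obner-basis formulation packages the descent into one canonical object and sidesteps transfer entirely, while your route exhibits the nonstandard lemma as a direct consequence of the standard one, at the cost of two technical points that you should make explicit. First, since $\overline{\F}$ and hence $V$ are \emph{not} nonstandard sets (they are not ultraproducts), ``$\Frob_\F$ preserves $V$'' cannot be fed to {\L}o\'s as written; you need to first rephrase the invariance as a statement about the ideal, or equivalently about the base change of $V$ to the genuinely nonstandard algebraically closed field $\prod_{\n\to\alpha}\overline{\F_\n}$, before transferring. Second, the uniformity of the degree and number of descended generators is not ``automatic'' from the defining polynomials alone, because $I(V_\n)$ is the \emph{radical} of the ideal generated by those polynomials and may require higher-degree elements; one needs an effective Nullstellensatz-type bound to cap the degree of the filtered pieces (also: ``filtered,'' not ``graded'' --- $I(V_\n)$ is not homogeneous in general, though the descent argument is unaffected). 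Neither issue is a genuine obstruction, but both would need to be spelled out to make the argument rigorous, which is roughly where the paper's Gr\"obner approach earns its keep.
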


\begin{proof}  The ``only if'' part is clear, so we focus on the ``if'' part.  First suppose that $V$ is an affine variety that is invariant under $\Frob_\F$.  Then the ideal $I(V)$ of polynomials that vanish on $V$ is also $\Frob_\F$-invariant (where $\Frob_\F$ acts on each coefficient of a given polynomial separately).  If we let $P_1,\ldots,P_m$ be the reduced Gr\"obner basis of $I(V)$ with respect to lexicographical ordering (see e.g. \cite{cox}), then this basis is unique, and is thus also $\Frob_\F$-invariant, that is to say the coefficients of the $P_i$ lie in $\F$.  Thus $V$ is defined over $\F$ as claimed.

Now suppose $V$ is a quasiprojective variety that is invariant under $\Frob_\F$.  The Zariski closure $\overline{V}$ of $V$ in $\overline{\F}^n$ is then also invariant under $\Frob_\F$, and by the preceding discussion is thus defined over $\F$; similarly for the affine variety $\overline{V} \backslash V$.  The claim follows.
\end{proof}

We will also need the fact that definable sets over nonstandard finite fields are projections of the $\F$-points of varieties, and their cardinality is comparable to a power of $|\F|$.  More precisely, we have

\begin{theorem}[Almost quantifier elimination]\label{aqe}  Let $\F$ be a nonstandard finite field, and let $E$ be a subset of $\F^n$ for some (standard) natural number $n$.  Then $\E$ is a definable set if and only if it can be expressed as the intersection of finitely many sets, each of the form
\begin{equation}\label{vaf}
 \{ x \in \F^n \mid \exists t \in \F: P(x,t)=0\}
\end{equation}
for some polynomial $P: \F^n \times \F \to \F$ with coefficients in $\F$.  

Furthermore, if $E$ is definable, the Zariski closure $\overline{E}$ of $E$ in $\overline{F}^n$ is the union of finitely many geometrically irreducible affine varieties defined over $\F$, and the nonstandard cardinality $|E|$ of $E$ is given by
\begin{equation}\label{asy}
 |E| = (\sigma + O(|\F|^{-1/2})) |\F|^{\dim(\overline{E})}
\end{equation}
for some standard positive rational number $\sigma$ (with the convention that $|\F|^{-\infty}=0$).  In particular, we have
\begin{equation}\label{fee}
 |\F|^{\dim(\overline{E})} \ll |E| \ll |\F|^{\dim(\overline{E})}.
\end{equation}
\end{theorem}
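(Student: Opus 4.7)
The plan is to handle the three assertions of the theorem---almost quantifier elimination, structure of the Zariski closure, and the Lang--Weil--type cardinality asymptotic---in sequence, relying on the cited model theory literature for the deepest inputs.

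\textbf{Step 1 (Almost quantifier elimination).} The ``if'' direction is trivial: any set of the form \eqref{vaf} is definable, and definability is preserved under finite intersection. For the ``only if'' direction, my plan is to invoke the Ax--Kiefe quantifier elimination theorem for the theory of pseudo-finite fields (\cite{ax}, \cite{kiefe}): modulo the auxiliary predicates ``$\exists t: P(x,t)=0$'', the theory of pseudo-finite fields admits quantifier elimination, so every definable set arises as a Boolean combination of such predicates. A mild manipulation---pushing unions inside the existential quantifier via the identity $\{\exists t: P_1(x,t)=0\} \cup \{\exists t: P_2(x,t)=0\} = \{\exists t: (P_1P_2)(x,t)=0\}$, and using standard pseudo-finite field tricks to rewrite negations of existential predicates as further existential predicates---then puts the description in the intersection form claimed. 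Transfer between the standard and nonstandard settings is made via {\L}os's theorem: a subset of $\F^n$ is definable in our sense precisely when it arises as an ultraproduct of definable subsets of bounded complexity of the $\F_\n^n$, so the structural description survives the ultralimit.

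\textbf{Step 2 (Zariski closure structure).} With Step~1 in hand, each generator $\{x \in \F^n : \exists t \in \F: P(x,t)=0\}$ sits inside the $\F$-points of the projection $\pi(V)$, where $V = \{(x,t) \in \overline{\F}^{n+1}: P(x,t)=0\}$ and $\pi$ projects onto the first $n$ coordinates. By Chevalley's theorem---the qualitative content underlying Proposition~\ref{proj}---$\pi(V)$ is a constructible subset of $\overline{\F}^n$, hence its Zariski closure is a finite union of geometrically irreducible affine varieties. Since $P$ has $\F$-coefficients, $V$ and $\pi(V)$ are invariant under the nonstandard Frobenius $\Frob_\F$, so the Zariski closure $\overline{E}$ is $\Frob_\F$-invariant and therefore defined over $\F$ by Lemma~\ref{ose}. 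To refine this to the statement that each individual geometric component is defined over $\F$, I would argue that $\Frob_\F$ permutes the components of $\overline{E}$, and that a component not fixed by $\Frob_\F$ has its $\F$-points confined to its (lower-dimensional) intersection with its Frobenius translates; the Lang--Weil analysis of Step~3 then shows such components cannot arise in the Zariski closure of an honest set of $\F$-points, so every component that survives is $\Frob_\F$-fixed and hence defined over $\F$.

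\textbf{Step 3 (Lang--Weil asymptotic).} For \eqref{asy}, the plan is to combine the decomposition of Step~2 with the Lang--Weil estimate \cite{lang}. For each top-dimensional geometrically irreducible component $W$ of $\overline{E}$ defined over $\F$, Lang--Weil (transferred to the nonstandard setting by {\L}os's theorem) yields $|W(\F)| = |\F|^{\dim W} + O(|\F|^{\dim W - 1/2})$. The leading constant $\sigma$ in \eqref{asy} then emerges by inclusion--exclusion over the top-dimensional components, weighted by the generic count of $\F$-fibers of the projections from Step~1; these generic fiber counts are themselves rational, being averages of integers over finite Galois orbits acting on the generic fiber, which gives $\sigma \in \mathbb{Q}_{>0}$. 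The contribution of the lower-dimensional components of $\overline{E}$, and of the discrepancy between $E$ and $\overline{E}\cap \F^n$, is absorbed into the error $O(|\F|^{\dim\overline{E}-1/2})$ by an induction on $\dim\overline{E}$. The principal obstacle, and the reason for invoking the literature rather than reproving by hand, is the careful bookkeeping of how the Boolean combinations from Step~1 pass through Lang--Weil to produce a \emph{rational} leading coefficient with the sharp error exponent $1/2$; this is done uniformly in the Chatzidakis--van den Dries--Macintyre framework (\cite{cherlin}, \cite{cher2}) and the closely related work of Fried--Sacerdote \cite{fried} and Kowalski \cite{kow}, which I would cite wholesale.
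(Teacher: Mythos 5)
Your proposal is correct and takes essentially the same route as the paper: the paper's entire proof is a one-line citation of Chatzidakis--van den Dries--Macintyre \cite{cher2} for the quantifier elimination and the Lang--Weil asymptotic with rational leading coefficient, plus Lemma~\ref{ose} for the Frobenius-invariance of $\overline{E}$, which is exactly the literature you invoke. One small economy you could make: in Step~2 you do not need the Lang--Weil bound from Step~3 to rule out components of $\overline{E}$ not fixed by $\Frob_\F$---if $W$ is such a component then $E\cap W$ is Zariski-dense in $W$ but sits inside the proper subvariety $W\cap\Frob_\F(W)$, which is already a contradiction without any point counting.
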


\begin{proof} These are the main results of \cite{cher2}.  The fact that $\overline{E}$ consists only of varieties defined over $\F$ follows from Lemma \ref{ose}, since $E$ and hence $\overline{E}$ is Frobenius-invariant.  The fact that the exponent of $\F$ is the dimension of the Zariski closure of $E$ is \cite[Proposition 4.9]{cher2}.
\end{proof}

We illustrate the almost quantifier elimination \eqref{aqe} with some simple examples.  The space of quadratic residues of $\F$ that include $0$ can be expressed as
$$ \{ x \in \F \mid \exists t \in F: x-t^2 = 0 \},$$
while the set of non-zero elements of $F$ can be expressed as
$$ \{ x \in \F \mid \exists t \in F: xt-1 = 0 \},$$
the singleton set $\{0\}$ can be expressed as
$$ \{ x \in \F \mid \exists t \in F: x = 0 \},$$
and the set of nonquadratic residues of $\F$ (again including $0$) can be expressed as
$$ \{ x \in \F \mid \exists t \in F: ax-t^2 = 0 \},$$
where $a$ is an invertible quadratic non-residue.  By intersecting these sets together, we can then create other definable sets, such as the set of all non-zero quadratic residues.  These examples also show that the quantity $\sigma$ appearing in \eqref{asy} need not be an integer (for instance, in the case of quadratic residues, $\sigma$ is $1/2$ when the characteristic is not equal to $2$).

Theorem \ref{aqe} gives the order of magnitude on the cardinality $|E|$ of a definable set, but does not specify exactly what the rational constant $\sigma$ in the asymptotic \eqref{asy}.  The following bound computes this constant in the case of a quasiprojective variety.

\begin{lemma}[Lang-Weil bound]\label{sz}  Let $\F$ be a nonstandard finite field, and let $V \subset \overline{\F}^n$ be a quasiprojective variety defined over $\overline{\F}$.  Then one has
$$ |V(\F)| = (c + O(|\F|^{-1/2})) |\F|^{\dim(V)}$$
where $c$ are the number of top-dimensional geometrically irreducible components of $V$ (i.e. components of dimension exactly $\dim(V)$) which are defined over $\F$.  
\end{lemma}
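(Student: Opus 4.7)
The plan is to reduce to the classical Lang--Weil theorem \cite{lang}, applied componentwise, by combining ultraproduct transfer with a Galois/Frobenius argument that kills the components not defined over $\F$.

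First I would decompose $V = V^{(1)} \cup \cdots \cup V^{(r)} \cup V'$, where $V^{(1)}, \ldots, V^{(r)}$ are the top-dimensional geometrically irreducible components (of dimension $d := \dim V$) and $V'$ is the union of the remaining lower-dimensional components. Since $V'$ has dimension at most $d-1$, an inductive application of \eqref{fee} from Theorem \ref{aqe} (or a direct bound) gives $|V'(\F)| \ll |\F|^{d-1}$. Similarly, for $i \neq j$ the pairwise intersection $V^{(i)} \cap V^{(j)}$ is a strict subvariety of each geometrically irreducible $V^{(j)}$, hence has dimension at most $d-1$, and so contributes $O(|\F|^{d-1})$ to the count. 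By inclusion-exclusion, it therefore suffices to establish, for each $j = 1,\ldots,r$, that $|V^{(j)}(\F)| = \epsilon_j |\F|^d + O(|\F|^{d-1/2})$, where $\epsilon_j = 1$ if $V^{(j)}$ is defined over $\F$ and $\epsilon_j = 0$ otherwise.

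Next, for each $V^{(j)}$ which is defined over $\F$, I would write $V^{(j)} = \prod_{\n \to \alpha} V^{(j)}_\n$ as an ultraproduct of geometrically irreducible quasiprojective varieties of uniformly bounded complexity defined over $\F_\n$, and apply the classical Lang--Weil estimate to each $V^{(j)}_\n$. Since the Lang--Weil error constant depends only on the dimension, degree, and ambient dimension — all uniformly bounded in $\n$ — taking ultralimits via {\L}os's theorem yields $|V^{(j)}(\F)| = |\F|^d + O(|\F|^{d-1/2})$, giving the desired $\epsilon_j = 1$ contribution. For each $V^{(j)}$ not defined over $\F$, Lemma \ref{ose} tells us that $\Frob_\F(V^{(j)})$ is a geometrically irreducible top-dimensional component of $V$ distinct from $V^{(j)}$; hence the intersection $V^{(j)} \cap \Frob_\F(V^{(j)})$ is a strict subvariety of the irreducible $V^{(j)}$ and so has dimension at most $d - 1$. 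But every $\F$-point of $V^{(j)}$ is $\Frob_\F$-fixed, hence lies in $V^{(j)} \cap \Frob_\F(V^{(j)})$, so $|V^{(j)}(\F)| \ll |\F|^{d-1}$, giving $\epsilon_j = 0$. Summing over $j$ produces the claimed main term $c = \#\{j : V^{(j)} \text{ is defined over } \F\}$.

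The principal obstacle is really just the transfer of the classical Lang--Weil theorem to the nonstandard setting with uniform constants; this is handled transparently because the external hypothesis on $V$ means it is an ultraproduct of varieties of bounded complexity, and the Lang--Weil bounds in the standard setting are uniform in the complexity. The one subtle point worth checking is that the decomposition into geometrically irreducible components is itself ``external'', i.e., can be realized as the ultraproduct of the corresponding decompositions of $V_\n$; this follows from the uniform bound on degree (which controls the number and complexity of components) together with {\L}os's theorem, and ensures that the Frobenius action on components at the nonstandard level corresponds to its action on components at each finite level.
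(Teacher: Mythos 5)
Your proposal is correct and follows essentially the same route as the paper: reduce to the geometrically irreducible case by discarding lower-dimensional pieces, use Lemma \ref{ose} and the Frobenius-invariance of $\F$-points to kill components not defined over $\F$, and appeal to the classical Lang--Weil estimate (transferred via {\L}os's theorem) for components defined over $\F$. The only cosmetic difference is that you phrase the reduction via inclusion-exclusion over all top-dimensional components and explicitly flag the uniformity-of-constants and external-decomposition points, whereas the paper simply restricts attention to a single irreducible component and leaves the ultraproduct transfer implicit.
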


\begin{proof}  By Theorem \ref{aqe} (or by cruder estimates, such as \cite[Lemma 1]{lang}), any affine variety of dimension strictly less than $\dim(V)$ has at most $O(|\F|^{\dim(V)-1})$ $\F$-points.  Thus, by replacing $V$ by its Zariski closure and then removing all lower dimensional components, we may assume without loss of generality that $V$ is a geometrically irreducible affine variety. 

If $V$ is not defined over $\F$, then by Lemma \ref{ose}, $\Frob_\F(V)$ is a different variety from $V$, and so $V \cap \Frob_\F(V)$ has dimension strictly less than $\dim(V)$.  But this variety contains all the $\F$ points of $\F$, and so $V$ only has at most $O(|\F|^{\dim(V)-1})$ $\F$-points, and the claim follows in this case (with $c=0$).

Finally, if $V$ is geometrically irreducible and defined over $\F$, the claim follows from \cite[Theorem 1]{lang}.
\end{proof}

\section{Proof of regularity lemma}

We now prove Lemma \ref{algreg-nonst}.  The first step is to pass from the set $E$ to a more tractable counting function (essentially the ``square'' of $E$), as follows.

\begin{proposition}[First reduction]\label{First-reg}  Let $\F$ be a nonstandard finite field of characteristic zero, let $V, W$ be definable sets over $\F$ with $\overline{V}, \overline{W}$ geometrically irreducible, and let $E$ be a definable subset of $V \times W$.  Let $\mu: W \times W \to \ultra \N$ denote the nonstandard counting function defined by the formula
\begin{equation}\label{mud}
 \mu(w,w') := | \{ v \in \overline{V}(\F) \mid (v,w), (v,w') \in E \}|
 \end{equation}
  for all $w,w' \in W$.
Then one can partition $W$ into a (standard) finite number of definable subsets $W_1,\ldots,W_m$ such that, for any $1 \leq i,j \leq m$, there is a standard rational $c_{ij}$ such that $\mu(w,w') = (c_{ij} + O( |\F|^{-1/2})) |V|$ for all but $O( |\F|^{-1/2} |W|^2 )$ of the pairs $(w,w') \in W_i \times W_j$.
\end{proposition}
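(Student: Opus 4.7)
The plan is to analyze $\mu(w,w')$ by passing to the Zariski closure of the \emph{doubled} definable set
\[
\tilde E := \{(v,w,w') \in V \times W \times W : (v,w),(v,w') \in E\},
\]
which is itself definable (with bounded complexity since it is obtained from $E$ by a first-order formula) and for which $\mu(w,w') = |\tilde E_{(w,w')}|$, where $\tilde E_{(w,w')}$ denotes the fibre over $(w,w')$. By Theorem \ref{aqe}, the Zariski closure $\overline{\tilde E}$ in $\overline V \times \overline W \times \overline W$ decomposes as a finite union of geometrically irreducible affine varieties $X_1,\ldots,X_s$, each defined over $\F$; moreover the same theorem shows that the symmetric difference between $\tilde E$ and $\overline{\tilde E}(\F) \cap (V\times W \times W)$ lies in a variety of strictly smaller dimension, hence contributes only negligible terms to the $\F$-point count.

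For each component $X_\alpha$ I would apply Proposition \ref{proj} to the projection $\pi_\alpha : X_\alpha \to \overline W \times \overline W$: this produces an exceptional subvariety $\Sigma_\alpha \subsetneq \pi_\alpha(X_\alpha)$ of dimension strictly less than $\dim \pi_\alpha(X_\alpha)$, outside of which every fibre $(X_\alpha)_{(w,w')}$ has the constant dimension $e_\alpha := \dim X_\alpha - \dim \pi_\alpha(X_\alpha)$. Components with $e_\alpha < \dim V$ contribute at most $O(|\F|^{\dim V - 1}) = O(|\F|^{-1})|V|$ to $\mu$ and can be absorbed into the error term, so only the components with $e_\alpha = \dim V$ govern the main term.

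For such main-term components, the Lang-Weil estimate (Lemma \ref{sz}) applied to the fibre gives
\[
\bigl|(X_\alpha)_{(w,w')}(\F)\bigr| = \bigl(c_\alpha(w,w') + O(|\F|^{-1/2})\bigr)|\F|^{\dim V},
\]
where $c_\alpha(w,w')$ counts those top-dimensional geometrically irreducible components of the fibre that are $\Frob_\F$-invariant. This integer-valued function takes only finitely many values as $(w,w')$ varies, since the set of geometric components of the generic fibre is finite and carries an action of the étale fundamental group of a suitable open $U' \subset \pi_\alpha(X_\alpha)$. Here I would invoke the characteristic-zero Künneth-type decomposition $\pi_1^{\mathrm{\acute et}}(U \times U) \cong \pi_1^{\mathrm{\acute et}}(U) \times \pi_1^{\mathrm{\acute et}}(U)$ for a geometrically connected open $U \subset \overline W$, so that the Frobenius class at $(w,w')$ factorizes as a pair of Frobenius classes at $w$ and $w'$ separately. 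This permits a partition $W = W_1 \cup \cdots \cup W_m$ into definable subsets indexed by the Frobenius conjugacy classes in the (finite) image of the monodromy representation, with the property that $c_\alpha$ is constant on each product $W_i \times W_j$; summing over $\alpha$ then produces the required rational constants $c_{ij}$.

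The exceptional loci $\bigcup_\alpha \Sigma_\alpha$ have dimension strictly less than $2\dim W$, so Theorem \ref{aqe} bounds the number of $\F$-pairs lying in them by $O(|\F|^{2\dim W - 1}) = O(|\F|^{-1})|W|^2$, which is stronger than the tolerated $O(|\F|^{-1/2})|W|^2$. The main obstacle is the third paragraph: one has to set up the étale fundamental group machinery carefully, establish that the monodromy factors through a \emph{finite} quotient (where the characteristic-zero hypothesis is indispensable, as $\pi_1^{\mathrm{\acute et}}$ is topologically finitely generated only in that setting), and verify that the resulting Frobenius partition is genuinely definable and depends only on $w$ and $w'$ individually through the Künneth decomposition — all of which are precisely the algebro-geometric inputs for which the paper adopts the nonstandard formalism.
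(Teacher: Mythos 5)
Your first paragraph contains a decisive error: you assert that Theorem~\ref{aqe} shows $\tilde E$ differs from $\overline{\tilde E}(\F) \cap (V\times W\times W)$ by a set contained in a variety of strictly smaller dimension. This is false, and Theorem~\ref{aqe} makes no such claim. A definable set is generally \emph{not} close to the $\F$-points of its Zariski closure: the quadratic residues $\{x\in\F : \exists t,\, x=t^2\}$ have Zariski closure $\overline{\F}$, so the symmetric difference with $\overline{\F}(\F)=\F$ is the set of non-residues, which is of full dimension. In the present situation $E$ (hence $\tilde E$) is definable but typically not constructible, so $\mu(w,w')$ genuinely counts $\F$-points of a definable set, not of a variety, and the Lang--Weil fibre count you apply in paragraph two is not counting the right thing. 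The paper's proof deals with exactly this by using the almost quantifier elimination of Theorem~\ref{aqe} to write $E = \pi(U(\F))$ for a quasi-finite projection $\pi$ of a variety $U$, so that $\mu(w,w') = \sum_{v}\mathbf{1}_{\nu_{w,w'}(v)\neq 0}$ with $\nu_{w,w'}(v)$ a bounded fibre-count, and then expresses $\mathbf{1}_{\nu\neq 0}$ as a linear combination of the moments $\nu^k$. It is only at the level of these moments — which are honest $\F$-point counts of fibre-product varieties $U_{w,k}\times_{\overline V} U_{w',k}$ — that Lang--Weil becomes applicable.

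Even setting that aside, the third paragraph underestimates the monodromy problem. The K\"unneth isomorphism $\pi_1^{\mathrm{\acute et}}(U\times U)\cong \pi_1^{\mathrm{\acute et}}(U)\times\pi_1^{\mathrm{\acute et}}(U)$ holds for a product of geometrically connected varieties, but the open locus over which your family $X_\alpha$ becomes a finite \'etale cover is some $(\overline W\times\overline W)\setminus\Delta$ with $\Delta$ a divisor that is typically \emph{not} of the product shape $(\Sigma_1\times\overline W)\cup(\overline W\times\Sigma_2)$, so K\"unneth does not apply to its fundamental group. The paper works instead on the $V$-side: after arranging $U\to V\times W$ to be finite \'etale over its image, it fixes a basepoint $p\in V(\F)$ and studies $\pi_1(\phi_V(U_w)\cap\phi_V(U'_{w'}),p)$ as a function of $(w,w')$, decoupling it via the \'etale van Kampen theorem (Theorem~\ref{evk}) together with the observation that the ``cross term'' $\Sigma_{1,w}\cap\Sigma'_{1,w'}$ generically has codimension $\geq 2$, so that Lemma~\ref{highd} lets one delete it. That codimension-two argument, plus topological finite generation of $\pi_1(V',p)$, is what actually allows the monodromy and Frobenius data to be encoded in a finite partition of $W$ and $W'$ separately; your proposal has no substitute for it.
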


Let us now see how Proposition \ref{First-reg} implies Lemma \ref{algreg-nonst}.  This will be an application of the ``$TT^*$ method'' from harmonic analysis.  Let $F, V, W, E$ be as in Lemma \ref{algreg-nonst}.  By decomposition we may take $\overline{V}, \overline{W}$ to be geometrically irreducible. We use Proposition \ref{First-reg} to partition $W$ into finitely many definable components $W_i$ with the stated properties.  Observe that at least one of the $W_i$ must have a Zariski closure of dimension $\dim(\overline{W})$ (and thus equal to $\overline{W}$, by the irreducibility of $\overline{W}$); and if any component has Zariski closure with dimension strictly less than $\dim(\overline{W})$, it may be safely absorbed into one of the other components without affecting the conclusion of the proposition (thanks to \eqref{fee}).  Thus we may assume that all $W_i$ have the same Zariski closure as $\overline{W}$, which among other things implies that $|W_i| \gg |W|$, thanks to \eqref{fee}.  Next, by passing to just one of these components, we may assume that $m=1$, thus we may reduce without loss of generality to the case where
\begin{equation}\label{muw}
 \mu(w,w') = (c + O( |\F|^{-1/2})) |V|
 \end{equation}
for all but $O( |\F|^{-1} |W|^2 )$ pairs $(w,w') \in W \times W$, and some standard rational $c$.  We can of course assume that $W$ is non-empty, as the claim is vacuously true otherwise.

Now let $f: W \to \ultra \R$ be any nonstandard function of mean zero and bounded in magnitude by $1$, and consider the nonstandard sum
$$ \sum_{v \in \overline{V}(\F)} |\sum_{w \in W} 1_E(v,w) f(w)|^2.$$
We may rewrite this expression as
$$ \sum_{w, w' \in W} f(w) f(w') \mu(w,w').$$
Applying \eqref{muw}, we have $\mu(w,w') = (c + O( |\F|^{-1/2})) |V|$ for all but $O( |\F|^{-1/2} |W|^2 )$ pairs $(w,w')$.  For these exceptional pairs, we use the crude estimate $\mu(w,w') = O(|V|) = (c+O(1)) |V|$.  We conclude that
$$ \sum_{v \in \overline{V}(\F)} |\sum_{w \in W} 1_E(v,w) f(w)|^2 = 
\sum_{w, w' \in W} f(w) f(w') c + O( |\F|^{-1/2} |V| |W|^2 ).$$
But as $f$ was assumed to have mean zero, the first sum vanishes, and so
$$ \sum_{v \in \overline{V}(\F)} |\sum_{w \in W} 1_E(v,w) f(w)|^2 \ll |\F|^{-1/2} |V| |W|^2.$$
In particular
$$ \sum_{v \in V} |\sum_{w \in W} 1_E(v,w) f(w)|^2 \ll |\F|^{-1/2} |V| |W|^2.$$
Next, we apply Proposition \ref{First-reg} again, but with the roles of $V$ and $W$ reversed, to partition $V$ into finitely many definable components $V_1,\ldots,V_m$ such that, for any $1 \leq i,j \leq m$, there is a standard rational $c_{ij}$ such that
$$ |\{ w \in W \mid (v,w), (v',w) \in E \}| = (c_{ij} + O( |\F|^{-1/2})) |W|$$
for all $(v,v') \in V_i \times V_j$ outside of a subvariety of $\overline{V} \times \overline{V}$ of dimension strictly less than $2\dim(\overline{V})$.  As before, we may assume that each $V_i$ has Zariski closure of dimension $\dim(\overline{V})$, so that $|V_i| \gg |V|$ thanks to \eqref{fee}.  By arguing as above, we conclude that
$$ \sum_{w \in W} |\sum_{v \in V_i} 1_E(v,w) g(v)|^2 \ll |\F|^{-1/2} |V|^2 |W|$$
whenever $1 \leq i \leq m$ and $g: V_i \to \ultra \R$ is a nonstandard function of mean zero bounded in magnitude by $1$.  By Cauchy-Schwarz, we conclude that
$$ |\sum_{v \in V_i} \sum_{w \in W} g(v) f(w) 1_E(v,w)| \ll |\F|^{-1/4} |V| |W|$$
whenever $1 \leq i \leq m$ and $f: W \to \ultra \R$, $g: V_i \to \ultra \R$ are nonstandard functions bounded in magnitude by $1$, with at least one of $f, g$ having mean zero.  If we now let $A$, $B$ be arbitrary nonstandard subsets of $V_i, W$ respectively, we can decompose $1_A$ into a constant component $|A|/|V_i|$ and a mean zero component $1_A - |A|/|V_i|$, and similarly decompose $1_B$ into $|B|/|W|$ and $1_B - |B|/|W|$; applying the above estimate to three of the four resulting terms, we conclude that
$$ \sum_{v \in V_i} \sum_{w \in W} 1_A(v) 1_B(w) 1_E(v,w) = \theta_i |A| |B| + O( |\F|^{-1/4} |V| |W| )$$
where $\theta_i := |E|/|V| |W|$ is the density of $E$ in $V_i \times W$.  Note from Theorem \ref{aqe} that $\theta_i$ lies within $O(|\F|^{-1/2})$ of a standard rational, and by replacing $A,B$ with $V_i,W$ we see that $\theta_i$ lies within $O(|\F|^{-1/4})$ of $|E \cap (V_i \times W)|/|V_i| |W|$.  Lemma \ref{algreg-nonst} follows.  

It remains to establish Proposition \ref{First-reg}.  To do this, we will (after some basic reductions) use the Lang-Weil bound (Lemma \ref{sz}) to compute $\mu(w,w')$ in terms of a counting function $c(w,w')$ that counts the number of top-dimensional geometrically irreducible components of a certain variety $U_w \times_{\overline{V}} U'_{w'}$ that are defined over $\F$.

We turn to the details.  Observe that any component $E'$ of $E$ that lies in a proper subvariety of $\overline{V} \times \overline{W}$ has cardinality at most $O(|\F|^{-1} |V| |W|)$ by Theorem \ref{aqe}.  By Chebyshev's inequality, we thus see that
$$ | \{ v \in \overline{V}(\F) \mid (v,w) \in E' \}| = O( |\F|^{-1/2} |V| )$$
for all but $O(|\F|^{-1/2} |W|)$ elements $w \in W$.  From this, we see that $E'$ has a negligible impact on the conclusions of Theorem \ref{First-reg}.  Thus we may freely delete any strict subvariety of $\overline{V} \times \overline{W}$ from $E$ if we wish (i.e. we may work with generic subsets of $\overline{V} \times \overline{W}$).

By Theorem \ref{aqe}, we may write $E$ in the form
\begin{equation}\label{vw}
E = \{ (v,w) \in \overline{V}(\F) \times \overline{V}(\F) \mid \exists t_1,\ldots,t_m \in \F: P_i(v,w,t_i) = 0 \hbox{ for all } i=1,\ldots,m\}
\end{equation}
for some finite collection of polynomials $P_1,\ldots,P_m: \overline{V} \times \overline{W} \times \overline{\F} \to \overline{\F}$ defined over $\F$.  For any one of these polynomials $P_i$, consider the set of pairs $(v,w)$ for which the one-dimensional polynomial $t_i \mapsto P_i(v,w,t_i)$ vanishes.  This is a subvariety of $\overline{V} \times \overline{W}$ defined over $\F$.  If it is all of $\overline{V} \times \overline{W}$, then the polynomial $P_i$ is redundant in \eqref{vw} and can be deleted, so we may assume that it is a strict subvariety of $\overline{V} \times \overline{W}$.  Thus, by deleting all such varieties as discussed previously, we may assume that $E$ actually takes the form
\begin{equation}\label{vw-2}
E = \{ (v,w) \in \Omega(\F) \mid \exists t_1,\ldots,t_m \in \F: P_i(v,w,t_i) = 0 \hbox{ for all } i=1,\ldots,m\}
\end{equation}
where $\Omega$ is a Zariski-dense subvariety of $\overline{V} \times \overline{W}$ defined over $\F$, and the polynomials $t_i \mapsto P_i(v,w,t_i)$ are non-vanishing for any $(v,w) \in \Omega$.  If we then define
$$ U := \{ (v,w,t_1,\ldots,t_m) \in \Omega \times \overline{\F}^m\mid P_i(v,w,t_i) = 0 \hbox{ for all } i=1,\ldots,m \}$$
then $U$ is a quasiprojective subvariety of $\Omega \times F^m$ defined over $\F$, and we have
\begin{equation}\label{elo}
E = \pi(U(\F))
\end{equation}
where $\pi: \Omega \times F^m \to \Omega$ is the projection map, which is \emph{quasi-finite} in the sense that the fibres $\pi^{-1}(\{(v,w)\})$ of $\pi$ are finite (hence zero dimensional) for all $(v,w) \in \Omega$.  In particular, $U$ has dimension at most $\dim(\overline{V}) + \dim(\overline{W})$, thanks to Lemma \ref{proj}.  

For any $w \in W$, we may form the quasiprojective variety
$$ U_w := \{ (v,t) \in \overline{V} \times \overline{\F}^m\mid (v,w,t) \in U \},$$
and for any $w,w' \in W$ we may then form the fibre product
$$ U_w \times_{\overline{V}} U_{w'} := \{ (v,t,t') \in \overline{V} \times \overline{\F}^m \times \overline{\F}^m\mid (v,t) \in U_w, (v,t') \in U_{w'} \}.$$
These are quasiprojective varieties which are quasi-finite over $\overline{V}$ and so have dimension at most $\dim(\overline{V})$.  We form the counting functions
$$ \nu_{w,w'}(v) := |\{ (t,t') \in \F^m \times \F^m\mid (v,t,t') \in U_w \times_{\overline{V}} U_{w'} \}|.$$
Then $\nu_{w,w'}(v)$ is finite for all $v \in \overline{V}(\F)$, and thus (by countable saturation\footnote{One can also use bounds on the degrees of the algebraic varieties involved here, if desired.}, see Lemma \ref{countsat}) is uniformly bounded.  On the other hand, from \eqref{elo} we have $\nu_{w,w'}(v) \neq 0$ if and only if $(v,w), (v,w') \in E$.  Thus by \eqref{mud}, we have
$$ \mu(w,w') = \sum_{v \in \overline{V}(\F)} 1_{\nu_{w,w'}(v) \neq 0}.$$
As $\nu_{w,w'}(v)$ is uniformly bounded, we can express $1_{c_{w,w'}(v) \neq 0}$ as a standard linear combination of $\nu_{w,w'}(v)^k$ for finitely many standard natural numbers $k$, and so $\mu(w,w')$ is a standard linear combination of the moments $\sum_{v \in \overline{V}(\F)} \nu_{w,w'}(v)^k$.  Thus, to prove Proposition \ref{First-reg}, it suffices (by Theorem \ref{aqe}) to show that for each standard natural number $k$, we can partition $W$ into a (standard) finite number of definable subsets $W_1,\ldots,W_m$ such that, for any $1 \leq i,j \leq m$, there is a standard rational $c_{ijk}$ such that 
$$ \sum_{v \in \overline{V}(\F)} \nu_{w,w'}(v)^k = (c_{ijk} + O( |\F|^{-1/2})) |\F|^{\dim(\overline{V})}$$
for all but $O( |\F|^{-1/2} |W|^2 )$ of the pairs $(w,w') \in W_i \times W_j$.

Observe that
$$ \sum_{v \in \overline{V}(\F)} \nu_{w,w'}(v)^k = |(U_{w,k} \times_{\overline{V}} U_{w',k})(\F)|$$
where $U_{w,k}$ is the $k$-fold fibre product of $U_w$ over $\overline{V}$,
$$ U_{w,k} := \{ (v,t_1,\ldots,t_k) \in \overline{V} \times (\overline{\F}^m)^k\mid (v,w,t_i) \in U \hbox{ for all } i=1,\ldots,k \},$$
and 
$$ U_{w,k} \times_{\overline{V}} U_{w',k} := \{ (v,t,t') \in \overline{V} \times (\overline{\F}^m)^k \times (\overline{\F}^m)^k\mid (v,t) \in U_{w,k}, (v,t') \in U_{w',k} \}.$$
By Lemma \ref{sz}, we conclude that
$$ \sum_{v \in \overline{V}(\F)} \nu_{w,w'}(v)^k = (c(w,w') + O(|\F|^{-1/2})) |\F|^{\dim(\overline{V})}$$
where $c(w,w')$ are the number of geometrically irreducible components of $U_{w,k} \times_{\overline{V}} U_{w',k}$ which are defined over $\F$.  To establish Proposition \ref{First-reg}, it thus suffices to establish the following claim:

\begin{proposition}[Second reduction]\label{Second-reg}  Let $\F$ be a nonstandard finite field of characteristic zero, and let $V, W, W'$ be definable sets over $\F$.  Let $m$ be a standard natural number, let $U$ be a subvariety of $\overline{V} \times \overline{W} \times \overline{\F}^m$ defined over $\F$ which is quasi-finite over $\overline{V} \times \overline{W}$, and let $U'$ be a subvariety of $\overline{V} \times \overline{W'} \times \overline{\F}^m$ defined over $\F$ which is quasi-finite over $\overline{V} \times \overline{W'}$.  For any $w \in W$ and $w' \in W'$, set
$$ U_w := \{ (v,t) \in \overline{V} \times \overline{\F}^m\mid (v,w,t) \in U \},$$
$$ U'_{w'} := \{ (v,t') \in \overline{V} \times \overline{\F}^m\mid (v,w',t') \in U' \}$$
and
$$ U_w \times_{\overline{V}} U'_{w'} := \{ (v,t,t') \in \overline{V} \times \overline{\F}^m \times \overline{\F}^m\mid (v,t) \in U_w, (v,t') \in U'_{w'} \}$$
and let $c(w,w')$ be the number of $\dim(\overline{V})$-dimensional geometrically irreducible components of $U_w \times_{\overline{V}} U'_{w'}$ that are defined over $\F$.  Then one can partition $W$ into a (standard) finite number of definable subsets $W_1,\ldots,W_m$ and $W'$ into a (standard) finite number of definable subsets $W'_1,\ldots,W'_{m'}$ such that, for any $1 \leq i \leq m$ and $1 \leq i' \leq m'$, there is a standard natural number $c_{ii'}$ such that $c(w,w') = c_{ii'}$ for all but $O(|\F|^{-1/2} |W| |W'|)$ of the pairs $(w,w') \in W_i \times W'_{i'}$.
\end{proposition}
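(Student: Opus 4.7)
The plan is to reduce the count $c(w,w')$ to Frobenius-fixed points on the fibres of a suitable finite \'etale $\F$-cover of (a dense open of) $\overline{W}\times\overline{W'}$, and then exploit the fact that this cover's monodromy factors through covers depending on $w$ and $w'$ separately, producing a product-form partition.

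First, by decomposing $\overline{W}, \overline{W'}$ into their geometrically irreducible components and absorbing pairwise intersections into the $O(|\F|^{-1}|W||W'|)$ exceptional budget (via Theorem \ref{aqe}), we may assume both are geometrically irreducible. Form the family $\rho: U\times_{\overline{V}}U' \to \overline{W}\times\overline{W'}$, whose fibre over $(w,w')$ is precisely $U_w\times_{\overline{V}}U'_{w'}$. The key step invokes the theory of the \'etale fundamental group for families in characteristic zero: the set of top-dimensional geometrically irreducible components of the fibre is locally constant in the \'etale topology over a Zariski-dense $\F$-open subset $\Omega \subset \overline{W}\times\overline{W'}$, and this locally constant sheaf is representable by a finite \'etale $\F$-morphism $\pi: H \to \Omega$ whose geometric fibre over $(w,w')\in\Omega$ is in Frobenius-equivariant bijection with the set of top-dimensional components of $U_w\times_{\overline{V}}U'_{w'}$. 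Consequently, for $(w,w')\in\Omega(\F)$, one has $c(w,w') = |\pi^{-1}(\{(w,w')\})\cap H(\F)|$, converting a component count into a fibre-wise $\F$-point count on a fixed \'etale cover. Points of $(W\times W')\setminus\Omega(\F)$ lie in a strict subvariety and contribute only $O(|\F|^{-1}|W||W'|)$ exceptional pairs.

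To produce the product partition, observe that the monodromy of $\pi$ factors through the monodromies of the two separate families $U \to \overline{V}\times\overline{W}$ and $U' \to \overline{V}\times\overline{W'}$. Let $G$ and $G'$ denote the respective generic geometric monodromy groups; the Frobenius action on the fibre of $\pi$ over $(w,w')$ is then governed by a pair $(\phi(w), \phi'(w'))$ of Frobenius conjugacy classes in $G$ and $G'$, and $c(w,w')$ becomes a class function depending only on this pair (off a lower-dimensional exceptional set in $\Omega$). The level sets of the type maps $\phi: W \to \operatorname{Conj}(G)$ and $\phi': W' \to \operatorname{Conj}(G')$ are definable (being cut out by first-order conditions describing how the \'etale cover $U\to\overline{V}\times\overline{W}$ splits above $\overline{V}\times\{w\}$, and similarly for $U'$) and finite in number since $G, G'$ are finite. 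Partitioning $W$ and $W'$ by these level sets yields the required product-form partition; on each cell $W_i\times W'_{i'}$, the integer $c$ is constant outside a subvariety of dimension strictly less than $\dim\overline{W}+\dim\overline{W'}$, and the exceptional set there contains only $O(|\F|^{-1}|W||W'|) \ll |\F|^{-1/2}|W||W'|$ pairs by Theorem \ref{aqe}.

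The principal obstacle is the construction of $\pi: H\to\Omega$ together with the verification that its monodromy factors according to the separate covers. The existence of $H$ as a finite \'etale cover of $\Omega$ relies on $\pi_1^{\text{\'et}}$ of a smooth variety being topologically finitely generated, a property that fails in positive characteristic and explains why the hypothesis of characteristic zero is indispensable here. The monodromy factorization reflects that $U_w\times_{\overline{V}}U'_{w'}$ is, Galois-theoretically, a fibre product of two covers of $\overline{V}$ whose monodromies individually depend only on $w$ and on $w'$ respectively; turning this intuition into a genuinely definable partition, while handling the thin locus where the \'etale structure degenerates or the generic monodromy drops, is the technical heart of the argument.
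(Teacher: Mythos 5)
Your outline identifies the right ingredients — components of fibres governed by \'etale monodromy, characteristic-zero needed for finite generation, definability issues — but the central claim is asserted rather than proved, and the route you propose differs from the paper's in a way that makes the assertion hard to justify. You set up a single cover $\pi: H\to\Omega\subset\overline{W}\times\overline{W'}$ and declare that ``the monodromy of $\pi$ factors through the monodromies of the two separate families,'' so that $c(w,w')$ becomes a class function of a pair $(\phi(w),\phi'(w'))$. No such factorization is automatic: $\pi_1(\Omega)$ has no a priori relation to a product of groups depending only on one of the two coordinates. The paper works instead over a \emph{fixed} base point $p\in V(\F)$ and analyses, as $w,w'$ vary, the group $\pi_1(\phi_V(U_w)\cap\phi_V(U'_{w'}),p)$ acting on $S_w\times S'_{w'}$. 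The decoupling there is genuine and requires two specific inputs: (i) the weak \'etale van Kampen theorem (Theorem~\ref{evk}), giving a surjection onto the fibre product of $\pi_1(V'\setminus\Sigma_{1,w},p)$ and $\pi_1(V'\setminus\Sigma'_{1,w'},p)$ over $\pi_1(V'\setminus(\Sigma_{1,w}\cap\Sigma'_{1,w'}),p)$; and (ii) the observation that for generic $w,w'$ the intersection $\Sigma_{1,w}\cap\Sigma'_{1,w'}$ has codimension at least $2$ in $V$, so that by Lemma~\ref{highd} the base of the fibre product can be replaced by $\pi_1(V',p)$. Without this codimension-$\geq 2$ step the base group would still depend jointly on $(w,w')$ and the decoupling would fail. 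Your write-up acknowledges that ``the monodromy factorization reflects that $U_w\times_{\overline V}U'_{w'}$ is a fibre product of two covers of $\overline V$,'' which is the right intuition, but the reason this intuition survives passage to fundamental groups is precisely the codimension argument, which is missing.

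There is a second gap in the definability claim. You assert that the type maps $\phi,\phi'$ have definable level sets. The paper does not try to make its coordinatizations definable — they are built using the axiom of choice (enumerating the fibres $S_w$), and the resulting partition of $W,W'$ has no reason to be even a nonstandard set. The paper therefore first proves a weakened Proposition~\ref{Third-reg} where the partition cells are arbitrary, with an error budget of $O(|\F|^{-1}|W||W'|)$, and then recovers definable cells and the stated $O(|\F|^{-1/2}|W||W'|)$ bound by (a) noting that the level sets of $c$ itself are definable (being cut out by first-order conditions on the defining equations of $U_w\times_{\overline V}U'_{w'}$), and (b) a double-counting argument with reference points $w'_{i'}$. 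Your proposal collapses this two-step structure into a single assertion of definability that is not justified by the construction you describe. Finally, topological finite generation of $\pi_1$ is not what gives the existence of the cover $H$; it is what guarantees there are only finitely many possible induced actions $\rho_w$ of $\pi_1(V',p)$ on the quotient $\{1,\ldots,M\}/\!\sim$, hence finitely many partition cells.
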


Indeed, after replacing $U$ and $U'$ with the $k$-fold fibre product
$$ U_k := \{ (v,w,t_1,\ldots,t_k) \in \Omega \times (F^m)^k\mid (v,w,t_i) \in U \hbox{ for all } i=1,\ldots,k \} $$
and applying the above proposition once for each $k$ (and with $W=W'$), we obtain the required structural decomposition of $\sum_{v \in V} \nu_{w,w'}(v)^k$ (after intersecting together all the partitions obtained).

It remains to establish Proposition \ref{Second-reg}.  The next reduction is to remove the requirement that the sets $W_1,\ldots,W_m$ and $W'_1,\ldots,W'_{m'}$ in the partitions of $W,W'$ are themselves definable or even nonstandard, at the slight cost of upgrading the $O(|\F|^{-1/2} |W| |W'|)$ error to $O(|\F|^{-1} |W| |W'|)$.  In other words, we will deduce Proposition \ref{Second-reg} from the following assertion.

\begin{proposition}[Third reduction]\label{Third-reg}  Let $\F,V,W,W'm,U,U',c$ be as in Proposition \ref{Second-reg}.  Then one can partition $W$ into a (standard) finite number of subsets $W_1,\ldots,W_m$, and $W'$ into a (standard) finite number of subsets $W'_1,\ldots,W'_{m'}$, such that, for any $1 \leq i \leq m$ and $1 \leq i' \leq m'$, there is a standard natural number $c_{ii'}$ such that $c(w,w') = c_{ii'}$ for all\footnote{Because $W_i$ and $W'_{i'}$ are not assumed to be nonstandard sets, one has to be careful about what this means, since $W_i$ and $W'_{i'}$ need not have a well-defined cardinality.  What we mean here is that the set of exceptions $(w,w') \in W_i \times W'_{i'}$ for which $c(w,w') \neq c_{ii'}$ has an \emph{outer cardinality} of $O(|\F|^{-1} |W||W'|)$, in the sense that it is \emph{contained} in a nonstandard set of cardinality $O(|\F|^{-1} |W| |W'|)$.} but $O(|\F|^{-1} |W||W'|)$ of the pairs $(w,w') \in W_i \times W'_{i'}$.  
\end{proposition}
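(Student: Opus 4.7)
My approach is to analyze $c(w, w')$ via the étale fundamental group and exhibit a product partition of $W \times W'$ determined by Frobenius conjugacy class data attached separately to $w$ and $w'$.

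\textbf{Step 1 (Separate stratification of $\overline{W}$ and $\overline{W'}$).} I first study the family $\{U_w\}_{w \in \overline{W}}$ via the projection $U \to \overline{W}$. By iterated application of Proposition \ref{proj} and standard constructibility results (the number of geometrically irreducible components of fibres of a morphism varies constructibly on the base), I partition $\overline{W}$ into finitely many locally closed irreducible subvarieties $A_1, \ldots, A_p$ defined over $\F$ via Lemma \ref{ose}. On a dense open $A_i^\circ \subset A_i$, the top-dimensional geometric components of $U_w$ are parametrized by a finite étale cover $T_i \to A_i^\circ$; in characteristic zero I may refine so that this cover is Galois with Galois group $G_i$ acting on the component set. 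Similarly, I stratify $\overline{W'}$ into $B_{i'}^\circ$ with Galois covers $T'_{i'} \to B_{i'}^\circ$ of groups $G'_{i'}$. The complements $A_i \setminus A_i^\circ$ and $B_{i'} \setminus B_{i'}^\circ$ have strictly smaller dimension, so their $\F$-points contribute outer cardinality only $O(|\F|^{-1}|W|)$ and $O(|\F|^{-1}|W'|)$ respectively by Theorem \ref{aqe}.

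\textbf{Step 2 (Frobenius action on components of fibred products).} For $(w, w') \in A_i^\circ(\F) \times B_{i'}^\circ(\F)$, Frobenius determines conjugacy classes $[\phi_w] \subset G_i$ and $[\phi_{w'}] \subset G'_{i'}$. Now consider the fibred product family whose fibre over $(w, w')$ is $U_w \times_{\overline{V}} U'_{w'}$. Away from a lower-dimensional subvariety of $A_i^\circ \times B_{i'}^\circ$ (which I again absorb into the exceptional set), the top-dimensional geometric components of this fibre are parametrized by a finite étale Galois cover over a dense open of $A_i^\circ \times B_{i'}^\circ$. By the Künneth formula for étale fundamental groups in characteristic zero (valid for smooth irreducible varieties, via the Lefschetz principle and the topological Künneth formula $\pi_1(X \times Y) = \pi_1(X) \times \pi_1(Y)$, followed by profinite completion), one has $\pi_1^{\text{\'et}}(A_i^\circ \times B_{i'}^\circ) \cong \pi_1^{\text{\'et}}(A_i^\circ) \times \pi_1^{\text{\'et}}(B_{i'}^\circ)$. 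Hence the Galois group acting on the components of the fibred product is a quotient of $G_i \times G'_{i'}$, and the Frobenius at $(w, w')$ acts via the image of $(\phi_w, \phi_{w'})$. By Galois theory, $c(w, w')$ is the number of Frobenius-fixed top-dimensional geometric components, so it depends only on the quadruple $(i, i', [\phi_w], [\phi_{w'}])$.

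\textbf{Step 3 (Product partition and error accounting).} I partition $W$ into finitely many pieces $W_\alpha$ indexed by pairs $\alpha = (i, \gamma)$, where $\gamma$ is a conjugacy class of $G_i$, setting $W_{(i, \gamma)} := \{w \in A_i^\circ(\F) \cap W : [\phi_w] = \gamma\}$; I adjoin one ``bad'' piece $W_{\mathrm{bad}} := \bigsqcup_i (A_i \setminus A_i^\circ)(\F) \cap W$. I partition $W'$ analogously. On any rectangle $W_{(i, \gamma)} \times W'_{(i', \gamma')}$ (neither factor being the bad piece), the value $c(w, w')$ equals a fixed constant $c_{ii'\gamma\gamma'}$ by Step 2, except possibly on the lower-dimensional exceptional subvariety of $A_i^\circ \times B_{i'}^\circ$ from Step 2. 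The total outer cardinality of exceptions in each rectangle — the bad pieces together with these lower-dimensional loci — is bounded by $O(|\F|^{-1}|W||W'|)$ via \eqref{fee}, since each exceptional locus is contained in a subvariety of $\overline{W} \times \overline{W'}$ of dimension strictly less than $\dim \overline{W} + \dim \overline{W'}$.

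\textbf{Main obstacle.} The principal challenge is Step 2: justifying that the Galois action on components of the fibred product family factors through $G_i \times G'_{i'}$ with Frobenius given by $(\phi_w, \phi_{w'})$. This combines the Künneth formula for étale fundamental groups (which is cleanest in the smooth irreducible setting) with a detailed description of how irreducible components of $U_w \times_{\overline{V}} U'_{w'}$ arise from pairs of components of $U_w$ and $U'_{w'}$, together with irreducible pieces of their fibred product over $\overline{V}$. Handling this cleanly in the quasiprojective, not-necessarily-smooth setting and ensuring all intermediate objects are defined over $\F$ (via Lemma \ref{ose} and Galois-orbit arguments when necessary) will constitute the bulk of the technical work.
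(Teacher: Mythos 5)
Your approach is a genuinely different route from the paper's. The paper (in the reduction to Proposition \ref{Four-reg}) works with the \'etale fundamental group of an open subset $V'$ of $\overline{V}$ and of $V' \setminus \Sigma_{1,w}$, decouples the roles of $w$ and $w'$ by combining the \'etale van Kampen theorem (Theorem \ref{evk}) with the fact that $\Sigma_{1,w} \cap \Sigma'_{1,w'}$ has codimension $\geq 2$ generically (so Lemma \ref{highd} collapses the base-change group), and then partitions $W,W'$ combinatorially using chosen enumerations of the fibres and topological finite generation (Proposition \ref{tpg}). You instead try to decouple at the level of $\overline{W}\times\overline{W'}$, stratifying the base and invoking the K\"unneth formula for $\pi_1^{\mathrm{\acute et}}$ of a product, and indexing your partition by Frobenius conjugacy classes. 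The K\"unneth route is an attractive idea, but as written there is a concrete error in Step 2, and a serious unresolved issue beyond it.

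The concrete error: you claim the monodromy group of the cover of $A_i^\circ\times B_{i'}^\circ$ that parametrizes the top-dimensional geometric components of $U_w\times_{\overline V}U'_{w'}$ is a quotient of $G_i\times G'_{i'}$, where $G_i$ is the Galois group of the cover parametrizing the components of $U_w$ alone. This is false. Take $V=\mathbb{G}_m$, $W=W'=\mathbb{G}_m$, and $U=U'=\{(v,w,t):t^2=vw\}$. Then $U_w$ is geometrically irreducible for every $w$, so your $G_i$ is trivial. But $U_w\times_V U'_{w'}$ has two geometric components for every $(w,w')$ (the function field is $\overline\F(\sqrt{vw},\sqrt{vw'})=\overline\F(\sqrt{vw},\sqrt{w/w'})$, of degree $2$ over $\overline\F(v)$), and $c(w,w')$ is $2$ or $0$ according as $w/w'$ is a square in $\F$ or not. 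The components-of-fibred-product cover thus has nontrivial monodromy (it is the pullback of the squaring cover of $\mathbb{G}_m$ along $(w,w')\mapsto w/w'$), which is certainly not a quotient of $G_i\times G'_{i'}=\{1\}$. The component count of the fibred product reflects how the two covers of $V$ interact over $V$, and that interaction is simply not visible in the separate families $\{U_w\}$ and $\{U'_{w'}\}$.

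This particular error is repairable in principle: by K\"unneth applied directly to the components-of-fibred-product cover, one does get a factorization of its geometric monodromy through a product of finite quotients $\tilde G_i\times\tilde G'_{i'}$ of $\pi_1(A_i^\circ)\times\pi_1(B_{i'}^\circ)$, and one could redefine your groups accordingly (uniformizing over the finitely many $i'$ using topological finite generation). But this makes the construction circular relative to how Step 1 is presented, and it exposes the deeper issue: to conclude that $c(w,w')$ depends only on the separate Frobenius classes of $w$ and $w'$, you need not just the geometric K\"unneth isomorphism but an arithmetic compatibility, namely that the Frobenius conjugacy class of the $\F$-point $(w,w')$ in the (finite, $\F$-rational) monodromy quotient is determined by the Frobenius classes of $w$ and $w'$ under the two projections. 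The geometric K\"unneth splitting over $\overline\F$ need not be defined over $\F$, so this step requires an argument you have not supplied. The paper avoids all of this by staying with covers of (open subsets of) $V$ and invoking van Kampen plus the codimension-$2$ purity lemma, which achieves the decoupling on the nose without any K\"unneth input. So while your route may ultimately be salvageable, Step 2 as written contains a false claim and a substantive missing argument.
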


The freedom to allow the partitions of $W,W'$ to not be definable or nonstandard will be of technical importance later in the argument, when we will use the axiom of choice to force a ``coordinate system'' on various relevant objects needed to compute $c(w,w')$; such coordinate systems will not necessarily be ``definable'' or even ``nonstandard'', but thanks to the above reduction, this will not be an issue.

Let us assume Proposition \ref{Third-reg} for now and see how it implies Proposition \ref{Second-reg}.  The key observation is that the level sets $\{ (w,w') \in W \times W'\mid c(w,w') = c_0\}$ of the function $c$ are themselves definable subsets over $\F$, as the property of a set cut out by a number of polynomial equations being geometrically irreducible and definable over $\F$ can be expressed as a first-order sentence in the coefficients of these equations.  (Note from countable saturation that that the complexities of all the irreducible varieties involved in a decomposition of a given variety of bounded complexity is necessarily bounded, and so the first-order sentence involved is finite in length.)  Next, we apply Proposition \ref{Third-reg} to partition $W$ and $W'$ into finitely many pieces $W_1,\ldots,W_m$ and $W'_1,\ldots,W'_{m'}$, not necessarily definable or nonstandard.   By hypothesis, we can find a nonstandard subset $\Sigma$ of $W \times W'$ of cardinality $O( |\F|^{-1} |W| |W'|)$ with the property that whenever $1 \leq i \leq m$, $1 \leq i' \leq m'$ and $(w,w') \in (W_i \times W'_{i'}) \backslash \Sigma$, we have $c(w,w') = c_{ii'}$.

By Markov's inequality, we see that outside of a exceptional subset $E'$ of $W'$ of cardinality $O( |\F|^{-1/2} |W'|)$, we have $(w,w') \not \in \Sigma$ for all but $O(|\F|^{-1/2} |W|)$ elements of $w \in W$.  For any $1 \leq i' \leq m$, we set $w'_{i'}$ to be an arbitrarily chosen element of $W'_{i'} \backslash E'$ if this set is non-empty, or an arbitrarily chosen element of $W$ otherwise.  By construction, we see that for all $w' \in W' \backslash E'$, there exists $1 \leq i' \leq m$ such that 
$$ c(w,w') = c(w,w'_{i'})$$
for 
$$ c(w,w'_{i'}) = c(w,w')$$
for all but $O( |\F|^{-1/2} |W| )$ values of $w \in W$.  

For each $1 \leq i' \leq m'$ and natural number $c_0$, the level sets $\{ w \in W\mid c(w,w'_{i'}) = c_0 \}$ is a definable set.  These definable sets generate a partition of $W$ into finitely many definable subsets $\tilde W_1,\ldots,\tilde W_{\tilde m}$.  By construction, we see that for all but $O(|\F|^{-1/2} |W'|)$ values of $w' \in W'$, the function $w \mapsto c(w,w')$ is constant outside of a set of cardinality $O(|\F|^{-1/2} |W|)$ on each of the $\tilde W_1,\ldots,\tilde W_{\tilde m}$.  By symmetry, we may also partition $W'$ into finitely many definable subsets $\tilde W'_1,\ldots,\tilde W'_{m'}$ with the property that for all but $O(|\F|^{-1/2} |W|)$ values of $w \in W$, the function $w' \mapsto c(w,w')$ is constant outside of a set of cardinality $O(|\F|^{-1/2} |W'|)$ on each of the $\tilde W'_1,\ldots,\tilde W'_{m'}$.  

By Theorem \ref{aqe}, the definable sets $\tilde W_i$ either have cardinality $\gg |W|$ or $O(|\F|^{-1} |W|)$.  Any sets of the latter form can be harmlessly absorbed into one of the sets of the former form, so we may assume that all sets $\tilde W_i$ have cardinality $\gg |W|$.  Similarly we may assume that all the sets $\tilde W'_{i'}$ have cardinality $\gg |W'|$.

Now we double-count.  for any $1 \leq i \leq \tilde m$ and $1 \leq i' \leq \tilde m'$, we see that
$$ | \{ (w_1,w_2,w'_1, w'_2) \in \tilde W_{i} \times \tilde W_{i} \times \tilde W'_{i'} \times \tilde W'_{i'}\mid c(w_1,w'_1) \neq c(w_2,w'_1) \}| \ll |\F|^{-1/2} |W|^2 |W'|^2$$
(because of the constancy properties of $w \mapsto c(w,w'_1)$ for most $w'_1$) and
$$ | \{ (w_1,w_2,w'_1, w'_2) \in \tilde W_{i} \times \tilde W_{i} \times \tilde W'_{i'} \times \tilde W'_{i'}\mid c(w_2,w'_1) \neq c(w_2,w'_2) \}| \ll |\F|^{-1/2} |W|^2 |W'|^2$$
(because of the constancy properties of $w' \mapsto c(w_2,w')$ for most $w_2$) and thus
$$ | \{ (w_1,w_2,w'_1, w'_2) \in W'_{i'} \times W'_{i'} \times W''_{i''} \times W''_{i''}\mid c(w_1,w'_1) = c(w_2,w'_1) = c(w_2,w'_2) \}| = |\tilde W_{i}|^2 |\tilde W'_{i'}|^2 - O( |\F|^{-1/2} |W|^4 )$$
and thus by the pigeonhole principle we can find $w_2, w'_2$ such that
$$ | \{ (w_1,w'_1) \in \tilde W_{i} \times \tilde W'_{i'}\mid c(w_1,w'_1) = c(w_2,w'_2) \}| = |\tilde W_{i}| |\tilde W'_{i'}| - O( |\F|^{-1/2} |W| |W'| )$$
and so $c$ is constant on $\tilde W_{i} \times \tilde W'_{i'}$ outside of a set of cardinality $O( |\F|^{-1/2} |W| |W'| )$, and the claim follows.

It remains to establish Proposition \ref{Third-reg}.  We can now remove all references to definability by passing to Zariski closures, and reduce to establishing the following fact:

\begin{proposition}[Fourth reduction]\label{Four-reg}   Let $\F$ be a nonstandard finite field of characteristic zero, and let $V, W, W'$ be affine varieties defined over $\F$.  Let $d$ be a natural number, and let $U, U'$ be subvarieties of $V \times W \times \overline{\F}^d$ and $V \times W' \times \overline{\F}^d$ respectively which are defined over $\F$ and quasi-finite over $V \times W$ and $V \times W'$ respectively.  For any $w \in W$ and $w' \in W'$, set
\begin{align*}
 U_w &:= \{ (v,t) \in V \times \overline{\F}^d\mid (v,w,t) \in U \},\\
U'_{w'} &:= \{ (v,t') \in V \times \overline{\F}^d\mid (v,w',t') \in U' \}\\
 U_w \times_V U'_{w'} &:= \{ (v,t,t') \in V \times \overline{\F}^d \times \overline{\F}^d: (v,t) \in U_w, (v,t') \in U'_{w'} \},
\end{align*}
and let $c(w,w')$ be the number of $\dim(\overline{V})$-dimensional geometrically irreducible components of $U_w \times_{\overline{V}} U'_{w'}$ that are defined over $\F$.  Then one can partition $W$ into a (standard) finite number of subsets $W_1,\ldots,W_m$ (not necessarily definable or nonstandard) and $W'$ into a (standard) finite number of subsets $W'_1,\ldots,W'_{m'}$ such that, for any $1 \leq i \leq m$ and $1 \leq i' \leq m'$, the function $c$ is generically constant on $W_i \times W'_{i'}$ (i.e. it is constant in $W_i \times W'_{i'}$ outside of a subvariety of $W \times W'$ of dimension strictly smaller than $\dim(W)+\dim(W')$).
\end{proposition}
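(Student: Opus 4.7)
The plan is to use the theory of the étale fundamental group in characteristic zero to describe both the total number of top-dimensional geometrically irreducible components of the fiber product $U_w \times_V U'_{w'}$ and the Frobenius action on them, and then to partition $W$ and $W'$ separately according to the ``Frobenius type'' of the associated étale covers.

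First I would iteratively apply Proposition \ref{proj} to partition $W$ and $W'$ into finitely many locally closed $\F$-irreducible subvarieties, on each of which one can find a dense open $V^\circ \subset V$ such that the restrictions of $U \to V \times W$ and $U' \to V \times W'$ to the chosen stratum become finite \'etale covers of $V^\circ \times W_i$ and $V^\circ \times W'_{i'}$ of constant degrees $n$ and $n'$, respectively. After this reduction, for any generic $w$ in the stratum $W_i$, the fiber $U_w \to V^\circ$ is an \'etale cover classified by a conjugacy class of continuous homomorphisms $\rho_w \colon \pi_1^{\text{\'et}}(V^\circ, \bar v) \to S_n$, and similarly for $\rho_{w'}$. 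The top-dimensional geometrically irreducible components of $U_w \times_V U'_{w'}$ are then in bijection with the orbits of $\pi_1^{\text{\'et}}(V^\circ, \bar v)$ acting diagonally on $\{1,\dots,n\} \times \{1,\dots,n'\}$ via $\rho_w \times \rho_{w'}$.

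The next step is to exploit the fact that in characteristic zero the \'etale fundamental group of a variety is topologically finitely generated --- the decisive qualitative input that fails in positive characteristic. Using the homotopy exact sequence for the fibration $V^\circ \times W_i \to W_i$ together with this finite generation, one can, after further stratification, arrange that both $\rho_w$ and $\rho_{w'}$ factor through common finite quotients $G$ and $G'$ of $\pi_1^{\text{\'et}}(V^\circ, \bar v)$. Consequently the total number of top-dimensional geometric components of $U_w \times_V U'_{w'}$ is generically constant on $W_i \times W'_{i'}$. To count only those components defined over $\F$, I would observe that a component is $\F$-defined precisely when its corresponding $\pi_1^{\text{\'et}}$-orbit is Frobenius-stable, where Frobenius acts on $\{1,\dots,n\} \times \{1,\dots,n'\}$ diagonally through permutations $\tau_w \in S_n$ and $\tau_{w'} \in S_{n'}$ encoding the $\F$-rational structure of $U_w$ and $U'_{w'}$. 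I would then partition $W_i$ into finitely many (not necessarily definable or nonstandard) pieces according to the conjugacy class of the pair $(\rho_w, \tau_w)$ --- which lives in the finite set of conjugacy classes in $G \rtimes \langle \sigma \rangle$, where $\sigma$ denotes the outer Frobenius action on $G$ --- and partition $W'_{i'}$ similarly via $(\rho_{w'}, \tau_{w'})$. On each resulting product piece, the value of $c(w,w')$ is determined by a purely group-theoretic count of orbits fixed setwise by the specified Frobenius element, hence is constant outside the lower-dimensional subvarieties excluded by the preceding stratifications.

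The main obstacle is making the second step rigorous: namely, establishing that the monodromy $\rho_w$ of the \'etale cover $U_w \to V^\circ$ varies in a way controlled by a fixed finite quotient of $\pi_1^{\text{\'et}}(V^\circ, \bar v)$ as $w$ ranges over a stratum, and that the Frobenius structure $\tau_w$ takes only finitely many conjugacy classes. This rests squarely on the characteristic-zero theory of the \'etale fundamental group and on the homotopy exact sequence for smooth families, and it is precisely the point at which the large-characteristic hypothesis of the main theorems becomes essential: in positive characteristic, $\pi_1^{\text{\'et}}$ is generally not topologically finitely generated (because of wild ramification), and a direct counterpart would require substantial modification. The Lefschetz principle embedding $\overline{\F}$ into $\C$ provides a convenient bridge to the classical topology of Riemann surfaces, and should be the tool of choice for verifying the monodromy claims in the one-dimensional situations used later in the paper.
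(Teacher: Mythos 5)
Your proposal captures the correct high-level strategy (étale covers, fundamental group actions on fibres, Frobenius encoding the $\F$-structure, topological finite generation of $\pi_1^{\text{ét}}$ in characteristic zero), but the opening reduction on which the rest of your argument hinges is false, and this is precisely the point where the real difficulty of the proposition lies.

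You claim that after stratifying $W$ one can find a dense open $V^\circ \subset V$, \emph{independent of $w$ in the stratum}, such that $U$ restricted to $V^\circ \times W_i$ is a finite étale cover of $V^\circ \times W_i$, and hence each $U_w \to V^\circ$ is a finite étale cover over a fixed base. This fails: the branch locus of $U \to V \times W$ need not be (even after stratifying $W$) contained in a product $(V \setminus V^\circ) \times W \cup V \times (W \setminus W_i)$. For instance, with $V = W = \overline{\F}$ and $U = \{(v,w,t) : t^2 = v - w\}$, the map $U_w \to V$ is ramified precisely at $v=w$, and these ramification points sweep out a Zariski-dense subset of $V$ as $w$ ranges over any nonempty open stratum. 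Consequently the fibre product $U_w \times_V U'_{w'}$ is a finite étale cover not of a fixed $V^\circ$ but of a $(w,w')$-dependent open set $V' \setminus (\Sigma_{1,w} \cup \Sigma'_{1,w'})$, whose fundamental group couples $w$ and $w'$ in a nontrivial way; this is exactly what $c(w,w')$ sees, so one cannot simply quote a fixed $\pi_1(V^\circ)$ acting on $\{1,\dots,n\}\times\{1,\dots,n'\}$.

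The paper's proof works around this by splitting the complement $\Sigma$ of the étale locus into a $w$-independent piece $\Sigma_0 \times W$ and a $w$-dependent piece $\Sigma_1$, observing that for generic $(w,w')$ the intersection $\Sigma_{1,w} \cap \Sigma'_{1,w'}$ has codimension $\geq 2$ in $V$, invoking the Zariski--Nagata purity statement (Lemma \ref{highd}) to delete it without changing $\pi_1$, and then applying the étale van Kampen theorem (Theorem \ref{evk}) to show $\pi_1(V' \setminus (\Sigma_{1,w}\cup \Sigma'_{1,w'}), p)$ surjects onto the fibre product $\pi_1(V'\setminus\Sigma_{1,w},p) \times_{\pi_1(V',p)} \pi_1(V'\setminus\Sigma'_{1,w'},p)$. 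This is the decoupling step that makes the later partition into finitely many pieces possible, and it has no analogue in your sketch: you do not mention van Kampen, the codimension-two observation, or purity. Topological finite generation is then used only at the very end (to bound the number of possible induced actions of $\pi_1(V',p)$ on the quotient fibres), not via a homotopy exact sequence for a product fibration as you propose. Until the decoupling step is supplied, the argument does not go through.
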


Indeed, Proposition \ref{Third-reg} follows from Proposition \ref{Four-reg} by specialising to a definable subset over $\F$ and using Lemma \ref{sz} to control the (outer cardinality of the) exceptional set.

Now we prove Proposition \ref{Four-reg}.  Our strategy is to work generically and improve the nature of the varieties $U_w, U'_{w'}$ lying above $V$, until they become \emph{finite \'etale covers} of certain Zariski-dense subvarieties of $V$.  At that point, we can use the theory of the \'etale fundamental group (Appendix \ref{etale-app}) to obtain the required local generic constancy of the counting function $c$.

We turn to the details.  First, we may decompose $V$ into geometrically irreducible components.  Any component which has dimension less than $\dim(\overline{V})$, or which is not defined over $\F$, gives a zero contribution to $c$.  Thus we may discard these components, and reduce to the case when $V$ is a single geometrically irreducible affine variety defined over $\F$.  For similar reasons, we may also reduce to the case where $W, W'$ are geometrically irreducible affine varieties defined over $\F$.

Next, we observe that we may freely delete any closed subvariety from $U$ of dimension at most $\dim(V)+\dim(W)-1$ without affecting the conclusion of the proposition.  Indeed, for generic $w \in W$, this deletion will only remove a set of dimension at most $\dim(V)-1$ from $U_w$ and hence from $U_w \times_V U'_{w'}$ for any $w' \in W'$, and hence will not affect $c(w,w')$ for generic $w$.  Similarly, we may delete any closed subvariety from $U'$ of dimension at most $\dim(V)+\dim(W')-1$.

Next, we work to make $U$ smooth.  Given an affine variety $V \subset k^n$ and a point $p$ in $V$, define the \emph{tangent space} $T_p V$ of $V$ at $p$ to be the vector space ${\mathfrak m}/{\mathfrak m}^2$, where ${\mathfrak m}$ is the space of polynomials in $k[V]$ that vanish at $p$.  We say that $p$ is a \emph{smooth} point of $V$ if $T_p V$ has dimension $\dim(V)$, and a \emph{singular} point otherwise.  A quasiprojective variety $U$ is said to be \emph{smooth} if every point of $U$ is a smooth point of $\overline{U}$.  Note that a point that lies in two or more components of an affine variety cannot be a smooth point of that variety, so the irreducible components of a smooth quasiprojective variety are necessarily disjoint.

The variety $U$ has dimension at most $\dim(V)+\dim(W)$.  As is well known, the set of singular points of $U$ must have dimension strictly less than this (see e.g. \cite[Theorem 5.6.8]{taylor}); here is one place where we crucially use the hypothesis that $\F$ has characteristic zero.  By deleting these points, we may thus assume that $U$ is smooth and has dimension exactly $\dim(V)+\dim(W)$; in particular, the irreducible components of $U$ are now disjoint.  Similarly, we may assume that $U'$ is smooth and has dimension $\dim(V)+\dim(W')$.  In particular, the projections of $U$ and $U'$ to $V \times W$ and $V \times W'$ respectively are now \emph{dominant maps}, in the sense that their images are Zariski dense.

By again using the hypothesis that $\F$ has characteristic zero, the set of points $u \in U$ where the derivative $d\pi(u)$ of the projection map $\pi: U \to V \times W$ does not have full rank, has dimension strictly less than $\dim(V)+\dim(W)$ (see e.g. \cite[III 10.7]{hart}), so by deleting these points we may assume that $d\phi$ is everywhere non-singular, or in other words that $\pi: U \to V \times W$ is an \emph{\'etale map}.   Similarly, we may assume that the projection $\pi': U' \to V \times W'$ is also \'etale.

The projections $\pi, \pi'$ are currently quasi-finite and \'etale.  We will need to upgrade the quasi-finiteness property to the stronger property of \emph{finiteness}.  We quickly review the relevant definitions:

\begin{definition}  Let $V \subset k^n$ be quasiprojective variety over an algebraically closed field $k$.  A quasiprojective variety is \emph{abstractly affine} if there is a regular isomorphism between it and an affine variety.  A regular morphism $\phi: V \to W$ is \emph{finite} if one can cover $W$ by open, abstractly affine subvarieties $W_i$, such that $\phi^{-1}(W_i)$ is also abstractly affine, and the ring $k[\phi^{-1}(W_i)]$ is a finite $k[W_i]$-algebra (where we use $\phi$ to pull $k[W_i]$ back into $k[\phi^{-1}(W_i)]$ in the obvious manner).
\end{definition}

\begin{example}  The inclusion of $k \backslash \{0\}$ into $k$ is quasi-finite and \'etale, but not finite, because the ring $k[ k \backslash \{0\} ] = k[x, \frac{1}{x}]$ is not finite over $k[k] = k[x]$.  A finite morphism in algebraic geometry is analogous to the notion of a covering space (with finite fibres) in topology; note for instance that the inclusion of $\C \backslash \{0\}$ into $\C$ is also not a covering space.
\end{example}

We have the following basic fact:

\begin{lemma}\label{lemf}  Let $\phi: U \to V$ be a quasi-finite regular morphism between two quasiprojective varieties $U, V$ which is dominant.  Then there exists an open dense subvariety $V'$ of $V$ such that the restricted map $\phi: \phi^{-1}(V') \to V'$ is finite.
\end{lemma}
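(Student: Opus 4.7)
The plan is a classical generic finiteness argument: reduce to a dominant quasi-finite morphism between irreducible affine varieties and exploit the fact that quasi-finiteness forces the function field extension to be finite algebraic, so that generators of the coordinate ring become integral after inverting a single element of $k[V]$.

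First I would perform a series of reductions. Decompose $V$ into irreducible components; the components of dimension less than $\dim V$ can be discarded by placing them in the complement of $V'$, so we may assume $V$ is irreducible. Cover $V$ by finitely many affine opens $V^{(i)}$; since finiteness is local on the target and a finite intersection of dense opens is dense open, it suffices to treat each $\phi^{-1}(V^{(i)})\to V^{(i)}$ separately, so we may assume $V$ is affine and irreducible. Next decompose $U$ into irreducible components $U_1,\ldots,U_r$; any $U_j$ with $\phi(U_j)$ not Zariski dense in $V$ has image in a proper closed subvariety of $V$, which we delete from $V'$, so we may assume each $U_j$ dominates $V$.

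The heart of the matter is the case when $U$ is also irreducible and affine. Dominance gives an injection $k[V]\hookrightarrow k[U]$, and passing to fraction fields an extension $k(V)\hookrightarrow k(U)$. Quasi-finiteness forces this extension to be algebraic: a transcendence basis would produce fibers of positive dimension, contradicting the hypothesis. Since $k[U]$ is a finitely generated $k$-algebra, write $k[U]=k[V][u_1,\ldots,u_n]$. Each $u_i$ is algebraic over $k(V)$; clearing denominators in its minimal polynomial yields a relation
\begin{equation*}
a_i\, u_i^{d_i} + a_{i,d_i-1}\, u_i^{d_i-1} + \cdots + a_{i,0} = 0
\end{equation*}
with $a_i, a_{i,j}\in k[V]$ and $a_i\neq 0$. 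Setting $f := a_1\cdots a_n$ and $V' := \{v\in V : f(v)\neq 0\}$, each $u_i$ becomes integral over the localization $k[V']=k[V]_f$, so $k[\phi^{-1}(V')]=k[U]_f$ is a finitely generated integral, hence finite, $k[V']$-module, yielding finiteness of the restricted morphism.

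The main technical obstacle is passing from this affine irreducible case back to the quasiprojective setting, since the definition of finiteness in the paper requires the preimage to be \emph{abstractly affine} over affine opens of the base, not merely to be covered by affine pieces mapping finitely. After covering $U$ by finitely many affine opens $U_j$ (which is possible because $U$ is quasiprojective, hence of finite type) and running the above argument on each $\phi|_{U_j}$, one obtains dense opens $V'_j\subset V$. To conclude, one invokes Zariski's main theorem, which factors the separated quasi-finite morphism $\phi$ of finite type as an open immersion followed by a finite morphism; after shrinking $V$ further to avoid the loci where the open immersion is non-surjective and where distinct $U_j$-pieces overlap non-trivially, the preimage $\phi^{-1}(V')$ becomes abstractly affine and the local finiteness statements assemble to the required global one. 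The final $V'$ is the intersection of all the dense opens produced during the reductions and in the affine argument, which remains dense and open.
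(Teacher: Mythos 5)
Your core argument — quasi-finiteness plus dominance forces $k(U)/k(V)$ to be finite algebraic, then clear denominators in the minimal polynomials of generators of $k[U]$ to obtain integrality after inverting a single $f\in k[V]$ — is exactly the Shafarevich-style argument the paper adapts (citing [Theorem I.5.3.6]{shaf}), and that part of the proof is fine.

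Where you differ is in the reductions and especially in the final assembly step, and the concern you raise there is a legitimate one that the paper glosses over: the paper's terse proof clears denominators and removes the zeros of $f$ from $V$, asserting that $V$ ``stays abstractly affine,'' but it never reduces $U$ itself to something affine, even though the argument writes $k[U]$ and $k(U)$ as if $U$ were an irreducible affine variety. If $U$ is genuinely quasiprojective, $k[U]$ need not be a finitely generated $k$-algebra, and $\phi^{-1}(V')$ need not be abstractly affine even when $U$ is affine, so some extra care is indeed required. Your fix via Zariski's main theorem (factoring the separated quasi-finite morphism $\phi$ as an open immersion $j:U\hookrightarrow\bar U$ followed by a finite $\pi:\bar U\to V$, then shrinking $V$ to the complement of $\pi(\bar U\setminus j(U))$) is valid and in fact does rather more work than you give it credit for: once you have the ZMT factorization, $\phi^{-1}(V')=\pi^{-1}(V')$ is automatically finite over $V'$ and automatically abstractly affine over any affine open of $V'$, so the entire coordinate-ring computation becomes redundant and ZMT alone proves the lemma. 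A lighter alternative, closer to the paper's spirit and avoiding ZMT, is to first shrink $U$ to a dense affine open $U^\circ$ and remove $\overline{\phi(U\setminus U^\circ)}$ (which has lower dimension) from $V$; this forces $\phi^{-1}(V')\subset U^\circ$, after which the removal of $\{\phi^*f=0\}$ from $U^\circ$ produces a distinguished (hence abstractly affine) open and the affine algebra argument closes cleanly. One small imprecision to note: when you reduce to irreducible $V$ and $U$, if several top-dimensional components of $V$ (or several components of $U$ dominating $V$) are present, you need to remove the pairwise intersections and images of intersections so that the final preimage is a disjoint union of the abstractly affine pieces; otherwise the union need not be abstractly affine. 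This is easily arranged but should be said.
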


\begin{proof}  We adapt the proof of \cite[Theorem I.5.3.6]{shaf}.  By passing to an open dense abstractly affine variety of $V$ (such as $\overline{V}$ with a codimension one closed subvariety removed) we may assume that $V$ is abstractly affine.  Let $k(V)$ denote the field of fractions of $k[V]$ (i.e. the rational functions on $V$), and similarly define $k(U)$.  By the hypotheses on $\phi$, $k(U)$ is an algebraic extension of $k(V)$ (pulled back by $\phi$, of course), and so on clearing denominators one can find a finite set of generators for $k[U]$ that become integral over $k[V]$ after multiplying by a non-zero regular function $f$ in $k[V]$.  Removing the zeroes of $f$ from $V$ (thus adding $1/f$ to $k[V]$, and keeping $V$ abstractly affine), and removing the corresponding preimage from $U$, we obtain the claim.
\end{proof}

Applying this lemma to the map $\pi: U \to V \times W$, we can remove a lower-dimensional piece from $U$ and assume without loss of generality that $\pi: U \to \pi(U)$ is not just \'etale, but is finite \'etale, thus the smooth variety $U$ is a finite \'etale cover of $\pi(U)$.  Similarly, we may make $\pi': U' \to \pi'(U')$ a finite \'etale covering map.

Since $\pi: U \to \pi(U)$ is a finite \'etale covering map of smooth varieties, the restriction $\pi_V: U_w \to \pi_V(U_w)$ is also a finite \'etale map of smooth varieties for any $w \in W$, where $\pi_V: V \times F^m \to V$ is the projection onto $V$.  Similarly, $\pi_V: U'_{w'} \to \pi_V(U'_{w'})$ is finite \'etale for any $w' \in W'$, which implies that the fibre product $\pi_V: U_w \times_V U'_{w'} \to \pi_V(U_w \times_V U'_{w'})$ is also finite \'etale.  (Here we have used the fact that finiteness and the \'etale property are both preserved with respect to base change and composition; see e.g. \cite[Propositions I.1.3, I.3.3]{milne-etale}.)

The set $\phi(U)$ is a Zariski-dense subvariety of $V \times W$.  Applying Lemma \ref{fubini}, we conclude that for generic $v \in V$, one has $(v,w) \in \phi(U)$ (or equivalently, $v \in \phi_V(U_w)$) for generic $w \in W$.  Similarly, for generic $v \in V$, one has $v \in \phi_V(U'_{w'})$ for generic $w' \in W'$.  Thus, we may find a point $p \in V$ such that $p \in \phi_V(U_w \times_V U'_{w'}) = \phi_V(U_w) \cap \phi_V(U'_{w'})$ for generic $w \in W$ and $w' \in W'$.  Indeed, by Lemma \ref{sz}, we may take $p$ to be an $F$-point of $V$.

Fix this point $p$.  For generic $w \in W$ and $w' \in W'$, the fibre of of $U_w \times_V U'_{w'}$ over $p$ is non-empty, and may be identified with the Cartesian products $S_w \times S'_{w'}$, where $S_w$, $S'_{w'}$ are the finite sets
$$ S_w := \{ t \in \overline{\F}^d\mid (v,w,t) \in U \}$$
and
$$ S'_{w'} := \{ t \in \overline{\F}^d\mid (v,w',t) \in U' \}.$$
As $U, U'$ are defined over $\F$, and $p$ is an $F$-point of $V$, $S_w$ and $S'_{w'}$ are defined over $\F$.  In particular, the nonstandard Frobenius map $\Frob_\F$ acts on $S_w$ and $S'_{w'}$, and thus also acts on the product $S_w \times S'_{w'}$ by the diagonal action.

As $\phi_V: U_w \times_V U'_{w'} \to \phi_V(U_w) \cap \phi_V(U'_{w'})$ is a finite \'etale covering, the \'etale fundamental group $\pi_1( \phi_V(U_w) \cap \phi_V(U'_{w'}), p)$ acts on the fibre $S_w \times S'_{w'}$, by a product of its actions on the individual fibres $S_w$ and $S'_{w'}$; see Appendix \ref{etale-app}.  As noted in that appendix, each orbit of this action is the fibre of exactly one of the irreducible components of $U_w \times_V U'_{w'}$. Thus, the number $c(w,w')$ of such components that are defined over $\F$ is equal to the number of orbits of this action that are invariant with respect to the nonstandard Frobenius action.

We view this number as a combinatorial quantity, which currently depends (for generic $w,w'$) in a rather entangled fashion on several objects: the variety $\phi_V(U_w) \cap \phi_V(U'_{w'})$ (and more specifically, its \'etale fundamental group over $p$); the fibres $S_w$ and $S'_{w'}$; the action of the \'etale fundamental group on these fibres; and the action of the Frobenius map on these fibres.  Our goal is to decouple the role of $w$ and $w'$ in forming $c(w,w')$, so that (after partitioning $W$ and $W'$ into finitely many subsets, and working on a single subset of $W$ and a single subset of $W'$), the quantity $c(w,w')$ becomes generically constant.

We achieve this as follows.  The first step is to (crudely) ``trivialise the bundles'' of $U$ over $V \times W$ and $U'$ over $V \times W'$ in a set-theoretic sense.  Observe that for generic $w$, the fibre $S_w$ has constant cardinality $M$ for some standard natural number $M$ (indeed, $M$ is just the degree of $U$ divided by the degree of $V \times W$).  Thus, by the axiom of choice, we may enumerate $S_w = \{ t_{w,1}, \ldots, t_{w,M} \}$.  By fixing such an enumeration, we can thus (non-canonically) identify $S_w$ with $\{1,\ldots,M\}$ for generic $w$.  Similarly, we may non-canonically identify $S'_{w'}$ with $\{1,\ldots,M'\}$ for generic $w'$ and some standard natural number $M'$ by using the axiom of choice to select an enumeration $S'_{w'} = \{ t'_{w',1},\ldots,t'_{w',M'}\}$.  Note that as we appeal to the axiom of choice here to build this enumeration, we do not claim or expect these identifications to be definable, or even nonstandard; but thanks to our reduction of Proposition \ref{Second-reg} to Proposition \ref{Third-reg}, such definability and nonstandardness properties will not be needed\footnote{One could avoid the appeal to the axiom of choice here by working with all enumerations at once, and quotienting out the objects constructed at the end of the day the equivalence relation given by all possible relabelings; similarly for some further invocations of the axiom of choice later in this argument.  However, we will not choose this ``coordinate-free'' route here as it requires the use of more complicated notation, opting instead for a less elegant, but more direct ``coordinate-heavy'' approach, which is more unnatural from an algebraic geometry perspective, but more convenient from a combinatorial one.}.  

We now fix the above enumerations of $S_w$ and $S'_{w'}$.  For generic $w$, the action of the nonstandard Frobenius map $\Frob_\F$ on $S_w \equiv \{1,\ldots,M\}$ is now given by a permutation $\sigma_w$ in the symmetric group $\operatorname{Sym}(M)$ on $M$ elements.  This permutation depends on $w$, so by partioning $W$ into finitely many subsets (which need not be definable or nonstandard), we can ensure that the map $w \mapsto \sigma_w$ is constant on each such subset.  We now pass to one of these subsets of $W$; thus for generic $w$ in this subset, the action of $\Frob_\F$ on $S_w \equiv \{1,\ldots,M\}$ is now independent of $w$, when viewed in coordinates.  Similarly, by partitioning $W'$ into finitely many subsets and passing to any one of these subsets, we may assume that for generic $w'$ in this subset, the action of $\Frob_\F$ on $S'_{w'} \equiv \{1,\ldots,M'\}$ is independent of $w'$.  Thus, for generic $w,w'$ in the respective subsets of $W,W'$, the product action of $\Frob_\F$ on $S_w \times S'_{w'} \equiv \{1,\ldots,M\} \times \{1,\ldots,M'\}$ is independent of both $w$ and $w'$.  To obtain the desired local constancy of $c(w,w')$, it thus suffices to show (perhaps after further finite partition of $W$ and $W'$) that for generic $w,w'$ in their respective subsets of $W,W'$, the set of orbits of the \'etale fundamental group $\pi_1( \phi_V(U_w) \cap \phi_V(U'_{w'}), p)$ on $S_w \times S_{w'} \equiv \{1,\ldots,M\} \times \{1,\ldots,M'\}$ is actually independent of the choice of $w$ and $w'$.  

The main difficulty here, of course, is that the group $\pi_1( \phi_V(U_w) \cap \phi_V(U'_{w'}), p)$ depends on both $w$ and $w'$ in a coupled fashion.  To decouple the role of $w$ and $w'$ here, we would like to use the \'etale van Kampen theorem (Theorem \ref{evk}), but first we must understand how the sets $\phi_V(U_w)$ and $\phi_V(U'_{w'})$ intersect each other.

The set $\phi(U)$ is an open dense subvariety of $V \times W$, and can thus be written as $\phi(U) = (V \times W) \backslash \Sigma$ for some closed subvariety of $V \times W$ of dimension at most $\dim(V)+\dim(W)-1$.  We split $\Sigma = (\Sigma_0 \times W) \cup \Sigma_1$, where $\Sigma_0 \times W$ is the union of all the irreducible components of $\Sigma$ that are of the form $H \times W$ for some closed subvariety $H$ of $V$, and $\Sigma_1$ is the union of all the other irreducible components of $\Sigma$.  Informally, $\Sigma_0$ represents the portion of $\Sigma$ that does not depend on the $w \in W$ coordinate, while $\Sigma_1$ represents the portion which is non-trivially dependent on this coordinate.  Note that $\Sigma_0$ has dimension at most $\dim(V)-1$, and $\Sigma_1$ has dimension at most $\dim(V)+\dim(W)-1$.  We then have
$$ \phi_V(U_w) = V \backslash (\Sigma_0 \cup \Sigma_{1,w})$$
for any $w \in W$, where
$$ \Sigma_{1,w} := \{ v \in V\mid (v,w) \in \Sigma_1 \}$$
is a slice of $\Sigma_{1}$.  Similarly, we may write
$$ \phi_V(U'_{w'}) = V \backslash (\Sigma'_0 \cup \Sigma'_{1,w'})$$
for all $w' \in W'$, where $\Sigma'_0$ is a closed subvariety of $V$ of dimension at most $\dim(V)-1$, $\Sigma'_1$ is a closed subvariety of $V \times W'$ of dimension at most $\dim(V)+\dim(W')-1$ consisting entirely of components that are not of the form $H \times W'$ for any $H$, and 
$$ \Sigma'_{1,w'} := \{ v \in V\mid (v,w') \in \Sigma'_1 \}$$
is a slice of $\Sigma'_{1}$.  

We now have
$$ 
\phi_V(U_w) \cap \phi_V(U'_{w'}) = V' \backslash (\Sigma_{1,w} \cup \Sigma'_{1,w'})
$$
where $V'$ is the open dense subvariety of $V$ defined by
$$ V' := V \backslash (\Sigma_0 \cup \Sigma'_0).$$

We can now apply the \'etale van Kampen theorem (Theorem \ref{evk}) and conclude that for generic $w,w'$, $\pi_1( \phi_V(U_w) \cap \phi_V(U'_{w'}), p )$ surjects onto the fibre product of $\pi_1( V' \backslash \Sigma_{1,w}, p )$ and $\pi_1( V' \backslash \Sigma'_{1,w'}, p )$ over $\pi_1( V' \backslash (\Sigma_{1,w} \cap \Sigma'_{1,w'}), p )$ (with respect to the obvious homomorphisms between these groups). 

Next, we make the crucial observation that for generic $w, w'$, the set $\Sigma_{1,w} \cap \Sigma'_{1,w'}$ has dimension at most $\dim(V)-2$ (i.e. it has codimension at least $2$ in $V$).  Indeed, for generic $w$, $\Sigma_{1,w}$ has dimension at least $\dim(V)-1$.  Given a top dimensional component $H_w$ of $\Sigma_{1,w}$, $H_w$ will not be a component of $\Sigma'_{1,w'}$ for generic $w'$ unless $H_w \times W'$ is contained in $\Sigma'_1$, which contradicts the construction of $\Sigma'_1$.  Thus the intersection of $H_w$ with any component of $\Sigma'_{1,w'}$ will generically have dimension at most $\dim(V)-2$, and the claim follows\footnote{Here we have used the obvious fact that the set of pairs $(w,w')$ for which $\Sigma_{1,w} \cap \Sigma'_{1,w'}$ has dimension more than $\dim(V)-2$ is a constructible set, so that we may apply Lemma \ref{fubini}.}.  

Because the codimension of $\Sigma_{1,w} \cap \Sigma'_{1,w'}$ is generically at least $2$, we may invoke Lemma \ref{highd} and conclude the isomorphism
$$\pi_1( V' \backslash (\Sigma_{1,w} \cap \Sigma'_{1,w'}), p ) \equiv \pi_1( V', p ) $$
for generic $w,w'$, using the obvious homomorphism from the former group to the latter.  We conclude that for generic $w,w'$, $\pi_1( \phi_V(U_w) \cap \phi_V(U'_{w'}), p )$ surjects onto the fibre product of $\pi_1( V' \backslash \Sigma_{1,w}, p )$ and $\pi_1( V' \backslash \Sigma'_{1,w'}, p )$ over $\pi_1( V', p )$, with respect to the obvious homomorphisms between these groups.  On the other hand, observe that $\pi_1( V' \backslash \Sigma_{1,w}, p )$ acts on $S_w$ (as $U_w$ is a finite \'etale covering over $V' \backslash \Sigma_{1,w}$), and $\pi_1( V' \backslash \Sigma'_{1,w'}, p )$ acts on $S'_{w'}$, and so the fibre product of $\pi_1( V' \backslash \Sigma_{1,w}, p )$ and $\pi_1( V' \backslash \Sigma'_{1,w'}, p )$ over $\pi_1( V', p )$ acts on $S_w \times S'_{w'}$ by the product action.  From the compatibility of the \'etale fundamental group actions on fibres (see Appendix \ref{etale-app}) we see that the action of $\pi_1( \phi_V(U_w) \cap \phi_V(U'_{w'}), p )$ on $S_w \times S'_{w'}$ factors through this product action.  From the surjectivity mentioned earlier, we conclude an important fact:

\begin{proposition} For generic $w, w'$, the set of orbits of $\pi_1( \phi_V(U_w) \cap \phi_V(U'_{w'}), p )$ on $S_w \times S'_{w'}$ is equal to the set of orbits of the fibre product of $\pi_1( V' \backslash \Sigma_{1,w}, p )$ and $\pi_1( V' \backslash \Sigma'_{1,w'}, p )$ over $\pi_1( V', p )$.
\end{proposition}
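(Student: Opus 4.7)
The plan is to assemble the ingredients already developed in the preceding paragraphs into a clean identification of the orbit sets, relying on the \'etale van Kampen theorem (Theorem \ref{evk}), the codimension-2 removal lemma (Lemma \ref{highd}), and the functoriality of the action of the \'etale fundamental group on fibres of a finite \'etale cover.

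First I would rewrite $\phi_V(U_w) \cap \phi_V(U'_{w'}) = V' \setminus (\Sigma_{1,w} \cup \Sigma'_{1,w'})$ and apply Theorem \ref{evk} to the cover of this variety by the two Zariski-open subsets $V' \setminus \Sigma_{1,w}$ and $V' \setminus \Sigma'_{1,w'}$, whose intersection equals $V' \setminus (\Sigma_{1,w} \cap \Sigma'_{1,w'})$. This produces a surjection from $\pi_1(\phi_V(U_w) \cap \phi_V(U'_{w'}), p)$ onto the fibre product
\[ \pi_1(V' \setminus \Sigma_{1,w}, p) \times_{\pi_1(V' \setminus (\Sigma_{1,w} \cap \Sigma'_{1,w'}), p)} \pi_1(V' \setminus \Sigma'_{1,w'}, p). \]

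Second, I would establish the crucial codimension bound: for generic $(w,w')$, the intersection $\Sigma_{1,w} \cap \Sigma'_{1,w'}$ has codimension at least $2$ in $V$. The argument uses the decomposition already performed when defining $\Sigma_1$ and $\Sigma'_1$: no top-dimensional component $H_w$ of $\Sigma_{1,w}$ can coincide with a top-dimensional component of $\Sigma'_{1,w'}$ for generic $w'$, since this would force $H_w \times W'$ to lie inside $\Sigma'_1$, contradicting the construction. Observing that the locus of pairs $(w,w')$ where this codimension-2 bound fails is constructible, Lemma \ref{fubini} promotes this to the required genericity. Given the bound, Lemma \ref{highd} yields the canonical isomorphism $\pi_1(V' \setminus (\Sigma_{1,w} \cap \Sigma'_{1,w'}), p) \cong \pi_1(V', p)$, induced by the inclusion, and so the fibre product above may be rewritten as the fibre product over $\pi_1(V', p)$.

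Third, I would compare orbit sets. By functoriality of the fibre action (Appendix \ref{etale-app}), the action of $\pi_1(\phi_V(U_w) \cap \phi_V(U'_{w'}), p)$ on $S_w \times S'_{w'}$ factors through the product action of $\pi_1(V' \setminus \Sigma_{1,w}, p)$ on $S_w$ and $\pi_1(V' \setminus \Sigma'_{1,w'}, p)$ on $S'_{w'}$, which assembles into an action of the fibre product identified in the previous step. Since the van Kampen surjection is onto, the orbits of the source group and those of the fibre product on $S_w \times S'_{w'}$ coincide, which is exactly the claim.

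The main obstacle, I expect, is the generic codimension-2 estimate in the second step: all the ambient constructions ($\Sigma_{1,w}$, $\Sigma'_{1,w'}$, their components) depend on $w$ and $w'$, and one has to argue constructibility of the exceptional locus so that Lemma \ref{fubini} can be applied. Fortunately the a priori decomposition of $\Sigma_1$ into components that are genuinely $W$-dependent (and similarly for $\Sigma'_1$) has been arranged precisely so that this step goes through; the remaining steps are then a formal assembly.
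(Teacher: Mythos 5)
Your proof matches the paper's argument essentially step for step: the \'etale van Kampen theorem to get a surjection onto the fibre product over $\pi_1(V' \backslash (\Sigma_{1,w} \cap \Sigma'_{1,w'}), p)$, the generic codimension-two bound together with Lemma \ref{highd} to identify that base group with $\pi_1(V', p)$, and functoriality of the fibre actions plus surjectivity to match up orbit sets. The only slip is a stray word in your second sentence: the intersection of $V' \backslash \Sigma_{1,w}$ and $V' \backslash \Sigma'_{1,w'}$ is $V' \backslash (\Sigma_{1,w} \cup \Sigma'_{1,w'})$, while it is their \emph{union} that equals $V' \backslash (\Sigma_{1,w} \cap \Sigma'_{1,w'})$.
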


In view of this proposition, the only remaining task needed to establish Proposition \ref{Four-reg} (and thus Lemma \ref{algreg-nonst}) is to show that, after further finite subdivision of $W$ and $W'$ into subsets, and for generic $w,w'$ in respective subsets of $W,W'$, the set of orbits of the fibre product of $\pi_1( V' \backslash \Sigma_{1,w}, p )$ and $\pi_1( V' \backslash \Sigma'_{1,w'}, p )$ over $\pi_1( V', p )$ on $S_w \times S'_{w'} \equiv \{1,\ldots,m\} \times \{1,\ldots,m'\}$ is independent of both $w$ and $w'$.

At first glance, it may seem that one would need a rather precise understanding of the nature of the \'etale fundamental group $\pi_1( V' \backslash \Sigma_{1,w}, p )$, and how it sits over $\pi_1( V', p )$ by the obvious surjective homomorphism, and how it acts on $S_w$.  Fortunately, however, we only need a small amount of information on this group and this action.  Namely, let $H_w$ be the kernel of the surjective homomorphism from $\pi_1( V' \backslash \Sigma_{1,w}, p )$ to $\pi_1( V', p )$.  This normal subgroup of $\pi_1( V' \backslash \Sigma_{1,w}, p )$ acts on $S_w \equiv \{1,\ldots,M\}$; let $\sim_w$ be the equivalence relation on $\{1,\ldots,M\}$ induced by this action (so that two elements of $\{1,\ldots,M\}$ are equivalent by $\sim_w$ if there is an element of $H_w$ that moves one to the other).  There are only finitely many possibilities for this equivalence relation, so by partitioning $W$ further into finitely many subsets and passing to one of these subsets, we may assume that $\sim_w = \sim$ is independent of $w$ for generic $w$ in this subset.  Similarly, letting $H'_{w'}$ denote the kernel of the homomorphism from $\pi_1( V' \backslash \Sigma'_{1,w'}, p )$ to $\pi_1( V', p )$, we may assume that the equivalence relation $\sim'$ on $\{1,\ldots,M'\} \equiv S_{w'}$ induced by $H'_{w'}$ is generically independent of $w'$, after passing to one of the finitely many subsets partitioning $W'$.

For generic $w$, the action of $\pi_1( V' \backslash \Sigma_{1,w}, p )$ on $\{1,\ldots,M\}$ now descends to an action $\rho_w$ of the quotient group $\pi_1(V',p)$ on the quotient space $\{1,\ldots,M\}/\sim$, and similarly for generic $w'$ we have an action $\rho'_{w'}$ of $\pi_1(V',p)$ on $\{1,\ldots,M'\}/\sim'$.  An orbit of the fibre product $\pi_1( V' \backslash \Sigma_{1,w}, p )$ and $\pi_1( V' \backslash \Sigma'_{1,w'}, p )$ over $\pi_1( V', p )$ in $\{1,\ldots,M\} \times \{1,\ldots,M'\}$ can now be written in the form
$$ \bigcup_{g \in \pi_1(V',p)} \Pi^{-1}( \rho_w(g)x ) \times (\Pi')^{-1}(\rho'_{w'}(g) y)$$
where $x$ is a point in $\{1,\ldots,M\}/\sim$, $y$ is a point in $\{1,\ldots,M'\}/\sim'$, and $\Pi: \{1,\ldots,M\} \to \{1,\ldots,M\}/\sim$ and
$\Pi': \{1,\ldots,M'\} \to \{1,\ldots,M'\}/\sim'$ are the quotient maps.  Such orbits are almost independent of $w$ and $w'$, save for the need to specify the actions $\rho_w$, $\rho'_{w'}$ of $\pi_1(V',p)$ on $\{1,\ldots,M\}/\sim$ and $\{1,\ldots,M'\}/\sim'$.  But now we use the crucial fact (see Proposition \ref{tpg}) that $\pi_1(V',p)$ is topologically finitely generated, so that in order to specify an action such as $\rho_w$ on a finite set such as $\{1,\ldots,M\}/\sim$, it suffices to specify the action of a finite number of topological generators.  There are only finitely many such possibilities for such actions, so after partitioning $W$ further into finitely many subsets and passing to one of these subsets, we may assume that $\rho_w$ is in fact independent of $w$ for generic $w$ in these subsets; similarly we may assume that $\rho'_{w'}$ is independent of $w'$ for generic $w'$ in one of the finitely many subsets partitioning $W'$.  Now, the orbits in $\{1,\ldots,M\} \times \{1,\ldots,M'\}$ are completely independent of $w,w'$ for generic $w,w'$ in their respective subsets, giving Proposition \ref{Four-reg} and hence Lemma \ref{algreg-nonst}.

\section{Extension to higher dimensions}

We can iterate Lemma \ref{algreg-nonst} to obtain a higher dimensional version:

\begin{theorem}[Regularity lemma, higher dimensional version]\label{threg-higher}  Let $\F$ be a nonstandard finite field of characteristic zero, let $d,k \ge 1$ be a standard natural number, let $V_1,\ldots,V_d$ be definable sets over $\F$, and let $E_1,\ldots,E_k$ be definable subsets of $V_1 \times \ldots \times V_d$.  Then for each $1 \leq i \leq d$, one can partition $V_i$ into a finite number of definable sets $V_{i,1},\ldots,V_{i,a_i}$ for some standard natural number $a_i$, with the following property: for any natural numbers $j_1,\ldots,j_d$ with $1 \leq j_i \leq a_i$ for each $1 \leq i \leq d$ and $1 \leq l \leq k$, there exists a standard rational number $0 \leq \sigma_{l,j_1,\ldots,j_d} \leq 1$ with the property that
\begin{equation}\label{rag}
 |E_l \cap (A_1 \times \ldots \times A_d)| = \sigma_{l,j_1,\ldots,j_d} |A_1| \ldots |A_d| + O(  |\F|^{-1/4} |V_1| \ldots |V_d| ).
\end{equation}
for all nonstandard sets $A_1,\ldots,A_d$ with $A_i \subset V_{i,j_i}$ for all $1 \leq i \leq d$.

Furthermore, we may ensure that $|V_{i,j}| \gg |V_i|$ for all $1 \leq i \leq d$ and $1 \leq j \leq a_i$. 
\end{theorem}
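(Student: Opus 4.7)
The plan is to induct on $d$, using Lemma \ref{algreg-nonst} as the engine. The base case $d=1$ is essentially trivial: the common refinement of the $k$ partitions $\{E_l, V_1 \setminus E_l\}$ yields cells on which each density $\sigma_{l,j_1}$ is exactly $0$ or $1$, and any cell whose cardinality is $\lll |V_1|$ (and hence of size $O(|\F|^{-1}|V_1|)$ by Theorem \ref{aqe} and \eqref{fee}) can be absorbed into an adjacent large cell without disturbing the $O(|\F|^{-1/4}|V_1|)$ error budget.

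For the inductive step from $d-1$ to $d$, abbreviate $W := V_1 \times \cdots \times V_{d-1}$ (a definable set of bounded complexity) and view each $E_l$ as a definable subset of $V_d \times W$. Applying Lemma \ref{algreg-nonst} to each $E_l$ and taking the common refinement of the resulting partitions produces a partition $V_d = V_{d,1} \cup \cdots \cup V_{d,a_d}$ together with a partition $W = W_1 \cup \cdots \cup W_b$ into boundedly many definable cells with $|V_{d,j_d}| \gg |V_d|$ and $|W_j| \gg |W|$, as well as standard rationals $d_{l,j_d,j} \in [0,1]$ (obtained by replacing the nonstandard densities given by Lemma \ref{algreg-nonst} with their standard parts via Theorem \ref{aqe}) for which
$$|E_l \cap (A_d \times B)| = d_{l,j_d,j}\,|A_d|\,|B| + O(|\F|^{-1/4}|V_d||W_j|)$$
whenever $A_d \subset V_{d,j_d}$ and $B \subset W_j$. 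Next, apply the inductive hypothesis at dimension $d-1$ to the $b$ definable subsets $W_1,\ldots,W_b$ of $V_1 \times \cdots \times V_{d-1}$; this yields definable partitions $V_i = V_{i,1} \cup \cdots \cup V_{i,a_i}$ for $1 \leq i \leq d-1$ with $|V_{i,j_i}| \gg |V_i|$ and standard rationals $\tau_{j,j_1,\ldots,j_{d-1}} \in [0,1]$ satisfying the analogue of \eqref{rag} for the sets $W_j$ in place of $E_l$. To obtain \eqref{rag} itself, fix $A_i \subset V_{i,j_i}$, decompose $A_1 \times \cdots \times A_{d-1} = \bigsqcup_j ((A_1 \times \cdots \times A_{d-1}) \cap W_j)$, apply the two-dimensional estimate to each slab, substitute the inductive estimate for $|(A_1 \times \cdots \times A_{d-1}) \cap W_j|$, and set
$$\sigma_{l,j_1,\ldots,j_d} := \sum_{j=1}^{b} d_{l,j_d,j}\, \tau_{j,j_1,\ldots,j_{d-1}}.$$
Because $b$ is a standard constant and all cell-to-ambient size ratios are bounded, the two layers of error aggregate to $O(|\F|^{-1/4}|V_1|\cdots|V_d|)$; the bound $\sigma_{l,j_1,\ldots,j_d} \in [0,1]$ follows by specialising to $A_i = V_{i,j_i}$.

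The main technical point to monitor is the repeated passage from nonstandard densities to their standard rational parts: Theorem \ref{aqe} must be invoked at each stage, and the incurred $O(|\F|^{-1/2})$ slack has to be absorbed into the coarser $|\F|^{-1/4}$ budget. Aside from that, and some routine bookkeeping to merge sub-full-dimensional cells back into large ones to preserve the largeness clause $|V_{i,j}| \gg |V_i|$ at each inductive stage, the argument is a clean hypergraph-style iteration of the two-dimensional lemma and requires no new algebro-geometric input.
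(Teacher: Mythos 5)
Your proof is correct and follows essentially the same route as the paper's: induct on $d$, apply Lemma \ref{algreg-nonst} to $E_l \subset (V_1 \times \cdots \times V_{d-1}) \times V_d$, feed the resulting partition cells of $V_1 \times \cdots \times V_{d-1}$ into the $(d-1)$-dimensional inductive hypothesis, and set $\sigma_{l,j_1,\ldots,j_d} := \sum_j d_{l,j_d,j}\,\tau_{j,j_1,\ldots,j_{d-1}}$. The only cosmetic difference is that you apply the lemma to each $E_l$ separately and refine, whereas the paper first reduces to $k=1$; both handle the largeness clause the same way, by absorbing the $O(|\F|^{-1}|V_i|)$-sized cells into full-size ones.
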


\begin{remark} We stress that this lemma is \emph{not} analogous to the full ``hypergraph regularity lemma'' that generalises the Szemer\'edi regularity lemma \cite{rs}, \cite{rodl}, \cite{gowers-hyper}, \cite{tao-hyper}, but is instead more analogous to the earlier hypergraph regularity lemma of Chung \cite{chung-hyper} (see also \cite{frankl}); see the recent paper \cite{mubayi} for some discussion of the hierarchy of different hypergraph regularity lemmas.
\end{remark}

\begin{proof} We first remark that the final conclusion $|V_{i,j}| \gg |V_i|$ can be obtained ``for free'' as follows: by Lemma \ref{sz}, we have either $|V_{i,j}| \gg |V_i|$ or $|V_{i,j}| \ll |\F|^{-1} |V_i|$ for each $i,j$.  Any $V_{i,j}$ that obeys the latter bound instead of the former can be absorbed without difficulty into one of the sets $V_{i,j}$ in the partition that obeys the former bound (and, by the pigeonhole principle, at least one of the $V_{i,j}$ will obey that bound), without affecting the regularity property \eqref{rag}.  Thus, to prove Theorem \ref{threg-higher}, we may ignore the final requirement that $|V_{i,j}| \gg |V_i|$ for all $i,j$.

We now induct on $d$.  The case $d=1$ is trivial, so suppose $d \geq 2$, and the claim has already been proven for $d-1$.

Next, we observe that to prove the theorem for a given $d$, it suffices to do so when $k=1$, as the higher $k$ case follows by applying the theorem to each $E_l$ separately and then intersecting together all the definable subsets $V_{i,j}$ of $V_i$ produced by this theorem.  Thus we may assume $k=1$, and abbreviate $E_1$ as $E$.  Applying Lemma \ref{algreg-nonst} with $V$ and $W$ set equal to $V_1 \times \ldots \times V_{d-1}$ and $V_d$, we may partition
$$ V_1 \times \ldots \times V_{d-1} = E'_1 \cup \ldots \cup E'_{k'}$$
and
$$ V_d = V_{d,1} \cup \ldots \cup V_{d,a_d}$$
where $k', a_d$ are standard natural numbers and the $E'_j$, $V_{d,j}$ are definable sets with the property that for each $1 \leq j' \leq k'$ and $1 \leq j_d \leq a_d$ there exists a standard rational number $0 \leq \sigma_{j',j_d} \leq 1$ such that
\begin{equation}\label{aoid}
 |E \cap (A' \times A_d)| = \sigma_{j',j_d} |A'| |A_d| + O( |\F|^{-1/4} |V_1| \ldots |V_d| )
\end{equation}
for all nonstandard sets $A' \subset E'_{j'}$ and $A_d \subset V_{d,j}$.

Next, we apply the induction hypothesis to the $E'_1,\ldots,E'_{k'}$ to obtain partitions $V_i = V_{i,1} \cup \ldots \cup V_{i,a_i}$ for $i=1,\ldots,d-1$ into definable sets with the property that for any $j_1,\ldots,j_{d-1}$ with $1 \leq j_i \leq a_i$ for each $1 \leq i \leq d-1$, and each $1 \leq j' \leq k'$, there exists a standard rational $0 \leq \sigma_{j',j_1,\ldots,j_{d-1}} \leq 1$ such that
\begin{equation}\label{load}
 |E'_{j'} \cap (A_1 \times \ldots \times A_{d-1})| = \sigma_{j',j_1,\ldots,j_{d-1}} |A_1| \ldots |A_{d-1}| + O( |\F|^{-1/4} |V_1| \ldots |V_{d-1}| )
\end{equation}
whenever $A_1,\ldots,A_{d-1}$ are nonstandard sets with $A_i \subset V_{i,j_i}$ for all $1 \leq i \leq d-1$.

Now suppose that $1 \leq j_i \leq a_i$ for $i=1,\ldots,d$, and let $A_1,\ldots,A_d$ are nonstandard sets with $A_i \subset V_{i,j_i}$ for all $1 \leq i \leq d$.  We compute the quantity
$$ |E \cap (A_1 \times \ldots \times A_d)|.$$
Intersecting $A_1 \times \ldots \times A_{d-1}$ with each of the $E'_{j'}$ and using \eqref{aoid}, we can write this expression as
$$ \sum_{j'=1}^{k'} \sigma_{j',j_d} |E'_{j'} \cap (A_1 \times \ldots \times A_{d-1})| |A_d| + O( |\F|^{-1/4} |V_1| \ldots |V_d| ).$$
Applying \eqref{load}, we can simplify this to
$$ \sigma_{j_1,\ldots,j_d} |A_1| \ldots |A_d| + O( |\F|^{-1/4} |V_1| \ldots |V_d| )$$
where
$$ \sigma_{j_1,\ldots,j_d} := \sum_{j'=1}^{k'} \sigma_{j',j_d}  \sigma_{j',j_1,\ldots,j_{d-1}}$$
and the claim follows (note that $\sigma_{j_1,\ldots,j_d}$ can be adjusted if necessary to not exceed $1$, since $|E \cap (A_1 \times \ldots \times A_d)|$ is trivially bounded by $|A_1| \ldots |A_d|$).
\end{proof}

For computational purposes, it is convenient to rephrase \eqref{rag} as follows.

\begin{corollary}\label{long}  Let the notation, hypotheses, and conclusion be as in Theorem \ref{threg-higher}. Then for any nonstandard functions $f_i: V_i \to \ultra \C$ for $i=1,\ldots,d$ with $|f_i(x)| \ll 1$ for all $x \in V_i$, and any $1 \leq l \leq k$, the expression
$$
\sum_{(x_1,\ldots,x_d) \in V_1\times \ldots \times V_d} 1_{E_l}(x_1,\ldots,x_d) f_1(x_1) \ldots f_d(x_d)
$$
is equal to
$$ \sum_{j_1=1}^{a_1} \ldots \sum_{j_d=1}^{a_d} \sigma_{l,j_1,\ldots,j_d} \prod_{i=1}^d (\sum_{x_i \in V_{i,j_i}} f_i(x_i)) + 
O(  |\F|^{-1/4} |V_1| \ldots |V_d| ).$$
\end{corollary}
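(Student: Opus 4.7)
The plan is to view Corollary \ref{long} as the extension of Theorem \ref{threg-higher} from indicator functions $1_{A_i}$ to arbitrary bounded nonstandard functions, and to obtain it by a layer-cake discretisation. Multilinearity of the summand in $(f_1, \ldots, f_d)$, combined with the splitting $f_i = (\Re f_i)^+ - (\Re f_i)^- + i(\Im f_i)^+ - i(\Im f_i)^-$ and a rescaling by the standard bound on $|f_i|$, reduces matters (at the cost of a standard factor of $4^d$) to the case where every $f_i$ takes values in $[0,1]$. The full sum then decomposes as $\sum_{j_1, \ldots, j_d} S_{j_1, \ldots, j_d}$ over the blocks $V_{1, j_1} \times \cdots \times V_{d, j_d}$, and since the number of blocks $\prod_i a_i$ is standard, it suffices to treat a single block sum and then add up.

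Within a single block, I would approximate each $f_i$ by a simple function. Let $N$ be a large nonstandard integer, and for each $(i, j_i, k)$ with $k \in \{1, \ldots, N\}$ set
\[
A_{i, k}^{j_i} := \{ x_i \in V_{i, j_i} \mid f_i(x_i) > k/N \},
\]
which is a nonstandard subset of $V_{i, j_i}$ since $f_i$ is a nonstandard function. The simple function $g_{i, j_i} := \frac{1}{N} \sum_{k=1}^N 1_{A_{i, k}^{j_i}}$ satisfies $|f_i - g_{i, j_i}| \leq 1/N$ pointwise on $V_{i, j_i}$, and since $|f_i|, |g_{i, j_i}| \leq 1$, a multilinear expansion gives
\[
S_{j_1, \ldots, j_d} = \sum_{x \in V_{1,j_1} \times \cdots \times V_{d,j_d}} 1_{E_l}(x)\, g_{1, j_1}(x_1) \cdots g_{d, j_d}(x_d) + O\!\left( \frac{|V_1| \cdots |V_d|}{N} \right).
\]
Expanding each $g_{i, j_i}$ as an average of indicators and interchanging summations rewrites the main term as $N^{-d} \sum_{k_1, \ldots, k_d} |E_l \cap (A_{1, k_1}^{j_1} \times \cdots \times A_{d, k_d}^{j_d})|$, and now \eqref{rag} applies term by term: each fibre count equals $\sigma_{l, j_1, \ldots, j_d} |A_{1, k_1}^{j_1}| \cdots |A_{d, k_d}^{j_d}| + O(|\F|^{-1/4} |V_1| \cdots |V_d|)$ with a uniform standard implied constant. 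Summing the $N^d$ such identities and dividing by $N^d$ preserves the error $O(|\F|^{-1/4} |V_1| \cdots |V_d|)$, while the main term factorises as $\sigma_{l, j_1, \ldots, j_d} \prod_i \sum_{x_i \in V_{i, j_i}} g_{i, j_i}(x_i)$, which agrees with $\sigma_{l, j_1, \ldots, j_d} \prod_i \sum_{x_i \in V_{i, j_i}} f_i(x_i)$ up to a further $O(|V_1| \cdots |V_d|/N)$ telescoping error.

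To finish I would pick $N = \lfloor |\F|^{1/4} \rfloor$, which absorbs the discretisation error into the regularity error; summing over the standard-sized family of blocks and reversing the reduction from the first paragraph yields the claim. The only delicate bookkeeping point is to verify that the implied constant in \eqref{rag} is a single standard constant independent of the sets $A_i$, so that adding $N^d$ regularity estimates and then dividing by $N^d$ does not smuggle the nonstandard parameter $N$ back into the error; this is transparent from the formulation of Theorem \ref{threg-higher}, so I do not anticipate any substantive obstacle beyond this careful balancing of constants.
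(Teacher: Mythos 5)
Your argument is correct and takes essentially the same route as the paper: after the reduction (via real/imaginary and positive/negative parts, and restriction to a single block) to nonnegative $f_i$ bounded by $1$ and supported on $V_{i,j_i}$, both proofs use a layer-cake decomposition to reduce to indicator functions and then invoke \eqref{rag}. The paper writes each $f_i$ directly as the nonstandard integral $\int_0^1 1_{f_i \ge t}\,dt$ and applies \eqref{rag} inside the integral; you instead discretise into $N$ level sets with $N$ a nonstandard integer of size $\approx |\F|^{1/4}$ and apply \eqref{rag} to each of the $N^d$ resulting cells. These are functionally equivalent. On the point you flag — that the implied constant in \eqref{rag} must be a single standard constant uniform over all internal choices of $A_1,\ldots,A_d$, so that summing $N^d$ instances and dividing by $N^d$ does not degrade the error — you are right that this is the one place where care is needed, and right that it holds: the constant produced in the proof of Lemma \ref{algreg-nonst} comes from Proposition \ref{First-reg} and the Cauchy–Schwarz steps and does not depend on the test sets, and in any case the uniformity can be deduced abstractly from countable saturation (Lemma \ref{countsat}) since the family of bounds indexed by the internal sets $A_i$ is internal.
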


\begin{proof}  By decomposing each $f_i$, we may assume that $f_i$ is real, non-negative, and bounded by $1$, and supported on a single set $V_{i,j_i}$ for some $1 \leq j_i \leq a_i$.  Our task is then to show that
$$
\sum_{(x_1,\ldots,x_d) \in V_{1,j_1}\times \ldots \times V_{d,j_d}} 1_{E_l}(x_1,\ldots,x_d) f_1(x_1) \ldots f_d(x_d)
$$
is equal to
$$ \sigma_{l,j_1,\ldots,j_d} \prod_{i=1}^d (\sum_{x_i \in V_{i,j_i}} f_i(x_i)) + 
O(  |\F|^{-1/4} |V_1| \ldots |V_d| ).$$
By expressing each $f_i$ as a (nonstandard) integral $f_i = \int_0^1 1_{f_i \geq t}\ dt$ of (nonstandard) indicator functions, we may reduce to the case where each $f_i$ is an indicator function.  But the claim then follows from \eqref{rag}.
\end{proof}

\subsection{Expanding definable maps}

We now apply the above regularity lemma to establish the following dichotomy for definable maps.  Given two definable sets $V, W$ over a field $F$, call a function $f: V \to W$ \emph{definable} if its graph $\{ (v, f(v))\mid v \in V \}$ is a definable set.

\begin{theorem}[Expansion dichotomy]\label{expand-thm} Let $\F$ be a nonstandard finite field of characteristic zero, and let $V, W, U$ be geometrically irreducible quasiprojective varieties defined over $\overline{\F}$.  Let $P: V \times W \to U$ be a regular map defined over $\F$.  Then at least one of the following statements hold:
\begin{itemize}
\item[(i)] (Algebraic constraint)  The set
$$ \{ (P(v,w), P(v,w'), P(v',w), P(v',w'))\mid v,v' \in V; w,w' \in W \}$$
is not Zariski dense in $U^4$.
\item[(ii)]  (Moderate expansion) There exists a partition of $U(\F)$ into finitely many definable subsets $U(\F) = U_1 \cup \ldots \cup U_m$ with $|U_i| \gg |U(\F)|$ for all $1 \leq j \leq m$, with the property that for any nonstandard sets $A \subset V(\F)$, $B \subset W(\F)$, there exists $1 \leq j \leq m$ with
$$ |U_j \backslash P(A,B)| \ll |\F|^{-1/16} (|V(\F)|/|A|)^{1/2} (|W(\F)|/|B|)^{1/2} |U_j|.$$
In particular, we have the following moderate expansion property: if $|A||B| \ggg |\F|^{-1/8} |V(\F)| |W(\F)|$, then $|P(A,B)| \gg |U(\F)|$.
\end{itemize}
\end{theorem}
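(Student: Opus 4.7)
Assume (i) fails, so the constructible set
\[
E := \{(P(v,w), P(v,w'), P(v',w), P(v',w')) : v, v' \in V, w, w' \in W\} \subset U^4
\]
is Zariski dense, and write $\phi$ for the parameterising map from $V^2 \times W^2$ to $U^4$. Following the Cauchy-Schwarz reduction of \cite{bukh}, the strategy is to convert the sparse graph of $P$ into this dense 4-point structure on $U^4$ and then apply the regularity lemma. First I would apply Theorem \ref{threg-higher} to $U(\F)^4$ with the definable set $E(\F)$; this yields the partition $U(\F) = U_1 \cup \cdots \cup U_m$ (with $|U_j| \gg |U(\F)|$) and standard rational cell densities $\sigma_{j_1,\ldots,j_4}$ promised by (ii). By Lemma \ref{sz} applied to the generic fibres of $\phi$ (which have dimension $D := 2\dim V + 2\dim W - 4\dim U$), the counting function $N(u) := |\phi^{-1}(u) \cap (V(\F)^2 \times W(\F)^2)|$ equals $(c_0 + O(|\F|^{-1/2}))\,|\F|^D$ on a Zariski-dense open portion of $E(\F)$, with the non-generic locus contributing only lower-order corrections; this lets me treat $N$ as essentially $c_0|\F|^D \cdot 1_E$ for the purposes of Corollary \ref{long}, with $c_0|\F|^D|U(\F)|^4 \asymp |V(\F)|^2|W(\F)|^2$.

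Fix $A \subseteq V(\F)$ and $B \subseteq W(\F)$, set $X_j := |\{(v,w) \in A \times B : P(v,w) \in U_j\}|$, and let $D_j := U_j \setminus P(A,B)$. Since $\sum_j X_j = |A||B|$ and $\sum_j |U_j| = |U(\F)|$, pigeonholing produces an index $j$ with $X_j \geq |A||B|\cdot|U_j|/|U(\F)|$; fix this $j$. The key move is the orthogonal decomposition $1_{D_j} = (|D_j|/|U_j|)\,1_{U_j} + h$, in which $h$ is supported on $U_j$ and, by construction, has zero sum on every cell of the partition. Because $D_j$ is disjoint from $P(A,B)$, the bilinear sum $\sum_{v \in A, w \in B} 1_{D_j}(P(v,w))$ vanishes, which rearranges into the identity
\[
\frac{|D_j|}{|U_j|}\,X_j \;=\; -\Lambda(h), \qquad \Lambda(h) := \sum_{v \in A, w \in B} h(P(v,w)).
\]

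Applying Cauchy-Schwarz twice to $\Lambda(h)$ (once in $v$, once in $w$) bounds $|\Lambda(h)|^4 \leq |A|^2|B|^2\,T$, where
\[
T := \sum_{(v,v',w,w') \in V(\F)^2 \times W(\F)^2} h(P(v,w))\,h(P(v,w'))\,h(P(v',w))\,h(P(v',w')) = \sum_{u \in U(\F)^4} N(u) \prod_{k=1}^4 h(u_k).
\]
Substituting $N \approx c_0|\F|^D 1_E$ from the first paragraph and then applying Corollary \ref{long} (with all four test functions equal to $h$) to the 4-partite set $E(\F)$, the main term of $T$ factorises as $c_0|\F|^D \sum_{j_1,\ldots,j_4} \sigma_{j_1,\ldots,j_4} \prod_k \left(\sum_{u_k \in U_{j_k}} h(u_k)\right)$, each of whose inner factors vanishes by construction of $h$; hence $T \ll |\F|^{-1/4}|V(\F)|^2|W(\F)|^2$. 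Substituting into the Cauchy-Schwarz bound, taking fourth roots, and dividing by the pigeonhole lower bound $X_j \gg |A||B||U_j|/|U(\F)|$ (together with $|U(\F)|/|U_j| = O(1)$) yields
\[
\frac{|D_j|}{|U_j|} \;\ll\; |\F|^{-1/16} \left(\frac{|V(\F)|}{|A|}\right)^{1/2} \left(\frac{|W(\F)|}{|B|}\right)^{1/2},
\]
which is the conclusion of (ii).

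The main obstacle in this plan is the first-paragraph reduction from the weighted count $N$ to the indicator $1_E$: one needs the Lang-Weil estimate uniformly on the generic locus of $\phi$, and a separate dimension-counting argument to confine the non-generic contributions (to both $T$ and to $|E(\F)|$) within the $|\F|^{-1/4}$ regularity error, which is where the Zariski density hypothesis from the failure of (i) is used crucially. Everything else—the double Cauchy-Schwarz, the pigeonhole choice of cell, and the mean-zero decomposition of $1_{D_j}$—is routine once this reduction is in place, and together they convert the $|\F|^{-1/4}$ gain of Corollary \ref{long} into the $|\F|^{-1/16}$ gain displayed in (ii).
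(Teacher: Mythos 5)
Your overall architecture mirrors the paper's proof — double Cauchy–Schwarz to pass to the $4$-point structure, Lang–Weil on generic fibres, the regularity lemma to kill the mean-zero test function, pigeonhole on cells, and a final division to get the $|\F|^{-1/16}$ gain. The rearrangement via the identity $(|D_j|/|U_j|)X_j=-\Lambda(h)$ is a modestly cleaner packaging than the paper's $L^1$-deviation bound $\sum_{u\in U_j}|\mu(u)-\mu_j|$, but it buys nothing essentially new.

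There is, however, a genuine gap in your first paragraph, and it is precisely where the hard work of the actual proof lives. You claim that on a Zariski-dense open portion of $E(\F)$ one has $N(u)=(c_0+O(|\F|^{-1/2}))|\F|^D$ for a \emph{single} standard constant $c_0$, so that $N$ is essentially a multiple of $1_{E(\F)}$ and Corollary \ref{long} can be applied to the one set $E(\F)$. This is false. Lemma \ref{sz} gives $N(u)=(c(u)+O(|\F|^{-1/2}))|\F|^D$ where $c(u)$ is the number of top-dimensional geometrically irreducible components of the fibre $\pi^{-1}(\{u\})$ \emph{defined over $\F$}. This quantity is bounded, but the set where $c(u)=c_0$ is not cut out by a Zariski-closed condition — Frobenius-invariance of a component is a definable but not an algebraic constraint on $u$ (compare the quadratic-residue example $x^2=f(u)$, where the number of $\F$-rational points in the fibre oscillates densely). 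So $N$ is not proportional to a single indicator even up to lower-order corrections; it is proportional to $\sum_{c_0}c_0\,1_{E_{c_0}}$ where $E_{c_0}:=\{u\in U(\F)^4\setminus\Lambda:\ c(u)=c_0\}$ are the (definable) level sets of $c$. The fix is exactly what the paper does: observe that there are only finitely many nonempty $E_{c_0}$ (since $c$ is uniformly bounded), feed this whole family into Theorem \ref{threg-higher} simultaneously, intersect the resulting partitions of $U(\F)$, and then apply Corollary \ref{long} to each $E_{c_0}$ separately when expanding $T$. All four inner factors $\sum_{u_k\in U_{j_k}}h(u_k)$ still vanish for each level set, so your argument closes once this stratification is inserted; but as written, the reduction of $N$ to $c_0|\F|^D 1_E$ is incorrect, and it is the key technical step that the Lang–Weil/regularity machinery is designed to circumvent.
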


\begin{proof}
Assume that conclusion (i) of that theorem fails.  We consider the set
$$ \Sigma := \{ (v,v',w,w',P(v,w), P(v,w'), P(v',w), P(v',w'))\mid v,v' \in V; w,w' \in W \} \subset V^2 \times W^2 \times U^4.$$
This is a graph of a regular map from $V^2 \times W^2$ to $U^4$ and is thus (by Proposition \ref{proj}) an irreducible constructible set of dimension $2\dim(V)+2\dim(W)$.  By hypothesis, the projection of this set to $U^4$ is Zariski dense, and thus the projection map $\pi$ from $\Sigma$ to $U^4$ is dominant.  Thus, outside of a subvariety $\Lambda$ of $\overline{U}^4$ of dimension strictly less than $4\dim(V)$, the fibres of $\pi$ are $2\dim(V)+2\dim(W)-4\dim(U)$-dimensional.  Furthermore, $\pi^{-1}(\Lambda)$ has dimension strictly less than $2\dim(V)+2\dim(W)$.

By Lemma \ref{sz}, for any $F$-point $x \in U(\F)^4$ that does not lie in $\Lambda$, the $F$-points $\pi^{-1}(\{x\})(\F)$ of the fibre at $x$ have cardinality 
\begin{equation}\label{Sang}
|\pi^{-1}(\{x\})(\F)| = (c(x) + O(|\F|^{-1/2})) |\F|^{2\dim(V)+2\dim(W)-4\dim(U)},
\end{equation}
where $c(x)$ is the number of top-dimensional geometrically irreducible components of the fibre $\pi^{-1}(\{x\})$ which are defined over $\F$.  As this is finite for every $x$, we see from countable saturation (or from degree considerations) that $c(x)$ is uniformly bounded in $x$.  Also, from the definition of $c(x)$, it is clear that the level sets $E_{c_0} := \{ x \in U(\F)^4 \backslash \Lambda(\F)\mid c(x) = c_0 \}$ are definable sets for each standard natural number $c_0$ (and, by the preceding discussion, are empty for sufficiently large $c_0$).  Applying Theorem \ref{threg-higher} (and combining the four partitions of $U$ obtained by that theorem), we may thus find a partition $U(\F) = U_1 \cup \ldots \cup U_m$ into finitely many definable subsets with $|U_i| \gg |U(\F)|$ for all $1 \leq i \leq m$, such that for any $1 \leq j_1,j_2,j_3,j_4 \leq m$ and natural number $c_0$, there exists a standard rational number $0 \leq \sigma_{c_0,j_1,j_2,j_3,j_4} \leq 1$ such that
$$ |E_{c_0} \cap (A_1 \times A_2 \times A_3 \times A_4)| = \sigma_{c_0,j_1,j_2,j_3,j_4} |A_1| |A_2| |A_3| |A_4| + O( |\F|^{-1/4} |U(\F)|^4 )$$
for all nonstandard subsets $A_1,A_2,A_3,A_4$ of $U_{j_1}, U_{j_2}, U_{j_3}, U_{j_4}$ respectively.  From Corollary \ref{long} we see that
\begin{equation}\label{esum}
\sum_{(u_1,u_2,u_3,u_4) \in E_{c_0}} f_1(u_1) f_2(u_2) f_3(u_3) f_4(u_4) = 
\sigma_{c_0,j_1,j_2,j_3,j_4} \prod_{i=1}^4 (\sum_{u_i \in U_{j_i}} f_i(u_i)) + O( |\F|^{-1/4} |U(\F)|^4 )
\end{equation}
whenever, for each $i=1,2,3,4$, $f_i: U(\F) \to \ultra \R$ is a nonstandard function supported on $U_{j_i}$ bounded in magnitude by $1$.

Fix the partition $U(\F) = U_1 \cup \ldots \cup U_m$, let $1 \leq j \leq m$ be an index, and let $f: U(\F) \to \R^+$ be a nonstandard function bounded in magnitude by $1$ that is supported on $U_j$ and has mean zero.  We consider the quantity
\begin{equation}\label{able}
|\sum_{v \in A} \sum_{w \in B} f( P( v, w ) )|.
\end{equation}
By the Cauchy-Schwarz inequality, we may bound this expression by
$$ |A|^{1/2} (\sum_{v \in V(\F)} |\sum_{w \in B} f( P( v, w ) )|^2)^{1/2}$$
which we can rewrite as
$$ |A|^{1/2} |\sum_{w,w' \in B} \sum_{v \in V(\F)} f( P(v,w) ) f( P(v,w') )|^{1/2}.$$
By a second application of Cauchy-Schwarz, we can bound this expression by
$$ |A|^{1/2} |B|^{1/2} (\sum_{w,w' \in W(\F)} |\sum_{v \in V(\F)} f( P(v,w) )f( P(v,w') )|^2)^{1/4}$$
which we can rearrange as
$$|A|^{1/2} |B|^{1/2} |\sum_{(v,v',w,w') \in V(\F)^2 \times W(\F)^2} f( P(v,w) )f( P(v,w') ) f( P(v',w) ) f( P(v',w') )|^{1/4}$$
or equivalently
$$|A|^{1/2} |B|^{1/2} |\sum_{s \in \Sigma(\F)} f^{\otimes 4}( \pi( s ) )|^{1/4}$$
where the tensor power $f^{\otimes 4}: U(\F)^4 \to \R^+$ of $f$ is defined by the formula 
$$ f^{\otimes 4}( u_1,u_2,u_3,u_4) := f(u_1) f(u_2) f(u_3) f(u_4)$$
for $u_1,u_2,u_3,u_4 \in U(\F)$.  

Since $\pi^{-1}(\Lambda)$ has dimension strictly less than $2\dim(V)+2\dim(W)$, we see from Lemma \ref{sz} that at most $O( |\F|^{2\dim(V)+2\dim(W)-1} )$ of the points $s \in \Sigma(\F)$ lie in $\pi^{-1}(\Lambda)$.  Thus, by the boundedness of $f$, we may bound \eqref{able} by
$$|A|^{1/2} |B|^{1/2} |\sum_{s \in \Sigma(\F) \backslash \pi^{-1}(\Lambda)} f^{\otimes 4}( \pi( s ) ) + O(|\F|^{2\dim(V)+2\dim(W)-1}) |^{1/4}$$
which we can rewrite as
\begin{align*}
&|A|^{1/2} |B|^{1/2} |\sum_{(u_1,u_2,u_3,u_4) \in U(\F)^4 \backslash \Lambda} |\pi^{-1}(\{(u_1,u_2,u_3,u_4)\})(\F)| \times \\
&\quad \times f(u_1) f(u_2) f(u_3) f(u_4) + O(|\F|^{2\dim(V)+2\dim(W)-1})|^{1/4}.
\end{align*}
Applying \eqref{Sang} (and Lemma \ref{sz} to bound $|U(\F)|$), we can bound this by
\begin{align*}
&|A|^{1/2} |B|^{1/2} | |\F|^{2\dim(V)+2\dim(W)-4\dim(U)} \sum_{(u_1,u_2,u_3,u_4) \in U(\F)^4 \backslash \Lambda} \\
&\quad c(u_1,u_2,u_3,u_4) f(u_1) f(u_2) f(u_3) f(u_4) + O(|\F|^{2\dim(V)+2\dim(W)-1/2}) )|^{1/4}.
\end{align*}
We can rewrite this as
\begin{equation}\label{daisy}
\begin{split}
&|A|^{1/2} |B|^{1/2} | |\F|^{2\dim(V)+2\dim(W)-4\dim(U)} \sum_{c_0 \leq C_0} c_0 \sum_{(u_1,u_2,u_3,u_4) \in E_{c_0}} \\
&\quad f(u_1) f(u_2) f(u_3) f(u_4) + O(|\F|^{2\dim(V)+2\dim(W)-1/2}) )|^{1/4}.
\end{split}
\end{equation}
where $C_0$ is the largest value of $c_0$ for which $E_{c_0}$ is non-empty (as mentioned previously, $C_0$ is a standard natural number).  Applying \eqref{esum}, we can bound the above expression by
$$|A|^{1/2} |B|^{1/2} | O( |\F|^{2\dim(V)+2\dim(W)-1/4} ) |^{1/4},$$
and thus (by Lemma \ref{sz})
$$ 
|\sum_{v \in A} \sum_{w \in B} f( P( v, w ) )|
\ll |\F|^{-1/16} |A|^{1/2} |V(\F)|^{1/2} |B|^{1/2} |W(\F)|^{1/2} $$
whenever $f: U(\F) \to \ultra \R$ is a nonstandard function supported on $U_j$, bounded in magnitude by $1$ and of mean zero.

For each $u \in U(\F)$, define the multiplicity function
$$ \mu(u) := |\{ (v,w) \in A \times B\mid P(v,w) = u \}|,$$
then the above bound can be rewritten as
$$ |\sum_{u \in U_j} f(u) \mu(u)| \ll |\F|^{-1/16} |A|^{1/2} |V(\F)|^{1/2} |B|^{1/2} |W(\F)|^{1/2}$$
whenever $1 \leq j \leq a$ and $f: U(\F) \to \ultra \R$ is a nonstandard function supported on $U_j$, bounded in magnitude by $1$ and of mean zero.  In particular, one has
\begin{equation}\label{mujo}
 \sum_{u \in U_j} |\mu(u) - \mu_j| \ll |\F|^{-1/16} |A|^{1/2} |V(\F)|^{1/2} |B|^{1/2} |W(\F)|^{1/2}
\end{equation}
for all $1 \leq j \leq a$ and $\mu_j \in \ultra \R$ is the average value of $\mu(u)$ on $U_j$, as can be seen by taking $f$ to be the signum function of $\mu(u)-\mu_j$, normalised to have mean zero and bounded in magnitude by $1$.  

On the other hand, by double counting we have
$$ |A| |B| = \sum_{u \in U} \mu(u) = \sum_{j=1}^m |U_j| \mu_j.$$
By the pigeonhole principle, we can find $1 \leq j \leq m$ such that
\begin{equation}\label{muj}
 \mu_j \gg |U_j|^{-1} |A| |B|.
\end{equation}
From this and \eqref{mujo} we see that
\begin{equation}\label{loe}
 |\{ u \in U_j\mid \mu(u) = 0 \}| \ll |\F|^{-1/16} (|V(\F)|/|A|)^{1/2} (|W(\F)|/|B|)^{1/2} |U_j|.
 \end{equation}
Since $\{ u \in U_j\mid \mu(u) = 0 \} = U_j \backslash P(A,B)$, the claim follows. 
\end{proof}

\begin{remark}\label{three}  From \eqref{mujo} (and bounding $\mu_j$ crudely by $|A| |B|/ |U_j|$), we conclude the additional bound
$$ |\{ (a,b) \in A \times B\mid P(a,b) \in C \}| \ll \frac{|A| |B| |C|}{|U(\F)|} + |\F|^{-1/16} |A|^{1/2} |V(\F)|^{1/2} |B|^{1/2} |W(\F)|^{1/2}$$
whenever $A \subset V(\F), B \subset W(\F), C \subset U(\F)$ are nonstandard sets and conclusion (i) of Theorem \ref{expand-thm} fails.  A variant of the above argument gives the more general bound
$$ | (A \times B \times C) \cap S| \ll \frac{|A| |B| |C|}{|U(\F)|} + |\F|^{-1/16} |A|^{1/2} |V(\F)|^{1/2} |B|^{1/2} |W(\F)|^{1/2}$$
whenever $S$ is a subvariety of $V \times W \times U$ with the property that the fibres $\{ u \in U: (v,w,u) \in S \}$ are finite for all $v \in V, w \in W$, and such that the set
$$ \{ (u_1,u_2,u_3,u_4) \in U^4\mid \exists v,v' \in V; w,w' \in W: (u_1,v,w), (u_2,v,w'), (u_3,v',w), (u_4,v',w') \in S \}$$
is Zariski dense in $U^4$, by replacing \eqref{able} with the more general expression
$$ |\sum_{v \in A} \sum_{w \in B} \sum_{u \in U: (v,w,u) \in S} f( u )|.$$
We leave the details of the above generalisation to the interested reader.  Such bounds, in the context of subsets of $\C$ rather than of $\F$, were studied in \cite{elekes-szabo}.  It is likely that one could use the techniques in this paper to then establish analogues of the main results of \cite{elekes-szabo} in the context of large subsets of finite fields of large characteristic, but we will not pursue this issue here.
\end{remark}

We also have a variant that gives stronger expansion provided that one can rule out a second constraint:

\begin{theorem}[Second expansion dichotomy]\label{expand2-thm} Let $\F$ be a nonstandard finite field, let $V, W, U$ be geometrically irreducible quasiprojective varieties defined over $\F$.  Let $P: V \times W \to U$ be a regular map defined over $\F$.  Then at least one of the following statements hold:
\begin{itemize}
\item[(i)] (Algebraic constraint)  The set
$$ \{ (P(v,w), P(v,w'), P(v',w), P(v',w'))\mid v,v' \in V; w,w' \in W \}$$
is not Zariski dense in $U^4$.
\item[(ii)]  (Second algebraic constraint)  There exist geometrically irreducible quasiprojective varieties $V',W',U'$ defined over $\F$ with the same dimensions as $V,W,U$ respectively, and dominant regular maps $f: V' \to V$, $g: W' \to W$, $h: U' \to U$ defined over $\F$ such that the variety
$$  \{ (v',w',u') \in V' \times W' \times U'\mid P( f(v'), g(w') ) = h(u') \}$$
is not irreducible.
\item[(iii)]  (Strong expansion) For any non-empty nonstandard sets $A \subset V(\F)$, $B \subset W(\F)$, one has the strong expansion property
$$ |U(\F) \backslash P(A,B)| \ll |\F|^{-1/16} (|V(\F)|/|A|)^{1/2} (|W(\F)|/|B|)^{1/2} |U|.$$
\end{itemize}
\end{theorem}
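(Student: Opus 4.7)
The plan is to sharpen the argument of Theorem \ref{expand-thm} under the joint failure of (i) and (ii), in order to rule out any genuine non-triviality in the partition $U(\F) = U_1 \cup \cdots \cup U_m$ produced there. Assume (i) fails, so the Zariski density hypothesis of Theorem \ref{expand-thm} is satisfied. Running the proof of Theorem \ref{expand-thm} through to \eqref{mujo} gives, for \emph{every} cell $U_j$, the $L^1$ bound
$$ \sum_{u \in U_j} |\mu(u) - \mu_j| \ll |\F|^{-1/16}|A|^{1/2}|V(\F)|^{1/2}|B|^{1/2}|W(\F)|^{1/2},$$
where $\mu(u) := |\{(v,w) \in A \times B : P(v,w) = u\}|$ and $\mu_j$ is the mean of $\mu$ on $U_j$. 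Since $|U_j \setminus P(A,B)| \le \mu_j^{-1}\sum_{u\in U_j}|\mu(u)-\mu_j|$, summing over $j$ reduces (iii) to the uniform lower bound $\mu_j \gg |A||B|/|U(\F)|$ for \emph{every} $j$; the pigeonhole step in Theorem \ref{expand-thm} already supplies this for some single $j_0$, so the real task is to promote ``some $j_0$'' to ``every $j$''.

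The partition $\{U_j\}$ is not arbitrary: it arose by applying Theorem \ref{threg-higher} to the level sets of the fibre-counting function $c : U^4 \setminus \Lambda \to \N$ recording the number of top-dimensional geometrically irreducible components of $\pi^{-1}(u_1,\ldots,u_4)$ defined over $\F$, where $\pi : \Sigma \to U^4$ is the projection from
$$ \Sigma = \{(v,v',w,w',P(v,w),P(v,w'),P(v',w),P(v',w')) : v,v'\in V,\ w,w'\in W\}.$$
I claim that, under the failure of (ii), the function $c$ is generically constant on $U^4$, in which case the partition collapses to $m=1$ after absorbing any lower-dimensional cells, and the uniform lower bound on $\mu_j$ is then automatic from the mean identity $\sum_j |U_j|\mu_j = |A||B|$; inserting this into the $L^1$ bound above immediately yields (iii).

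The heart of the argument, and the step I expect to be the main obstacle, is the proof of this claim. The plan is to argue by contrapositive: starting from the assumption that the generic fibre of $\pi$ splits into $r > 1$ geometrically irreducible components over $\overline{\F}$, I would apply Stein factorisation together with the \'etale fundamental group machinery used in the proof of Lemma \ref{algreg-nonst} to obtain a non-trivial finite \'etale cover $\tilde U \to \pi(\Sigma)$ of degree $r$. Using the coordinate projections $U^4 \to U$ and the graph of $P$ to match up the $V$- and $W$-components, this cover should then be descended to a finite dominant cover $h : U' \to U$ together with compatible dominant covers $f : V' \to V$ and $g : W' \to W$ of matching dimensions, all defined over $\F$ via the Frobenius-invariance criterion of Lemma \ref{ose}. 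The variety $\{(v',w',u') : P(f(v'),g(w')) = h(u')\}$, being the pullback of the graph of $P$ by $(f,g,h)$, then inherits the $r$ components from this \'etale cover, producing the reducibility required by (ii). The main technical difficulty will be the descent from the four-fold base $U^4$ back to a single copy of $U$ while preserving both the dimension count and $\F$-rationality; the rest of the argument is either a mechanical unwinding of definitions or a direct repetition of the methods in the proof of Lemma \ref{algreg-nonst}.
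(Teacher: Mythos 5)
Your proof reverses the logical direction: you aim to establish that (not (i)) and (not (ii)) force (iii), whereas the paper's proof runs in the opposite direction, showing that (not (i)) and (not (iii)) force (ii). These are formally equivalent, but they lead to entirely different arguments, and the route you chose concentrates all the difficulty into a claim that is both unproven and, I believe, not derivable by the method you outline.

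Your reduction — that if the counting function $c$ is generically constant on $U^4$ then the partition in the proof of Theorem \ref{expand-thm} can be taken trivial, after which the mean identity makes $\mu_1 = |A||B|/|U(\F)|$ and \eqref{mujo} yields (iii) — is correct. The heart of your argument, as you say yourself, is the claim that failure of (ii) forces $c$ to be generically constant, equivalently that geometric reducibility of the generic fibre of $\pi : \Sigma \to U^4$ forces the second algebraic constraint (ii). The Stein-factorisation step is fine: reducibility of the generic fibre does produce a nontrivial finite cover $\tilde U \to U^4$ of degree $r>1$ (over a dense open set). But the descent from $\tilde U \to U^4$ back to covers $h: U' \to U$, $f: V' \to V$, $g: W' \to W$ is not a technical detail to be unwound — it is the whole problem, and I do not see why it should be possible. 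A finite \'etale cover of $U^4$ corresponds to a finite transitive $\pi_1(U^4)$-set, and $\pi_1(U^4) \cong \pi_1(U)^4$ has plenty of transitive finite quotients that are not products of quotients of the four factors (already for $\pi_1(U)$ mapping to $\Z/2$, a ``diagonal'' action like $(g_1,g_2,g_3,g_4) \mapsto g_1+g_2+g_3+g_4$ acting on a two-element set is not a product). The map $\pi$ couples the four $U$-slots through the shared $(v,v',w,w')$ and there is no a priori reason that the resulting cover of $U^4$ is of product type. Even granting a factoring cover of $U$, you would still have to manufacture compatible covers of $V$ and $W$ and verify Frobenius-invariance and the dimension conditions, none of which is sketched. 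In short, what you label a ``technical difficulty'' is an unproven implication, and the implication itself is in doubt: it is conceivable that there is a $P$ for which $c$ is not generically constant (so your reduction to $m=1$ fails) and yet $P$ is a strong expander, with (ii) established only through the quantitative route of the paper (applied to some other choice of $A,B$) and never via your intrinsic \'etale data.

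For comparison, the paper avoids this entirely: assuming (not (iii)), it finds a cell $U_j$ with anomalously small average multiplicity $\mu_j$, applies the algebraic regularity lemma to the definable set $P^{-1}(U_j) \subset V(\F) \times W(\F)$ to extract cells $V_i, W_{i'}$ on which the density is exactly $0$, uses Theorem \ref{aqe} to realise $V_i, W_{i'}, U_j$ as images of varieties $V', W', U'$ under dominant maps $f, g, h$, and then observes that the pullback variety $\{(v',w',u') : P(f(v'),g(w')) = h(u')\}$ has $o(|\F|^{\dim V + \dim W})$ many $\F$-points, which by the Lang--Weil estimate (Lemma \ref{sz}) is impossible for an irreducible variety of dimension $\dim V + \dim W$. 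The failure of (iii) thus supplies the quantitative input needed to force reducibility, and no fundamental-group descent is required. You should pivot to this counting argument; your $TT^*$-style reduction of (iii) to a lower bound on all $\mu_j$ is a correct observation, but the negation of that lower bound is exactly the starting point of the paper's argument, not something you can circumvent with an algebraic characterisation of when the partition is trivial.
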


\begin{proof} Now we use a variant of the previous argument.  We repeat all the construction and argument in the proof of Theorem \ref{expand-thm}.  If we have \eqref{muj} for all $1 \leq j \leq m$, then we have conclusion (iii) by summing \eqref{loe} in $j$.  Thus we may assume that
$$
 \mu_j = o(|U_j|^{-1} |A| |B| )
$$
for some $j$.  Thus, for this $j$, we have
$$ \sum_{v \in A} \sum_{w \in B} 1_{U_j}(P(v,w)) = o(|A| |B|).$$
Write $E := \{ (v,w) \in V(\F) \times W(\F)\mid P(v,w) \in U_j \}$, then $E$ is a definable subset of $V(\F) \times W(\F)$ and
\begin{equation}\label{loo}
 |E \cap (A \times B)| = o( |A| |B| ).
 \end{equation}
By Lemma \ref{algreg-nonst}, we can partition $V(\F)$ into a finite number of definable sets $V_1,\ldots,V_a$ and $W(\F)$ into a finite number of definable sets $W_1,\ldots,W_b$, such that for any $1 \leq i \leq a$ and $1 \leq i' \leq b$, there exists a standard rational number $0 \leq \sigma_{ii'} \leq 1$ with the property that
\begin{equation}\label{lan}
 |E \cap (A' \times B')| = \sigma_{ii'} |A'| |B'| + O( |\F|^{-1/4} |V(\F)| |W(\F)| ).
\end{equation}
for all nonstandard sets $A' \subset V_i$ and $B' \subset W_{i'}$; by concatenating the $V_i, W_{i'}$ if necessary we may assume that $|V_i| \gg |V(\F)|$ and $|W_j| \gg |W(\F)|$.  

Specialising to $A' := A \cap V_i$ and $B' := B \cap W_{i'}$ and using \eqref{loo} we see that
$$ \sigma_{ii'} |A'| |B'| \ll |\F|^{-1/4} |V(\F)| |W(\F)| + o( |A| |B| ).
$$

By the pigeonhole principle we can find $i,i'$ such that
$$ |A \cap V_i| \gg |A|$$
and
$$ |B \cap W_{i'}| \gg |B|,$$
and for this choice of $i,i'$ we thus have
$$ \sigma_{ii'} \ll |\F|^{-1/4} (|V(\F)|/|A|) (|W(\F)|/|B|) + o(1).$$
We may assume that $(|V(\F)|/|A|) (|W(\F)|/|B|) \ll |\F|^{1/8}$, otherwise conclusion (iii) is vacuously true; and so $\sigma_{ii'} \ll o(1)$.  Since $\sigma_{ii'}$ is standard rational, we conclude $\sigma_{ii'} = 0$.  Going back to \eqref{lan} and now setting $A' := V_i$ and $B' := W_{i'}$, we conclude that
$$ |E \cap (V_i \times W_{i'})| \ll |\F|^{-1/4} |V(\F)| |W(\F)|,$$
or in other words that
$$ \sum_{v \in V} \sum_{w \in W} 1_{V_i}(v) 1_{W_{i'}}(w) 1_{U_j}(P(v,w)) \ll |\F|^{-1/4} |V(\F)| |W(\F)|.$$

Using Theorem \ref{aqe}, we can write $V_i = f( V'(\F) )$ for some variety $V'$ defined over $\F$ with the same dimension as $V$, and some dominant map $f$ from $V'$ to $V$, also defined over $\F$;.  Similarly $W_{i'} = g(W'(\F))$ and $U_j = h(U'(\F))$; thus
$$ \sum_{v \in V} \sum_{w \in W} 1_{f(V'(\F))}(v) 1_{g(W'(\F))}(w) 1_{h(U'(\F)}(P(v,w)) \ll |\F|^{-1/4} |V(\F)| |W(\F)|.$$
From Proposition \ref{proj}, we can find an open dense subvariety $V''$ of $V'$ such that $f$ is quasi-finite on $V''$, and thus (by countable saturation, or degree considerations) one has a uniform bound $|f^{-1}(\{v\})| \leq C$ for all $v \in f(V'')$.  By intersecting $V''$ with its Galois conjugates, we may assume that $V''$ is defined over $\F$.  Similarly we can find open dense subvarieties $W'', U''$ of $W', U'$ respectively defined over $\F'$ such that $g, h$ are quasi-finite on $W'', U''$ respectively.  Then
$$ \sum_{v \in V} \sum_{w \in W} 1_{f(V''(\F))}(v) 1_{g(W''(\F))}(w) 1_{h(U''(\F)}(P(v,w)) \ll |\F|^{-1/4} |V(\F)| |W(\F)|$$
and hence
$$ |\{ (v', w', u' ) \in V''(\F) \times W''(\F) \times U''(\F) \mid h(u') = P(f(v'), g(w')) \}| \ll |\F|^{-1/4} |V(\F)| |W(\F)|.$$
In particular, the variety
$$ \{ (v', w', u') \in V'' \times W'' \times U'' \mid h(u') = P(f(v'),g(w')) \}$$
has $o( |V(\F)| |W(\F)| ) = o( |\F|^{\dim(V)+\dim(W)})$ $\F$-points.  On the other hand, this variety is quasi-finite over $\{ (v',w',P(f(v'),g(w')))\mid v \in V'', w \in W'' \}$ and thus has dimension $\dim(V)+\dim(W)$.  The only way this is consistent with Lemma \ref{sz} is if the variety is not irreducible, and the claim follows.
\end{proof}

\section{Solving the algebraic constraint}\label{algo}

In this section we solve the algebraic constraint that emerges in Theorem \ref{expand-thm}, in the case when $V=W=k$ is just an affine line and $P$ is polynomial.  More precisely, we show

\begin{theorem}\label{const-0}  Let $k$ be an algebraically closed field of characteristic zero.  Let $P: k^2 \to k$ be a polynomial with the property that the set
\begin{equation}\label{lka}
 \{ (P(a,c), P(a,d), P(b,c), P(b,d)) \mid  a,b,c,d \in k  \}
\end{equation}
is not Zariski dense in $k^4$.  Then one of the following statements hold:
\begin{itemize}
\item[(i)] (Additive structure)  There exist polynomials $Q, F, G: k \to k$ such that $P(x,y) = Q(F(x)+G(y))$ for all $x,y \in k$.
\item[(ii)] (Multiplicative structure) There exist polynomials $Q, F, G: k \to k$ such that $P(x,y) = Q(F(x) G(y))$ for all $x,y \in k$.
\end{itemize}
\end{theorem}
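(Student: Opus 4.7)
The plan is to reinterpret the non-density of \eqref{lka} as providing a one-parameter algebraic symmetry of the fibration $P:k^2\to k$, organise this symmetry into an action of a one-dimensional connected algebraic group $G$, use the Lefschetz principle together with the structure theory of such groups over $\C$ to identify $G$ with either $\mathbb{G}_a$, $\mathbb{G}_m$, or an elliptic curve, rule out the elliptic case using the polynomiality of $P$, and read off the additive or multiplicative form of $P$ from the group type.

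The first step is the standard reduction to geometrically indecomposable $P$: if $P=Q_0\circ P_0$ with $\deg Q_0\ge 2$, then $P_0$ inherits the non-density of \eqref{lka}, and any additive or multiplicative structure for $P_0$ transfers to $P$ after absorbing $Q_0$ into the outer factor $Q$. Induction on $\deg P$ reduces to indecomposable $P$, for which the generic fiber $L_u=P^{-1}(u)\subset k^2$ is geometrically irreducible.

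The central step is to build $G$. The non-density hypothesis supplies a nonzero irreducible polynomial relation $R(P(a,c),P(a,d),P(b,c),P(b,d))\equiv 0$; equivalently the image of the map $\Phi\colon(a,b,c,d)\mapsto(P(a,c),P(a,d),P(b,c),P(b,d))$ has dimension at most three. A fiber-dimension count then yields a one-parameter family of preimages over a generic image point, and the $2\times 2$ symmetry of $R$ should force this family to close up under composition, producing a connected one-dimensional algebraic group $G$ acting on a birational model of the total space of $P$ and preserving the generic fibers. Invoking the Lefschetz principle to embed $k$ into $\C$, the classification of connected one-dimensional complex algebraic groups forces $G\cong\mathbb{G}_a$, $\mathbb{G}_m$, or an elliptic curve. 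The elliptic case is excluded because the total space is affine and any regular morphism from a complete variety to an affine variety is constant, so the would-be $G$-orbits collapse to points, contradicting the nontriviality of $G$. In the two remaining cases, trivialising the $G$-action in polynomial coordinates converts the fibration into a canonical form $u=F(x)+G(y)$ or $u=F(x)G(y)$, yielding $P(x,y)=Q(F(x)+G(y))$ or $P(x,y)=Q(F(x)G(y))$ respectively.

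The main obstacle is this central step of upgrading an algebraic correspondence on generic fibers to a genuine algebraic group action. This requires verifying the associativity of the composition law — which should follow from the rigidity of the $2\times 2$ rectangular structure of \eqref{lka} — and descending from a birational action to a regular one, with any finite ramification absorbed into the outer polynomial $Q$. The example $P(x,y)=x^3+y^3$ shows that $L_u$ may itself be elliptic even in cases where additive structure holds, so the $G$-action must be viewed on a cover of $L_u$ (in this example, the cover $(x,y)\mapsto(x^3,y^3)$) rather than on $L_u$ directly; this subtlety is precisely what forces the birational-to-regular descent and explains the presence of the outer factor $Q$ in the final form of $P$.
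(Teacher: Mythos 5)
Your route --- build a one-dimensional algebraic group $G$ from the non-density condition and classify it as $\mathbb{G}_a$, $\mathbb{G}_m$, or elliptic --- is the heuristic the paper itself flags in a footnote, but the paper deliberately avoids carrying it out.  Instead it observes that non-density forces the Jacobian of $(a,b,c,d)\mapsto(P(a,c),P(a,d),P(b,c),P(b,d))$ to vanish identically, which after rearrangement gives the separated constraint $\frac{P_1}{P_2}(a,b)=\frac{f(a)}{g(b)}$ for rational $f,g$.  It then introduces primitives $\int_\gamma f$, $\int_\gamma g$ by contour integration over $\C$ (tracking the period lattices $\Gamma_1,\Gamma_2$ generated by $2\pi i$ times the residues), proves $P(\gamma_1(1),\gamma_2(1))=H(\int_{\gamma_1}f+\int_{\gamma_2}g)$ for some entire $H$, and reads off the trichotomy from whether $\Gamma_1+\Gamma_2$ is trivial, rank one, or rank two; in the rank-two (elliptic) case $H$ is doubly periodic and hence constant by Liouville.

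The gap is the central step you yourself flag.  From non-density you obtain only an algebraic \emph{correspondence} on (a cover of) the generic fibre of $P$.  Upgrading that correspondence to an algebraic group with an associative composition law, acting regularly rather than merely birationally, is the content of a genuine theorem (the Weil/Hrushovski group-chunk theorem, or the Elekes--Szab\'o machinery in this geometric setting), and ``should follow from the rigidity of the $2\times 2$ rectangular structure'' is an assertion, not an argument.  Your exclusion of the elliptic case compounds the problem: you argue that a regular map from a complete $G$ to an affine variety is constant, but you have only a \emph{birational} action on a \emph{birational model} of the total space, which need not be affine.  Your own example $P(x,y)=x^3+y^3$, whose generic fibre is elliptic, illustrates exactly this: the group must act on a nontrivial cover of the fibre, not on the affine fibre itself, so the affine-target argument does not directly apply.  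Both load-bearing steps thus remain open; the paper's explicit PDE-plus-Riemann-surface analysis is precisely the machinery that replaces them.
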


This is not quite what we need for Theorem \ref{expand-thm-modas-nonst} because the polynomials obtained here are defined over an algebraically closed field $k$, rather than over the nonstandard finite field $\F$; we address this issue at the end of this section.

The strategy of proof of Theorem \ref{const-0} will be to use the Lefschetz principle to reduce to the complex case $k=\C$, and then use complex analytic methods to analyse certain Riemann surfaces associated with $P$.

We begin with the reduction to the complex case $k=\C$.  This will be a standard ``Lefschetz principle'' argument.  We first observe that to prove Theorem \ref{const-0}, it suffices to verify the case when $k$ has finite transcendence degree over the rationals $\Q$, as can be seen by passing to the field of definition of $P$.  In particular, we may identify $k$ with a subfield of $\C$.  If the set \eqref{lka} is not Zariski dense in $k^4$, then it is contained in a proper subvariety of $k^4$, and hence
$$
 \{ (P(a,c), P(a,d), P(b,c), P(b,d)) \mid  a,b,c,d \in \C \}
$$
is contained in the complexification of that variety and is thus also not Zariski dense.  Applying Theorem \ref{const-0} with $k=\C$, we conclude that we may find polynomials $Q_0,F_0,G_0: \C \to \C$ such that $P(x,y) = Q_0(F_0(x)+G_0(y))$ or $P(x,y) = Q_0(F_0(x)G_0(y))$ for all $x,y \in \C$.  For sake of discussion let us work with the additive case $P(x,y) = Q_0(F_0(x)+G_0(y))$, as the multiplicative case is analogous.  We are not quite done yet, because $Q_0,F_0,G_0$ have coefficients in $\C$ rather than in $k$.  We claim however that we may find polynomials $Q,F,G: k \to k$ of the same degree as $Q_0,F_0,G_0$ respectively and with coefficients in $k$ such that $P(x,y) = Q(F(x)+G(y))$ for all $x,y \in k$.  Indeed, this can be viewed as an algebraic constraint satisfaction problem in the coefficients of $Q,F,G$, where the constraints are defined over $k$.  If this problem has no solutions over the algebraically closed field $k$, then by Hilbert's nullstellensatz we see that the problem has no solution over any extension of $k$ either, and in particular has no solutions in $\C$, contradicting the existence of the factorisation $P(x,y)=Q_0(F_0(x)+G_0(y))$.  This concludes the reduction to the $k=\C$ case.

Henceforth $k=\C$.  Let $P: \C^2 \to \C$ be a polynomial such that
$$
 \{ (P(a,c), P(a,d), P(b,c), P(b,d)) \mid  a,b,c,d \in \C \}
$$
is not Zariski dense, i.e. it is contained in a proper subvariety of $\C^4$.  We conclude that for generic $(a,b,c,d) \in \C^4$, the derivative of the map
$$ (a,b,c,d) \mapsto (P(a,c), P(a,d), P(b,c), P(b,d))$$
from $\C^4$ to $\C^4$ is singular.  As this is a closed condition, this in fact holds for \emph{all} $(a,b,c,d) \in \C^4$.  Taking determinants, we conclude the constraint
$$
\begin{pmatrix}
P_1(a,c) & 0 & P_2(a,c) & 0 \\
P_1(a,d) & 0 & 0 & P_2(a,d) \\
0 & P_1(b,c) & P_2(b,c) & 0 \\
0 & P_1(b,d) & 0 & P_2(b,d)
\end{pmatrix} = 0$$
for all $a,b,c,d \in \C$, where $P_1, P_2$ are the partial derivatives of $P$ with respect to the first and second variable respectively.  Expanding this out, we conclude that
$$ P_1(a,c) P_2(a,d) P_2(b,c) P_1(b,d) = P_2(a,c) P_1(a,d) P_1(b,c) P_2(b,d)$$
for all $a,b,c,d \in \C$.

If one of $P_1$ or $P_2$ is identically zero, then $P(x,y)$ is a function of just one of the two variables $x,y$, and one trivially has both additive and multiplicative structure, so we henceforth assume that $P_1,P_2$ are not identically zero.  We can then rearrange the above identity as
$$ \frac{P_1}{P_2}(a,c) \frac{P_1}{P_2}(b,d) = \frac{P_1}{P_2}(a,d) \frac{P_1}{P_2}(b,c)$$
for generic $a,b,c,d \in \C$.  Fixing a generic choice of $b,d$, we conclude in particular (after relabeling $c$ as $b$) that
\begin{equation}\label{pilo}
 \frac{P_1}{P_2}(a,b) = \frac{f(a)}{g(b)}
\end{equation}
for generic $a,b \in \C$ and some rational functions $f, g: \C \cup \{\infty\} \to \C \cup \{\infty\}$, not identically zero or identically infinite.

To motivate the argument that follows, suppose that we could make a change of variables $a \mapsto z$, $b \mapsto w$ such that
$$ dz = f(a) da; \quad dw = g(b) db,$$
thus $z$ is a primitive of $f$ applied to $a$, and $w$ is a primitive of $g$ applied to $b$.  Then, by the chain rule, the constraint \eqref{pilo} simplifies to
$$ \frac{\partial P}{\partial z} - \frac{\partial P}{\partial w} = 0$$
and so $P$ must be a function of $z+w$.  This already resembles the conclusion of Theorem \ref{const-0} quite closely (particularly if one formally rewrites the multiplicative conclusion $P(x,y) = Q(F(x) G(y))$ as $P(x,y) = Q \circ \exp( \log \circ F(x) + \log \circ G(y) )$).  

Of course, the rational functions $f, g$ may contain simple poles, which by the residue theorem implies that they do not have single-valued primitives taking values in $\C$.  However, through monodromy one can still define a primitive taking values in some Riemann surface covering the (punctured) complex plane.  So, it is natural to try to execute the above strategy in the framework of Riemann surfaces rather than on the complex plane.  This is essentially what we will do next, except that we will not explicitly use the abstract language of Riemann surfaces and instead work with the more concrete machinery of paths in the complex plane, in order to easily take advantage of the additive structure of $\C$.

We turn to the details.  By collecting the residues of the poles of $f$, we may write
$$ f(a) = \sum_{j=1}^m \frac{\alpha_j}{a-a_j} + \tilde f(a)$$
for some finite number of distinct simple poles $a_1,\ldots,a_m \in \C$, some non-zero complex residues $\alpha_1,\ldots,\alpha_m \in \C$, and a rational function $\tilde f(a)$, such that all poles of $\tilde f$ have zero residue.  In particular, $\tilde f$ has a primitive $F$, which is a rational function, so that
\begin{equation}\label{faa}
 f(a) = \sum_{j=1}^m \frac{\alpha_j}{a-a_j} + F'(a)
\end{equation}
for all but finitely many $a \in \C$.  By translation, we may assume without loss of generality that $a_1,\ldots,a_m \neq 0$, and that $F(0)=0$.  

Now let $\gamma: [0,1] \to \C \backslash \{a_1,\ldots,a_m\}$ be a smooth curve avoiding the poles $a_1,\ldots,a_m$ of $f$ that starts at $\gamma(0)=0$.  From the fundamental theorem of calculus, we can evaluate the contour integral $\int_\gamma f = \int_\gamma f(a)\ da$ as
$$ \int_\gamma f = \sum_{j=1}^m \alpha_j \operatorname{Log} \frac{\gamma(1)-a_j}{a_j} + F(\gamma(1))$$
where (by abuse of notation) $\operatorname{Log} \frac{\gamma(1)-a_j}{a_j}$ is one of the logarithms $\log \frac{\gamma(1)-a_j}{a_j}$ of $\frac{\gamma(1)-a_j}{a_j}$, with the exact branch of logarithm used depending on the homotopy class of $\gamma$ in $\C \backslash \{a_1,\ldots,a_m\}$.  (Here we adopt the convention that $\int_\gamma f = \infty$ if $\gamma$ terminates at a (zero-residue) pole of $f$, and evaluate contour integrals that pass through (zero-residue) poles of $f$ by arbitrarily perturbing the contour around such poles.) In particular, we see that
$$ \int_\gamma f \in c_{\gamma(1)} + \Gamma_1$$
where $\Gamma_1 \leq \C$ is the finitely generated subgroup of $\C$ generated by $2\pi i \alpha_1,\ldots,2\pi i \alpha_m$, and for any $a \in \C \backslash \{a_1,\ldots,a_m\}$, $c_a+\Gamma_1$ denotes the coset
\begin{equation}\label{lor}
 c_a + \Gamma_1 := \sum_{j=1}^m \alpha_j \log \frac{a-a_j}{a_j} + F(a);
\end{equation}
thus $c_a$ is only defined up to an additive error in $\Gamma_1$.  Conversely, for any given choice of endpoint $a \in \C \backslash \{a_1,\ldots,a_m\}$, and any element of the coset $z = c_a + \Gamma_1$, one can find a smooth curve $\gamma:[0,1] \to \C \backslash \{a_1,\ldots,a_m\}$ from $0$ to $a$ with $\int_\gamma f = z$.

We need the following technical lemma, reminiscent of the Picard theorem on the possible values of entire functions.

\begin{lemma}[Almost surjectivity]\label{ass}  For all complex numbers $z$ outside of at most one coset of $\Gamma_1$, there exists at least one smooth curve $\gamma: [0,1] \to \C \backslash \{a_1,\ldots,a_m\}$ starting at $0$ with $\int_\gamma f = z$.  If $\Gamma_1$ is not trivial and is not a rank one lattice $\Gamma_1 = 2\pi i \alpha \Z$, then the caveat ``outside of at most one coset of $\Gamma_1$'' in the previous claim may be deleted.
\end{lemma}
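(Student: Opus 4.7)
The plan is to study the holomorphic map $\Phi \colon X \to T := \C/\Gamma_1$ defined by $\Phi(a) := c_a \bmod \Gamma_1$, where $X := \C \setminus (\{a_1,\ldots,a_m\} \cup \{\text{poles of } F\})$ and $c_a$ is as in \eqref{lor}. Since $c_a$ comes from path integrals of $f$, the image $\Phi(X)$ is exactly the set of cosets realised as $\int_\gamma f$ for some smooth $\gamma$ in $X$ from $0$, so the lemma reduces to controlling $\Phi(X)$. The key local input is the expansion $c_a = \alpha_1 \log(a-a_1) + g_1(a)$ near $a_1$ with $g_1$ holomorphic: as $a$ winds over a small punctured disk, $c_a$ covers (up to a bounded perturbation from $g_1$) the open half-plane $H := \{w : \Re(w/\alpha_1) < M\}$, so $\Phi(X)$ always contains the projection of $H$ to $T$.

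I would then split on the algebraic type of $\Gamma_1 \subset \C = \R^2$. If $\Gamma_1$ is a rank-$2$ lattice, then two generators $2\pi i\alpha_j, 2\pi i\alpha_k$ must be $\R$-linearly independent, and $\Re(\gamma/\alpha_1)$ takes arbitrarily large positive values over $\gamma \in \Gamma_1$; hence $H + \Gamma_1 = \C$ and $\Phi(X) = T$. If $\Gamma_1$ is non-discrete, its closure $\bar\Gamma_1$ is a non-discrete closed subgroup of $\R^2$ and hence contains a real line $\R v$ for some $v \neq 0$ by the standard classification of closed subgroups of $\R^2$. Since closed $\Gamma_1$-invariant subsets of $\C$ are automatically $\bar\Gamma_1$-invariant (limits of translates stay in the closed set), the complement of $\pi^{-1}(\Phi(X))$ is a union of $\bar\Gamma_1$-cosets, and it suffices to show surjectivity of the composition $X \to T \to \C/\bar\Gamma_1$. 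Setting $u := iv$, the transverse coordinate $\Re(c_a/u)$ (considered modulo the discrete part of $\bar\Gamma_1$) exhibits a logarithmic blow-up at any $a_k$ with $\Re(\alpha_k/u) \neq 0$. Such a $k$ must exist: otherwise all $\alpha_k \in \R v$ would force $\Gamma_1 \subset \R u$, making $\Gamma_1$ either a discrete cyclic subgroup of $\R u$ (contradicting $\bar\Gamma_1 \supset \R v$) or dense in $\R u$ (forcing $\bar\Gamma_1 = \R^2$, in which case $\C/\bar\Gamma_1$ is a point and surjectivity is automatic). The log blow-up then forces surjectivity of the transverse coordinate onto $\C/\bar\Gamma_1 \in \{\R, S^1, \text{pt}\}$ via the intermediate value theorem or by wrapping, giving $\Phi(X) = T$ and establishing the strengthened half of the lemma.

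For the remaining rank-one discrete case $\Gamma_1 = 2\pi i\alpha\Z$ (and the trivial $\Gamma_1 = 0$), at most one coset may indeed be missed. I would pass to the single-valued exponential $\Psi(a) := \exp(c_a/\alpha) = R(a)\,e^{F(a)/\alpha}$, where $R(a) := \prod_j ((a-a_j)/a_j)^{\alpha_j/\alpha}$ is a rational function (well-defined since $\alpha_j/\alpha \in \Z$), which identifies $T \cong \C^*$ via $[z] \leftrightarrow e^{z/\alpha}$. If $F$ is non-constant, $\Psi$ has an essential singularity at $\infty$ (when $F$ is a polynomial) or at a pole of $F$, and Picard's great theorem gives $\Psi(X) \supseteq \C \setminus \{w_0\}$ for some $w_0 \in \C$, so at most one coset of $T$ is missed. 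If $F$ is constant, $\Psi$ is a non-constant rational self-map of $\mathbb{P}^1$ and a direct count of preimages shows that the excluded set $\{a_1,\ldots,a_m,\infty\}$ can exhaust the preimages of at most one value in $\C^*$; the trivial $\Gamma_1 = 0$ case is analogous with $\Psi = F$ a rational map $\mathbb{P}^1 \to \mathbb{P}^1$. I expect the main obstacle to be the non-discrete case of the middle paragraph: carefully identifying the correct transverse coordinate for each sub-case of $\bar\Gamma_1$ (line, line plus discrete, or all of $\R^2$) and confirming that the logarithmic blow-up at an appropriate $a_k$ really forces transverse surjectivity in every sign configuration of the $\alpha_k/u$.
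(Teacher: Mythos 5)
Your proposal is correct in outline and takes a genuinely different route from the paper's.  The paper first reduces to $F=0$ and then runs an open-and-closed connectedness argument for the set $\Omega$ of achievable values: openness comes from $f$ having non-constant local analytic primitives (which are open maps), and closedness is proved by passing a convergent sequence $z_n = \int_{\gamma_n} f$ through Bolzano--Weierstrass on the endpoints $\gamma_n(1) \in \C\cup\{\infty\}$, splitting on whether the limit endpoint is a pole, a regular point, or $\infty$.  Your proof instead packages the problem as a map $\Phi: X \to \C/\Gamma_1$, invokes the classification of closed subgroups of $\R^2$ to split on the type of $\overline{\Gamma_1}$, and, in the rank-one discrete case, replaces the paper's explicit residue analysis with Picard's great theorem applied to $\Psi(a) = R(a)e^{F(a)/\alpha}$.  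That Picard step is cleaner than what the paper does, and the reduction to $\C/\overline{\Gamma_1}$ (closed $\Gamma_1$-invariant sets are $\overline{\Gamma_1}$-invariant) is a nice structural device absent from the paper.  Note that this reduction silently uses the openness of $\Phi(X)$ --- to know the complement of $\pi^{-1}(\Phi(X))$ is closed --- and you should make that explicit; it follows, as in the paper, from $f$ having non-constant local analytic primitives.

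The one real gap is exactly the one you flagged at the end, and it occurs in the sub-case $\overline{\Gamma_1} = \R v$ (no discrete factor), where $\C/\overline{\Gamma_1} \cong \R$.  There all the $\alpha_j$ lie in $\R u$, so $\Im(\alpha_j/u)=0$ and winding does nothing; a log blow-up at a finite pole $a_k$ gives $\Re(c_a/u)\to -\infty\cdot\operatorname{sgn}\bigl(\Re(\alpha_k/u)\bigr)$, a half-line only.  If every nonzero $\Re(\alpha_j/u)$ has the same sign, the finite poles alone do not cover $\R$.  The missing ingredient is the behavior near $\infty$: if $F\neq 0$ its pole already gives both signs, and if $F=0$ then $\Re(c_a/u)\sim\bigl(\sum_j\Re(\alpha_j/u)\bigr)\log|a|$ as $a\to\infty$.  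So either $\sum_j\Re(\alpha_j/u)\neq 0$ and $\infty$ supplies the opposite sign, or $\sum_j\Re(\alpha_j/u)=0$ and then the $\Re(\alpha_j/u)$ cannot all share a sign, so two finite poles already give both half-lines.  (By contrast, when $\overline{\Gamma_1}$ has a nontrivial discrete factor, some $\Im(\alpha_k/u)\neq 0$ and winding at that $a_k$ alone suffices, so the ``wrapping'' half of your dichotomy is sound there.)  This is precisely the role of the paper's ``$\gamma_n(1)\to\infty$'' subcase, where $\sum\alpha_i=0$ is forced and two residues of opposite sign are extracted.  Once this observation is inserted and the openness of $\Phi(X)$ is recorded, your argument closes.
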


To see why it is necessary sometimes to exclude a coset of $\Gamma$, consider the case
$$ f(a) := \frac{1}{a-1} - \frac{1}{a-2}$$
so that $\Gamma$ is the rank one lattice $\Gamma = 2\pi i \Z$ and
$$ \int_\gamma f = \operatorname{Log}(1-\gamma(1)) - \operatorname{Log}(1-\frac{\gamma(1)}{2}).$$
This quantity can attain all complex values except for those on the coset $\log 2 + \Gamma$ (this would require $\gamma$ to terminate at infinity, which is not possible since $\gamma$ has to stay inside $\C \backslash \{a_1,\ldots,a_m\}$).  Another example is when
$$ f(a) = \frac{1}{(a-1)^2}$$
so that $\Gamma = \{0\}$ is trivial and
$$ \int_\gamma f = \frac{\gamma(1)}{1-\gamma(1)}.$$
In this case, the coset $1 + \Gamma = \{1\}$ cannot be attained.

\begin{proof}  We first dispose of an easy case when $\Gamma_1$ is trivial (i.e. $f$ has no residues at any pole).  Then $\int_\gamma f = F(\gamma(1))$; as $f$ is not identically zero, the rational function $F$ is not constant, and the claim follows from the fundamental theorem of algebra.  (Note that the rational function $F-z_0$ has a constant numerator for at most one complex number $z_0$.)  Thus we may assume henceforth that $\Gamma_1$ is non-trivial.

Let $\Omega$ denote the set of all $z$ such that $z=\int_\gamma f$ for at least one $\gamma$ as in the lemma.  Clearly $\Omega$ contains $\{0\}$; by the preceding discussion, $\Omega$ is also the union of cosets of $\Gamma_1$.  Our objective is to show that $\Omega$ is either equal to $\C$, or $\C$ with one coset of $\Gamma_1$ deleted, with the latter case only permitted when $\Gamma_1$ is a rank one lattice.

If the rational function $F$ is not identically zero, then it is non-constant (since $F(0) = 0$), and thus has a pole somewhere in $\C \cup \{\infty\}$.  In any neighbourhood of this pole, the quantity \eqref{lor} attains all sufficiently large finite values in $\C$, thanks to Rouche's theorem, thus $\Omega$ contains the exterior of a sufficiently large disk.  Since $\Gamma_1$ is a non-trivial (hence unbounded) subgroup of $\C$, and $\Omega$ is invariant with respect to translations by $\Gamma_1$, we conclude in this case that $\Omega=\C$ as required.  Thus we may assume without loss of generality that $F=0$.

Next, suppose that the residues $\alpha_1,\ldots,\alpha_m$ do not all lie on a single line $\R\alpha$. We work in the neighbourhood of a single pole $a_1$.  From \eqref{lor} and Rouche's theorem, we see that $\Omega$ contains the half-plane $\{ \alpha_1 z \mid \Re(z) < -C \}$ for some sufficiently large $C$.  On the other hand, as the $\alpha_1,\ldots,\alpha_m$ do not all lie in a single line $\R\alpha$, the group $\Gamma_1$ is not contained in $2\pi i \alpha_1 \R$, and so the orthogonal projection onto $\alpha_1 \R$ is unbounded.  From this and the $\Gamma_1$-invariance of $\Omega$ we conclude that $\Omega=\C$ as desired.

The only remaining case is when all the $\alpha_1,\ldots,\alpha_m$ lie on a single line; by rotation, we may assume that the $\alpha_1,\ldots,\alpha_m$ are all real.   Let us first assume that the $\alpha_1,\ldots,\alpha_m$ are not all commensurable (i.e. rational multiples of each other), so that $\Gamma_1$ is a dense subset of $2\pi i\R$.   As non-constant analytic functions are open maps, and $f$ locally has an analytic primitive, we see that $\Omega$ is open; as $\C$ is connected, and $\Omega$ clearly is non-empty, it thus suffices to show that $\Omega$ is closed.  Accordingly, let $z_n$ be a sequence in $\Omega$ converging to a finite limit $z \in \C$; our task is to show that $z$ also lies in $\Omega$.

By definition, $z_n = \int_{\gamma_n} f$ for some sequence of smooth curves $\gamma_n:[0,1] \to \C \backslash \{a_1,\ldots,a_n\}$, all starting at $0$ but possibly having distinct endpoints $\gamma_n(1)$.  By the Bolzano-Weierstrass theorem, we may pass to a subsequence and assume that $a'_n := \gamma_n(1)$ converges to some limit $a' \in \C \cup \{\infty\}$.

If $a' = a_i$ for some pole $a_i$, then from \eqref{lor} and the hypothesis that the $\alpha_i$ are all real, we see that $\Re(z_n) \to -\sgn(\alpha_i) \infty$ as $n \to \infty$, which contradicts the convergence of $z_n$ to a finite limit.  If $a' \in \C\backslash \{a_1,\ldots,a_n\}$, then by choosing suitable branches of the logarithm, one can make the map $a \mapsto c_{a}$ analytic and non-constant (hence continuous and open) in a neighbourhood of $a'$; as $z_n \in c_{a'_n}+\Gamma_1$ for all $n$, we conclude on taking limits that $z \in c_{a'} + \overline{\Gamma}$; since $a \mapsto c_{a}$ is open, this implies that $z \in \Omega$ as required.

Next, we consider the case when $a'=\infty$, thus $\gamma_n(1) \to \infty$.  If $\alpha_1+\ldots+\alpha_m$ is non-zero, then from \eqref{lor} we see that $\Re(z_n) \to \sgn(\alpha_1+\ldots+\alpha_m) \infty$, contradicting the convergence of $z_n$ to a finite limit.  Thus $\alpha_1+\ldots+\alpha_m=0$, and so at least two of the $\alpha_i$ have opposing signs.  Without loss of generality we may assume that $\alpha_1 > 0 > \alpha_2$.  From \eqref{lor}, the real part of $\int_\gamma f$ depends only on the endpoint $\gamma(1)$, and goes to $-\infty$ as $\gamma(1)$ approaches $a_1$ and $+\infty$ as $\gamma(1)$ approaches $a_2$.  By the intermediate value theorem, we may thus find a smooth curve $\gamma$ starting at $0$ such that $\Re \int_\gamma f = \Re z$; as $\Gamma$ is a dense subgroup of $2\pi i \R$, we conclude that $\Omega$ contains a dense subset of the line $z + 2\pi i \R$.  As $\Omega$ is open and $\Gamma_1$-invariant, we conclude that $z \in \Omega$ as desired.  This concludes the proof of $\Omega=\C$ in the case that $\Gamma$ is a dense subgroup of $2\pi i \R$.

Finally, we have to consider the case when $\Gamma_1$ is a rank one lattice in $2\pi i \R$.  If $\alpha_1+\ldots+\alpha_m$ is non-zero, then we may repeat the previous arguments to show that $\Omega=\C$, so we may assume that $\alpha_1+\ldots+\alpha_m = 0$.  In this case, we see from \eqref{lor} that if $\gamma_n: [0,1] \to \C \backslash \{a_1,\ldots,a_m\}$ is a family of paths from the origin with $a'_n := \gamma_n(1)$ going to infinity, and $z_n := \int_{\gamma_n} f$, then
$$ \operatorname{dist}( z_n, c_\infty + \Gamma_1 ) \to 0$$
as $n \to \infty$, where $c_\infty + \Gamma_1$ is the coset
$$ c_\infty + \Gamma_1 := - \sum_{j=1}^m \alpha_j \operatorname{log} a_j + \Gamma_1.$$
From this and the previous arguments, we see that any limit point $z$ of $\Omega$ will also lie in $\Omega$ unless $z$ lies in $c_\infty + \Gamma_1$.  Thus the set $\Omega \backslash (c_\infty+\Gamma_1)$ is open, closed, and non-empty in $\C \backslash (c_\infty+\Gamma_1)$; as the latter set is connected, we conclude that $\Omega$ contains $\C \backslash (c_\infty+\Gamma_1)$, as required.
\end{proof}

All the above analysis involving the poles and residues of $f$ may similarly be applied to $g$.  More specifically, we may write
\begin{equation}\label{glib}
 g(b) = \sum_{k=1}^n \frac{\beta_k}{b-b_j} + G'(b)
\end{equation}
for all but finitely many $b \in \C$, where $b_1,\ldots,b_n \in \C$ are distinct and non-zero, $\beta_1,\ldots,\beta_k$ are non-zero, and $G$ is a rational function with $G(0)=0$.  We let $\Gamma_2$ be the subgroup of $\C$ generated by $2\pi i \beta_1,\ldots,2\pi i \beta_k$, and have the analogue of Lemma \ref{ass}:

\begin{lemma}[Almost surjectivity]\label{ass-2}  For all complex numbers $w$ outside of at most one coset of $\Gamma_2$, there exists at least one smooth curve $\gamma: [0,1] \to \C \backslash \{b_1,\ldots,b_n\}$ starting at $0$ with $\int_\gamma g = w$.  If $\Gamma_2$ is not trivial and is not a rank one lattice $\Gamma_2 = 2\pi i \alpha \Z$, then the caveat ``outside of at most one coset of $\Gamma_2$'' in the previous claim may be deleted.
\end{lemma}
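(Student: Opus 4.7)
The plan is to observe that Lemma \ref{ass-2} is structurally identical to Lemma \ref{ass} after the cosmetic relabeling $f \leftrightarrow g$, $a_j \leftrightarrow b_k$, $\alpha_j \leftrightarrow \beta_k$, $\Gamma_1 \leftrightarrow \Gamma_2$, $F \leftrightarrow G$. The decomposition \eqref{glib} plays the role of \eqref{faa}, and the coset structure for $c_b + \Gamma_2$ is defined analogously to \eqref{lor}. So I would simply repeat the proof of Lemma \ref{ass} verbatim, with the obvious substitutions, and not introduce any new ideas.

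More concretely, I would define $\Omega \subset \C$ to be the set of values $w = \int_\gamma g$ attainable by smooth curves $\gamma:[0,1] \to \C \backslash \{b_1,\ldots,b_n\}$ starting at $0$, note that $\Omega$ contains $0$ and is a union of cosets of $\Gamma_2$, and show that $\Omega$ is either all of $\C$ or misses exactly one coset (the latter only when $\Gamma_2$ is trivial or a rank one lattice $2\pi i \beta \Z$). The case analysis proceeds exactly as before: first, if $\Gamma_2$ is trivial then the primitive $G$ is non-constant and the conclusion follows from the fundamental theorem of algebra; otherwise, if $G$ is non-zero, Rouche's theorem near a pole of $G$ combined with $\Gamma_2$-invariance gives $\Omega = \C$; otherwise, if the residues $\beta_k$ do not all lie on a common real line, then near a single pole one produces a half-plane whose translates under $\Gamma_2$ cover $\C$; otherwise (all residues real up to a common rotation), one reduces to analyzing limits of sequences $w_n \in \Omega$ with endpoints $\gamma_n(1)$ converging in $\C \cup \{\infty\}$, handling the finite-limit case by openness of $b \mapsto c_b$ and the infinite-limit case by the identity $\beta_1+\cdots+\beta_n = 0$ (when it holds) and the connectedness of $\C \setminus (c_\infty + \Gamma_2)$.

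Because every step in the proof of Lemma \ref{ass} used only the formal properties of the data $f, a_j, \alpha_j, F, \Gamma_1$ that are also shared by $g, b_k, \beta_k, G, \Gamma_2$ (namely the partial-fraction decomposition into simple poles plus an exact differential, the fact that $\Gamma_2$ is generated by $2\pi i$ times the residues, and the normalization $G(0)=0$), no new obstacle arises. There is no genuine difficulty to resolve; the only thing to verify before invoking the previous argument is that $g$ satisfies all the standing hypotheses, namely that $g$ is a rational function, not identically zero or infinite, with the chosen residue/primitive decomposition \eqref{glib} and $b_1,\ldots,b_n \neq 0$, $G(0)=0$ after the same translation normalization used for $f$. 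These were arranged explicitly in the paragraph introducing $g$, so the proof of Lemma \ref{ass} applies without change.
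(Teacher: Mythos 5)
Your proposal is exactly what the paper does: the paper states only that ``all the above analysis involving the poles and residues of $f$ may similarly be applied to $g$'' and gives no separate proof, so the content of Lemma~\ref{ass-2} is precisely Lemma~\ref{ass} under the relabeling $f \to g$, $a_j \to b_k$, $\alpha_j \to \beta_k$, $F \to G$, $\Gamma_1 \to \Gamma_2$. You correctly verify the standing normalizations (\eqref{glib}, distinct nonzero poles, nonzero residues, $G(0)=0$) that make this substitution go through.
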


Now, we are able to solve the differential equation \eqref{pilo}, obtaining a rigorous version of the heuristic that $P$ is ``a function of $z+w$'' in some sense:

\begin{proposition}[Additive structure in $z,w$]\label{Add}  There exists an entire function $H: \C \to \C$ with the property that
\begin{equation}\label{pae}
 P( \gamma_1(1), \gamma_2(1) ) = H( \int_{\gamma_1} f + \int_{\gamma_2} g )
\end{equation}
for all smooth curves $\gamma_1: [0,1] \to \C \backslash \{a_1,\ldots,a_m\}$ and $\gamma_2: [0,1] \to \C \backslash \{b_1,\ldots,b_n\}$ with $\gamma_1(0)=\gamma_2(0)=0$ and $\int_{\gamma_1} f, \int_{\gamma_2} g \neq \infty$.
\end{proposition}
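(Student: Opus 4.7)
The plan is to interpret $z = \int_{\gamma_1} f$ and $w = \int_{\gamma_2} g$ as the ``right'' holomorphic coordinates in which the differential constraint \eqref{pilo} forces $P$ to become a function of $z+w$, and then use the almost surjectivity lemmas to upgrade this locally defined $H$ to an entire function on $\C$. Concretely, rewriting \eqref{pilo} as $g(b) P_1(a,b) - f(a) P_2(a,b) = 0$ and introducing the holomorphic vector field $X := g(b) \partial_a - f(a) \partial_b$ on any open polydisk $D_1 \times D_2 \subset \C^2$ avoiding the poles $a_1,\ldots,a_m$ of $f$ and the poles $b_1,\ldots,b_n$ of $g$, we see that $XP \equiv 0$, so $P$ is constant along the integral curves of $X$. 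Picking local holomorphic primitives $\tilde z : D_1 \to \C$ of $f$ and $\tilde w : D_2 \to \C$ of $g$, the integral curves of $X$ are precisely the level sets of $\tilde z(a)+\tilde w(b)$, so there is a local holomorphic function $H_{\mathrm{loc}}$ on the image with $P(a,b) = H_{\mathrm{loc}}(\tilde z(a)+\tilde w(b))$.

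Next I would globalize. For any pair of admissible paths $\gamma_1, \gamma_2$ starting at $0$ I tentatively set
$$H\Bigl(\int_{\gamma_1} f + \int_{\gamma_2} g\Bigr) := P(\gamma_1(1), \gamma_2(1)),$$
and verify well-definedness. Changing the homotopy class of $\gamma_1$ (for fixed endpoint) adds an element of $\Gamma_1$ to $\int_{\gamma_1} f$, and similarly for $\gamma_2$ and $\Gamma_2$; since $P$ depends only on the endpoints, $H$ is automatically $(\Gamma_1+\Gamma_2)$-periodic on the subset where it is defined. To check the remaining ambiguity, note that if two pairs of paths produce the same $z+w$ but different endpoints, one can join the two endpoint pairs by a path in the complement of the poles in $\C^2$ along which $\int f\,da + \int g\,db$ is held constant, and $P$ is constant along such a path by the vector-field argument above. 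Patching the local $H_{\mathrm{loc}}$'s along this deformation yields a single-valued holomorphic $H$ on the open subset of $\C$ swept out by $\tilde z + \tilde w$.

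Finally I would extend $H$ to all of $\C$. By Lemmas \ref{ass} and \ref{ass-2}, the range of $\int_{\gamma_1} f$ is all of $\C$ save at most one coset of $\Gamma_1$, and similarly for $\int_{\gamma_2} g$ with $\Gamma_2$; hence for any $s \in \C$ I can choose $w$ in the (essentially full) range of $\int g$ and then realize $z = s - w$ in the range of $\int f$ (after perturbing $w$ within its range to avoid the excluded coset, which is possible since each range, whether $\C$ or $\C$ minus a single coset of a proper subgroup, is thick enough). Thus $H$ is defined on all of $\C$ and holomorphic on a dense open subset (the complement of the critical values of $(a,b) \mapsto \tilde z(a)+\tilde w(b)$, together with the images of the poles); since $P$ is a polynomial, it is bounded on compact subsets of $\C^2$, and pulling this back through the local parametrizations gives a local bound on $H$ near each potentially singular point. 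Riemann's removable-singularities theorem then upgrades $H$ to an entire function, establishing \eqref{pae}.

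The main obstacle I expect is the global well-definedness in the middle step: one must rule out the possibility that analytic continuation of $H$ along different paths in the base produces different values, which amounts to showing that the fibers $\{(a,b) : \tilde z(a)+\tilde w(b) = s\}$ are connected enough in the complement of the poles that the level-set argument for $P$ transports consistently. The almost surjectivity of Lemmas \ref{ass}, \ref{ass-2} provides exactly the slack needed to move around the obstructions created by the poles of $f$ and $g$ and to justify patching.
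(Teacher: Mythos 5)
Your high-level strategy is the same as the paper's---view $z=\int_{\gamma_1}f$, $w=\int_{\gamma_2}g$ as the natural coordinates, observe that the PDE \eqref{pilo} makes $P$ constant along the flow of $X=g(b)\partial_a-f(a)\partial_b$, globalize, and upgrade to an entire function via the almost-surjectivity lemmas---but you have a genuine gap exactly where you suspect it, and the way you propose to fill it does not work.

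The gap is the well-definedness of $H$, which amounts to: if $(\gamma_1,\gamma_2)$ and $(\gamma_1',\gamma_2')$ are admissible path-pairs with $\int_{\gamma_1}f+\int_{\gamma_2}g=\int_{\gamma_1'}f+\int_{\gamma_2'}g$, then $P(\gamma_1(1),\gamma_2(1))=P(\gamma_1'(1),\gamma_2'(1))$. You reduce this to the claim that the fibers of the (multivalued) map $(a,b)\mapsto \tilde z(a)+\tilde w(b)$ in the complement of the poles are path-connected, so that the integral-curve argument transports $P$ consistently. That connectivity claim is precisely what has to be proved, and Lemmas \ref{ass} and \ref{ass-2} do not give it: they control the \emph{image} of $\int_{\gamma}f$ (all of $\C$ minus at most one $\Gamma_1$-coset), not the topology of the level sets of $\tilde z+\tilde w$. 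In general the level set $\{\tilde z(a)+\tilde w(b)=s\}$ can easily be disconnected (or have components separated by the deleted poles), and nothing in your argument addresses which component you are on.

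The paper sidesteps this by first proving a one-variable monodromy statement \eqref{pww}: for fixed $b$, $P(\gamma_1(1),b)$ depends only on $\int_{\gamma_1}f$ and not on the endpoint $\gamma_1(1)$. This is shown by taking a smooth homotopy $\gamma_1^*(\cdot,s)$ between $\gamma_1$ and $\tilde\gamma_1$ and constructing a \emph{compensating loop} $\tilde b(s)$ in the $b$-variable so that $P(\gamma^*(1,s),\tilde b(s))$ is constant and $\tilde b(0)=\tilde b(1)=b$; the compensating loop is obtained by pushing the $z$-drift of the homotopy through an explicit local inverse primitive $\Phi$ of $g$, whose existence on a large ball $B(w_0,2R+1)$ is the technical crux and requires a case analysis via Rouch\'e's theorem (depending on whether $G\neq 0$, whether the pole of $G$ hits a $b_i$, or whether $g$ has a simple pole). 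Only after \eqref{pww} and its $f\leftrightarrow g$ mirror is $P=Q(z,w)$ a genuine two-variable function; the additive form $Q(z,w)=H(z+w)$ is then deduced from $Q_1+Q_2=0$, which is the point where your vector-field observation is actually used. Your proposal collapses these two steps into one and thereby hides the monodromy issue inside an unproved connectivity assertion. To repair it along your lines you would need some substitute for the $\Phi$-construction (or for the algebraic fact that the relevant incidence variety is irreducible), and the almost-surjectivity lemmas alone do not supply that.
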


\begin{proof}  We begin by establishing a preliminary property of $P$ implied by \eqref{pae}, namely that
\begin{equation}\label{pww}
 P( \gamma_1(1), b ) = P( \tilde \gamma_1(1), b )
\end{equation}
whenever $b \in \C$ and $\gamma_1, \tilde \gamma_1: [0,1] \to \C \backslash \{a_1,\ldots,a_m\}$ are smooth curves with $\gamma_1(0)=\tilde \gamma_1(0)$ and $\int_{\gamma_1} f = \int_{\tilde \gamma_1} f \neq \infty$.

Fix $\gamma_1, \tilde \gamma_1$ as above, and let $w_0$ be a complex number to be chosen later.  By perturbing $\gamma_1, \tilde \gamma_1$ without moving the endpoints, we may assume that $\gamma_1, \tilde \gamma_1$ avoid all the poles of $f$.  By unique continuation, it suffices to establish \eqref{pww} for all $b$ in some non-empty open subset of $\C$.

By the connectedness of $\C \backslash \{a_1,\ldots,a_n\}$, even after deleting the remaining poles of $f$, we can find a smooth homotopy $\gamma^*_1: [0,1] \times [0,1] \to \C \backslash \{a_1,\ldots,a_n\}$ with $\gamma^*_1(t,0) = \gamma_1(t)$ and $\gamma^*_1(t,1) = \tilde \gamma_1(t)$ for all $t \in [0,1]$, and $\gamma^*_1(0,s) = 0$ for all $s \in [0,1]$, and such that $\gamma^*_1$ avoids all the poles of $f$.

The quantity $\int_{\gamma^*_1(\cdot,s)} f$ will, in general, not be constant in $s$; however, it varies smoothly in $s$, and thus lies in a ball $B(0,R)$.  From Cauchy's theorem, we see that
$$ \int_{\gamma^*_1(\cdot,s+ds)} f = \int_{\gamma^*_1(\cdot,s)} f + \int_{\gamma^*(1,[s,s+ds])} f$$
for any $0 \leq s < s+ds \leq 1$, and thus
\begin{equation}\label{dds}
 \frac{d}{ds} \int_{\gamma^*_1(\cdot,s)} f = f( \gamma^*(1,s) ) \frac{d}{ds} \gamma^*(1,s)
\end{equation}
for all $s\in [0,1]$.

Suppose that we can find a smooth function $\tilde b: [0,1] \to \C \backslash \{b_1,\ldots,b_n\}$ such that $\tilde b(0)=\tilde b(1)=b$, $\tilde b$ avoids all the poles of $g$, and
\begin{equation}\label{geo}
 g(\tilde b(s)) \frac{d}{ds} \tilde b(s) = - f( \gamma^*(1,s) ) \frac{d}{ds} \gamma^*(1,s).
\end{equation}
From \eqref{pilo} and the chain rule, we then have
$$ \frac{d}{ds} P( \gamma^*(1,s), b(s) ) = 0$$
for all $s\in [0,1]$, and thus by the fundamental theorem of calculus
$$P( \gamma_1(1), b ) = P( \gamma^*(1,0), b(0) ) = P( \gamma^*(1,1), b(1) ) = P( \tilde \gamma(1), b )$$
which is \eqref{pww}.  Thus, it will suffice to construct such a smooth function $\tilde b$ for all $b$ in a non-empty open subset of $\C$.

Suppose that we can find a ball $B(w_0,2R+1)$ and a complex analytic local diffeomorphism $\Phi: B(w_0,2R+1) \to \C$ which avoids all the poles and zeroes of $g$ in its image, and is an inverse primitive of $g$ in the sense that
\begin{equation}\label{ddt}
 \Phi'(w) = g( \Phi(w) )^{-1}
\end{equation}
for all $w \in B(w_0,2R+1)$.  Then for any $b$ in the open set $\Phi(B(w_0,1))$, so that $b = \Phi(w)$ for some $w \in B(w_0,1)$, we can construct the desired function $\tilde b$ by the formula
$$ \tilde b(s) := \Phi( w + \int_{\gamma_1} f - \int_{\gamma^*_1(\cdot,s)} f ).$$
Indeed, from the chain rule and \eqref{dds}, \eqref{ddt} we obtain \eqref{geo}, and the condition $\tilde b(0)=\tilde b(1)=b$ follows from the hypothesis $\int_{\gamma_1} f = \int_{\tilde \gamma_1} f$.  Thus, to finish the proof of \eqref{pww}, it suffices to obtain a local diffeomorphism $\Phi: B(w_0,2R+1) \to \C$ with the stated properties.

Suppose that the function $G$ is not identically zero, and thus has a pole at some point $b_* \in \C \cup \{\infty\}$.  If $b_*$ is distinct from $b_1,\ldots,b_m$.  By \eqref{glib}, $g$ then has a meromorphic primitive $\tilde G$ in a neighbourhood of $b_*$ which has a pole at $b_*$ but is otherwise holomorphic, and which will be a local diffeomorphism if the neighbourhood is small enough.  In particular, by Rouche's theorem, we see that if $w_0$ is a complex number of sufficiently large magnitude, $\tilde G$ is a diffeomorphism between some open subset of this neighbourhood and $B(w_0,2R+1)$.   Taking $\Phi$ to be the inverse of $\tilde G$, we obtain the claim.

If the pole $b_*$ coincides with one of the $b_i$, then we do not have a meromorphic primitive in a neighbourhood of $b_*$ any more, but we still have a primitive on sufficiently small semi-neighbourhood such as $\{ b \in\C\mid |b-b_*| < \eps; \Re(b-b_*) > 0 \}$ which is still a local diffeomorphism.  By Rouche's theorem (the point being that the pole of $G$ dominates the logarithmic factors), one can still find a diffeomorphism from some subset of this neighbourhood and $B(w_0,2R+1)$ for a suitably chosen $w_0$, and we can argue as before.

Now suppose that $G$ is identically zero.  As $g$ is not identically zero, we see from \eqref{glib} that $n$ is non-zero, thus $g$ has a simple pole at $b_1$.  From \eqref{glib}, we see that $g$ \emph{formally} has a primitive in a neighbourhood of $b_1$ that is equal to the sum of $\beta_1 \log(b-b_1)$ plus a holomorphic function.  This is however not rigorous because $\log$ is multivalued.  To get around this difficulty, we cover a small punctured neighbourhood $B(b_1,\eps) \backslash \{b_1\}$ of $b_1$ by the half-space $\{ z\mid \Re(z) < \log \eps \}$ via the translated exponential map $z \mapsto b_1 + \exp(z)$.  The differential $g(b) db$ on $B(b_1,\eps) \backslash \{b_1\}$ then pulls back to the differential
$$ g( b_1 + \exp(z) ) \exp(z) dz$$
on the half-space $\{ z\mid \Re(z) < \log \eps \}$.  For $\eps$ small enough, this differential has a primitive $\tilde G$, which is the sum of $\beta_1 z$ plus a holomorphic function of $b_1+\exp(z)$.  In particular, if $w_0$ is a complex number with $\beta_1^{-1} \Re(w_0)$ sufficiently large and negative, we see from Rouche's theorem that $\tilde G$ is a diffeomorphism between some open subset of this neighbourhood and $B(w_0,2R+1)$.  Taking $\Phi$ to be the inverse of $\tilde G$, composed with the translated exponential map $z \mapsto b_1 + \exp(z)$ (i.e. $\Phi(w) := b_1+ \exp( \tilde G^{-1}(w) )$), we obtain the claim.  This concludes the proof of \eqref{pww}.

From \eqref{pww}, we see that for any two smooth curves $\gamma_1:b[0,1] \to \C \backslash \{a_1,\ldots,a_m\}$ and $\gamma_2: [0,1] \to \C \backslash \{b_1,\ldots,b_n\}$ with $\gamma_1(0)=\gamma_2(0)$ and $\int_{\gamma_1} f, \int_{\gamma_2} g \neq \infty$, the quantity $P( \gamma_1(1), \gamma_2(1))$ is a function of $\int_{\gamma_1} f$ and $\gamma_2(1)$.  Applying the analogue of \eqref{pww} with the rules of $f,g$ reversed, we conclude that 
$P( \gamma_1(1), \gamma_2(1))$ is a function of $\int_{\gamma_1} f$ and $\int_{\gamma_2} g$.  Thus, there is a function $Q: \Omega_1 \times \Omega_2 \to \C$ such that
\begin{equation}\label{pgg}
 P( \gamma_1(1), \gamma_2(1) ) = Q( \int_{\gamma_1} f, \int_{\gamma_2} g )
\end{equation}
where $\Omega_1$ is the set of all finite values of $\int_{\gamma_1} f$ for smooth $\gamma_1: [0,1] \to \C \backslash \{a_1,\ldots,a_m\}$, and $\Omega_2$ is the set of all finite values of $\int_{\gamma_2} g$ for smooth $\gamma_2: [0,1] \to \C \backslash \{b_1,\ldots,b_n\}$.  Note from Lemmas \ref{ass}, \eqref{ass-2} that $\Omega_1$, $\Omega_2$ are equal to the entire complex plane $\C$ with at most one coset of $\Gamma_1, \Gamma_2$ respectively deleted.  

Since $f$ and $g$ have local primitives which are analytic and non-constant, and hence open, we see that the function $Q$ is continuous.  By further use of these local primitives, we see that for all $(z_0,w_0) \in \Omega_1 \times \Omega_2$, the function $Q(z,w_0)$ is holomorphic for $z$ in a sufficiently punctured disk $B(z_0,\eps) \backslash \{z_0\}$, and thus (by the continuity of $Q$ at $z_0$ and Morera's theorem) is holomorphic on the unpunctured disk $B(z_0,\eps)$ also.    Thus $Q(z,w)$ is holomorphic in the first variable $z$, and is similarly holomorphic in the second variable; thus it is a smooth biholomorphic function on $\Omega_1 \times \Omega_2$.  

Let $(z_0,w_0) \in \Omega_1 \times \Omega_2$.  For $z \in B(z_0,\eps) \backslash \{z_0\}$ and $w \in B(w_0,\eps) \backslash \{w_0\}$  for $\eps>0$ small enough, we may use local primitives to find smooth $\gamma_1: [0,1] \to \C \backslash \{a_1,\ldots,a_m\}$ and 
$\gamma_2: [0,1] \to \C \backslash \{b_1,\ldots,b_n\}$ with $\gamma_1(0)=\gamma_2(0)=0$, $\int_{\gamma_1} f = z$, $\int_{\gamma_2} g = w$, and $f(\gamma_1(1)), g(\gamma_2(1)) \neq 0$.  If one then elongates $\gamma_1$ by an infinitesimal line segment from $\gamma_1(1)$ to $\gamma_1(1)+f(\gamma_1(1))^{-1} dt$ for some infinitesimal $dt$, and simultaneously elongates $\gamma_2$ by an infinitesimal line segment from $\gamma_2(1)$ to $\gamma_2(1)+g(\gamma_2(1))^{-1} dt$, we see from\footnote{Note that while \eqref{pgg} is stated for smooth curves, it extends automatically to continuous curves (and in particular to the concatenation of two smooth curves) by a limiting argument.} \eqref{pgg} that
$$
 P( \gamma_1(1) + f(\gamma_1(1))^{-1} dt, \gamma_2(1) + g(\gamma_2(1))^{-1} dt ) = Q( z + dt, w + dt ) + o(dt)$$
and thus by the chain rule and sending $dt$ to zero
$$ f(\gamma_1(1))^{-1} P_1( \gamma_1(1), \gamma_2(1) ) + g(\gamma_2(1))^{-1} P_2( \gamma_1(1), \gamma_2(1) ) 
= Q_1(z,w) + Q_2(z,w).$$
By \eqref{pilo}, the left-hand side vanishes, and thus
$$ Q_1(z,w) + Q_2(z,w) = 0$$
for all $z \in B(z_0,\eps) \backslash \{z_0\}$ and $w \in B(w_0,\eps) \backslash \{w_0\}$; by the smoothness of $Q$ and the arbitrariness of $z_0,w_0$ we conclude that $Q_1(z,w) + Q_2(z,w) = 0$ for all $(z,w) \in \Omega_1 \times \Omega_2$.  In particular, for any complex number $\zeta \in \C$, the function
$$ z \mapsto Q( z, \zeta - z ),$$
which is defined outside of a discrete subset of $\C$, has zero derivative and thus extends to a constant function on $\C$.  In other words, we can find a function $H: \C \to \C$ such that
$$ Q(z,w) = H(z+w)$$
for all $(z,w) \in \Omega_1 \times \Omega_2$.  Since $Q$ is biholomorphic and $\Omega_1,\Omega_2$ are complements of discrete sets, we conclude that $H$ is holomorphic on all of $\C$, and is thus entire, and the proposition follows.
\end{proof}

We now analyse the entire function $H$ given by the above proposition.  Observe from \eqref{lor} that one can freely modify $\int_{\gamma_1} f$ by any element of $\Gamma_1$ without affecting the endpoint $\gamma_1(1)$.  As a consequence, we conclude that $H$ must be periodic with respect to translations by $\Gamma_1$.  Similarly it is periodic with respect to translations by $\Gamma_2$, and is thus periodic with respect to the combined subgroup $\Gamma_1+\Gamma_2$.  If this subgroup contains two non-zero elements that are not real multiples of each other, then $H$ thus descends to a holomorphic function on a torus, and is thus constant by Liouville's theorem, which makes $P$ constant, in which case Theorem \ref{const-0} is trivial.  Thus we may assume that $\Gamma_1+\Gamma_2$ does not contain two non-zero elements that are not real multiples of each other, and is thus contained in a line; without loss of generality we may normalise (after rescaling the $a$ and $b$ variables) so that $\Gamma_1+\Gamma_2 \leq 2\pi i \R$.  If $\Gamma_1+\Gamma_2$ is dense in $2\pi i \R$, then $H$ is constant along $2\pi i \R$; since non-constant analytic functions have isolated zeroes, we conclude that $H$ is constant on all of $\C$, so again $P$ is constant and Theorem \ref{const-0} is trivial in this case.  Thus we may assume that $\Gamma_1+\Gamma_2$ is discrete; without loss of generality we may normalise so that either $\Gamma_1+\Gamma_2 = \{0\}$ or $\Gamma_1 + \Gamma_2 = 2\pi i \Z$.  As we shall see shortly, these two cases correspond to the additive and multiplicative cases\footnote{In principle, the case when $\Gamma_1 + \Gamma_2$ is a rank two lattice would correspond to a case in which $P$ is given in terms of an elliptic curve group law instead of addition and multiplication, but such laws can only be expressed in terms of rational functions rather than by polynomials and so do not actually arise in this analysis.  However, if $P$ were generalised to be a regular map on the product of two curves, rather than a polynomial map on the product of two affine lines $k$, then one would have to also consider constructions arising from elliptic curve group laws; and in higher dimensions one would also have to consider more general abelian varieties.  It seems of interest to extend Theorem \ref{const-0} to these settings, but we will not pursue this matter here.} of Theorem \ref{const-0}.

First suppose that $\Gamma_1+\Gamma_2 = \{0\}$, so that $f$ and $g$ have no poles with non-zero residues.  In this case, we see from \eqref{lor} that
$$ \int_{\gamma_1} f = F(\gamma_1(1)); \int_{\gamma_2} f = G(\gamma_2(1))$$
and hence from Proposition \ref{Add}
$$ P( a, b ) = H( F(a) + G(b) )$$
whenever $a,b \in \C$ and $F(a), G(b) \neq \infty$.  If $F$ has a pole at some finite $a_*$, then by Rouche's theorem $F(a)$ can take any sufficiently large value for $a$ in a neighbourhood of $a_*$, and the above equation then forces the entire function $H$ to be bounded in a neighbourhood of infinity, and is thus constant by Liouville's theorem, in which case Theorem \ref{const-0} is trivial.  Thus we may assume that the rational function $F$ has no finite poles, and is thus a polynomial; similarly we may assume that $G$ is a polynomial.  By holding $b$ fixed and sending $a$ to infinity, we conclude that $H$ is of polynomial growth; being entire, we conclude from the generalised Liouville theorem that $H$ is itself a polynomial, and we have obtained the additive conclusion of Theorem \ref{const-0}.  

Now suppose that $\Gamma_1+\Gamma_2 = 2\pi i\Z$.  Then all the $\alpha_i, \beta_j$ are integers, and from \eqref{lor} we have
$$ \int_{\gamma_1} f \in F(\gamma_1(1)) + \log \tilde F( \gamma_1(1) )$$
and
$$ \int_{\gamma_2} g \in G(\gamma_2(1)) + \log \tilde G( \gamma_2(1) )$$
for some rational functions $\tilde F, \tilde G$, with at least one of $\tilde F, \tilde G$ non-constant, and all $\gamma_1, \gamma_2$ for which $F(\gamma_1(1)), G(\gamma_2(1)) \neq \infty$ and $\tilde F(\gamma_1(1)), \tilde G(\gamma_2(1)) \neq 0,\infty$.  From Proposition \ref{Add}, we conclude that
$$ P(a,b) = H( F(a) + G(b) + \log \tilde F(a) \tilde G(b) )$$
whenever $F(a), F(b) \neq \infty$ and $\tilde F(a), \tilde G(b)\neq 0,\infty$; note that the right-hand side is well-defined since $H$ is $2\pi i\Z$-periodic.   We use this periodicity to write $H(z) = \tilde H( \exp(z) )$ for some holomorphic function $\tilde H: \C \backslash \{0\} \to \C$, and conclude that
\begin{equation}\label{pab}
 P(a,b) = \tilde H( \exp(F(a) + G(b)) \tilde F(a) \tilde G(b) ).
\end{equation}
If $F$ has a pole at some finite $a_*$, then for fixed $b$, $\exp(F(a) + G(b)) \tilde F(a) \tilde G(b)$ can take any sufficiently large value in a neighbourhood of $a_*$, and so by arguing as in the additive case $\tilde H$ and hence $P$ is constant, in which case the claim is trivial.  Similarly if $G$, $\tilde F$, $\tilde G$ have finite poles, so we may assume that $F, G, \tilde F, \tilde G$ are all polynomials.

At least one of $\tilde F,\tilde G$ is non-constant; without loss of generality, assume that $\tilde F$ is non-constant, and so has a zero at some finite $a_*$.  Letting $a$ approach $a_*$ while fixing $b$ at some generic value, we see that $\exp(F(a) + G(b)) \tilde F(a) \tilde G(b)$ can attain any sufficiently small non-zero value, and conclude from \eqref{pab} that $\tilde H$ remains bounded in a neighbourhood of the origin, so that the origin is a removable singularity and $\tilde H$ can be extended to an entire function.

Suppose that $F$ is non-constant.  Then, fixing $b$ to be some generic value, we see from Rouche's theorem that any sufficiently large complex number $z$ can be represented as $\exp(F(a) + G(b)) \tilde F(a) \tilde G(b)$ for some $a = O( \log |z| )$.  We conclude that $H$ grows at most like a power of a logarithm (i.e. $H(z) = O( \log^{O(1)} |z| )$) and so by the generalised Liouville theorem, $H$ is constant and we are again done.  Thus we may assume that $F$ is constant, and similarly $G$ is constant.  We may then absorb the $\exp(F(a)+G(b))$ factor in \eqref{pab} into $\tilde F$ or $\tilde G$, and we have obtained the multiplicative conclusion of Theorem \ref{const-0}.  The proof of Theorem \ref{const-0} is now complete.

Now we move from the algebraically closed field setting to the nonstandard finite field setting.

\begin{theorem}\label{const}  Let $\F$ be a nonstandard finite field of characteristic zero.  Let $P: \F^2 \to \F$ be a polynomial with the property that the set
\begin{equation}\label{lka2}
 \{ (P(a,c), P(a,d), P(b,c), P(b,d)) \mid  a,b,c,d \in \F  \}
\end{equation}
is not Zariski dense in $\overline{\F}^4$.  Then one of the following statements hold:
\begin{itemize}
\item[(i)] (Additive structure)  There exist polynomials $Q, F, G: \F \to \F$ such that $P(x,y) = Q(F(x)+G(y))$ for all $x,y \in \F$.
\item[(ii)] (Multiplicative structure) There exist polynomials $Q, F, G: \F \to \F$ such that $P(x,y) = Q(F(x) G(y))$ for all $x,y \in \F$.
\end{itemize}
\end{theorem}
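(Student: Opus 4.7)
The plan is to deduce Theorem~\ref{const} from Theorem~\ref{const-0} applied with $k=\overline{\F}$, and then descend the resulting decomposition from $\overline{\F}$ down to $\F$ via a Galois-invariance argument.

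First I transfer the hypothesis to the algebraically closed setting. Since $\F$ is a nonstandard finite field of characteristic zero, any proper external subvariety of $\overline{\F}$ is a nonstandard-finite set of externally bounded cardinality, while $|\F|$ is unbounded; hence $\F$, and consequently $\F^4$, is Zariski dense in $\overline{\F}^4$. Because the regular map $(a,b,c,d)\mapsto(P(a,c),P(a,d),P(b,c),P(b,d))$ is continuous for the Zariski topology, its image over $\F^4$ and its image over $\overline{\F}^4$ have the same Zariski closure in $\overline{\F}^4$. The non-density hypothesis \eqref{lka2} therefore transfers to the $\overline{\F}$-setting, and Theorem~\ref{const-0} applied with $k=\overline{\F}$ produces polynomials $Q_0,F_0,G_0\in\overline{\F}[x]$ realising one of the two claimed forms; their coefficients lie in some finite extension $\F'$ of $\F$.

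Next I descend to $\F$ by a normalisation-and-uniqueness argument. Each form carries a natural family of symmetries: the additive form $P=Q(F+G)$ admits the three-parameter family $(Q,F,G)\mapsto(Q(a^{-1}(\cdot-\beta-\gamma)),aF+\beta,aG+\gamma)$, while the multiplicative form $P=Q(FG)$ admits the two-parameter family $(Q,F,G)\mapsto(Q(\mu\,\cdot),\lambda F,(\lambda\mu)^{-1}G)$. In the additive case I normalise $F_0$ to be monic with $F_0(0)=0$ and $G_0(0)=0$; in the multiplicative case I normalise both $F_0$ and $G_0$ to be monic. A short computation with the identity $P_x/P_y=F'(x)/G'(y)$ (additive) or $P_x/P_y=(F'(x)/F(x))\cdot(G(y)/G'(y))$ (multiplicative) shows that $F,G$ are determined by $P$ up to precisely these symmetries: in the additive case this is immediate, and in the multiplicative case the resulting differential relation $F_1'/F_1=\kappa F_2'/F_2$ (with a symmetric counterpart for $G$) forces the integer $\kappa=1$ whenever $F_1,F_2,G_1,G_2$ are all non-constant polynomials. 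Applying $\sigma=\Frob_\F$ to the normalised $(Q_0,F_0,G_0)$ then yields another normalised decomposition of $P^\sigma=P$, which by uniqueness must equal the original; hence $F_0,G_0,Q_0$ are fixed by $\sigma$ and lie in $\F[x]$.

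The main technical obstacle is establishing the uniqueness lemma rigorously, in particular handling degenerate configurations: $F_0$ or $G_0$ constant (in which case $P$ depends on only one variable and both claimed forms hold trivially), or $Q_0$ factoring through a power of its argument in the multiplicative case (which is excluded by choosing $\deg Q_0$ maximal among all multiplicative decompositions of $P$, so that Frobenius conjugation preserves this degree). Once these edge cases are dispatched, the scalar ambiguity in the integrated relations $F_1 = c F_2^{\kappa}$ and $G_2 = c' G_1^{1/\kappa}$ is forced down to the symmetry groups listed above, completing the descent and hence the proof.
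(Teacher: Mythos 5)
Your argument is correct, but it reaches Theorem~\ref{const} by a genuinely different route in the descent step. After invoking Theorem~\ref{const-0} over $\overline{\F}$ (the paper justifies the transfer of the non-density hypothesis via Lemma~\ref{sz}; you argue directly from Zariski density of $\F^4$ in $\overline{\F}^4$; both are fine), the paper descends to $\F$ via a sumset bound $|F(\F)+G(\F)|\ll|\F|$ (resp.\ $|F(\F)\cdot G(\F)|\ll|\F|$), coordinatises the field of definition of $F,G$ inside $\F^n$ via a generic basis $e_0,\dots,e_{n-1}$, shows that the resulting curves are translates or dilates of a one-dimensional algebraic subgroup of $(k^n,+)$ or $(k^\times)^n$ (the multiplicative case even detours through Lie theory over $\C$), and only then brings in Galois to control the components. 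Your route is a cleaner Galois-descent pattern: use the separated form of $P_x/P_y$ to pin $(F,G)$ up to the natural symmetry group of each decomposition type, normalise so the decomposition is unique, then apply $\Frob_\F$ and invoke uniqueness to force the conjugate decomposition to coincide with the original. This avoids the sumset estimates and the algebraic-group analysis entirely and is, I think, genuinely simpler. One wrinkle worth flagging: the relation $F_1'/F_1 = \kappa F_2'/F_2$ does not by itself ``force the integer $\kappa=1$'' --- already $P=(xy)^2$ has normalised decompositions $(z\mapsto z^2,x,y)$ and $(z\mapsto z,x^2,y^2)$ with $\kappa=2$ --- but you do acknowledge this, and your fix of fixing $\deg Q_0$ (for example maximal) does close the gap: writing $\kappa=p/q$ in lowest terms, matching poles and residues gives $F_1=H^p$, $F_2=H^q$, $G_1=K^p$, $G_2=K^q$ for monic $H,K$, hence $Q_1(z^p)=Q_2(z^q)$ as polynomials, and equal degrees of $Q_1,Q_2$ force $p=q=1$ and thus $\kappa=1$. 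Since $\Frob_\F$ preserves both $\deg Q_0$ and the monic normalisation and fixes $P$, the descent closes.
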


\begin{proof} Set $k := \overline{\F}$.  If \eqref{lka2} is not Zariski dense, then from Lemma \ref{sz} we see that \eqref{lka} is not Zariski dense either.  Thus by Theorem \ref{const-0}, we have $P(x,y) = Q(F(x)+G(y))$ or $P(x,y) = Q(F(x)G(y))$ for some polynomials $Q,F,G: k \to k$ defined over $k$.  The remaining difficulty is to replace these polynomials by polynomials that are defined over $\F$.

Note that we may assume that $F, G$ are non-constant, as the claim is trivial otherwise.

Suppose first that we have the additive structure $P(x,y) = Q(F(x)+G(y))$.  Clearly $P(\F,\F) \subset \F$, and hence $|F(\F) + G(\F)| \ll |\F|$.
The field of definition of $F,G$ is a finite extension $\F'$ of $\F$.  It will be convenient (particularly when we turn to the more difficult multiplicative case) to coordinatise this field $\F'$ as follows.  Let $n := [\F':\F]$ be the index of the extension $\F'$ over $\F$.  Let $S: \F \to \F$ be a generic monic polynomial of degree $n$, so that $S$ is irreducible and the roots $t_1,\ldots,t_n \in \F'$ are distinct.  We can then identify $\F'$ with the field
$$  \{ ( R(t_1), \ldots, R(t_n) )\mid R \in \F[t] \} \subset k^n$$
where $R$ ranges over the polynomials of one variable in $t$, where we give $k^n$ the ring structure of componentwise addition and multiplication.  This can  be viewed as an $n$-dimensional subspace (over $\F$) of $k^n$, with basis $e_0,\ldots,e_{n-1}$ given by
$$ e_i := (t_1^i,\ldots,t_n^i).$$
The restriction $F, G: \F \to \F'$ of $F,G$ to $\F$ can thus be written in components as $F = \sum_{i=0}^{n-1} F_i e_i$, $G = \sum_{i=0}^{n-1} G_i e_i$ for some polynomials $F_i,G_i: \F \to \F$ defined over $\F$.  We can then extend these maps to polynomial maps $\tilde F, \tilde G: k \to k^n$ by the same formulae:
$$ \tilde F := \sum_{i=0}^{n-1} F_i e_i; \quad \tilde G := \sum_{i=0}^{n-1} G_i e_i.$$
As $F, G$ are non-constant, $\tilde F, \tilde G$ are non-constant also.

We then have $|\tilde F(\F) + \tilde G(\F)| \ll |\F|$.  Since
$$ \sum_{v \in \tilde F(\F) + \tilde G(\F)} |\tilde F(\F) \cap (v-\tilde G(\F))| = |\tilde F(\F)| |\tilde G(\F)| \gg |\F|^2$$
and $|\tilde F(\F) \cap (v-\tilde G(\F))| \leq |\F|$, we conclude that
$$ |\tilde F(\F) \cap (v-\tilde G(\F))| \gg |\F|$$
for $\gg |\F|$ values of $v$.  In particular, $\tilde F(k) \cap (v-\tilde G(k))$ is infinite for infinitely many $v \in k^n$.  As $\tilde F, \tilde G$ are non-constant polynomial maps, $\tilde F(k), \tilde G(k)$ are irreducible quasiprojective curves in $k^n$, and so
$$ \overline{\tilde F(k)} = v - \overline{\tilde G(k)}$$
for infinitely many $v$.  In particular, the symmetry group $H := \{ v \in k^n\mid \overline{\tilde F(k)} = v + \overline{\tilde F(k)} \}$ is infinite.  But this symmetry group is an algebraic subgroup of the additive group $k^n$ of dimension at most $1$, and so it is a line.  This implies that $\overline{\tilde F(k)}$ and $\overline{\tilde G(k)}$ are translates of the same line, and so $F(x)+G(y)$ is an affine (over $\F$) function of $F_i(x)+G_i(y)$ for some $0 \leq i \leq n-1$.  From this it is easy to see that
$$ P(x,y) = Q(F(x)+G(y)) = Q_i(F_i(x)+G_i(y))$$
for some polynomial $Q_i: \F \to \F$ defined over $\F$, and conclusion (i) of Theorem \ref{const} follows.

Now we turn to the multiplicative case $P(x,y) = Q(F(x) \cdot G(y))$.  By arguing as before we have
$$|\tilde F(\F) \cdot \tilde G(\F)| \ll |\F|.$$
Since $F, G$ have only boundedly many zeroes, $\tilde F(\F)$ and $\tilde G(\F)$ have all but boundedly many points in $(k^\times)^n$, where $k^\times := k \backslash \{0\}$ is the multiplicative group of $k$.  If we define $\tilde F(\F)^\times := \tilde F(\F) \cap (k^\times)^n$ and $\tilde G(\F)^\times := \tilde G(\F) \cap (k^\times)^n$, then we have
$$|\tilde F(\F)^\times \cdot \tilde G(\F)^\times| \ll |\F|.$$
Arguing as in the additive case, we conclude that the symmetry group
$H := \{ v \in (k^\times)^n\mid \overline{\tilde F(k)}^\times = v \cdot \overline{\tilde F(k)}^\times \}$ is infinite, where $\overline{\tilde F(k)}^\times := \overline{\tilde F(k)} \cap (k^\times)^n$.  This is a algebraic subgroup of the multiplicative group $(k^\times)^n$ that is contained in a dilate of the connected curve $\overline{\tilde F(k)}^\times$, and is therefore a connected curve.  Using the Lefschetz principle to embed the field of definition of $H$ into $\C$, we can thus view $H$ as a connected one-dimensional algebraic subgroup of $(\C^\times)^n$.  From Lie group theory, a one-dimensional connected subgroup of $(\C^\times)^n$ takes the form $\{ (\exp( \alpha_1 z), \ldots, \exp(\alpha_n) z)\mid z \in \C \}$ for some complex numbers $\alpha_1,\ldots,\alpha_n$.  By inspecting the limits as $z \to \infty$, one can check that such subgroups are algebraic only when $\alpha_1,\ldots,\alpha_n$ are \emph{commensurate}, in that they lie in a dilate $\Q$, in which case $H$ can be parameterised as $\{ (t^{m_1},\ldots,t^{m_n})\mid t \in \C^\times\}$ for some integers $m_1,\ldots,m_n$.  As $\overline{\tilde F(k)}^\times$ is contained in a dilate of $H$, we see (after restricting back from $\C$ to $k$) that
$$ \tilde F(k) \subset \{ (c_1 t^{m_1}, \ldots, c_n t^{m_n})\mid t \in k \}$$
for some constants $c_1,\ldots,c_n \in k$.  Factoring the components of $\tilde F$ into monic irreducible polynomials, we conclude that 
$$ \tilde F(x) = (c_1 F'(x)^{a_1}, \ldots, c_n F'(x)^{a_n})$$
for some monic polynomial $F': k \to k$ defined over $k$, and some natural numbers $a_1,\ldots,a_n$ (a scalar multiple of the $m_1,\ldots,m_n$).

The Frobenius endomorphism $\Frob_\F$ generates the Galois group $\operatorname{Gal}(\F'/\F) \equiv \Z/n\Z$, which acts transitively on the roots $t_1,\ldots,t_n$ of $R$.  This group then also acts on the polynomial components $c_i F'(x)^{a_i}$ of $\tilde F$, by acting on the coefficients of these polynomials; by taking degrees, we conclude that $a_1=\ldots=a_n=a$, so that $F'$ is invariant with respect to the Frobenius action and is thus defined over $\F$.  Thus $F$ is a scalar multiple of a polynomial $(F')^a$ defined over $\F$.  A similar argument shows that $G$ is also a scalar multiple of a polynomial $(G')^b$ defined over $\F$, and so $P$ can be written in the form $P(x,y) = Q'((F')^a(x) (G')^b(y))$ for some polynomial $Q'$; as $P, F', G'$ are Frobenius-invariant (with $F',G'$ non-constant), $Q'$ is also Frobenius-invariant and thus defined over $\F$, giving the required representation of $P$.
\end{proof}

Combining Theorem \ref{const} with Theorem \ref{expand-thm}, we immediately obtain Theorem \ref{expand-thm-modas-nonst}.

\section{Weak expansion}\label{weak-sec}

Now we prove Theorem \ref{expand-thm-weak-nonst} (and hence Theorem \ref{expand-thm-weak}), by combining the above arguments with the Fourier-analytic arguments from \cite{hls}.  Suppose for contradiction that we can find $\F, P$
which obey the hypotheses of this theorem, but do not obey any of the four conclusions (i)-(iv).  Applying Theorem \ref{expand-thm-modas-nonst} (and noting that moderate asymmetric expansion certainly implies weak expansion), we see that we must have either additive structure in the sense that
\begin{equation}\label{padd-0}
P(x_1,x_2) = Q(F_1(x_1)+F_2(x_2))
\end{equation}
for some polynomials $Q, F_1, F_2: \F \to \F$, or multiplicative structure in the sense that
\begin{equation}\label{pmult-0}
P(x_1,x_2) = Q(F_1(x_1) F_2(x_2))
\end{equation}
for some polynomials $Q, F_1, F_2: \F \to \F$.

Suppose first that we have additive structure \eqref{padd-0}.  As the conclusion (iii) of Theorem \ref{expand-thm-weak-nonst} is assumed to fail, we can find a nonstandard subset $A \subset \F$ with 
\begin{equation}\label{abig}
|A| \ggg |\F|^{1-1/16}
\end{equation}
and
$$ |P(A,A)| \lll |\F|^{1/2} |A|^{1/2}.$$
We can assume that $P$ is non-constant, so that $Q$ is non-constant.  We can also assume $F_1,F_2$ non-constant, as the claim is immediate otherwise.
In particular, $Q$ has fibres of bounded cardinality and so $|Q(B)| \gg |B|$ for all nonstandard $B \subset \F$.  We conclude that
\begin{equation}\label{f12}
 |F_1(A) + F_2(A)| \lll |\F|^{1/2} |A|^{1/2}.
\end{equation}
If we let $B := F_2(A) \times F_1(A) \subset \F^2$ and $C := (F_1(A)+F_2(A)) \times (F_1(A)+F_2(A)) \subset \F^2$, then we have
$$ (F_1(t), F_2(t)) + B \subset C$$
for all $t \in A$, and hence
\begin{equation}\label{afd}
 \sum_{t \in \F} \sum_{(x,y) \in \F^2} 1_B(x,y) 1_C(x+F_1(t),y+F_2(t)) \geq |A| |B| \gg |A|^3.
\end{equation}
We now use Fourier analysis to expand
\begin{equation}\label{ba}
 1_B(x,y) = \sum_{\chi_1,\chi_2 \in \hat \F} \hat 1_B(\chi_1,\chi_2) \chi_1(x) \chi_2(y)
 \end{equation}
where $\hat \F$ is the space of nonstandard (additive) characters on $\F$, that is to say the nonstandard homomorphisms $\chi: \F \to {}^* S^1$ from $\F$ (viewed as an additive group) to the unit circle, and $\hat 1_B(\chi_1,\chi_2)$ are the Fourier coefficients
$$ \hat 1_B(\chi_1,\chi_2) := \E_{x,y \in \F} 1_B(x,y) \overline{\chi_1(x)} \overline{\chi_2(y)}.$$
Similarly, we may write
\begin{equation}\label{ca}
 1_C(x+F_1(t),y+F_2(t)) = \sum_{\chi_1,\chi_2 \in \hat \F} \hat 1_C(\chi_1,\chi_2) \chi_1(x) \chi_2(y) \chi_1(F_1(t)) \chi_2(F_2(t)).
\end{equation}
Multiplying \eqref{ca} by the complex conjugate of \eqref{ba} and summing using the orthogonality properties of characters, we may expand the left-hand side of \eqref{afd} as
$$ |\F|^2 \sum_{\chi_1,\chi_2 \in \hat \F} \hat 1_B(\chi_1,\chi_2) \hat 1_C(\chi_1,\chi_2) \sum_{t \in\F} \chi_1(F_1(t)) \chi_2(F_2(t)).$$
Using the trivial bound $|\chi_1|, |\chi_2| = 1$, we see that the contribution of any given pair $(\chi_1,\chi_2)$ to the above sum is
$$ |\F|^2 (|B|/|\F|^2) (|C|/|\F|^2) |\F| $$
which by \eqref{f12} and the trivial bound $|B| \leq |A|^2$ is $o(|A|^3)$.  We thus have
$$ \sum_{(\chi_1,\chi_2) \in \hat \F \times \hat \F \backslash E} |\hat 1_B(\chi_1,\chi_2)| |\hat 1_C(\chi_1,\chi_2)| |\sum_{t \in\F} \chi_1(F_1(t)) \chi_2(F_2(t))| \gg |A|^3/|\F|^2$$
for any subset $E \subset \hat \F \times \hat \F$ of bounded cardinality.  On the other hand, from the Plancherel identity we have
$$ \sum_{(\chi_1,\chi_2) \in \hat \F \times \hat \F} |\hat 1_B(\chi_1,\chi_2)|^2  = |B|/|\F|^2 \leq |A|^2/|\F|^2$$
and
$$ \sum_{(\chi_1,\chi_2) \in \hat \F \times \hat \F} |\hat 1_C(\chi_1,\chi_2)|^2  = |C|/|\F|^2 \lll |A|/|\F|$$
and hence by Cauchy-Schwarz
$$ \sum_{(\chi_1,\chi_2) \in \hat \F \times \hat \F \backslash E} |\hat 1_B(\chi_1,\chi_2)| |\hat 1_C(\chi_1,\chi_2)| 
\lll |A|^{3/2}/|\F|^{3/2}.$$
By H\"older's inequality, we conclude the existence of $\chi_1,\chi_2 \in \hat \F$ with $(\chi_1,\chi_2) \not \in E$ such that
$$ |\sum_{t \in\F} \chi_1(F_1(t)) \chi_2(F_2(t))| \ggg |A|^{3/2}/|\F|^{1/2};$$
in particular, from the hypothesis \eqref{abig} we certainly have
\begin{equation}\label{tff}
 |\sum_{t \in\F} \chi_1(F_1(t)) \chi_2(F_2(t))| \ggg |\F|^{1/2}.
 \end{equation}
(with some room to spare).
As $E$ was an arbitrary set of bounded cardinality, we conclude that \eqref{tff} holds for an unbounded number of pairs $(\chi_1,\chi_2) \in \hat \F \times \hat \F$.  In particular, it holds for a pair $(\chi_1,\chi_2)$ with $(\chi_1,\chi_2) \neq (0,0)$.

As is well known, the group $\hat \F$ of additive characters of $\F$ is isomorphic to $\F$ itself, and there exists a non-trivial generator $\chi_0 \in \hat \F$ such that any other character $\chi \in \hat\F$ takes the form $\chi(x) = \chi_0(ax)$ for some $a \in \F$. (Indeed, if $\F$ is a standard finite field of characteristic $p$, one can take $\chi_0$ to be $\chi_0(x) := e^{2\pi i \phi(x)/p}$ where $\phi$ is any linear surjection from $\F$ (viewed as a vector space over $\F_p$) to $\F_p$, and the nonstandard case then follows by {\L}os's theorem.)  We may thus write
$$ \sum_{t \in \F} \chi_1(F_1(t)) \chi_2(F_2(t)) = \sum_{t \in \F} \chi_0( a F_1(t) + b F_2(t) )$$
for some $a,b \in \F$, not both zero.  On the other hand, from the Weil bound on character sums (see e.g. \cite{reiter}) and {\L}os's theorem we have
$$ |\sum_{t \in \F} \chi_0(P(t)) | \ll |\F|^{1/2}$$
whenever $\chi_0$ is a non-trivial additive character and $P$ is a non-constant (external) polynomial.  We conclude that $a F_1(t) + b F_2(t)$ must be constant, and it is then an easy matter to write $P$ in the desired form for Theorem \ref{expand-thm-weak-nonst}(i).

Now we suppose that we are in the case when $P$ has multiplicative structure \eqref{pmult-0}.  Arguing as in the additive case, we then can find a nonstandard subset $A \subset \F$ obeying \eqref{abig} and
$$
 |F_1(A) \cdot F_2(A)| \lll |\F|^{1/2} |A|^{1/2}.
$$
Again, we may assume that $Q,F_1,F_2$ are non-constant.  By removing a bounded number of elements from $A$, we may also assume that $0 \not \in F_1(A), F_2(A)$, thus $F_1(A), F_2(A)$ take values inside the multiplicative group $\F^\times := \F \backslash \{0\}$.

We now run the same Fourier analytic argument used in the additive case, but with the underlying abelian group used for Fourier analysis being the multiplicative group $\F^\times$ rather than the additive group $\F$.  Let $\hat{\F^\times}$ be the space of nonstandard (multiplicative) characters on $\F$, that is to say the nonstandard homomorphisms $\psi: \F^\times \to {}^* S^1$ from $\F^\times$ (viewed as a multiplicative group) to the unit circle.  By repeating the previous arguments with the obvious changes, we conclude that
\begin{equation}\label{psif}
 |\sum_{t \in\F} \psi_1(F_1(t)) \psi_2(F_2(t))| \ggg |\F|^{1/2}
\end{equation}
for an unbounded number of pairs $(\psi_1,\psi_2) \in \hat{\F^\times} \times \hat{\F^\times}$.

We factor $F_1,F_2$ as
$$ F_1 = c_1 P_1^{a_1} \ldots P_r^{a_r}, \quad F_2 = c_2 P_1^{b_1} \ldots P_r^{b_r}$$
where $c_1,c_2 \in \F^\times$, $P_1,\ldots,P_r$ are a bounded number of distinct monic irreducible polynomials, and $a_1,\ldots,a_r, b_1,\ldots,b_r$ are standard natural numbers.  If $(a_1,\ldots,a_r)$ and $(b_1,\ldots,b_r)$ are linearly dependent, then $P$ can be expressed in the desired form for Theorem \ref{expand-thm-weak-nonst}(ii), so suppose that these vectors are linearly independent.   We can rewrite the left-hand side of \eqref{psif} as
$$
|\sum_{t \in \F} \prod_{i=1}^r \psi_1^{a_i} \psi_2^{b_i}( P_i(t) )|.$$
By the Weil bound for multiplicative characters (see e.g. \cite[Corollary 2.3]{wan}), and the fact that $P_1,\ldots,P_r$ are distinct monic irreducible polynomials, this expression is $O( |\F|^{1/2} )$ unless $\psi_1^{a_i} \psi_2^{b_i}$ is trivial for all $i=1,\ldots,r$.  But as $(a_1,\ldots,a_r)$ and $(b_1,\ldots,b_r)$ are linearly independent, this can only occur if $\psi_1,\psi_2$ both have order at most $C$, for some standard $C$.  On the other hand, the multiplicative group $\F_q^\times$ of a finite field of order $q$ is isomorphic as a group to the cyclic group $\Z/(q-1)\Z$, and so the dual group $\hat{\F_q^\times}$ also is isomorphic to this group.  In particular, for any $k$, there are at most $k$ characters in $\hat{\F_q^\times}$ of order $k$.  Applying {\L}os's theorem, we conclude the same statement holds for $\hat{\F}$.  Thus there are only a bounded number of pairs $(\psi_1,\psi_2)$ for which \eqref{psif} holds, giving the desired contradiction.

\section{The second algebraic constraint}\label{second-sec}

We are now ready to establish Theorem \ref{expand-thm-ass-nonst} (and thus Theorem \ref{expand-thm-ass}).  
Suppose for contradiction that we can find $\F, P$ which obey the hypotheses of this theorem, but do not obey any of the four conclusions (i)-(iv).  Applying Theorem \ref{expand2-thm} and Theorem \ref{const}, we see that the only possibility is that
there exist geometrically irreducible one-dimensional quasiprojective varieties $V',W',U'$ defined over $\F$ and dominant regular maps $f: V' \to \overline{\F}$, $g: W' \to \overline{\F}$, $h: U' \to \overline{\F}$ defined over $\F$ such that the variety
\begin{equation}\label{var}
  \{ (v',w',u') \in V' \times W' \times U'\mid P( f(v'), g(w') ) = h(u') \}
\end{equation}
is not irreducible.  As $\F$ has characteristic zero, the curves $U',V',W'$ are generically smooth, so by removing a finite number of points from each curve we may assume that $U',V',W'$ are smooth curves.

Note that if $P$ only depends on the first variable (thus $P(x,y) = Q(x)$ for some polynomial $x$) then we have either conclusion (i) or (ii) of Theorem \ref{expand-thm-ass-nonst}, so we may assume that $P$ does not depend purely on the first variable.  Similarly, we may assume that $P$ does not depend purely on the second variable.  In particular, $P$ is non-constant and thus dominant.

We now claim that the varieties
\begin{equation}\label{zao}
 \{ (v',w') \in V' \times W'\mid P(f(v'),g(w')) = t \}
\end{equation}
are reducible for generic $t \in \overline{\F}$.  Indeed, suppose this were not the case; then (as the set of $c$ for which \eqref{zao} is reducible is constructible) this implies that \eqref{zao} is irreducible for generic $t \in \F$.  Now consider an irreducible component of the variety \eqref{var}.  This can be viewed as the relative product of the varieties
\begin{equation}\label{so}
 \{ (v',w',t) \in V' \times W' \times \overline{\F}\mid P(f(v'),g(w')) = t \}
\end{equation}
and
\begin{equation}\label{what}
 \{ (u,t) \in U \times \overline{\F}\mid h(u') = t \}
 \end{equation}
over $\overline{\F}$.  As discussed previously, the generic fibres of the first factor \eqref{so} of this relative product are irreducible.  Hence, any irreducible component of this relative product must be generically equal to a relative product of \eqref{so} with an irreducible component of \eqref{what}.  But \eqref{what} is isomorphic to $U$ and is thus already irreducible, so that \eqref{var} is irreducible, a contradiction.  Thus \eqref{zao} is generically reducible.  

We may rephrase the previous conclusion in the language of linear systems as the assertion that the linear system $((v',w') \mapsto P(f(v'),g(w'))-t)_{t \in \overline{\F}}$ on $V' \times W'$ is reducible.  We also observe that this system has no fixed components, as the varieties \eqref{zao} are disjoint as $c$ varies.
We may then apply (the generalisation of) Bertini's second theorem (see e.g. \cite[Theorem (5.3)]{klei}) to conclude that this system is composite with a pencil, which means that there is a regular map $Q: (V' \times W' \backslash \Sigma) \to C$ from a dense subvariety $(V' \times W') \backslash \Sigma$ of $V' \times W'$ into an irreducible algebraic curve $U'$ and a regular map $h: U' \to \overline{\F}$ of degree $d \geq 2$ (in the sense that the fibres $k^{-1}(\{t\})$ generically have cardinality $d$) such that
\begin{equation}\label{pvw}
 P(f(v'),g(w')) = h( Q( v',w' ) )
\end{equation}
for generic $(v',w') \in V' \times W'$.  (Indeed, one can simply take $C$ to be the curve given by the Chow coordinates of the irreducible components of generic fibres \eqref{zao}, with the obvious generically defined maps $Q,h$.)  Note that as $P, f, g$ are dominant, the maps $h, Q$ must also be dominant.

We now begin transferring the base field $\overline{\F}$ to the complex field $\C$ in order to use the theory of Riemann surfaces.  As in Section \ref{algo}, we may find an algebraically closed subfield $k$ of $\overline{\F}$ of finite transcendence degree over $\Q$ such that all the varieties $U',V',W',\Sigma$ and regular maps $P,f,g,h,Q$ used above are defined over $k$, and then embed $k$ into the complex field $\C$, so that all these varieties can also be viewed as complex varieties (with the regular maps being complex analytic maps).   

Henceforth we will work over the complex field.  We will exploit the well-known fact (see e.g. \cite[Corollary 6.10]{hart}) that any smooth quasiprojective algebraic curve over an algebraically closed field $k$ is isomorphic to an open dense subset of a smooth projective algebraic curve over $k$, or that is to say a projective algebraic curve with a finite number of points deleted.  Furthermore (see e.g. \cite[Proposition 6.8]{hart}), any regular map from an open dense subset of a projective algebraic curve $C$ to a projective variety $V$ can be uniquely completed to a regular map from $C$ to $W$.  As such, we can view the algebraic curves $U',V',W'$ in the above discussion as open dense subsets of smooth projective curves $\tilde U, \tilde V, \tilde W$ respectively, and we can also view $\C$ as an open dense subset of the projective line $\mathbb{P}^1(\C)$.  We can thus complete the regular maps $f: V' \to \C$, $g: W' \to \C$, $h: U' \to \C$ to regular maps $\tilde f: \overline{V} \to \mathbb{P}^1$, $\tilde g: \overline{W} \to \mathbb{P}^1$, $\tilde h: \tilde{U} \to \mathbb{P}^1(\C)$.  Unfortunately, the regular maps $P, Q$, being defined on the product of curves rather than on a single curve, do not automatically extend to the projective completion.  However, we may obtain a regular extension $\tilde Q: (\tilde V \times \tilde W) \backslash \Sigma \to \tilde U$ of $Q$ defined outside of a finite subset $\Sigma$ of $\tilde V \times \tilde W$ as follows.  Take the graph 
$$\{ (v,w,Q(v,w)): v \in V, w \in W, Q(v,w) \hbox{ well defined}\} \subset \tilde V \times \tilde W \times \tilde U$$
which is an irreducible constructible set by Proposition \ref{proj}. Its closure $S$ is then an irreducible projective variety\footnote{Here we use the fact that the product of projective varieties is (up to isomorphism) again projective, see e.g. \cite[Lemma I.6.3]{mumf}.} of dimension two.  Projecting $S$ back down to $\tilde V \times \tilde W$, we see that outside of a subset $\Delta$ of $S$ of dimension at most one, this projection has zero-dimensional (hence finite) fibres, with the fibres being at least one-dimensional on $\Delta$.  In particular, the projection $\Sigma$ of $\Delta$ to $\tilde V \times \tilde W$ is finite.  Outside of $\Sigma$, the fibres are finite and generically a single point; a local connectedness argument (using the fact that every point in $\tilde V \times \tilde W$ contains arbitrarily small neighbourhoods which are connected even when one removes those points in which $Q$ is undefined) then shows that the fibres are a single point everywhere in $\tilde V \times \tilde W \backslash \Sigma$, and so $S$ is a graph of some function $\tilde Q: \tilde V \times \tilde W \backslash \Sigma \to \C$.  This implies that the projection from $S \backslash \Sigma$ to $\tilde V \times \tilde W \backslash \Delta$ has degree one (because the generic fibre has cardinality equal to the degree in characteristic zero, see e.g. \cite[Proposition 7.16]{harris}), thus $k(S \backslash \Sigma)$ is isomorphic to $k(\tilde V \times \tilde W \backslash \Delta)$.  As the variety  $\tilde V \times \tilde W \backslash \Delta$ is smooth, it is normal (see e.g. \cite[Theorem II.5.1]{shaf}) and so $k[S \backslash \Sigma]$ is isomorphic to $k[\tilde V \times \tilde W \backslash \Delta]$.  Thus the map $(v,w) \mapsto (v,w,Q(v,w))$ is regular on $\tilde V \times \tilde W \backslash \Sigma$, and so $\tilde Q$ is regular on this domain as well.

Similarly, we have a regular extension $\tilde P: (\mathbb{P}^1(\C) \times \mathbb{P}^1(\C)) \backslash \Lambda \to \mathbb{P}^1(k)$ defined outside of a finite subset $\Lambda$ of $\mathbb{P}^1(\C) \times \mathbb{P}^1(\C)$.  By enlarging $\Sigma$ if necessary, we may assume that $(f(v),g(w)) \not \in \Lambda$ whenever $(v,w) \not \in \Sigma$.  Using these regular extensions and \eqref{pvw}, we see that
\begin{equation}\label{pvw-2}
 \tilde P(\tilde f(v),\tilde g(w)) = \tilde h( \tilde Q( v,w ) )
\end{equation}
for all $(v,w) \in (\tilde V \times \tilde W) \backslash \Sigma$.

The smooth projective curves $\tilde V, \tilde W, \tilde U$, when viewed over $\C$, become compact Riemann surfaces, and thus each have a well-defined \emph{genus}, which is a natural number; see e.g. \cite{grif}.  We now split into several cases depending on the genera $g_{\tilde V}, g_{\tilde W}, g_{\tilde U}$ of the curves $\tilde V, \tilde W, \tilde U$ (viewed as curves over $\C$).  The high genus cases will be relatively easy to eliminate, by using existing theorems in the literature that limit the number of regular maps available between high genus Riemann surfaces, and once we reduce to the case when $\tilde U$ has genus zero, we will be able to conclude the final option (iii) of Theorem \ref{expand-thm-ass-nonst}, after some normalisation.  Unfortunately, we were not able to argue to similarly reduce the genus of $\tilde V$ or $\tilde W$ to zero, which is why Theorem \ref{expand-thm-ass-nonst}(iii) still makes reference to curves of arbitrary genus.

We turn to the details.  First suppose that $g_{\tilde U} \geq 2$ and $g_{\tilde V} < 2$.  In this case, we use the Riemann-Hurwitz formula (see \cite[p. 219]{grif}), which among other things implies that there does not exist a non-constant regular map from a Riemann surface of genus $g$ to a Riemann surface of genus $g'$ whenever $g < g'$.  We conclude that the map $(v,w) \mapsto \tilde Q(v,w)$ is constant in $v$ for all $w$ (outside of $\Sigma$, of course), which implies by \eqref{pvw} and the dominance of $\tilde f, \tilde g$ that $P(v,w)$ is a function of $w$ only, contradicting our previous hypothesis about $P$.  

Now suppose that $g_{\tilde U} \geq 2$ and $g_{\tilde V} \geq 2$.  For this case, we use a classical theorem of de Franchis \cite{def}, that asserts that when two Riemann surfaces $\tilde U, \tilde V$ have genus at least two, then there are only finitely many non-constant regular maps $h_1,\ldots,h_n$ from $\tilde V$ to $\tilde U$.  For each $i=1,\ldots,n$, the set of $w \in \tilde W$ such that $\tilde Q(v,w) = h_i(v)$ for all $v\in \tilde V$ for which $\tilde Q(v,w)$ is well-defined is a closed subset of $\tilde W$, as is the set of $w \in \tilde W$ for which $v \mapsto \tilde Q(v,w)$ is constant.  As these sets cover $\tilde W$, we conclude that $\tilde Q$ is either constant in $v$ or constant in $w$ (outside of $\Sigma$), which implies that $P$ is constant in either $v$ or $w$, leading to a contradiction as before.

Now suppose that $g_{\tilde U}=1$ and $g_{\tilde V} \geq 2$.  Here we can use a variant of the de Franchis theorem, namely the theorem of Tamme \cite{tamme} that for any fixed degree $d$, there are only finitely many non-constant regular maps from $\tilde V$ to $\tilde U$ of degree at most $d$.  The maps $v \mapsto \tilde Q(v,w)$ can easily be seen to have uniformly bounded degree if they are non-constant, and so by repeating the previous argument we again obtain a contradiction.

If $g_{\tilde U}=1$ and $g_{\tilde V}=0$, then we can again use the Riemann-Hurwitz formula to show that there are no non-constant regular maps from $\tilde V$ to $\tilde U$, and we can argue as before to reach a contradiction.  

If $g_{\tilde U}=1$, then the previous arguments (together with their counterparts when $\tilde V$ and $\tilde W$ are switched) handle all cases except when $g_{\tilde V}=g_{\tilde W}=1$; thus $\tilde U, \tilde V, \tilde W$ can all be viewed as elliptic curves (after arbitrarily designating one point on each of $\tilde U, \tilde V, \tilde W$ as the origin).  It is a classical fact (see e.g. \cite[p. 238]{grif}) that every elliptic curve over $\C$ has the structure of a complex abelian group, and specifically to a torus $\C/\Gamma$ for some discrete lattice $\Gamma$ of $\C$.  We can thus form the identifications $\tilde U \equiv \C/\Gamma_{\tilde U}$, $\tilde V \equiv \C/\Gamma_{\tilde V}$, $\tilde W \equiv \C/\Gamma_{\tilde W}$ on the level of Riemann surfaces (and complex abelian groups) for some lattices $\Gamma_{\tilde U}, \Gamma_{\tilde V}, \Gamma_{\tilde W}$.  It is then known (see \cite[Lemma 4.9]{hart}) that any regular map from $\tilde V$ to $\tilde U$ corresponds to a map from $\C/\Gamma_{\tilde V}$ to $\C/\Gamma_{\tilde U}$ of the form $z \mapsto z_0 + m z$, where $z_0 \in \C$ and $m$ is a complex number such that $m \Gamma_{\tilde V} \subset \Gamma_{\tilde U}$.  In particular, $m$ is constrained to a discrete subgroup of $\C$ (the rank of which depends on whether $\tilde U, \tilde V$ are isogenous, and whether they have complex multiplication).  For each such $m$, the set of $w \in \tilde W$ for which (the completion of) the map $v \mapsto \tilde Q(v,w)$ corresponds to a map of the form $z \mapsto z_0 + mz$ for some complex number $z_0$ is a Zariski closed subset of $\tilde W$ (this follows from the fact that the group operations on an elliptic curve are given by a regular map), and is thus either finite or all of $\tilde W$.  As $\tilde W$ is uncountable (when viewed over the complex numbers), we conclude that there is a single $m$ for which the above statement holds for all $w \in \tilde W$.  In particular, this implies that $\tilde Q(v,w)$ takes the form 
\begin{equation}\label{quod}
\tilde Q(v,w) = R(v) \oplus_{\tilde U} S(w)
\end{equation}
on the domain of definition of $\tilde Q$, where $R: \tilde V \to \tilde U$ corresponds to the map $z \mapsto mz$, and $S: \tilde W \to\tilde U$ is a map (which is necessarily regular, since $\tilde Q$ and $R$ are regular).  Again, we may assume that $R, S$ are non-constant (hence dominant), as otherwise $P$ depends only on one of $v,w$, contradicting our preceding hypothesis.

At this point we could use Theorem \ref{const} to conclude, but we instead give the following more direct argument.  We start using the hypothesis that $\tilde P: \mathbb{P}^1(\C) \times \mathbb{P}^1(\C) \to \mathbb{P}^1(\C)$ is not just a regular map, but is in fact (the completion of) a polynomial, which implies that $\tilde P(x,y)=\infty$ can only occur if $x=\infty$ or $y=\infty$.  Combining this with \eqref{pvw-2} and \eqref{quod}, we conclude that $R(v) \oplus_{\tilde U} S(w) \in \tilde h^{-1}(\{\infty\})$ can only occur if $v \in \tilde f^{-1}(\{\infty\})$ or $w \in \tilde g^{-1}(\{\infty\})$.  As $\tilde h, \tilde f, \tilde g, R, S$ are dominant, we thus conclude that there are finite subsets $A, B$ of $\tilde U$ such that the only pairs $(a, b) \in \tilde U \times \tilde U$ with $a \oplus_{\tilde U} b \in \tilde h^{-1}(\{\infty\})$ are those with $a \in A$ or $b \in B$.  But as $\tilde U$ is an infinite group, this is only possible of $\tilde h^{-1}(\{\infty\})$ is empty.  But $\tilde h: \tilde U \to \mathbb{P}^1(\C)$ is a projective morphism, hence has Zariski closed image (see e.g. \cite[Theorem I.5.2]{shaf}); since $\tilde h$ is non-constant, it is therefore surjective, a contradiction.  This concludes the treatment of the $g_{\tilde U}=1$ case.

The only remaining cases are when $g_{\tilde U}=0$, thus $\tilde U$ is isomorphic to $\mathbb{P}^1(\C)$ (see \cite[Example 1.3.5]{hart}), and so without loss of generality we can take $\tilde U = \mathbb{P}^1(\C)$.  In particular, $\tilde Q: (\tilde V \times \tilde W) \backslash \Sigma \to \mathbb{P}^1(\C)$ and $\tilde h: \mathbb{P}^1(\C) \to \mathbb{P}^1(\C)$ are now meromorphic functions on Riemann surfaces.

As $\tilde h$ is a non-constant meromorphic function, $\tilde h^{-1}(\{\infty\})$ must contain at least one point.
Suppose first that $\tilde h^{-1}(\{\infty\})$ contains at least two points, which after a M\"obius transformation we may normalise to be $0$ and $\infty$.  From \eqref{pvw-2} we see that if $(v,w) \in (\tilde V \times \tilde W) \backslash \Sigma$ is such that $\tilde Q(v,w)=0$ or $\tilde Q(v,w)=\infty$, then either $\tilde g(w)=\infty$ or $\tilde f(v)=\infty$.  Thus, for all but finitely many $w$, the meromorphic function that is (the completion of) $v \mapsto \tilde Q( v, w )$ has all of its zeroes and poles in $\tilde f^{-1}(\{\infty\})$.  The order of these zeroes and poles is easily seen to be bounded, so there are only finitely many possibilities for the \emph{divisor} of $v \mapsto \tilde Q( v, w )$ (the formal sum of the zeroes, minus the poles).  The set of $w$ for which a given divisor occurs is a constructible subset of $\tilde W$, so there must exist one divisor $D$ which is attained for all but finitely many $w$.  By Liouville's theorem, any two meromorphic functions with the same divisor must differ by a multiplicative constant, so we conclude that $\tilde Q(v,w) = R(v) S(w)$ for all but finitely many $v \in \tilde V$, all but finitely many $\tilde w \in W$, and some meromorphic functions $R: \tilde V \to \mathbb{P}^1(\C)$, $S: \tilde W \to \mathbb{P}^1(\C)$.

As before, we could use Theorem \ref{const} to conclude at this point, but we will again give a more direct argument. Now let $v, v' \in \tilde V$ be such that $\tilde f(v)=\tilde f(v')$.  Excluding finitely many exceptional pairs $(v,v')$ (including those for which one of $R(v), R(v')$ is zero or infinite), we conclude from the above discussion and \eqref{pvw-2} that
$$ \tilde h( R(v) S(w) ) = \tilde h( R(v') S(w) )$$
for all but finitely many $w$; as $S$ is dominant, this implies that
$$ \tilde h( (R(v)/R(v')) z ) = \tilde h( z )$$
for all but finitely many $z \in \C$, and hence for all $z \in \C$.  Consider the group $G$ of complex numbers $u$ such that $\tilde h(uz) = \tilde h(z)$ for all $z \in\C$.  As $\tilde h$ is non-constant, this is a finite subgroup of $\C^\times$ and is thus the $N^{\operatorname{th}}$ roots of unity for some natural number $N$.  Then we may write $\tilde h(z) = \tilde h'(z^N)$ for some regular map $\tilde h': \mathbb{P}^1(\C) \to \mathbb{P}^1(\C)$, so by replacing $\tilde h$ with $\tilde h'$ and $Q$ with $Q^N$ if necessary we may assume that $N=1$.  (Note that this cannot reduce the degree of $\tilde h$ to less than $2$, since $\tilde h$ will still map both $0$ and $\infty$ to $\infty$.)  We conclude that for all but finitely many pairs $(v,v')$, $\tilde f(v)=\tilde f(v')$ implies $R(v) = R(v')$.  Thus the irreducible projective variety $\{ (\tilde f(v), R(v))\mid v \in \mathbb{P}^1(\C) \}$ is a graph outside of a finite set of a function from $\mathbb{P}^1(\C) \to \mathbb{P}^1(\C)$; this function is continuous, generically holomorphic and blows up at most polynomially at any point, and is thus rational, so that $R = Y \circ \tilde f$ for some rational function $Y: \mathbb{P}^1(\C) \to \mathbb{P}^1(\C)$.  Similarly we may assume that $S = Z \circ \tilde g$ for some rational $Z: \mathbb{P}^1(\C) \to \mathbb{P}^1(\C)$.  But then from \eqref{pvw-2} and the dominance of $\tilde f, \tilde g$ we conclude that
$$ 
P(x, y) = \tilde h( Y(x) Z(y) )
$$
for all but finitely many pairs $(x,y) \in \C^2$.  Since $h^{-1}(\{\infty\})$ contains $0$, $\infty$, we see that $Y, Z$ cannot have any poles or zeroes on $\C$ and are thus constant, contradicting the non-constant nature of $P$.

The only remaining case is when $h^{-1}(\{\infty\})$ consists of a single point, which we may normalise to be $\infty$.  Then the meromorphic function $h$ maps $\C$ to $\C$ and is thus a polynomial.  We have now almost reached the conclusion in Theorem \ref{expand-thm-ass-nonst}(iii), with the main thing missing being that the curves $\tilde V, \tilde W$ are not yet affine, and the maps $f, g, Q$ are not yet polynomials.  To address this, we use a variant of the argument that treated the case when $\tilde h^{-1}(\{\infty\}$ contained more than one point.  Namely, we observe from \eqref{pvw-2} that for all but finitely many $w \in \tilde W$, the function $v \mapsto \tilde Q(v,w)$ is defined on all of $\tilde V$, and is finite outside of $\tilde f^{-1}(\{\infty\})$.  This function is a meromorphic function on $\tilde V$ of bounded degree.  Applying the Riemann-Roch\footnote{One does not need the full power of the Riemann-Roch theorem here; the finite dimensionality of this space can also be established from Laurent expansion at each pole together with Liouville's theorem.} theorem (see \cite[p. 245]{grif}), we conclude that there is a finite-dimensional vector space $S$ (over $\C$) of meromorphic functions that only have poles at $\tilde f^{-1}(\{\infty\})$, such that the functions $v \mapsto \tilde Q(v,w)$ lie in $S$ for all but finitely many $w \in \tilde W$.  Letting $e_1,\ldots,e_n: \tilde V \backslash \tilde f^{-1}(\{\infty\}) \to \C$ be a basis for this vector space, we thus have a representation of the form
\begin{equation}\label{lode}
\tilde Q(v,w) = \sum_{i=1}^n Z_i(w) e_i(v)
\end{equation}
for all but finitely many $w$, and all $v \in \tilde V \backslash \tilde f^{-1}(\{\infty\})$.  As $\tilde Q$ is a regular map and the $e_i$ are linearly independent, we conclude that $Z_i$ are regular maps from $W$ (with finitely many points deleted) to $\C$ and are thus meromorphic.

From \eqref{pvw-2}, we see that for all but finitely many $v$, we have $w \mapsto \tilde Q(v,w)$ defined on all of $\tilde W$, and finite outside of $\tilde g^{-1}(\{\infty\})$.  From this, \eqref{lode}, and the linear independence of the $e_i$ we see that the $Z_i$ are finite outside of $\tilde Q(v,w)$.  Also one can easily verify that the $Z_i$ have bounded degree.  Applying the Riemann-Roch theorem, we can find a linearly independent set of regular maps $e'_1,\ldots,e'_m: \tilde W \backslash \tilde g^{-1}(\{\infty\}) \to \C$, such that each $Z_i$ is a linear combination of the $e'_j$, thus we have
\begin{equation}\label{tq}
\tilde Q(v,w) = \sum_{i=1}^n \sum_{j=1}^m c_{ij} e_i(v) e'_j(w)
\end{equation}
for all $v$ outside of a finite subset of $\tilde V$, and all $w$ outside of a finite subset of $\tilde W$, and some complex coefficients $c_{ij}$.

Now let $V \subset \C^{1+n}, W \subset \C^{1+m}$ be the sets
$$ V := \{ (f(v), e_1(v),\ldots,e_n(v))\mid v \in \tilde V \backslash \tilde f^{-1}(\{\infty\}) \}$$
and
$$ W := \{ (g(v), e'_1(w),\ldots,e'_m(w))\mid w \in \tilde W \backslash \tilde g^{-1}(\{\infty\}) \}.$$
These are the irreducible projective curves $\{ (f(v),e_1(v),\ldots,e_n(v))\mid v \in \tilde V \}$, $\{ (g(w),e'_1(w),\ldots,e'_m(w))\mid w \in \tilde W \}$ with a finite number of points deleted, and so are irreducible quasiprojective curves.  If we define the polynomial maps $f': \C^{1+n} \to \C$, $g': \C^{1+m} \to \C$, $Q': \C^{1+n} \times \C^{1+m} \to \C$ by
\begin{align*}
f'( z_0, z_1,\ldots,z_n ) &:= z_0 \\
g'( w_0, w_1,\ldots,w_m ) &:= w_0 \\
Q'( (z_0,\ldots,z_n), (w_0,\ldots,w_m) ) &:= \sum_{i=1}^n \sum_{j=1}^m c_{ij} z_i w_j
\end{align*}
then $f', g'$ are non-constant on $V, W$, and we see from \eqref{pvw}, \eqref{tq} that
$$ P( f'(v), g'(w) ) = j(Q'(v,w) )$$
for all $v$ outside of a finite subset of $V$, and all $w$ outside of a finite subset of $W$.  As all functions involved here are polynomials, we may pass to the affine Zariski closures $\overline{V} \subset \C^{1+n}$, $\overline{W} \subset \C^{1+m}$, which are affine irreducible curves, and conclude that the above identity in fact holds for \emph{all} $v \in \overline{V}$ and $w \in \overline{W}$.  This gives Theorem \ref{expand-thm-ass-nonst}(iii) except for the fact that all polynomials and curves here are defined over $\C$ rather than $k$; but as $P$ was already defined over $k$, we may use the nullstellensatz (or the Lefschetz principle) as in the proof of Theorem \ref{const-0} to locate a choice of $\overline{V}, \overline{W}, f', g', Q'$ obeying the above properties and also defined over the algebraically closed subfield $k$ of $\C$.  This (finally!) concludes the proof of Theorem \ref{expand-thm-ass-nonst}.

\appendix 
\section{The \'etale fundamental group}\label{etale-app}

Throughout this appendix, $k$ is an algebraically closed field of characteristic zero.  (Some portion of the discussion below can be generalised to the positive characteristic setting, but several additional technical complications arise in that case which we do not wish to discuss here.)

Given any smooth irreducible variety $W$ over $k$, and a point $p$ in $W$, one can define an \emph{\'etale fundamental group} $\pi_1(W,p)$ of $W$ at $p$.  The precise construction of this group is not particularly relevant for this discussion, but one can for instance define $\pi_1(W,p)$ as the group of automorphisms of the fibre functor $\phi \mapsto \phi^{-1}(\{p\})$ that maps finite \'etale covers\footnote{A minor technical point here: in the definition of the \'etale fundamental group of a general locally connected Noetherian scheme, one needs to consider covers that are also as general as a locally connected Noetherian scheme.  However, it is known that any finite \'etale cover of a quasiprojective variety is again a quasiprojective variety \cite[Proposition 5.3.2]{ega}, and so one can work entirely within the category of quasiprojective varieties here.  We thank Antoine Chambert-Loir for pointing out this subtlety.} $\phi: V \to W$ of $W$ to their fibre $\phi^{-1}(\{p\})$ (and is thus a functor from the category of finite \'etale covers of $W$ to the category of sets).  See \cite[Chapter V]{sga} or \cite[Chapter 5]{szam} for details of this construction.  (One can also define the \'etale fundamental group for more general connected, locally noetherian schemes, but we will not need this additional level of generality here.)  

In this appendix, we will list some key properties of the \'etale fundamental group that are well established in the literature (such as \cite{sga}), which we will use as ``black boxes''.  Under the running hypothesis that $k$ is algebraically closed and has characteristic zero, these properties are largely analogous to properties of the (profinite completion of the) topological fundamental group of a complex variety.  (The situation is more complicated in positive characteristic, due to the existence of Artin-Schreier coverings, but we will not need to deal with these difficulties here.)

An important property of the \'etale fundamental group for us will be topological finite generation:

\begin{proposition}[Topological finite generation]\label{tpg}  Let $W$ be a smooth variety over an algebraically closed field $k$ of characteristic zero, and let $p$ be a point in $W$.  Then \'etale fundamental group $\pi_1(W,p)$ is a profinite group which is topologically finitely generated, that is to say there is a finitely generated subgroup which is dense in the profinite topology.
\end{proposition}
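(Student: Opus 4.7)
The plan is to reduce to the classical case $k = \C$ via the Lefschetz principle, and then invoke the comparison between the \'etale fundamental group and the profinite completion of the topological fundamental group.

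First, I would find a finitely generated subfield $k_0 \subset k$ over which both $W$ and the point $p$ are defined. Setting $k_1$ to be the algebraic closure of $k_0$ inside $k$, we may view $W$ and $p$ as defined over $k_1$. A standard invariance result (SGA 1, Expos\'e X, Corollaire 1.8), valid in characteristic zero, asserts that extension of algebraically closed base fields induces an isomorphism of \'etale fundamental groups; thus $\pi_1(W,p) \cong \pi_1(W_{k_1}, p)$. Since $k_1$ has countable cardinality and finite transcendence degree over $\Q$, we may embed $k_1$ into $\C$, and applying the invariance result once more, we obtain $\pi_1(W,p) \cong \pi_1(W_\C, p_\C)$. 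This step is entirely analogous to the Lefschetz principle reduction used in Section \ref{algo} and Section \ref{second-sec} of the paper.

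Once we are working over $\C$, I would invoke the Riemann existence theorem (SGA 1, Expos\'e XII), which asserts that for a smooth variety $W$ over $\C$, the \'etale fundamental group $\pi_1(W_\C, p)$ is canonically isomorphic to the profinite completion of the topological fundamental group $\pi_1^{\mathrm{top}}(W(\C), p)$ of the underlying complex analytic space. Thus it suffices to prove that $\pi_1^{\mathrm{top}}(W(\C), p)$ is (discretely) finitely generated, since the profinite completion of a finitely generated group is topologically finitely generated by construction.

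Finally, I would establish finite generation of $\pi_1^{\mathrm{top}}(W(\C), p)$ by appealing to the classical fact that any complex algebraic variety has the homotopy type of a finite CW complex. One standard route is to use Hironaka's resolution of singularities to find a smooth projective compactification $\overline{W}$ of $W$ in which the complement $D := \overline{W} \setminus W$ is a simple normal crossings divisor; then $\overline{W}(\C)$ is a compact complex manifold (hence a finite CW complex with finitely presented fundamental group), and $W(\C)$ is obtained by removing $D$, which adds only a finite number of ``meridian'' generators (one for each irreducible component of $D$). The main potential obstacle is the base change invariance step, but in characteristic zero this is well-documented in SGA 1 and can be cited directly; the positive characteristic failure of this invariance (e.g. due to Artin-Schreier covers) is precisely why Proposition \ref{tpg} is restricted to characteristic zero, as noted in the footnote in Section \ref{algreg-sec}.
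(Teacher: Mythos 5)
Your proof is correct, and it is precisely the alternative argument that the paper's one-line proof alludes to parenthetically: the paper's primary route simply cites \cite[Theorem II.2.3.1]{sga71}, but then adds that the result ``can also be deduced from Theorem \ref{equate} below,'' and it is this second route that you flesh out. Namely: reduce by base-change invariance of $\pi_1$ (valid in characteristic zero; this is the Lefschetz-principle step) to the case $k=\C$, invoke the comparison isomorphism $\pi_1(W,p)\cong\pi_1^{\mathrm{top}}(W(\C),p)^\wedge$ of Theorem \ref{equate} via the Riemann existence theorem, and then note that $\pi_1^{\mathrm{top}}(W(\C),p)$ is finitely generated. For that last step you offer two separate arguments stacked together — the general fact that $W(\C)$ has the homotopy type of a finite CW complex, and a Hironaka compactification plus van Kampen count of meridian loops around an SNC boundary — but either one alone suffices; the Hironaka detour is unnecessary once you cite the finite CW homotopy type, and vice versa. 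Two minor remarks: the proposition also asserts profiniteness of $\pi_1(W,p)$, which your argument does not mention (it is immediate from the definition as an inverse limit, and the paper dismisses it with a citation); and you should make explicit that base-change invariance of $\pi_1$ for \emph{non-proper} varieties requires characteristic zero — the reference \cite[Expos\'e X]{sga} treats the proper case, while the general characteristic-zero case follows by combining it with the comparison theorem, or is packaged directly into Theorem \ref{equate}. With those caveats, the argument is sound and matches the secondary route the paper sketches.
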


\begin{proof} The profiniteness of the \'etale fundamental group follows from construction (see \cite[\S V.7]{sga} or \cite[Theorem 5.4.2]{szam}).  Topological finite generation is established in \cite[Theorem II.2.3.1]{sga71} (and can also be deduced from Theorem \ref{equate} below).
\end{proof}   

Given any finite \'etale covering $\phi: V \to W$ of $W$ by a smooth variety $V$, the fibre $\phi^{-1}(\{p\})$ is automatically a finite set, and by construction, the fundamental group $\pi_1(W,p)$ acts on this set, thus each group element $g \in \pi_1(W,p)$ will map a point $v$ in $\phi^{-1}(\{p\})$ to a point $gv$ in $\phi^{-1}(\{p\})$.  This action is natural in the following sense: if one has two finite \'etale coverings $\phi: V \to W$ and $\phi': V' \to W$ and a regular map $f: V \to V'$ with $\phi = \phi' \circ f$, then the actions of $\pi_1(W,p)$ on $\phi^{-1}(\{p\})$ and $(\phi')^{-1}(\{p\})$ are intertwined by $f$ in the sense that
$$ g f(v) = f(gv)$$
for all $g \in \pi_1(W,p)$ and $v \in \phi^{-1}(\{p\})$; see \cite[\S V.7]{sga} or \cite[Theorem 5.4.2]{szam}.  A particular consequence of importance to us occurs when one has two finite \'etale coverings $\phi_1: V_1 \to W$, $\phi_2: V_2 \to W$ by smooth varieties $V_1,V_2$.  Then we may form the the fibre product $\phi_1 \times_W \phi_2: V_1 \times_W V_2 \to W$, where
$$ V_1 \times_W V_2 := \{ (v_1,v_2) \in V_1 \times V_2\mid \phi_1(v_1)= \phi_2(v_2) \}$$
and
$$ \phi_1 \times_V \phi_2(v_1,v_2) = \phi_1(v_1) = \phi_2(v_2)$$
then $\phi_1 \times_W \phi_2$ is easily seen to also be a finite \'etale covering, and the action of $\pi_1(W,p)$ on the product fibre $(\phi_1 \times_W \phi_2)^{-1}(\{p\}) = \phi_1^{-1}(\{p\}) \times \phi_2^{-1}(\{p\})$ is the direct sum of the action of $\pi_1(W,p)$ on the individual fibres $\phi_1^{-1}(\{p\}), \phi_2^{-1}(\{p\})$, thus
$$ g (v_1,v_2) = (gv_1, gv_2)$$
for all $g \in \pi_1(W,p)$, $v_1 \in \phi_1^{-1}(\{p\})$, and $v_2 \in \phi_2^{-1}(\{p\})$.

Givne a finite \'etale covering $\phi: V \to W$ by a smooth variety $V$, we can use the \'etale fundamental group $\pi_1(W,p)$ to relate the connected components of $V$ with the fibre $\phi^{-1}(\{p\})$ (and, for the purposes of this paper, this is the main reason why need the \'etale fundamental group in the first place).  Indeed, $V$ is connected if and only if the action of $\pi_1(W,p)$ on $\phi^{-1}(\{p\})$ is transitive (see \cite[\S V.7]{sga} or \cite[Theorem 5.4.2]{szam}); from this and functoriality, we see that in the more general case when $V$ is not necessarily connected, the orbits of $\pi_1(W,p)$ on $\phi^{-1}(\{p\})$ are nothing more than the fibres over $p$ of the connected components of $V$.

Let $W$ be a smooth irreducible variety, let $U$ be a smooth irreducible subvariety of $W$, and let $p$ be a point in $U$.  Then there is a canonical homomorphism $\eta$ from $\pi_1(U,p)$ to $\pi_1(W,p)$, which is compatible with the actions of these groups on fibres in the following sense: if $\phi: V \to W$ is a finite \'etale covering of smooth varieties, and $\phi': \phi^{-1}(U) \to U$ is the restriction of $\phi$ to $\phi^{-1}(U)$, then $\phi'$ is also a finite \'etale covering, and one has
$$ g v = \eta(g) v$$
for all $g \in \pi_1(U,p)$ and all $v \in \phi^{-1}(\{p\}) = (\phi')^{-1}(\{p\})$; see \cite[\S V.7]{sga}.  

\begin{lemma}[Insensitivity to high codimension sets]\label{highd}  Let $W$ be a smooth irreducible variety, let $S$ be a closed subvariety of $W$, and let $p \in W \backslash S$, and let $\eta: \pi_1(W \backslash S,p) \to \pi_1(W,p)$ be the homomorphism described above.
\begin{itemize}
\item If $S$ has codimension at least $1$ in $W$, then $\eta$ is surjective.
\item If $S$ has codimension at least $2$ in $W$, then $\eta$ is an isomorphism.
\end{itemize}
\end{lemma}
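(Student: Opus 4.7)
My plan is to establish both parts by translating the statement about fundamental groups into the equivalent statement about the categories of finite \'etale covers, via the Galois-type correspondence underlying the definition of $\pi_1$ (see \cite[\S V.7]{sga} or \cite[Theorem 5.4.2]{szam}). Recall that for a smooth irreducible variety $X$ with basepoint $p$, the category of finite \'etale covers of $X$ is equivalent to the category of finite continuous $\pi_1(X,p)$-sets via the fibre functor at $p$. Under this equivalence, a homomorphism $\eta\colon \pi_1(W\setminus S,p)\to \pi_1(W,p)$ is surjective iff restriction from $W$ to $W\setminus S$ preserves connectedness of finite \'etale covers, and it is an isomorphism iff restriction is moreover an equivalence of categories.

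For the first bullet (codimension $\ge 1$), I would argue as follows. Let $\phi\colon V\to W$ be a connected finite \'etale cover with $V$ smooth. Since $V$ is smooth, it is locally irreducible, and since it is also connected it must actually be irreducible. The preimage $\phi^{-1}(S)$ is a closed subvariety of $V$ of codimension at least $1$ (as $\phi$ is quasi-finite and $S$ has codimension at least $1$), hence $V\setminus \phi^{-1}(S)$ is a dense open subset of the irreducible variety $V$, and is therefore itself irreducible, in particular connected. Thus restriction preserves connectedness, and the image of $\eta$ acts transitively on every fibre over $p$ of a connected cover of $W$; this is precisely the condition for $\eta$ to be surjective.

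For the second bullet (codimension $\ge 2$), I would invoke the Zariski--Nagata purity of the branch locus (see e.g.\ \cite[X.3.1]{sga}), which asserts that in characteristic zero any finite \'etale cover $\phi'\colon V'\to W\setminus S$ of the complement of a codimension $\ge 2$ closed subset of a smooth variety $W$ extends uniquely to a finite \'etale cover $\phi\colon V\to W$. In categorical language, this says that the pullback functor from finite \'etale covers of $W$ to those of $W\setminus S$ is an equivalence of categories, with inverse given by this canonical extension. Combined with the compatibility of fibre functors at $p$ (which lies in $W\setminus S\subset W$), this equivalence translates precisely to $\eta$ being an isomorphism of profinite groups.

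The main technical input here is the purity theorem invoked in the last paragraph; everything else is bookkeeping about the correspondence between $\pi_1$ and finite \'etale covers, together with the elementary observation that removing a proper closed subset from a smooth irreducible variety preserves irreducibility. Since the excerpt is content to use the \'etale fundamental group as a black box (including its topological finite generation in Proposition \ref{tpg}, itself a nontrivial result in this theory), it is natural to likewise cite Zariski--Nagata purity as a black box here rather than reprove it.
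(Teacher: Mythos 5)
Your proof is correct and follows essentially the same route as the paper, which simply cites SGA1 V.5.6 for surjectivity and, for the isomorphism, cites SGA1 X.3.3 while noting it can be deduced from Zariski--Nagata purity (SGA2 X.3.4) -- exactly the purity input you invoke. Your fleshed-out argument for the first bullet (smooth plus connected implies irreducible, so the dense open complement of the preimage of $S$ stays connected) is precisely the content behind the cited SGA1 corollary.
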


\begin{proof}  See \cite[Corollary V.5.6]{sga} for the first part, and \cite[Corollary X.3.3]{sga} for the second part.  (This second part can also be deduced from the Zariski-Nagata purity theorem \cite[Theorem X.3.4]{sga2}.)
\end{proof}

If $W$ is a smooth irreducible variety over $k$, and $p$ is a point in $W$, then (because $k$ has characteristic zero) one can find an algebraically closed subfield $k'$ of finite transcendence degree over $\Q$ over which $W$ and $p$ are still defined (by taking the algebraic closure of the coefficients of the polynomials that cut out $W$, as well as the coefficients of $p$).  As such, there exists an embedding $\tau: k' \to \C$ of the field $k'$ to the complex numbers $\C$; note that in general that this embedding will not be unique.  Given such an embedding $\tau$, we can then define a complex variety $\tau(W)$ by applying $\tau$ to all the coefficients of the polynomials defining $W$.  Similarly, by applying $\tau$ to $p$ we have a point $\tau(p)$ in $\tau(W)$.  As $W$ was smooth, we see that $\tau(W)$ is smooth and irreducible in the algebraic sense, which implies by the inverse function theorem that $\tau(W)$ is smooth and connected in the analytic sense.  We can then form the ordinary (i.e, topological) fundamental group $\pi_1^{\operatorname{top}}(\tau(W), \tau(p))$.  This group is, in general, not profinite; however, we can form the profinite completion $\pi_1^{\operatorname{top}}(\tau(W), \tau(p))^\wedge$, defined as the Hausdorff completion (i.e. inverse limit) of all the finite quotients of $\pi_1^{\operatorname{top}}(\tau(W), \tau(p))$.  We have the following deep theorem:

\begin{theorem}[Equivalence of \'etale and topological fundamental groups]\label{equate}  Let $W$ be a smooth variety over an algebraically closed field of characteristic zero, let $p$ be a point in $W$, and let $k'$ be a field of finite transcendence degree over $\Q$, such that $W$ and $p$ are defined over $k'$.  Let $\tau: k'\to \C$ be an embedding of the field $k'$ in $\C$.  Then there is a canonical identification
$$ \pi_1(W,p) \equiv \pi_1^{\operatorname{top}}(\tau(W), \tau(p))^\wedge$$
between the \'etale fundamental group $\pi_1(W,p)$ and the profinite completion of the topological fundamental group $\pi_1^{\operatorname{top}}(\tau(W), \tau(p))$.  Furthermore one has the following functorial property: if $S$ is an closed subvariety of $W$ defined over $k'$ that has codimension at least $1$ and avoids $p$, then the above identifications intertwine the canonical homomorphism 
$$
\pi_1(W \backslash S,p) \to \pi_1(W,p)$$
of \'etale fundamental groups described previously, and the profinite completion
$$ 
\pi_1^{\operatorname{top}}(\tau(W \backslash S), \tau(p))^\wedge \to \pi_1^{\operatorname{top}}(\tau(W), \tau(p))^\wedge
$$
of the obvious homomorphism from the topological fundamental group $\pi_1^{\operatorname{top}}(\tau(W \backslash S), \tau(p))$ to the topological fundamental group $\pi_1^{\operatorname{top}}(\tau(W), \tau(p))$.  (It is easy to see that this profinite completion exists and is well defined).
\end{theorem}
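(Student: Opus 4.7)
The plan is to reduce the theorem to two classical black boxes: (a) the invariance of the \'etale fundamental group under extension of algebraically closed base fields of characteristic zero, and (b) the Riemann Existence Theorem comparing finite \'etale covers of a smooth complex variety with finite topological covering spaces of its analytification. Together these will give the desired identification, and the functoriality will then come almost for free from the functoriality built into the two comparison theorems.

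First, I would set up the base change. Let $W_{k'}, p_{k'}$ denote the models of $W,p$ over $k'$. Using the fact that the formation of $\pi_1$ is compatible with the directed colimit of finite-type $k'$-subalgebras (and hence with any extension of algebraically closed fields of characteristic zero), one obtains canonical isomorphisms $\pi_1(W,p) \equiv \pi_1(W_{k'}, p_{k'}) \equiv \pi_1(\tau(W), \tau(p))$, where on the right we view $\tau(W)$ as a complex algebraic variety (not yet an analytic space); see SGA 1, Exp. X or \cite[Cor. 5.7.7]{szam}. This reduces the statement to the case $k' = k = \C$, where we are comparing the algebraic \'etale fundamental group of a smooth complex variety with the profinite completion of its topological fundamental group.

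Second, I would invoke the Riemann Existence Theorem (SGA 1, Exp. XII, Th\'eor\`eme 5.1; see also \cite[Thm. 5.7.4]{szam}), which asserts that the analytification functor induces an equivalence of categories between finite \'etale covers of $\tau(W)$ (as an algebraic variety) and finite topological covering spaces of the associated analytic space $\tau(W)^{\mathrm{an}}$. Under this equivalence, the algebraic fibre functor over $\tau(p)$ matches the topological fibre functor. On the other hand, classical covering space theory identifies finite covering spaces of a connected, locally simply connected space $X$ with finite continuous $\pi_1^{\mathrm{top}}(X,x)$-sets. Passing to automorphism groups of fibre functors and taking the inverse limit over finite quotients then yields the canonical identification $\pi_1(\tau(W), \tau(p)) \equiv \pi_1^{\mathrm{top}}(\tau(W)^{\mathrm{an}}, \tau(p))^{\wedge}$ as required.

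Third, for the compatibility with the open immersion $W \setminus S \hookrightarrow W$: both the \'etale fundamental group and the topological fundamental group are functorial in such inclusions, and both the base-change isomorphism and the Riemann Existence equivalence are natural with respect to open immersions of smooth varieties (since restriction of finite \'etale covers matches restriction of topological covers, and analytification commutes with open immersions). Thus the canonical square relating the four fundamental groups commutes, and Lemma \ref{highd} (surjectivity part) ensures the induced map is again surjective after profinite completion, matching the expected topological surjectivity. The main obstacle in carrying out this program is the Riemann Existence Theorem itself, whose proof is nontrivial and which we simply cite; the base-change step is comparatively soft, and the functoriality step is routine once the comparison equivalences are in hand.
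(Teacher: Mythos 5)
Your proposal is essentially the same as the paper's proof: both identify the Riemann Existence Theorem (SGA 1, Exp.\ XII, Th\'eor\`eme 5.1 / Corollaire 5.2) as the main input for the identification, and both deduce the functoriality statement from the naturality of the comparison equivalences under restriction along open immersions. The one place where you are more careful than the paper is the base change step: the theorem is stated for a variety $W$ over a general algebraically closed field $k$ of characteristic zero, so one must first invoke invariance of the \'etale fundamental group under extension of algebraically closed base fields of characteristic zero (SGA 1, Exp.\ X; Szamuely) to pass from $k$ to $\C$ before Riemann Existence applies; the paper's proof is terse and appears to elide this step, simply citing Corollary XII.5.2 directly. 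One small imprecision in your write-up: you should pass through the algebraic closure $\overline{k'}$ (viewed inside $k$ and, via an extension of $\tau$, inside $\C$) rather than through $k'$ itself, since $k'$ is not assumed algebraically closed and \'etale $\pi_1$ over a non-algebraically-closed field carries a Galois contribution; this does not affect the substance of the argument. Also, your remark invoking Lemma \ref{highd} in the third step is unnecessary for the functoriality claim, which only requires the comparison squares to commute, not surjectivity.
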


\begin{proof} The first part of the theorem is \cite[Corollary XII.5.2]{sga}, which follows directly from the Riemann existence theorem (\cite[Theorem XII.5.1]{sga}), which provides an equivalence of categories between finite analytic covering spaces of $\tau(W)$ and finite \'etale coverings of $W$, with $\pi(W,p)$ being described completely by its action on fibres over $p$ in the latter category, and $\pi_1^{\operatorname{top}}(\tau(W), \tau(p))^\wedge$ by its action on fibres over $\tau(p)$ in the former category.  The second part of the theorem follows from the obvious fact that these equivalences of categories for $W$ and for $W \backslash S$ are intertwined by the restriction maps from $W$ to $W \backslash S$.  (More generally, one can replace the inclusion map from $W \backslash S$ to $W$ by other smooth maps, but we will not need to do so here.)
\end{proof}

\begin{remark} Among other things, this shows that up to isomorphism, the profinite completion of the topological fundamental group $\pi_1^{\operatorname{top}}(\tau(W), \tau(p))^\wedge$ is independent of $\tau$.  However, the topological fundamental group itself can be sensitive to the choice of $\tau$; see \cite{serre}.
\end{remark}

We will use the above equivalence to establish the following result which is crucial for our analysis:

\begin{theorem}[Weak \'etale van Kampen theorem]\label{evk}  
Let $W$ be a smooth variety defined over an algebraically closed field of characteristic zero, and let $W_1,W_2$ be closed subvarieties of $W$ of strictly smaller dimension.  Let $p$ be a point in $W \backslash (W_1 \cup W_2)$.  By Lemma \ref{highd}, we have canonical surjective homomorphisms from $\pi_1(W \backslash (W_1 \cup W_2),p)$ to $\pi_1(W \backslash W_1)$ and $\pi_1(W \cap W_2)$, as well as canonical surjective homomorphisms from $\pi_1(W \backslash W_1,p)$ and $\pi_1(W \backslash W_2,p)$ to $\pi_1(W)$.  

Then $\pi_1(W \backslash (W_1 \cup W_2),p)$ surjects onto the fibre product $\pi_1(W \backslash W_1,p) \oplus_{\pi_1(W,p)} \pi_1(W \backslash W_2,p)$.
\end{theorem}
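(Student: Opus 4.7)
The plan is to reduce the statement to a topological van Kampen argument by invoking Theorem \ref{equate}. First I would descend all the data $(W, W_1, W_2, p)$ to a subfield $k' \subseteq k$ of finite transcendence degree over $\Q$ over which these objects are all defined, fix an embedding $\tau: k' \hookrightarrow \C$, and pass to the complex analytic manifolds $\tau(W) \backslash \tau(W_i)$ and $\tau(W) \backslash \tau(W_1 \cup W_2)$. Theorem \ref{equate} identifies each of the four \'etale fundamental groups in the statement with the profinite completion of its topological counterpart, and identifies the canonical surjections supplied by Lemma \ref{highd} with the profinite completions of the ordinary inclusion-induced maps. Thus it suffices to establish the analogous surjection at the topological level and then verify that it survives profinite completion.

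Writing $U_i := \tau(W) \backslash \tau(W_i)$, so that $U_1 \cap U_2 = \tau(W) \backslash \tau(W_1 \cup W_2)$ and $U_1 \cup U_2 = \tau(W) \backslash \tau(W_1 \cap W_2)$, the classical topological van Kampen theorem applied to the open cover $\{U_1, U_2\}$ yields
\[
\pi_1^{\operatorname{top}}(U_1 \cup U_2, \tau(p)) \;\cong\; \pi_1^{\operatorname{top}}(U_1, \tau(p)) *_{\pi_1^{\operatorname{top}}(U_1 \cap U_2, \tau(p))} \pi_1^{\operatorname{top}}(U_2, \tau(p)).
\]
Combining this with the general algebraic fact that for any amalgamated free product $G = G_1 *_H G_2$ the natural map $H \to G_1 \times_G G_2$ is surjective --- an immediate consequence of the normal form theorem for amalgamated products, which forces any pair $(g_1,g_2)$ with common image in $G$ to factor through a single element of $H$ --- yields the surjection $\pi_1^{\operatorname{top}}(U_1 \cap U_2) \twoheadrightarrow \pi_1^{\operatorname{top}}(U_1) \times_{\pi_1^{\operatorname{top}}(U_1 \cup U_2)} \pi_1^{\operatorname{top}}(U_2)$. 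When $W_1 \cap W_2$ has complex codimension at least two in $W$ --- the regime relevant to the paper's applications, where this hypothesis is made explicit just after the theorem is invoked --- Lemma \ref{highd} further identifies $\pi_1(U_1 \cup U_2)$ with $\pi_1(\tau(W))$, so the fibre product over $\pi_1(U_1 \cup U_2)$ coincides with the fibre product over $\pi_1(\tau(W))$ appearing in the theorem statement.

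The step I expect to be the main obstacle is the passage from the topological surjection to the \'etale (profinite) one, since profinite completion does not commute with the fibre product construction in general. The cleanest way around this is to argue one finite quotient at a time: given any pair of finite quotients $\rho_i: \pi_1(W \backslash W_i) \twoheadrightarrow Q_i$ that descend to a common finite quotient $\pi_1(W) \twoheadrightarrow Q$, I would use the finite-\'etale-cover description of the \'etale fundamental group to construct a compatible finite \'etale cover of $W \backslash (W_1 \cup W_2)$ whose monodromy on the two overlapping open subvarieties realises the prescribed pair $(\rho_1,\rho_2)$ --- existence of such a cover being precisely the topological statement applied to that particular finite quotient. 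Topological finite generation (Proposition \ref{tpg}) guarantees that the collection of such finite quotients is rich enough for this local-to-global verification to deliver surjectivity onto the entire profinite fibre product.
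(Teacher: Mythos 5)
Your route tracks the paper's almost exactly: Lefschetz descent to a subfield of finite transcendence degree over $\Q$ and an embedding into $\C$, conversion to topological fundamental groups via Theorem \ref{equate}, the topological van Kampen theorem for the open cover $\{U_1,U_2\}$, and the algebraic fact that an amalgamated free product surjects onto the corresponding fibre product. Your explicit observation that van Kampen naturally yields the fibre product over $\pi_1(W\backslash(W_1\cap W_2))$, which agrees with the fibre product over $\pi_1(W)$ only when $W_1\cap W_2$ has codimension at least two, is accurate and matches the paper's own tacit reduction: the paper's opening move, ``remove $W_1\cap W_2$ from $W$ without affecting any of the fundamental groups,'' quietly uses that codimension condition, which is then verified separately in the application (Proposition \ref{Four-reg}).

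The place where you diverge is the passage from the topological surjection to the profinite one, and this is where your write-up has a gap. What makes this step work is compactness, not finite generation: the image of the compact group $G_{12}^\wedge$ in the Hausdorff group $G_1^\wedge\times_{G^\wedge}G_2^\wedge$ is automatically closed, so surjectivity reduces to density of the image of $G_{12}$, which one then deduces from the discrete surjection $G_{12}\twoheadrightarrow G_1\times_G G_2$ together with the surjectivity (via Lemma \ref{highd}) of the structural maps $G_i\to G$ and $G_{12}\to G_i$. Your ``one finite quotient at a time'' plan is, in effect, a reformulation of that density check, but you never record the closed-image observation without which density does not imply surjectivity, and you appeal instead to Proposition \ref{tpg} (topological finite generation). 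That proposition is not what this step uses --- it is needed elsewhere in the paper, to reduce the monodromy representations $\rho_w$ to finitely many possibilities in the proof of Proposition \ref{Four-reg}, but plays no role in Theorem \ref{evk}. Dropping the appeal to Proposition \ref{tpg} and replacing it with the compactness/closed-image argument makes your proof essentially coincide with the paper's.
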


\begin{proof}  By Lemma \ref{highd}, we may remove $W_1 \cap W_2$ from $W$ without affecting any of the fundamental groups, so we may assume that $W_1$ and $W_2$ are disjoint.

Let $k'$ be an algebraically closed field of finite transcendence degree over $\Q$, over which $W, W_1, W_2, p$ are all defined, and let $\tau: k' \to \C$ be an embedding of fields. By Theorem \ref{equate}, it suffices to show that $G_{12}^\wedge$ surjects onto the fibre product $G_1^\wedge \times_{G^\wedge} G_2^\wedge$, where $G,G_1,G_2,G_{12}$ are the topological fundamental groups
\begin{align*}
G &:= \pi_1(\tau(W),\tau(p)) \\
G_1 &:= \pi_1(\tau(W) \backslash \tau(W_1),\tau(p)) \\
G_2 &:= \pi_1(\tau(W) \backslash \tau(W_2),\tau(p)) \\
G_{12} &:= \pi_1(\tau(W) \backslash (\tau(W_1) \cup \tau(W_2),\tau(p)).
\end{align*}
On the other hand, by the topological van Kampen theorem (see e.g. \cite[\S 1.2]{hatcher}), $G_{12}$ can be canonically identified with the amalgamated free product of $G_1$ and $G_2$ over $G$; in particular, $G_{12}$ surjects onto $G_1 \times_G G_2$, which implies that the image of $G_{12}^\wedge$ in $G_1^\wedge \times_{G^\wedge} G_2^\wedge$ is dense in the relative product of profinite topologies.  On the other hand, as profinite groups are compact Hausdorff with respect to the profinite topology, this image must also be compact, and the surjectivity follows.
\end{proof}

\begin{remark} Actually, the above theorem is valid in arbitrary characteristic, and furthermore $\pi_1(W \backslash (W_1 \cup W_2),p)$ is the coproduct (in the category of profinite groups) of $\pi_1(W \backslash W_1,p)$ and $\pi_1(W \backslash W_2,p)$ over $\pi_1(W,p)$; see \cite{zoon}.  This result can also be deduced from \cite[Theorem IX.5.1]{sga}; ultimately, it is equivalent to the fact that finite \'etale covers over $W \backslash W_1$ and $W \backslash W_2$ which are isomorphic on $W \backslash (W_1 \cup W_2)$ can  be glued together to form a finite \'etale cover of $W$ if $W_1,W_2$ are disjoint.   However, we will not need this stronger version of the \'etale van Kampen theorem here.
\end{remark}

\end{document}